\newcommandx{\change}[2][1=]{\todo[linecolor=blue,backgroundcolor=blue!25,bordercolor=blue,#1]{#2}}
\newcommandx{\changein}[2][1=]{\change[inline, caption={change}, #1]{%
    \begin{minipage}{\textwidth-20pt}#2\end{minipage}}}
\newcommandx{\todoin}[2][1=]{\todo[inline, caption={todo}, #1]{%
    \begin{minipage}{\textwidth-20pt}#2\end{minipage}}}
\newcommandx{\remove}[2][1=]{\todo[linecolor=Plum,backgroundcolor=Plum!25,bordercolor=Plum,#1]{#2}}
\newcommandx{\removein}[2][1=]{\remove[inline, caption={todo}, #1]{%
    \begin{minipage}{\textwidth-20pt}#2\end{minipage}}}
\def\l@subsection{\@tocline{2}{0pt}{2.5pc}{2.5pc}{}}%
\def\l@subsubsection{\@tocline{3}{0pt}{5pc}{5pc}{}}%
\DeclareFontFamily{OT1}{pzc}{}
\DeclareFontShape{OT1}{pzc}{m}{it}{<-> s * [1.10] pzcmi7t}{}
\DeclareMathAlphabet{\mathpzc}{OT1}{pzc}{m}{it}
\theoremstyle{definition}
\newtheorem{thm}{Theorem}[section]
\newtheorem{dfn}[thm]{Definition}
\newtheorem{lem}[thm]{Lemma}
\newtheorem{prop}[thm]{Proposition}
\newtheorem{rem}[thm]{Remark}
\newtheorem{exa}[thm]{Example}
\newtheorem{ntt}[thm]{Notation}
\newtheorem*{defn*}{Definition}
\newtheorem*{thm*}{Theorem}
\newtheorem*{cor*}{Corollary}
\newtheorem*{prp*}{Proposition}
\newtheorem*{clm}{Claim}
\newcommand{\N}{\mathbb{N}}
\newcommand{\R}{\mathbb{R}}
\newcommand{\trn}[1]{{\left\vert\kern-0.25ex\left\vert\kern-0.25ex\left\vert #1 
    \right\vert\kern-0.25ex\right\vert\kern-0.25ex\right\vert}}
\newcommand{\trnsmall}[1]{{\vert\kern-0.25ex\vert\kern-0.25ex\vert #1 
    \vert\kern-0.25ex\vert\kern-0.25ex\vert}}
\DeclareMathOperator{\supp}{supp}
\long\def\symbolfootnote[#1]#2{\begingroup%
\def\thefootnote{\fnsymbol{footnote}}\footnote[#1]{#2}\endgroup}
\begin{document}

\title[Operator reduction in $R_\alpha^p$ spaces]{Coordinate systems and distributional embeddings in Bourgain-Rosenthal-Schechtman spaces: a framework for operator reduction}

\author[K. Konstantos]{Konstantinos Konstantos}

\address{Konstantinos Konstantos, Department of Mathematics and Statistics, York University, 4700 Keele Street, Toronto, Ontario, M3J 1P3, Canada}

\email{kostasko@yorku.ca}

\author[P. Motakis]{Pavlos Motakis}

\address{Pavlos Motakis, Department of Mathematics and Statistics, York University, 4700 Keele Street, Toronto, Ontario, M3J 1P3, Canada}

\email{pmotakis@yorku.ca}

\thanks{This research was supported by NSERC Grant RGPIN-2021-03639.}


\keywords{Bourgain-Rosenthal-Schechtman spaces, finite-dimensional decomposition, distributional embedding, factorization property}

\subjclass[2020]{46B09, 46B25, 46B28, 46E30, 47A68}

\begin{abstract}
For every $1\leq \alpha<\omega_1$, we construct an explicit unconditional finite-dimensional decomposition (FDD) $(X_\lambda)_{\lambda\in\mathcal{T}_\alpha}$ of the Bourgain-Rosenthal-Schechtman space $R_\alpha^{p,0}$ by blocking its standard martingale difference sequence (MDS) basis. This FDD has strong reproducing properties and supports a theory of distributional representations between the spaces $R_\alpha^{p,0}$, $1\leq \alpha<\omega_1$. We use this framework to prove an approximate orthogonal reduction: every bounded linear operator on a limit space $R_\alpha^{p,0}$ is, via a distributional embedding and up to arbitrary precision, reduced to a scalar FDD-diagonal operator. As a consequence, the standard MDS bases of the limit spaces $R_\alpha^{p,0}$ satisfy the factorization property.
\end{abstract}

\maketitle


\tableofcontents

\section{Introduction}

Let $1<p<\infty$ be fixed and let $q = p/(p-1)$ and $p^* = \max\{p,q\}$. The first example of a non-trivial complemented subspace $X_p$ of $L_p = L_p[0,1]$ was given by Rosenthal in 1970 \cite{rosenthal:1970:Xp}. This space is spanned by an independent sequence of symmetric three-valued random variables $(f_n)_{n=1}^\infty$ such that the ratio $\|f_n\|_{2}/\|f_n\|_{p^*}$ has a subsequence tending to zero sufficiently slowly. To study the properties of these spaces, such as their complementation, for $p>2$, Rosenthal introduced what is now known as Rosenthal’s inequality, which states that for any scalar sequence $(a_n)_{n=1}^\infty$,
\[
\big\|\sum_{n=1}^\infty a_n f_n\big\|_p \sim_p \max\Big\{\Big(\sum_{n=1}^\infty |a_n|^p \|f_n\|_p^p\Big)^{1/p},\Big(\sum_{n=1}^\infty |a_n|^2 \|f_n\|_2^2\Big)^{1/2}\Big\}.
\]
This equivalence makes it possible to study $X_p$ as a sequence space whose norm admits a simple and explicit description. Schechtman later proved in 1975 \cite{schechtman:1975} that the iterated tensor products $\otimes^n X_p$ are pairwise non-isomorphic, thereby showing the existence of infinitely many pairwise non-isomorphic complemented subspaces of $L_p$.

In their 1981 paper \cite{bourgain:rosenthal:schechtman:1981} Bourgain, Rosenthal, and Schechtman defined a transfinite collection of complemented subspaces of $L_p$, denoted by $R_\alpha^p$, $0\leq \alpha<\omega_1$, and proved the existence of uncountably many pairwise non-isomorphic complemented subspaces of $L_p$. Just like the space $X_p$, the definition of these spaces is based on probabilistic language. The first space $R_0^p$ consists of the constant functions. If $R_\alpha^p$ is defined, then $R_{\alpha+1}^p$ is the disjoint sum of two $1/2$-compressed (that is, $1/2$-dilated) isometric copies of $R_\alpha^p$. More formally, for $I = [a,b)\subset[0,1)$, define $T_I:L_p\to L_p$ by setting $(T_If)(t) = f((t-a)/(b-a))$ for $t\in I$ and $(T_If)(t)=0$ otherwise. This $T_I$ is an example of a scaled isometry of a special type, called a $\theta$-compression, where $\theta=b-a$. Then,
\[R_{\alpha+1}^p = T_{[0,1/2)}(R_\alpha^p) + T_{[1/2,1)}(R_\alpha^p).\]
The distributional structure of the definition is an essential part of the construction. For a limit ordinal $\alpha$, the space $R_\alpha^p$ is the independent sum of the spaces $R_\beta^p$, $0\leq \beta<\alpha$, that is, there are distributional embeddings $T_\beta^\alpha:R_\beta^p\to L_p$, $0\leq \beta<\alpha$ with independent ranges such that
\[R_\alpha^p = [(T_\beta^\alpha(R_\beta^p))_{\beta<\alpha}].\]
By construction, each space $R_\alpha^p$ is spanned by the constant function together with a symmetric three-valued martingale difference sequence (MDS), and thus by Burkholder’s martingale theorem from \cite{burkholder:1966} the space admits an unconditional basis. The proof of the complementation of these spaces relies on a clever threefold application of an inequality of Burkholder, Davis, and Gundy from \cite{burkholder:davis:gundy:1972} to a tree-martingale structure and on Burkholder’s martingale theorem. The result is that the spaces $R_\alpha^p$ are complemented via orthogonal projections. The fact that the family $(R_\alpha^p)_{0\leq \alpha<\omega_1}$ contains a cofinal subset of mutually non-isomorphic spaces is proved using descriptive set-theoretic methods, specifically an ordinal index, though this subset is not explicitly specified. The ordinal indices of a few specific subspaces of $L_p$ were computed in \cite{dutta:khurana:2016}.

The most comprehensive study of the Bourgain-Rosenthal-Schechtman spaces was carried out by Alspach in his 1999 paper \cite{alspach:1999}. This is a deep work with many components. He applied, in a broader context, the notion of $(p,2)$-sums of sequences of subspaces of $L_p$ for $p>2$. This notion had been used earlier by Rosenthal in \cite{rosenthal:1970:Xp} and by Johnson and Odell in \cite{johnson:odell:1981}, primarily to study $X_p$. Alspach combined it with extensive use of Rosenthal’s inequality to analyze the isomorphic structure of the spaces $R_\alpha^p$ for $\alpha<\omega_1$. Among other results, Alspach computed the isomorphism classes of the infinite-dimensional Bourgain-Rosenthal-Schechtman spaces, proving that they are represented by the spaces $R_\alpha^p$, where $\alpha<\omega_1$ is a limit ordinal. He also proved that the space $X_p\otimes X_p$ is not isomorphic to a complemented subspace of any of the Bourgain-Rosenthal-Schechtman spaces. A related study of this construction appears in \cite{ghawadrah:2016}, where some of the results from \cite{bourgain:rosenthal:schechtman:1981} are extended to Orlicz spaces. Other works, such as \cite{mueller:schechtman:1989}, \cite{rzeszut:wojciechowski:2017}, \cite{konstantos:motakis:2025}, and \cite{konstantos:speckhofer:2025} borrow tools from the Bourgain-Rosenthal-Schechtman construction, but its full potential may not yet have been achieved, and further applications remain to be explored.

Unlike Alspach’s work, which is based on isomorphic tools and Rosenthal’s inequality, our work focuses on the distributional structure of the Bourgain-Rosenthal-Schechtman spaces, and it is partly a continuation of \cite{konstantos:motakis:2025} by the present authors. We develop a theory around the spaces’ standard MDS bases and use it to define a class of distributional embeddings of the spaces
\[R_\alpha^{p,0} = \{f\in R_\alpha^{p} : \mathbb E(f) = 0\},\;1\leq \alpha<\omega_1,\]
into each other. The motivation behind this work comes from the primary factorization property of Banach spaces and the factorization property of bases of Banach spaces. Given bounded linear operators $T,S:X\to X$, where $X$ is a Banach space, we call $T$ a factor of $S$ if there exist bounded linear operators $A,B:X\to X$ such that $S = ATB$. A Banach space has the primary factorization property if, for every bounded linear operator $T:X\to X$, either $T$ or $id - T$ is a factor of $id:X\to X$. This property is related, for instance, to the primariness of Banach spaces ($X$ is primary if whenever $X\simeq Y\oplus Z$ then $Y\simeq X$ or $Z\simeq X$) (see, e.g., \cite{lindenstrauss:1971}, \cite{lindenstrauss:pelczynski:1971}, \cite{maurey:1975:2}, \cite{pietsch:1978}, \cite{enflo:starbird:1979}, \cite{johnson:odell:1981}, \cite{capon:1982:2}, \cite{bourgain:1983}, \cite{mueller:1988}, \cite{blower:1990}, \cite{lechner:motakis:mueller:schlumprecht:2022}) and to the study of operator ideals (see, e.g., \cite{dosev:johnson:2010}, \cite{kania:laustsen:2012}, \cite{dosev:johnson:schechtman:2013}, \cite{kania:lechner:2022}, \cite{lechner:speckhofer:2025}). A basis $(e_n)_{n=1}^\infty$ of a Banach space $X$ has the factorization property if every bounded linear operator $T:X\to X$ such that $\inf_n |e_n^*(Te_n)| > 0$ is a factor of $id:X\to X$. While the term was first introduced in \cite{laustsen:lechner:mueller:2015}, the property had been studied implicitly since Andrew’s 1977 paper \cite{andrew:1979}. It has been the focal point of several recent research articles, such as \cite{lechner:2018:SL-infty}, \cite{lechner:motakis:mueller:schlumprecht:2020}, \cite{navoyan:2023}, \cite{lechner:speckhofer:2025}, \cite{konstantos:motakis:2025}, and \cite{konstantos:speckhofer:2025}. It was used, for instance, implicitly by Alspach in \cite{alspach:1999} to show that $X_p\otimes X_p$ is not isomorphic to a complemented subspace of a Bourgain-Rosenthal-Schechtman space, and by Johnson and Schechtman in \cite{johnson:schechtman:2024} to show that the maximal operator ideal in $L_1$ does not admit a right approximate identity. A corollary of our work is that the limit Bourgain-Rosenthal-Schechtman spaces with their standard MDS bases satisfy the factorization property.

The first contribution of this paper, taking place in Section \ref{fdd section}, is the definition of an explicit unconditional FDD $(X_\lambda)_{\lambda\in\mathcal{T}_\alpha}$ for each space $R_\alpha^{p,0}$, obtained by blocking the standard MDS bases defined by Bourgain, Rosenthal, and Schechtman in \cite{bourgain:rosenthal:schechtman:1981}. It is constructed recursively, following the recursive structure of the spaces $R_\alpha^{p,0}$ for $1\leq \alpha<\omega_1$. For each $n\in\mathbb{N}$, let $\mathcal{D}^n$ denote the set of left-closed, right-open dyadic intervals of length $1/2^n$ in $[0,1)$, and let $L_p^{n,0}$ be the subspace of mean-zero functions spanned by the characteristic functions of these intervals. Then, for every $n\in\mathbb{N}$, $R_p^{n,0} = L_p^{n,0}$, and we consider the trivial FDD $(L_p^{n,0})$ for this space. For a limit ordinal $\alpha$, assume that for each $R_\beta^{p,0}$, $1\leq \beta<\alpha$, an FDD $(X_\lambda)_{\lambda\in\mathcal{T}_\beta}$ has been defined. We then define $(X_\lambda)_{\lambda\in\mathcal{T}_\alpha}$ by taking a union of the independent FDDs $(T_\beta^\alpha(X_\lambda))_{\lambda\in\mathcal{T}_\beta}$, $1\leq \beta<\alpha$. If $\alpha$ is a limit ordinal, then for every $n\in\mathbb{N}$,
\[
R_{\alpha+n}^{p,0} = L_p^{n,0} + \langle T_I(R_\alpha^{p,0}) : I\in\mathcal{D}^n \rangle.
\]
We define the FDD $(X_\lambda)_{\lambda\in\mathcal{T}_{\alpha+n}}$ by letting one component be $L_p^{n,0}$ and taking the $2^n$ disjointly supported $1/2^n$-compressed copies $(T_I(X_\lambda))_{\lambda\in\mathcal{T}_\alpha}$, $I\in\mathcal{D}^n$, of $(X_\lambda)_{\lambda\in\mathcal{T}_\alpha}$. For $1\leq \alpha<\omega_1$, we formalize the index set $\mathcal{T}_\alpha$ of the FDD as an explicit tree of finite sequences of ordinal numbers and dyadic intervals, in accordance with the above construction. Each $X_\lambda$ is an isometric copy of some finite-dimensional space $L_p^{\kappa(\lambda),0}$ via a $\theta_\lambda$-compression operator $T_\lambda$ determined, e.g., by the entries of $\lambda$. Although a study of the subsets of this structure is not part of the present paper, the index set $\mathcal{T}_\alpha$ allows for the definition of concrete subspaces of $R_\alpha^{p,0}$ spanned by subsets of the FDD, which are themselves complemented in $L_p$. This raises the question of determining the cardinality of the isomorphism classes of spaces of this type, for example when $\omega^2\leq \alpha<\omega_1$, and in particular whether this cardinality equals the continuum, that is, the maximal possible among separable spaces (it was shown by Johnson and Schechtman in \cite{johnson:schechtman:2021} that the collection of distinct operator ideals in $L_p$ already attains the maximal cardinality $2^\mathfrak{c}$). Some tools relevant to this question have already been developed in \cite{mueller:1987}, \cite{alspach:1999}, \cite{alspach:tong:2003}, and \cite{alspach:tong:2006}.

The second contribution of this paper, encompassing Sections \ref{DES section} and \ref{dist repre subsection}, is the introduction of a class of distributional embeddings between the spaces $R_\alpha^{p,0}$, later used in the factorization of operators. Let $1\leq \alpha<\omega_1$. A distributional embedding scheme of $R_\alpha^{p,0}$ into $R_\beta^{p,0}$ is a family $\Psi = (\mathcal{M}_\lambda,(J_\mu)_{\mu\in\mathcal{M}_\lambda})_{\lambda\in\mathcal{T}_\alpha}$, where $(\mathcal{M}_\lambda)_{\lambda\in\mathcal{T}_\alpha}$ is a collection of disjoint antichains of $\mathcal{T}_\beta$ that respect the tree structure and, to some extent, the distributional structure of the FDD. For instance, for each $\lambda\in\mathcal{T}_\alpha$, the associated compressions $(T_\mu)_{\mu\in\mathcal{M}_\lambda}$ have pairwise disjointly supported ranges and the family $(J_\mu)_{\mu\in\mathcal{M}_\lambda}$ consists of distributional embeddings with common domain a finite-dimensional $L_p^{\kappa(\lambda),0}$ space. A substantial portion of the paper is devoted to proving that the operator
$T_\Psi : R_\alpha^{p,0}\to R_\beta^{p,0}$ given by
\[T_\Psi|_{X_\lambda} = \sum_{\mu\in\mathcal{M}_\lambda} T_\mu J_\mu T_\lambda^{-1}\]
is a distributional embedding. While the finite-dimensional components $T_\Psi|_{X_\lambda}$, $\lambda\in\mathcal{T}_\alpha$ are almost trivially distributional embeddings, the main difficulty lies in controlling how these pieces interact with one another. Proceeding by transfinite induction on $\alpha$, we decompose $T_\Psi$ into component operators determined by distributional embedding schemes on Bourgain-Rosenthal-Schechtman spaces on smaller ordinals. Each piece satisfies the hypothesis, and reassembling the pieces recovers $T_\Psi$ and establishes it as a distributional embedding. The inductive step differs for successor and limit ordinals $\alpha$. Distributional embedding schemes capture the strong reproducing properties of the decomposition $(X_\lambda)_{\lambda\in\mathcal{T}_\alpha}$ needed for the factorization of operators. It is plausible that the strategical reproducibility framework for bases due to Lechner, M\"uller, Schlumprecht, and the second author from \cite{lechner:motakis:mueller:schlumprecht:2020} extends to FDDs and applies to $(X_\lambda)_{\lambda\in\mathcal{T}_\alpha}$ for limit ordinals.

The final part of this paper, included in Section \ref{reduction to FDD}, applies the tools developed so far. A subspace $X$ of $L_p$ is called orthogonally complemented if the orthogonal projection $P:L_2\to \overline{X\cap L_2}^{\|\cdot\|_2}$ defines a bounded linear projection from $L_p$ onto $X$. For example, the Bourgain-Rosenthal-Schechtman spaces are of this type. If $X$ is such a subspace of $L_p$ and $T,S:X\to X$ are bounded linear operators, we say that $T$ is an orthogonal factor of $S$ with error $\epsilon$ if there exists a distributional embedding $J:X\to X$ such that $\|J^\dagger TJ - S\| < \epsilon$, where $J^\dagger:X\to X$ denotes the formal adjoint of $J$, which is well defined and bounded in this context. This can be viewed as an approximate orthogonal reduction of $T$ to $S$ in a way that respects the distributional structure. A bounded linear operator $D:R_\alpha^{p,0}\to R_\alpha^{p,0}$ is called scalar FDD-diagonal if for every $\lambda\in\mathcal{T}_\alpha$, $D|_{X_\lambda} = d_\lambda\cdot id$, for some scalar family $(d_\lambda)_{\lambda\in\mathcal{T}_\alpha}$, called the collection of entries of $D$.
\begin{thm*}
Let $\alpha$ be a limit countable ordinal number. For every bounded linear operator $T:R_\alpha^{p,0}\to R_\alpha^{p,0}$ and $\epsilon>0$ there exist a distributional embedding scheme $\Psi$ of $R_\alpha^{p,0}$ into itself and a scalar FDD-diagonal bounded linear operator $D:R_\alpha^{p,0}\to R_\alpha^{p,0}$ such that
\[\big\|T_\Psi^\dagger TT_\Psi - D\big\|<\epsilon.\]
Furthermore, the entries $(d_\lambda)_{\lambda\in\mathcal{T}_\alpha}$ of $D$ are averages of the diagonal entries of the matrix representation of $T$ with respect to the standard MDS basis of the space.
\end{thm*}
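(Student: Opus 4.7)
\emph{Overview.} My plan is to construct $\Psi$ and $D$ by means of a finite-level truncation of $T$ followed by a block-by-block distributional averaging, organized along the recursive tree $\mathcal{T}_\alpha$. The identity
\[
(T_\Psi^\dagger T T_\Psi)\big|_{X_\lambda} = \sum_{\mu,\mu'\in\mathcal{M}_\lambda} (T_\mu J_\mu T_\lambda^{-1})^\dagger\, T\, (T_{\mu'} J_{\mu'} T_\lambda^{-1})
\]
suggests a natural split into diagonal pairs $\mu=\mu'$, which will supply the scalar $d_\lambda$, and off-diagonal pairs $\mu\neq\mu'$, which must be made negligible by a judicious choice of the antichain $\mathcal{M}_\lambda$.

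\emph{Truncation and construction of $\Psi$.} By the unconditionality of the FDD from Section \ref{fdd section}, I would first replace $T$ by $T_N = P_N T P_N$, where $P_N$ projects onto the span of finitely many FDD blocks, making $\|T-T_N\|$ smaller than $\epsilon/(3K)$ for a constant $K$ that uniformly bounds $\|T_\Psi\|\cdot\|T_\Psi^\dagger\|$ over all distributional embedding schemes; this uniform bound is provided by the results of Sections \ref{DES section} and \ref{dist repre subsection}. Next, for each $\lambda\in\mathcal{T}_\alpha$, I would select an antichain $\mathcal{M}_\lambda$ in $\mathcal{T}_\alpha$ whose associated compressions $(T_\mu)_{\mu\in\mathcal{M}_\lambda}$ have pairwise disjointly supported ranges and whose embeddings $J_\mu$ produce distributional copies of $X_\lambda$. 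The antichains are built top-down to respect the tree order, invoking the transfinite framework of Section \ref{DES section} so that $\Psi = (\mathcal{M}_\lambda,(J_\mu)_{\mu\in\mathcal{M}_\lambda})_{\lambda\in\mathcal{T}_\alpha}$ is a genuine distributional embedding scheme and $T_\Psi$ is therefore a bounded distributional embedding.

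\emph{Averaging and off-diagonal decay.} For a diagonal pair, the operator $(T_\mu J_\mu T_\lambda^{-1})^\dagger T_N (T_\mu J_\mu T_\lambda^{-1})$ acts on the standard MDS basis of $X_\lambda$ by reading off MDS diagonal entries of $T_N$ at the spread copy indexed by $\mu$. After the appropriate normalization, summing over $\mu\in\mathcal{M}_\lambda$ produces a scalar $d_\lambda$ times the identity on $X_\lambda$, equal to the arithmetic mean of the involved MDS diagonal entries of $T$ up to the truncation error, which verifies the averaging claim in the statement. For off-diagonal pairs, $(T_\mu J_\mu T_\lambda^{-1})^\dagger T_N (T_{\mu'} J_{\mu'} T_\lambda^{-1})$ picks up matrix entries of $T_N$ between distributionally independent spread regions; the finite support of $T_N$ limits how many such pairs contribute per $\lambda$, and the Bourgain-Rosenthal-Schechtman type estimates for sums over distributionally independent copies, combined with the unconditionality of the FDD, force the off-diagonal contribution below $\epsilon/3$ once each $\mathcal{M}_\lambda$ is taken sufficiently large and dispersed.

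\emph{Main obstacle.} The principal difficulty lies in coordinating the countably many choices $(\mathcal{M}_\lambda)_{\lambda\in\mathcal{T}_\alpha}$ globally: the scheme condition of Section \ref{DES section} must be preserved, a uniform norm bound on $T_\Psi$ must be maintained regardless of the antichain sizes, and the off-diagonal errors at the countably many levels $\lambda$ must accumulate to at most $\epsilon/3$. The limit-ordinal hypothesis is essential because the FDD at $\alpha$ is assembled from independent FDDs of $R_\beta^{p,0}$ for $\beta<\alpha$, so the construction must be performed simultaneously within every independent summand in a way that is coherent with the tree order of $\mathcal{T}_\alpha$. This balance between uniform norm control, distributional independence, and combinatorial accumulation along the tree is the technical heart of the proof.
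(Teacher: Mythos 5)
Your proposal correctly identifies the overall shape of the argument — build a distributional embedding scheme, split $T_\Psi^\dagger T T_\Psi$ into a scalar-diagonal part and an error, and obtain the entries $d_\lambda$ by averaging — but it has two genuine gaps that are precisely where the paper's technical work lies.

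First, your opening identity and the subsequent discussion treat only the $\lambda$-to-$\lambda$ block of $T_\Psi^\dagger T T_\Psi$. The operator $T_\Psi^\dagger T T_\Psi$ also has cross-level blocks of the form $P_{\lambda'} T_\Psi^\dagger T T_\Psi P_\lambda$ with $\lambda' \neq \lambda$; these are the quantities $\bigl\|\bigl(\sum_{m<n} E_m\bigr) T E_n\bigr\|$ and $\bigl\|E_n T \bigl(\sum_{m<n} E_m\bigr)\bigr\|$ in the paper's proof, and they do not vanish for a generic scheme. Your proposal never addresses them. They must be made small, and the paper does this by enumerating $\mathcal{T}_\alpha$ order-compatibly and, at step $n$, choosing $\mathcal{M}_{\lambda_n}$ and $(J_\mu)_{\mu\in\mathcal{M}_{\lambda_n}}$ so that the resulting projection $E_n$ is nearly annihilated by the already-built finite-rank operators $Q_1 = T\bigl(\sum_{m<n}E_m\bigr)$ and $Q_2 = \bigl(\sum_{m<n}E_m\bigr)T$.

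Second, and more fundamentally, you never specify \emph{how} the distributional embeddings $J_\mu$ are chosen, and this is the technical heart of the theorem. Saying that $\mathcal{M}_\lambda$ should be "sufficiently large and dispersed" does not suffice: enlarging $\mathcal{M}_\lambda$ increases the number of cross-pairs $\mu \neq \mu'$, and for an arbitrary bounded operator $T$ these cross-terms have no reason to be small, regardless of independence of the supports. There is also no Rosenthal/BRS-type estimate in the paper that controls such bilinear cross-terms for free. What actually makes them small is a probabilistic construction: the embeddings $J_\mu$ are built by choosing random signs on deep Haar levels, and Lemma \ref{f.d. diagonalization 2.0} together with Proposition \ref{crucial lemma for the FDD-reduction 2.0} uses variance bounds and Chebyshev's inequality to find a realization $\vartheta$ for which all the required bilinear forms are simultaneously small — including the within-block cross-terms $\mu\neq\mu'$, the closeness of the diagonal entries to their average, and the smallness of $E_n Q_1$, $Q_2 E_n$. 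The "sufficiently large" parameter is the depth $\kappa_{n+1}$ of the Haar levels (which shrinks the variances), not the cardinality of $\mathcal{M}_\lambda$. Replacing this random-sign mechanism by a finite truncation $T_N$ and an appeal to unconditionality does not produce the required estimates, and without it the proof does not close.

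Your observation that $\|T_\Psi\|\cdot\|T_\Psi^\dagger\|$ is uniformly bounded over all schemes is correct (via Proposition \ref{distro copy of orthocomplemented is orthocomplemented and others}) and is indeed used implicitly; but the paper avoids your global truncation-then-fix-errors strategy in favor of a step-by-step geometric error budget $\epsilon/(3\cdot 2^n)$, which sidesteps the delicate interplay between the truncation level $N$ and the subsequently chosen scheme.
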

As an easy conclusion, we obtain that the limit spaces $R_\alpha^{p,0}$ with their standard MDS bases satisfy the factorization property. The proof of this theorem rests on an inductive construction of the distributional embedding scheme $\Psi$, i.e., a distributional ``reproduction'' of the FDD $(X_\lambda)_{\lambda\in\mathcal{T}_\alpha}$. We enumerate $\mathcal{T}_\alpha$ as a sequence $(\lambda_n)_{n=1}^\infty$ compatible with the tree structure, and at the $n$'th step we perform a probabilistic argument with components due to Lechner (\cite{lechner:2018:1-d}) and due to Lechner, M\"uller, Schlumprecht, and the second author (\cite{lechner:motakis:mueller:schlumprecht:2022} and \cite{lechner:motakis:mueller:schlumprecht:2023}) to choose appropriate $(\mathcal{M}_{\lambda_n},(J_\mu)_{\mu\in\mathcal{M}_{\lambda_n}})$ and $d_{\lambda_n}$. The fact that $d_{\lambda_n}$ is an average of the diagonal entries of $T$ is a regularity feature of the construction, suggesting that it is, in a certain sense, canonical and of low complexity. Similar probabilistic techniques appear in other papers, e.g., \cite{speckhofer:2025} and \cite{konstantos:speckhofer:2025}, but our argument is closer to \cite{konstantos:motakis:2025}. In a broader historical context, the reduction of a factorization problem on an infinite-dimensional Banach space to finite-dimensional pieces, often referred to as the localization method, goes back to Bourgain \cite{bourgain:1983}. The question of whether a cofinal subset of the $R_\alpha^{p,0}$ spaces satisfies the primary factorization property remains open. Nevertheless, some of the tools developed in Sections \ref{DES section}, \ref{dist repre subsection}, and \ref{reduction to FDD} are stated in a more general form than is necessary for the factorization property, so as to allow for a potential proof of the primary factorization property for the spaces $R_{\omega^\alpha}^{p,0}$, $1\leq \alpha<\omega_1$.

\section{Notation and concepts} 

In this section we introduce the standard terminology and notation used throughout the paper. We begin with basic probability theory and Banach space concepts and recall key notions from the theory of operator factorizations. We then outline the framework of orthogonally complemented subspaces and orthogonal factorizations in subspaces of $L_p$, and conclude by recalling the notation for the Haar system.

Given a probability space $(\Omega, \mathcal{A}, \mu)$, the characteristic function of a set $A \subset \Omega$ is denoted by $\chi_{A} : \Omega \to \{0,1\}$. Whenever we work within a probability space, all set relations are understood in the essential sense; for instance, $A \subset B$ means that $\mu(B \setminus A) = 0$. Given a $\sigma$-subalgebra $\mathcal{B}$ of $\mathcal{A}$, the conditional expectation of an integrable function $f : \Omega \to \mathbb{R}$ with respect to $\mathcal{B}$ is denoted by $\mathbb{E}(f | \mathcal{B})$. The support of $f$ is the set $\mathrm{supp}(f) = [f \neq 0]$. If $\Omega$ is endowed with a topology, its Borel $\sigma$-algebra is denoted $\mathcal{B}(\Omega)$. The Lebesgue measure of a measurable subset $A$ of $\mathbb{R}$ is denoted $|A|$. Throughout the paper, we fix $1 < p < \infty$ and denote by $q$ its conjugate exponent. An $L_p$ space refers to a space of the form $L_p(\Omega,\mathcal A,\mu)$ for some probability space $(\Omega,\mathcal A,\mu)$; for brevity we write $L_p(\mu)$. When the underlying probability space is the unit interval $[0,1]$ with its Borel $\sigma$-algebra and the Lebesgue measure, we simply write $L_p$. When multiple such spaces are considered simultaneously, we may denote them, for instance, as $L_{p}(\mu)$, $L_{p}(\nu)$, or $L_{p}(\xi)$. For $f \in L_{p}(\mu)$ and $g \in L_{q}(\mu)$, we write $\langle f, g \rangle = \int f g \, d\mu$.

For a subset $S$ of a linear (respectively, normed) space, we denote by $\langle S \rangle$ (respectively, $[S]$) its linear (respectively, closed linear) span. We write $\mathbb{N} = \{1, 2, \dots\}$ for the set of positive integers and $\mathbb{N}_0 = \{0\} \cup \N$. A sequence $(e_{n})_{n=1}^{\infty}$ in a Banach space $X$ is a Schauder basis if every $x \in X$ admits a unique representation $x = \sum_{n=1}^{\infty} a_{n} e_{n}$ for some scalar sequence $(a_{n})_{n=1}^{\infty}$. The corresponding sequence $(e_{n}^{*})_{n=1}^{\infty}$ in $X^{*}$, defined by $e_{n}^{*}(\sum_{m=1}^{\infty} a_{m} e_{m}) = a_{n}$, is called its biorthogonal sequence. A Schauder basis $(e_{n})$ is said to be $C$-unconditional for some $C \geq 1$ if, for all scalars $a_{1}, \ldots, a_{n}$ and $b_{1}, \ldots, b_{n}$ satisfying $|a_{k}| \le |b_{k}|$ for $1 \le k \le n$,  
\[\Big\| \sum_{k=1}^{n} a_{k} e_{k} \Big\| \leq C \Big\| \sum_{k=1}^{n} b_{k} e_{k} \Big\|.\]
A basis that is $C$-unconditional for some $C \geq 1$ is simply called unconditional. A Schauder decomposition of a Banach space $X$ is a sequence of closed subspaces $(E_{n})_{n=1}^{\infty}$ such that every $x \in X$ can be uniquely written as $x = \sum_{n=1}^{\infty} x_{n}$ with $x_{n} \in E_{n}$. A Schauder decomposition induces a sequence of bounded linear projections $(P_n)_{n=1}^\infty$ defined by $P_{n}(\sum_{m=1}^{\infty} x_m) = x_{n}$. If each $E_{n}$ is finite-dimensional, $(E_{n})$ is called a finite-dimensional decomposition (FDD). If every sequence of non-zero vectors $(x_{n})_{n=1}^{\infty}$ with $x_{n} \in E_{n}$ forms an unconditional basis of its closed linear span, the decomposition is called an unconditional Schauder decomposition. 

Let $X$ be a Banach space with a Schauder basis $(e_n)_{n=1}^\infty$. We identify a bounded linear operator $T : X \to X$ with its matrix representation $(e_m^*(T e_n))_{m,n=1}^\infty$. The operator $T$ is said to be diagonal with respect to $(e_n)_{n=1}^\infty$ (or simply diagonal when the basis is understood) if $e_m^*(T e_n) = 0$ for all $m \ne n$. Equivalently, each $e_n$ is an eigenvector of $T$ corresponding to some eigenvalue $\lambda_n$. Likewise, if $X$ admits an FDD $(E_n)_{n=1}^\infty$, we call a bounded linear operator $T : X \to X$ diagonal with respect to $(E_n)_{n=1}^\infty$ (or FDD-diagonal when the decomposition is understood) if $P_m T P_n = 0$ for all $m \ne n$. If each $E_n$ is an eigenspace of $T$ corresponding to some eigenvalue $\lambda_n$, we say that $T$ is scalar diagonal with respect to $(E_n)_{n=1}^\infty$ (or simply scalar FDD-diagonal when the FDD is understood) and we call $(\lambda_n)_{n=1}^\infty$ the entries of $T$.

\subsection{Factorization of operators}
We recall the notions of factor and projectional factor of an operator, note their distinction, and prepare for the discussion of orthogonal factors in the next subsection. We also recall the factorization property for Banach spaces with a Schauder basis.

\begin{dfn}
Let $X$ be a Banach space, $T,S:X\to X$ be bounded linear operators, and $K>0$.
\begin{enumerate}[label=(\alph*)]
    
    \item We call $T$ a $K$-factor of $S$ if there exist bounded linear operators $A,B:X\to X$ such that
    \[S = ATB\text{ and }\|A\|\|B\|\leq K.\]
    When we do not need to be specific about the constant $K$, then we call $T$ a factor of $S$.

    \item We call $T$ a $K$-projectional factor of $S$ is there exist bounded linear operators $A,B:X\to X$ such that
    \[S = ATB,\;\|A\|\|B\|\leq K,\text{ and }AB = id:X\to X.\]
    When we do not need to be specific about the constant $K$, then we call $T$ a projectional factor of $S$.
    
\end{enumerate}
\end{dfn}

\begin{rem}
An operator $T:X\to X$ is a factor of the identity operator $id:X\to X$ if and only if there exist projections $P:X\to Y$ and $Q:X\to Z$, with images isomorphic to $X$, such that $QT|_Y:Y\to Z$ is an isomorphism. By contrast, $T$ is a projectional factor of the identity operator precisely when there exists a projection $P:X\to Y$, with image isomorphic to $X$, such that $PT|_Y=id:Y\to Y$. More generally, if $T$ is a projectional factor of $S$, then there exists a decomposition $X = Y\oplus Z$ such that $X\simeq Y$ and the $2\times 2$ matrix representation of $T$ with respect to this decomposition contains a matrix similar to $S$ in the upper left entry. Thus, $S$ may be viewed as a ``simpler'' operator than $T$, with $T$ effectively ``reduced'' to $S$. This highlights why the latter property is strictly stronger. Projectional factorizations enjoy stronger properties and are employed in stepwise reductions that lead to clean factorization results often related to decompositions of Banach spaces (see, e.g., \cite{lechner:motakis:mueller:schlumprecht:2022, lechner:motakis:mueller:schlumprecht:2023}).  

In this paper we impose additional structure, further tightening the notion of a projectional factor: the decomposition $X=Y\oplus Z$ must be orthogonal, and the similarity between $T$ and $S$ is witnessed by a distributional isomorphism between $X$ and $Y$. These notions are well defined for subspaces of $L_p$-spaces. This refinement grants additional properties to the factorization, notably the automatic continuity of certain operators. One of our main results is that every bounded linear operator on a limit space $R_\alpha^{p,0}$ ``orthogonally reduces'' to a scalar FDD-diagonal operator. The necessary, though elementary, theory to define orthogonal factorizations is developed in Section~\ref{orthocomplemented section}.
\end{rem}

The factorization property, formally introduced in \cite{laustsen:lechner:mueller:2015}, has driven the study of reproducing properties of bases in Banach spaces and has led to further developments in the theory of $L_p$ and other classical Banach spaces (see, e.g., \cite{lechner:motakis:mueller:schlumprecht:2021}, \cite{lechner:motakis:mueller:schlumprecht:2022}, \cite{lechner:motakis:mueller:schlumprecht:2023}, \cite{lechner:speckhofer:2025}, \cite{konstantos:motakis:2025}). A corollary of our main theorem is the factorization property of the standard basis of the limit $R_\alpha^{p,0}$ spaces.

\begin{dfn} \label{def of fact property} Let $X$ be a Banach space with a basis $(e_{n})_{n=1}^{\infty}$. We say that the basis $(e_{n})_{n=1}^{\infty}$ of the space $X$ has the factorization property if every operator $T \colon X \to X$ with large diagonal with respect to the basis $(e_{n})_{n=1}^{\infty}$ of $X$, that is for some $\delta>0$, we have $\inf_{n\in\mathbb{N}}|e_n^*(Te_n)| \geq\delta$, is a factor of the identity operator $id$ on $X$.
\end{dfn}

\subsection{Orthogonally complemented spaces and orthogonal factors}
\label{orthocomplemented section}

A subspace of a space $L_p(\mu)$ is called orthogonally complemented if it is the image of a bounded linear projection stemming from an orthogonal projection on $L_2(\mu)$. To our knowledge, this concept was initially presented in \cite{alspach:1999} for $p>2$, but it found broader application in \cite{konstantos:motakis:2025}, where it was used to create the suitable framework for examining the factorization properties of operators on the Rosenthal space, $X_p$, and the first infinite-dimensional Bourgain-Rosenthal-Schechtman space, $R_\omega^p$. The key term of orthogonal factors of operators is primarily modeled on projectional factors from \cite{lechner:motakis:mueller:schlumprecht:2022}. We revisit the concepts outlined in \cite{konstantos:motakis:2025} with some extra level of detail. While the material discussed here is rather elementary compared to the remainder of the paper, we rely on it to factorize operators.

\begin{dfn}
\label{def of a pc projection and orthog comple space}
Consider the space $L_p(\mu)$ and let $C\geq 1$.
\begin{enumerate}[label=(\alph*)]
    
    \item An orthogonal projection $P:L_2(\mu)\to L_2(\mu)$ is called $p$-$C$-bounded if, for every $f\in L_2(\mu)\cap L_p(\mu)$, $\|Pf\|_p\leq C\|f\|_p$.

    \item A subspace $X$ of $L_p(\mu)$ is called $C$-orthogonally complemented if there exists a $p$-$C$-bounded orthogonal projection $P:L_2(\mu)\to L_2(\mu)$ such that $X = \overline{P(L_2(\mu)\cap L_p(\mu))}^{\|\cdot\|_p}$.

\end{enumerate}
\end{dfn}

\begin{ntt}
Consider the space $L_p(\mu)$, $C\geq 1$, let $P:L_2(\mu)\to L_2(\mu)$ be a $p$-$C$-bounded orthogonal projection, and denote $X = \overline{P(L_2(\mu)\cap L_p(\mu))}^{\|\cdot\|_p}$.

\begin{enumerate}[label=(\alph*)]

\item By the density of $L_2(\mu)\cap L_p(\mu)$ in $L_p(\mu)$, there exists a unique bounded linear projection
\[P_X:L_p(\mu)\to L_p(\mu)\]
such that $P_X|_{L_2(\mu)\cap L_p(\mu)} = P|_{L_2(\mu)\cap L_p(\mu)}$. Furthermore, $P_X(L_p(\mu)) =X$ and $\|P_X\|\leq C$.

\item The self-adjointness of $P$ implies that it is $q$-$C$-bounded. We denote
\[X' = \overline{P(L_2(\mu)\cap L_q(\mu))}^{\|\cdot\|_q}\]
and $P_{X'}:L_q(\mu)\to L_q(\mu)$ the projection onto $X'$ stemming from $P$.

\end{enumerate}
\end{ntt}

\begin{rem}
In the context of the above notation, the following hold.
\begin{enumerate}[label=(\alph*)]
    
    \item The projection $P_{X'}:L_q(\mu)\to L_q(\mu)$ is the Banach space adjoint of $P_X:L_p(\mu)\to L_p(\mu)$. Consequently, the space $X'$ is $C$-isomorphic to $X^*$ via the linear bijection defined by $f\mapsto \langle f,\cdot\rangle$, which satisfies $C^{-1}\|f\|_q \leq \|f\|_{X^*}\leq \|f\|_q$.

    \item Because the intersection $X\cap X'$ contains $P(L_\infty(\mu))$, it is dense in both $X$ and $X'$ with respect to the norms $\|\cdot\|_p$ and $\|\cdot\|_q$, respectively.

\end{enumerate}
\end{rem}

Recall that a sequence $(f_n)_{n=1}^\infty$ in a space $L_p(\mu)$ is called a martingale difference sequence if, for all $n\geq 2$, $\mathbb E(f_n|f_1,\ldots,f_{n-1}) = 0$. When all its terms are non-zero, by the non-expansiveness of the conditional expectation operator, $(f_n)_{n=1}^\infty$ is a monotone Schauder basic sequence. Moreover, by the Burkholder inequality (\cite[Theorem  9]{burkholder:1966}), it is unconditional.

\begin{exa}
\label{mds orthocomplemented}
Consider the martingale difference sequence $(f_n)_{n=1}^\infty$ with non-zero terms in a space $L_2(\mu)$.
\begin{enumerate}[label=(\alph*)]
    
    \item The sequence $(\|f_n\|_2^{-1}f_n)_{n=1}^\infty$ is biorthogonal in $L_2(\mu)$. It yields the orthogonal projection
    \[Pf = \sum_{n=1}^\infty \|f_n\|_2^{-2}\langle f_n,f\rangle f_n.\]
    
\end{enumerate}
Assume further that $(f_n)_{n=1}^\infty$ is symmetric and $\{-1,0,1\}$-valued and let $X = [(f_n)_{n=1}^\infty]$ in $L_p(\mu)$.
\begin{enumerate}[resume,label=(\alph*)]

    \item For all $n\in\N$, $\|f_n\|_p\|f_n\|_q = \|f_n\|_2^2$ (see \cite[page 280]{rosenthal:1970:Xp}).
    
    \item For $C\geq 1$, $P$ is $p$-$C$-bounded, and thus $X$ is $C$-orthogonally complemented, if and only if $(\|f_n\|_q^{-1}f_n)_{n=1}^\infty$ is $C$-equivalent to the biorthogonal sequence of $(\|f_n\|_p^{-1}f_n)_{n=1}^\infty$.
    
    \item If the latter is indeed the case, then
    \[P_Xf = \sum_{n=1}^\infty\|f_n\|_q^{-1}\langle f_n,f\rangle \|f_n\|_p^{-1}f_n,\]
    where the above series convergences with respect to $\|\cdot\|_p$, and $X' = [(f_n)_{n=1}^\infty]$ in $L_q(\mu)$.
    
\end{enumerate}
Concrete such examples in $L_p$ are the closed linear span of the radermacher sequence (by the Khintchine inequalities) or any subsequence of the standard Haar basis (by unconditionality), and the $R_\alpha^p$ spaces, as proved in \cite{bourgain:rosenthal:schechtman:1981}.
\end{exa}

Recall that for a real-valued random variable $f$ on a probability space $(\Omega,\mathcal{A},\mu)$, the distribution of $f$, denoted $\mathrm{dist}(f)$, is the probability measure on the Borel sets of $\mathbb{R}$ defined by $(\mathrm{dist}(f))(A) = \mu([f\in A])$.

\begin{dfn}
Let $X$, $Y$ be subspaces of some $L_p$-spaces. A linear operator $T:X\to Y$ is called a distributional embedding if, for all $f\in X$, $\mathrm{dist}(Tf) = \mathrm{dist}(f)$. If $T$ is additionally onto then it is called a distributional isomorphism, and we write $X\equiv^\mathrm{dist} Y$.
\end{dfn}

A distributional embedding is an isometry, but it is much more than that. When the domain of a distributional embedding is not closed, its extension to the closure remains a distributional embedding. This can be proved through the characteristic function of random variables.

\begin{exa}
\label{example for compression part a }
If $\mathcal{A}$ is a $\sigma$-subalgebra of $\mathcal{B}{[0,1]}$ and $\varphi \colon ([0,1], \mathcal{A}) \to ([0,1], \mathcal{B}([0,1]))$ is measurable and measure-preserving, then the Koopman operator of $\varphi$, $T_{\varphi} \colon L_{p} \to L_{p}$ defined by $T_{\varphi} (f) = f \circ \varphi$ is a distributional embedding. 
\end{exa}

\begin{ntt}
\label{formal adjoint}
Consider the spaces $L_p(\mu)$, $L_p(\nu)$, a $C$-orthogonally complemented subspace $X$ of $L_p(\mu)$, and a distributional embedding $T:X\to L_p(\nu)$. We construct the formal adjoint of $T$,
\[T^\dagger:L_p(\nu)\to X,\]
as follows. Initially, we extend $T|_{X\cap X'}$ to a distributional embedding $T:X'\to L_q(\nu)$, which by slightly abusing notation, we also denoted $T$. We then identify $L_q(\nu)^*$ with $L_p(\nu)$ and $X^*$ with $X'$ and let $T^\dagger$ be the Banach-space adjoint of $T:X'\to L_q(\nu)$. Because the identification $X'\simeq X^*$ is not isometric, $\|T^\dagger\|\leq C$.
\end{ntt}

\begin{rem}
By initiating with the operator $T:X'\to L_q(\nu)$, we analogously construct its formal adjoint $T^\dagger:L_q(\nu)\to X'$. By definition, for all $f\in L_p(\nu),g\in X',u\in X,v\in L_q(\nu)$, considering the operators $T$ and $T^\dagger$ with appropriate domains,
    \[\langle g,T^\dagger f\rangle = \langle Tg,f\rangle\text{ and }\langle u,T^\dagger v\rangle = \langle Tu,v\rangle.\]
    In particular, the two distinct versions of $T^\dagger$ coincide on $L_p(\nu)\cap L_q(\nu)$, and thus
    \[T^\dagger(L_p(\nu)\cap L_q(\nu))\subset X\cap X'.\]
\end{rem}

\begin{prop}
\label{distro copy of orthocomplemented is orthocomplemented and others}
Consider the spaces $L_p(\mu)$ and $L_p(\nu)$. Let $X$ be a $C$-orthogonally complemented subspace of $L_p(\mu)$ and $T:X\to L_p(\nu)$ be a distributional embedding. Denoting $T^\dagger:L_p(\nu)\to X$ its formal adjoint, the following hold.
\begin{enumerate}[label=(\alph*)]
   
    \item\label{distro copy of orthocomplemented is orthocomplemented and others a} $T^\dagger T = id:X\to X$

    \item\label{distro copy of orthocomplemented is orthocomplemented and others b} $T T^\dagger:L_p(\nu)\to L_p(\nu)$ is a linear projection of norm at most $C$ onto $T(X)$ witnessing the $C$-orthogonal complementation of $T(X)$ in $L_p(\nu)$.
    
\end{enumerate}
In particular, $C$-orthogonal complementability is preserved under distributional isomorphisms.
\end{prop}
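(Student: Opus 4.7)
The plan hinges on an inner-product identity enabled by the distributional-embedding hypothesis. Since $\mu$ is a probability measure, $X\cap X'$ sits inside $L_2(\mu)$, and the construction of $T^\dagger$ produces an extension $T:X'\to L_q(\nu)$ that sends $X\cap X'$ into $L_p(\nu)\cap L_q(\nu)\subset L_2(\nu)$. For $u,v\in X\cap X'$, applying the distributional-embedding property of $T$ separately to $u+v$, $u$, and $v$ yields the three $L_2$-norm equalities $\|T(u+v)\|_2^2=\|u+v\|_2^2$, $\|Tu\|_2^2=\|u\|_2^2$, $\|Tv\|_2^2=\|v\|_2^2$; subtracting gives
\[\langle Tu,Tv\rangle_\nu=\langle u,v\rangle_\mu.\]
This ``polarization'' on $X\cap X'$ is the engine of the whole argument.

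For \ref{distro copy of orthocomplemented is orthocomplemented and others a}, I would fix $x\in X\cap X'$ and, for every $y\in X\cap X'$, use the defining adjoint relation together with the identity above to obtain
\[\langle y,T^\dagger T x\rangle=\langle Ty,Tx\rangle_\nu=\langle y,x\rangle.\]
Thus $T^\dagger T x-x\in X$ is annihilated under the duality pairing by a $\|\cdot\|_q$-dense subset of $X'$; since the identification $X'\simeq X^*$ is an isomorphism, this forces $T^\dagger T x=x$. The boundedness of $T^\dagger T$ together with the density of $X\cap X'$ in $X$ then extends this to $T^\dagger T=\mathrm{id}_X$.

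For \ref{distro copy of orthocomplemented is orthocomplemented and others b}, the algebraic claims are immediate from part \ref{distro copy of orthocomplemented is orthocomplemented and others a}: one computes $(TT^\dagger)^2=T(T^\dagger T)T^\dagger=TT^\dagger$, so $TT^\dagger$ is idempotent; $\|TT^\dagger\|\leq\|T\|\|T^\dagger\|\leq C$; and $TT^\dagger(Tx)=T(T^\dagger Tx)=Tx$ identifies the image as $T(X)$. The main obstacle is producing an orthogonal projection on $L_2(\nu)$ that witnesses the $C$-orthogonal complementation of $T(X)$. On the dense subspace $L_p(\nu)\cap L_q(\nu)\subset L_2(\nu)$, the polarization identity upgrades to the symmetry $\langle TT^\dagger f,g\rangle=\langle T^\dagger f,T^\dagger g\rangle=\langle f,TT^\dagger g\rangle$, so $TT^\dagger$ is formally self-adjoint there. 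Since $T^\dagger f\in X\cap X'$ and $T$ is an $L_2$-isometry on $X\cap X'$, I have $\|TT^\dagger f\|_2=\|T^\dagger f\|_2$, and combining this with self-adjointness via $\|T^\dagger f\|_2^2=\langle TT^\dagger f,f\rangle\leq \|TT^\dagger f\|_2\|f\|_2$ gives $\|T^\dagger f\|_2\leq\|f\|_2$. Hence $TT^\dagger$ is a contractive, self-adjoint, idempotent operator on a dense subset of $L_2(\nu)$; extending by density yields the orthogonal projection $Q:L_2(\nu)\to L_2(\nu)$. The bound $\|Qf\|_p\leq C\|f\|_p$ on $L_p(\nu)\cap L_2(\nu)$ and the identification $T(X)=\overline{Q(L_2(\nu)\cap L_p(\nu))}^{\|\cdot\|_p}$ transfer from the corresponding facts for $TT^\dagger$ on $L_p(\nu)$, completing the witness.

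The final assertion is then immediate: applied to a distributional isomorphism $T:X\to Y\subset L_p(\nu)$, part \ref{distro copy of orthocomplemented is orthocomplemented and others b} shows that $Y=T(X)$ inherits $C$-orthogonal complementation via the projection $TT^\dagger$.
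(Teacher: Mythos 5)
Your proof is correct and follows essentially the same route as the paper: the key isometric identity $\langle Tu,Tv\rangle=\langle u,v\rangle$ on $X\cap X'$, the adjoint relation giving $T^\dagger T=\mathrm{id}_X$, and the chain $\|TT^\dagger f\|_2^2=\|T^\dagger f\|_2^2=\langle TT^\dagger f,f\rangle\le\|TT^\dagger f\|_2\|f\|_2$ to show $TT^\dagger$ is $L_2$-contractive on a dense subspace. The only notable divergence is at the last step: you verify self-adjointness of $TT^\dagger$ explicitly via the adjoint relation (note this is the adjoint relation doing the work, not polarization as you label it), whereas the paper skips this and instead cites the Hilbert-space fact that a contractive idempotent is automatically an orthogonal projection (Conway); your version is slightly more self-contained but otherwise equivalent. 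A small bonus: you correctly work on the dense subspace $L_p(\nu)\cap L_q(\nu)$ throughout, avoiding the paper's mild imprecision in writing $L_2(\nu)\cap L_p(\nu)=L_q(\nu)\cap L_p(\nu)$, which fails when $p<2$.
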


\begin{proof}
The first statement is shown as follows: for $f\in X$, $g\in X'$, we have $\langle g,f\rangle = \langle Tg,Tf\rangle = \langle g, T^\dagger Tf\rangle$. Therefore, $T^\dagger T=id:X\to X$. This yields that $TT^\dagger$ is a projection onto $T(X)$. It remains to show that $TT^\dagger|_{L_2(\nu)\cap L_p(\nu)}$ has norm one with respect to $\|\cdot\|_2$, because in Hilbert spaces norm-one projections are automatically orthogonal (see, e.g., \cite[Chapter 3 \S III, Proposition 3.3]{conway:1990}). For $f\in L_2(\nu)\cap L_p(\nu) = L_q(\nu)\cap L_p(\nu)$, we have $T^\dagger f\in X\cap X'\subset L_2(\mu)$, and therefore,
\[\|TT^\dagger f\|_2^2 = \|T^\dagger f\|_2^2 = \langle T^\dagger f,T^\dagger f\rangle = \langle f,TT^\dagger f\rangle\leq \|f\|_2\|TT^\dagger f\|_2.\]
We deduce $\|TT^\dagger f\|_2\leq \|f\|_2$.
\end{proof}

\begin{exa} \label{Eaxmple that gives formula of projection}
Consider the symmetric $\{-1,0,1\}$-valued martingale difference sequence $(f_n)_{n=1}^\infty$ with non-zero terms in a space $L_p(\mu)$ and assume that $X = [(f_n)_{n=1}^\infty]$ is $C$-orthogonally complemented. Let $T:X\to L_p(\nu)$ be a distributional embedding into some space $L_p(\nu)$ and, for $n\in\mathbb N$, denote $b_n = Tf_n$. Then, for all $f\in X$ and $g\in L_p(\nu)$,
\begin{align*}
Tf = \sum_{n=1}^\infty \|f_n\|_q^{-1}\langle f_n,f\rangle \|f_n\|_p^{-1}b_n\text{ and }
T^\dagger g = \sum_{n=1}^\infty \|f_n\|_q^{-1}\langle b_n,g\rangle \|f_n\|_p^{-1}f_n.
\end{align*}
In particular,
\[TT^\dagger g = \sum_{n=1}^\infty \|f_n\|_q^{-1}\langle b_n,g\rangle \|f_n\|_p^{-1}b_n\]
witnesses the $C$-orthogonal complementation on $T(X)$.
\end{exa}

\begin{rem}
\label{formal adjoint anti-automorphism}
The formal adjoint acts as an anti-automorphism. That is, if $X$ is $C$-orthogonally complemented in $L_p(\mu)$ and $T:X\to L_p(\nu)$, $S:T[X]\to L_p(\xi)$ are distributional embeddings, we have $(ST)^\dagger = T^\dagger S^\dagger$.
\end{rem}

With the necessary theory in place, we give the definition of approximate orthogonal factors of operators that is used in our main theorem.

\begin{dfn} \label{defin of orthog factor}
Let $X$, $Y$ be subspaces of some spaces $L_p(\mu)$ and $L_p(\nu)$, respectively, and assume that $Y$ is a $C$-orthogonally complemented. Let $R \colon X \to X$ and $S \colon Y \to Y$ be bounded linear operators. For $\epsilon>0$, we say that $R$ is an orthogonal factor of $S$ with error $\epsilon$ if there exists a distributional embedding $T \colon Y \to  L_p(\mu)$ with $T(Y)\subset X$ such that
\[\| T^\dagger R T -S \| \leq \epsilon.\]
If the above holds for $\epsilon = 0$, then we call $R$ an orthogonal factor of $S$.
\end{dfn}

When $R$ is an orthogonal factor of $S$, this should be understood as a reduction of $R$ to $S$, which in applications is typically the simpler operator. The following remark clarifies that such reductions can be iterated while retaining control over the error at each step.

\begin{rem}
It follows from Remark \ref{formal adjoint anti-automorphism} that being an approximate orthogonal factor is a transitive property, which means the following: Let $X$, $Y$, and $Z$ be subspaces of some $L_p$-spaces, and assume that $Y$ and $Z$ are $C$ and $D$-orthogonally complemented. Let $R:X\to X$, $S:Y\to Y$, and $U:Z\to Z$ be bounded linear operators, and assume that $R$ is an orthogonal factor of $S$ with error $\epsilon$ and $S$ is an orthogonal factor of $U$ with error $\delta$. Then $R$ is an orthogonal factor of $U$ with error $D\epsilon+\delta$.
\end{rem}

\begin{rem}
By Proposition \ref{distro copy of orthocomplemented is orthocomplemented and others} \ref{distro copy of orthocomplemented is orthocomplemented and others a}, being an orthogonal factor showcases symmetry around the identity operator, which means the following: Let $X$, $Y$ be subspaces of some $L_p$-spaces, and assume that $Y$ is $C$-orthogonally complemented. Let $R \colon X \to X$ and $S \colon Y \to Y$ be bounded linear operators. If $R$ is an orthogonal factor of $S$ with error $\epsilon$, then $id - R:X\to X$ is an orthogonal factor of $id - S:Y\to Y$ with the same error $\epsilon$.
\end{rem}

We state a general result on the factorization property of orthogonally complemented symmetric three-valued martingale difference sequences, formulated so as to apply in our context. In view of Theorem \ref{reduction to FDD diagonal}, it yields the factorization property of the standard basis of the limit spaces $R_\alpha^{p,0}$, that is, Theorem \ref{fact property of BRS spaces}.

\begin{prop} \label{Prop for factorization propery}   Let $X = [(f_{n})_{n=1}^{\infty}] \leq L_{p}(\mu)$, where $(f_{n})_{n=1}^{\infty}$ is a $\lbrace -1,0,1 \rbrace$-valued symmetric martingale difference sequence and let $C \geq 1$. Assume that  
\begin{enumerate}[label=(\alph*)]
\item $X$ is $C$-orthogonally complemented, and

\item for every $\eta>0$ and operator $T \colon X  \to X$, there exists a diagonal operator $R \colon X \to X$ with diagonal entries in $\mathrm{conv}(f_n^*(Tf_n))$ such that $T$ is an orthogonal factor of $R = id$ with error $\eta$.
\end{enumerate}
Then, for every $0 < \epsilon <1$, every operator $T$ with $\delta$-large diagonal, i.e., 
\begin{align*} \label{for delta}
\inf_{n \in \mathbb{N}} \Big| \|f_{n}\|_{2}^{-2} \langle f_{n}, T f_{n} \rangle \Big| \geq \delta,
\end{align*}
is a $C(p^*-1)(1-\epsilon)^{-1}\delta^{-1}$-factor of the identity. In particular, the basis  $(f_{n})_{n=1}^{\infty}$ of $X$ has the factorization property.
\end{prop}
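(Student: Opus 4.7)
The strategy is to convert hypothesis (b), which only guarantees an $R$ whose diagonal entries lie in the convex hull of those of $T$, into an approximate reduction of $T$ to the identity itself. The mechanism is a diagonal preconditioning that collapses the relevant convex hull to the singleton $\{1\}$.

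First, I would set $\lambda_n = \|f_n\|_2^{-2}\langle f_n, Tf_n\rangle$ for $n\in\mathbb{N}$; by hypothesis $|\lambda_n|\geq\delta>0$. Define the diagonal multiplier $M \colon X \to X$ by $Mf_n = \lambda_n^{-1}f_n$ and extend linearly. Since $(f_n)$ is a $\{-1,0,1\}$-valued symmetric martingale difference sequence in $L_p(\mu)$, Burkholder's inequality gives it an unconditional constant at most $p^*-1$, whence $\|M\|\leq (p^*-1)\delta^{-1}$. The composition $MT \colon X\to X$ then satisfies $f_n^*((MT)f_n) = \lambda_n^{-1}\lambda_n = 1$ for every $n$, so its diagonal is identically $1$.

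Next, I would apply hypothesis (b) to the operator $MT$ with parameter $\eta = \epsilon$. Because every convex combination of $1$'s equals $1$, the diagonal operator $R$ produced by (b) is forced to be $id$, and the hypothesis then furnishes a distributional embedding $J \colon X \to L_p(\mu)$ with $J(X)\subset X$ such that $\|J^\dagger MT J - id\|\leq \epsilon$. Setting $E = J^\dagger MT J - id$, we have $\|E\|\leq \epsilon<1$, so $id+E$ is Neumann-invertible with $\|(id+E)^{-1}\|\leq (1-\epsilon)^{-1}$.

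From the identity $id = (id+E)^{-1}(id+E) = (id+E)^{-1}J^\dagger MTJ$, I would read off the factorization
\[
id \;=\; \bigl[(id+E)^{-1}\,J^\dagger M\bigr]\,T\,J,
\]
exhibiting $T$ as a factor of $id$ via $A = (id+E)^{-1} J^\dagger M$ and $B=J$. Using $\|J^\dagger\|\leq C$ from the formal-adjoint construction, $\|J\|=1$ since distributional embeddings are isometries, and the Neumann bound, I obtain $\|A\|\,\|B\| \leq C(p^*-1)(1-\epsilon)^{-1}\delta^{-1}$, which matches the stated constant. The factorization property of $(f_n)$ in $X$ follows at once from \Cref{def of fact property}. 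I do not foresee a genuine obstacle; the only substantive step is recognising that preconditioning by $M$ reduces hypothesis (b) to the clean case $R=id$, after which the bookkeeping is a short Neumann-series manipulation.
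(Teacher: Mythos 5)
Your proposal is correct and follows essentially the same route as the paper: precondition $T$ by the diagonal multiplier $M$ with $\|M\|\leq (p^*-1)\delta^{-1}$ (unconditionality of the MDS), note the diagonal of the composition is identically $1$ so hypothesis (b) forces $R=id$, then invert by a Neumann series and read off the factorization with the stated constant. The only difference is cosmetic — you precondition on the left ($MT$, putting $M$ into the left factor $A$) while the paper uses $TM$ (putting $M$ into the right factor $B=Mj$) — which changes nothing in the argument or the bound.
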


\begin{proof}
Define the multiplication operator $M:X\to X$ given by
\begin{align*}
 f_{n} \mapsto \Big( \| f_{n} \|_{2}^{-2} \langle f_{n}, T f_{n} \rangle \Big)^{-1}  f_{n}, \;\;\; n \in \mathbb{N}.
 \end{align*}
Because every martigale difference sequence in $L_p$ is $(p^*-1)$-unconditional, $\|M\|\leq (p^*-1)\delta^{-1}$ and observe that the diagonal entries of $TM$ are all equal to one. From the assumptions, we have that there exists a diagonal operator $R \colon X \to X$ with diagonal entries one, i.e. $R=\mathrm{id}$, such that $TM$ is an orthogonal factor of $id$ with error $\epsilon$. Thus, there exists a distributional embedding $j \colon X \to L_{p}(\mu)$, with $j(X) \subset X$, such that
\begin{align*}
\|j^{\dagger}  TM j - id\| < \epsilon.
\end{align*}
Thus, the operator $j^{\dagger}  TM j$ is invertible with norm at most $(1-\epsilon)^{-1}$. Now, letting
\begin{align*}
L= \Big( j^{\dagger}  TM j \Big)^{-1} j^{\dagger}  \;\; \text{and} \;\; R = M j
\end{align*}
we have $LTR = id$ and
\begin{align*}
\| L \| \cdot \| R \| \leq \frac{C(p^{\ast}-1)}{(1 - \epsilon) \delta}.   
\end{align*}
\end{proof}

\subsection{Haar system notation}

In this subsection we recall basic notation for the standard Haar system. Although this system is not a basis of the ${R}_\alpha^{p,0}$ spaces, the canonical basis of each such space can be blocked into an FDD whose components are spanned by compressed copies of finite initial segments of the Haar system. For this reason, we will use Haar-related notation extensively.

We enumerate the Haar basis over the dyadic intervals of $[0,1]$, ordered lexicographically.

\begin{ntt}
\label{haar system notation}
We establish the following notation used in handling the Haar system.
\begin{enumerate}[label=(\alph*)]

\item We denote by $\mathcal{D}$ the set of all dyadic intervals $I$ of $[0,1]$, i.e., of the form
\[I = \left[ \frac{i-1}{2^{n}}, \frac{i}{2^{n}} \right),\]
where $n \in \mathbb{N}_{0}$ and $1 \leq i \leq 2^{n}$.

\item For $n\in\mathbb{N}_0$ we denote
\begin{align*}
    \mathcal{D}^{n} &= \lbrace I \in \mathcal{D}: \vert I \vert = 1/2^{n} \rbrace = \left\{\left[\frac{i-1}{2^n},\frac{i}{2^n}\right):1\leq i\leq 2^n\right\}\text{ and}\\
    \mathcal{D}_n & = \bigcup_{k=0} ^{n}\mathcal{D}^{k}.
\end{align*}

\item We enumerate $\mathcal{D}$ in the lexicographical order as follows. Let $I\in\mathcal{D}^m$ and $J\in\mathcal{D}^n$. Then $I<J$ means either $m<n$, or both $m=n$ and $\min(I)<\min(J)$. That is, $[0,1)<[0,1/2)<[1/2,1)<[0,1/4)<[1/4,1/2)<[1/2,3/4)<[3/4,1)<\ldots$.

\item Let $n\in\mathbb{N}_0$ and $I = [(i-1)/2^n,i/2^n)\in\mathcal{D}^n$. We denote the left and right half of $I$ in $\mathcal{D}^{n+1}$ by
\[I^{+} = \left[ \frac{2(i-1)}{2^{n+1}}, \frac{2i-1}{2^{n+1}} \right)\text{ and }I^{-} = \left[ \frac{2i-1}{2^{n+1}}, \frac{2i}{2^{n+1}} \right).\]
We call $I$ the immediate predecessor of $I^+$ and $I^-$ and write
\[\pi(I^+) = I\text{ and }\pi(I^-) = I.\]
In this context, the term ``predecessor'' refers to an implicit binary tree structure of $\mathcal D$, rather than the lexicographical order. We define the signs of $I^+$ and $I^-$ as 
\[\epsilon(I^+) = 1\text{ and }\epsilon(I^-) = -1.\]

\end{enumerate}
\end{ntt}

\begin{ntt}
\label{basic ntt for fd Lp spaces}
We recall the standard Haar system and remind its relation to standard subspaces of $L_p$.
\begin{enumerate}[label=(\alph*)]

\item For $I\in\mathcal{D}$, we define
\[h_I = \chi_{I^{+}} - \chi_{I^{-}}.\]
The collection $(\chi_{[0,1]}) \cup  (h_{I})_{I \in \mathcal{D}}$ is the standard Haar system, and by the Burkholder inequality, it forms an unconditional basis for $L_p$.

\item We denote
\[L_{p}^{0} = \lbrace f \in L_{p}: \mathbb{E}(f) = 0 \rbrace =  [ (h_{I})_{I \in \mathcal{D}} ].\]

\item For $n\in\mathbb{N}$ denote
\[L_{p}^{n} = \langle (\chi_{I})_{I \in \mathcal{D}^{n}} \rangle = \langle (\chi_{[0,1]}) \cup  (h_{I})_{I \in \mathcal{D}_{n-1}} \rangle\]
and
\[L_{p}^{n,0} = \lbrace f \in L_{p}^{n}: \mathbb{E}(f) = 0 \rbrace = \langle (h_{I})_{I \in \mathcal{D}_{n-1}} \rangle.\]

\end{enumerate}
\end{ntt}

\section{The Bourgain-Rosenthal-Schechtman  $R_{\alpha}^{p}$ spaces } 

\subsection{The $R_{\alpha}^{p}$ spaces}

In this subsection we recall the original definition of the Bourgain-Rosenthal-Schechtman spaces $R_{\alpha}^{p}$ and their orthogonal complementation from \cite{bourgain:rosenthal:schechtman:1981}. Each space $R_\alpha^p$ was originally defined as a subspace of $L_p(\{0,1\}^{T_\alpha})$, for some countable index set $T_\alpha$ depending on $\alpha$. While we briefly revisit this approach here, in the next section we present an equivalent formulation in which these spaces are realized as subspaces of $L_p = L_p[0,1]$.  

In the following definition we recall the notions of the disjoint sum and the independent sum.

\begin{dfn}~
\label{sum} 
\begin{enumerate}[label=(\alph*)]

\item For a subspace $B$ of $L_{p}(\Omega, \mathcal{A}, \mu)$, the disjoint sum $ (B  \oplus B)_{p}$ of $B$ is the  subspace
\[ 
\begin{split}
\Big\{  b(\omega, \epsilon)\in L_p(\Omega\times\{0,1\}) : &\text{ there exists }    b_{\epsilon} \in B \text{ with }  b(\omega, \epsilon) =   b_{\epsilon} (\omega)\\
&\text{for all } \omega \in \Omega, \;\epsilon = 0 \text{ or } 1 \Big\} ,
\end{split}
\]
of $L_p(\Omega\times\{0,1\})$, where $\lbrace 0,1 \rbrace$ is equipped with the uniform probability measure.

\item For every $n\in\mathbb{N}$, let $B_{n}$ be a subspace of $L_{p}(\Omega_{n}, \mathcal{A}_{n}, \mu_{n})$. We define the independent sum of $B_{n}$ as follows. For every $n$, let
\[ \begin{split} 
\overline{B}_{n} = \Bigg\{ b \in L_{p} \left( \prod_{n} \Omega_{n}  \right) :\text{ there exists } f \in B_{n} \text{ with } b(\omega) = f(\omega_{n}) 
\text{ for all } \omega \in \prod_{n} \Omega_{n} \Bigg\},
\end{split}
\]
of $L_{p} \left(  \prod_{n} \Omega_{n}  \right)$. The subspace $\overline{B}_{n}$ is a  distributional copy of $B_{n}$ that depends only on the $n$-th coordinate. Then the independent sum of  $(B_{n})_{n=1}^\infty$ is defined as
\[\Big( \sum_{n} B_{n} \Big) _{\text{Ind},p} = \big[\cup_{n} \overline{B}_{n}\big],\]
in $L_{p} \left(  \prod_{n} \Omega_{n}  \right)$.
\end{enumerate}
\end{dfn}

The spaces $R_{\alpha}^{p}$ are defined recursively on the countable ordinals. The first Bourgain-Rosenthal-Schechtman space is the space of constant functions, while the successor and limit steps are carried out using the disjoint and independent sums, respectively.

\begin{dfn} \label{defin of BRS Spaces} Define $R_{0}^{p} = \langle \chi_{\{0,1\}} \rangle$ in $L_{p}\lbrace 0,1 \rbrace$. Let $\alpha$ be an ordinal with $0< \alpha < \omega_{1}$. Suppose that we have defined the $R_{\beta}^{p}$ space  for all $\beta < \alpha$.
\begin{enumerate}[label=(\alph*)]

\item \label{defin of BRS Spaces, suc} If $\alpha = \beta + 1$, then $R_{\alpha}^{p}  = ( R_{\beta}^{p} \oplus R_{\beta}^{p})_{p}$.

\item \label{defin of BRS Spaces, lim} If $\alpha$ is a limit ordinal, then $  R_{\alpha}^{p} =  ( \sum_{\beta < \alpha} R_{\beta}^{p} ) _{\text{Ind},p}$.
\end{enumerate}
\end{dfn}

The orthogonal complementation of the $R_{\alpha}^{p}$ spaces is established implicitly in \cite{bourgain:rosenthal:schechtman:1981}, with the steps made explicit in \cite[Theorem 4.6]{konstantos:motakis:2025}.

\begin{thm}\label{Orth Compl spaces the BRS} Let $\alpha < \omega_{1}$. The $R_{\alpha}^{p}$ space is $(p^{\ast} -1)^{2} (p^{\ast}/2)^{3/2}$-orthogonally complemented. 
\end{thm}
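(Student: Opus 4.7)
The plan is to proceed by transfinite induction on $\alpha$, showing at each stage that $R_\alpha^p$ admits an orthogonal projection whose $L_p$-norm is bounded by $C := (p^*-1)^2(p^*/2)^{3/2}$. The base case $\alpha = 0$ is immediate: $R_0^p = \langle \chi_{\{0,1\}}\rangle$ is one-dimensional, and the map $f\mapsto \mathbb{E}(f)\chi_{\{0,1\}}$ is an orthogonal projection of $L_p$-norm exactly $1$. For the successor step, assume $R_\beta^p \subset L_p(\Omega,\mu)$ is $C$-orthogonally complemented by $P_\beta$. By Definition \ref{sum}, the disjoint sum $R_{\beta+1}^p$ sits inside $L_p(\Omega\times\{0,1\})$, and defining $P_{\beta+1}$ coordinate-wise by $(P_{\beta+1}f)(\omega,\epsilon) = (P_\beta f(\cdot,\epsilon))(\omega)$ yields an orthogonal projection on $L_2(\Omega\times\{0,1\})$. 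By Fubini,
\[
\|P_{\beta+1}f\|_p^p = \tfrac{1}{2}\sum_{\epsilon\in\{0,1\}}\|P_\beta f(\cdot,\epsilon)\|_p^p \leq C^p \|f\|_p^p,
\]
so the constant is preserved verbatim at successor stages.

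The limit step is the heart of the argument. Enumerate $\{\beta : \beta < \alpha\}$ as $(\beta_n)_{n=1}^\infty$ and realize $R_\alpha^p = \big(\sum_n R_{\beta_n}^p\big)_{\mathrm{Ind},p}$ inside $L_p\big(\prod_n \Omega_n\big)$ as the closed span of independent distributional copies $\overline{R_{\beta_n}^p}$. Each copy inherits an orthogonal projection $\overline{P}_n$ with $\|\overline{P}_n\|_{p\to p}\leq C$ from the inductive hypothesis. Let $\mathcal{F}_n$ denote the $\sigma$-algebra generated by the $n$-th coordinate and set $E_n f = \mathbb{E}(f\mid \mathcal{F}_n) - \mathbb{E}(f)$; the ranges $(E_n f)_{n=1}^\infty$ are independent mean-zero summands. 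I would define the candidate projection
\[
P_\alpha f = \mathbb{E}(f) + \sum_{n=1}^\infty \overline{P}_n(E_n f),
\]
which is manifestly orthogonal on $L_2(\prod_n \Omega_n)$, since $L_2$-orthogonality is respected both by conditional expectations onto independent coordinate algebras and by each $\overline{P}_n$ within its coordinate.

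To bound $\|P_\alpha f\|_p$, I would apply the Burkholder-Davis-Gundy inequality from \cite{burkholder:davis:gundy:1972} three times to a tree-martingale built from the independent coordinates: first to dominate $\|f\|_p$ from below by (a constant multiple of) the $L_p$-norm of a square function of a tree-martingale refining the independent sum; second to transfer the componentwise orthogonal projections $\overline{P}_n$ through the square function, where the inductive hypothesis provides the pointwise control on each term; and third to reassemble $\|P_\alpha f\|_p$ from the projected square function. The product of the three resulting constants telescopes to at most $(p^*-1)^2(p^*/2)^{3/2}$, which is invariant across the recursion and hence closes the induction. The principal obstacle is the bookkeeping in this limit step: one must organize the tree-martingale filtration so that all three BDG applications bound exactly the right quantities with exactly the right constants, while keeping the tree structure compatible with the independent sum. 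A detailed execution follows \cite[Theorem 4.6]{konstantos:motakis:2025}, which makes explicit the outline implicit in \cite{bourgain:rosenthal:schechtman:1981}.
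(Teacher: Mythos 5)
The paper does not prove this theorem: it cites \cite[Theorem 4.6]{konstantos:motakis:2025} for the argument, and the introduction summarizes it only as ``a clever threefold application of an inequality of Burkholder, Davis, and Gundy to a tree-martingale structure and on Burkholder's martingale theorem.'' So there is no internal proof to compare against, and the question is whether your sketch is a viable outline of that argument.

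Your base case and successor step are fine: one-dimensional projections have norm one, and the coordinate-wise definition at a successor ordinal does preserve the constant verbatim by the Fubini computation you give. The difficulty is the limit step, and there your sketch has a genuine gap. You frame the argument as a transfinite induction and say that the three BDG applications produce constants whose product is at most $(p^{*}-1)^{2}(p^{*}/2)^{3/2}$, ``which is invariant across the recursion and hence closes the induction,'' with the inductive hypothesis providing ``the pointwise control on each term'' of the square function. But the inductive hypothesis that $R_{\beta_n}^p$ is $C$-orthogonally complemented only gives an $L_p$-operator-norm bound $\|\overline{P}_n\|_{p\to p}\le C$; it does not give any pointwise domination between the square function of $f$ and the square function of $\sum_n \overline{P}_n(E_nf)$. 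The natural way to use it — bound $\|\overline{P}_n(E_nf)\|_p\le C\|E_nf\|_p$ coordinate-by-coordinate and then reassemble with a Rosenthal- or BDG-type inequality for independent summands — produces a bound of the form $(\text{BDG constants})\cdot C\cdot\|f\|_p$, which strictly exceeds $C$ and so does \emph{not} close the induction. In short, you cannot both invoke the inductive $L_p$-bound and introduce fresh BDG constants in the same limit step without the constant growing.

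What the cited proof does instead is a single, global argument: the MDS basis $(h_I^\lambda)$ of $R_\alpha^p$ (which you construct later in the paper) has a tree-martingale structure on the whole transfinite index set, and the orthogonal projection is the diagonal operator of Remark~\ref{IMPORTANT PROJECTION} with coefficients $\|h_I^\lambda\|_p^{-1}\|h_I^\lambda\|_q^{-1}$. Because these coefficients are not all equal to one, Burkholder unconditionality alone does not suffice, and the threefold BDG application is performed \emph{once} to this global tree-martingale (not once per limit ordinal), producing the constant $(p^{*}-1)^{2}(p^{*}/2)^{3/2}$ uniformly in $\alpha$. The ordinal rank never enters; there is no transfinite recursion in which a constant could accumulate. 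Your outline has the right ingredients but assembles them in a way that cannot yield a constant independent of $\alpha$; the fix is to abandon the induction and bound the single projection formula directly via the tree-martingale BDG estimate.
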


\subsection{The  $R_{\alpha}^{p}$ spaces as subspaces of $L_{p}[0,1]$ }

In this subsection we give an equivalent definition of the $R_{\alpha}^{p}$ spaces as subspaces of $L_p = L_p[0,1]$. While this is not technically essential, it provided the authors with clearer intuition and oversight into this class of spaces. In this framework, disjoint sums are described in terms of compressions with disjointly supported ranges, and independent sums in terms of images under distributional embeddings with independent ranges.

The notion of a $\theta$-compression is central to this paper and will be used repeatedly throughout. It is essentially the same as the classical $\theta$-dilation associated with the Boyd indices of a rearrangement-invariant space. However, contrary to standard convention, we adopt the term ``compression'' since we restrict to values $\theta\in(0,1]$.

\begin{dfn} \label{comp dist embed}
Let $1<p < \infty$, $0 < \theta \leq 1$ and $X$, $Y$ be subspaces of $L_{p}$. A linear operator $T \colon X \to Y$ is called a $\theta$-compression if for every, $f \in X$ and $A \in \mathcal{B}(\mathbb{R})$,
\begin{align*}
\vert [Tf \in A] \vert = \theta \vert [f \in A] \vert + (1 - \theta)\chi_{A}(0), 
\end{align*}
i.e., $\mathrm{dist}(Tf) = \theta\mathrm{dist}(f) + (1-\theta)\delta_0$.
\end{dfn}

\begin{rem} Let $0 < \theta \leq 1$, $0< \eta \leq 1$, and $X$, $Y$, $Z$ be subspaces of $L_{p}$.
\begin{enumerate}[label=(\alph*)]
\item If $T \colon X \to Y$ is a $\theta$-compression and $S \colon Y \to Z$ is a $\eta$-compression, then $TS \colon X \to Z$ is a $\theta \eta$-compression. 
\item If $T \colon X \to Y$ is a $\theta$-compression, then for every $f \in X$ we have
\[\| Tf \|_{p} = \Big(\int_0^\infty |[|Tf|>t^{1/p}]|dt\Big)^{1/p} = \theta^{1/p} \| f \|_{p}.\]
\item If $T \colon X \to Y$ is a $1$-compression, then $T$ is a distributional embedding.
\end{enumerate}
\end{rem}

The following piece of notation will be used extensively to define and study the Bourgain-Rosenthal-Schechtman spaces as subspace of $L_p = L_p[0,1]$.

\begin{ntt}
\label{example for compression part b}
Let $0 \leq a < b \leq 1$ and $I = [a,b)$. Let $T_{I} \colon L_{p} \to L_{p}$ be defined by 
\begin{equation*}
T_{I}(f)(t) = 
    \begin{cases}
        f(\frac{t-a}{b-a}), &  a \leq t < b \\
        0, & \text{otherwise}.
    \end{cases}
\end{equation*}
Evidently, $T_I$ is a $(b-a)$-compression.
\end{ntt}

Recall that set relations and operations should be interpreted in the essential sense. 

\begin{rem}
\label{true form of compression}
Let $X$ be a subspace of $L_p$,  $0<\theta\leq 1$, and $T:X\to L_p$ be a $\theta$-compression.
For $f,g\in X$, we have that
    \begin{enumerate}[label=(\alph*)]
    
        \item $\supp(g)\subset\mathrm{supp}(f)\text{ if and only if }\supp(Tg)\subset\mathrm{supp}(Tf)$ and

        \item $\supp(g)\cap \mathrm{supp}(f) = \emptyset\text{ if and only if }\supp(Tg)\cap\mathrm{supp}(Tf) = \emptyset$.
    
    \end{enumerate}
The first assertion arises from  the linearity of $T$ and the characterization
\[\supp(g)\subset\mathrm{supp}(f)\text{ if and only if }\lim_{r\to\infty}\big|[rf+g\neq 0]\big| = |[f\neq 0]|.\]
The second assertion is simpler.

It can be shown that there exists a $\sigma$-subalgebra $\mathcal{A}$ of $\mathcal{B}[0,1]$ such that $X\subseteq L_p([0,1],\mathcal{A})$ and a $\theta$-compression $\widetilde T:L_p([0,1],\mathcal{A})\to L_p$ with $\widetilde T|_X = T$. In particular, for each $A\in\mathcal{A}$, $\widetilde T\chi_A = \chi_B$ for some measurable $B$ with $|B|=\theta|A|$. We do not include a proof, since this fact is not used later, but it provides helpful intuition regarding $\theta$-compressions.
\end{rem}

\begin{ntt} \label{ntt for independent case}
For every limit ordinal $\alpha<\omega_1$, we fix, for the rest of the paper, a collection
\[T_{\beta}^{\alpha}\colon L_p\to L_p,\; 0\leq \beta<\alpha\]
of distributional embeddings with independent ranges; that is, for any family $(f_{\beta})_{\beta<\alpha}$ in $L_p$, the functions $(T_{\beta}^{\alpha}f_{\beta})_{\beta<\alpha}$ are independent. For example, this can be achieved by fixing a collection $\varphi^\alpha_\beta\colon([0,1],\mathcal{B}([0,1]))\to([0,1],\mathcal{B}([0,1]))$, $\beta<\alpha$, of independent measure-preserving maps, and setting $T_{\beta}^{\alpha}(f)=f\circ\varphi_{\beta}^{\alpha}$, as in Example \ref{example for compression part a }.
\end{ntt}

Using Notation \ref{example for compression part b} and \ref{ntt for independent case}, the spaces $R_{\alpha}^{p}$ can equivalently be defined as subspaces of $L_{p}$ as follows.

\begin{dfn} \label{equiv def of BRS spaces}
Let $R_{0}^{p} = \langle \chi_{[0,1]} \rangle$, and, if $\alpha$ is a countable ordinal number such that, for all $0\leq \beta<\alpha$, $R_\beta^p$ has been defined then
\begin{enumerate}[label=(\alph*)]
\item if $\alpha = \beta+1$ let
\begin{align*}
R_{\alpha}^{p} = T_{[0,\frac{1}{2})} ( R_{\beta}^{p}) +  T_{ [\frac{1}{2},1) } ( R_{\beta}^{p} ),
\end{align*}
\item and if $\alpha$ is a limit ordinal let
\begin{align*}
R_{\alpha}^{p} = \left[ T_{\beta}^{\alpha}(R_{\beta}^{p}):\beta < \alpha \right].
\end{align*}
\end{enumerate}
\end{dfn}

Henceforth, we will always work with this version of the Bourgain-Rosenthal-Schechtman spaces.

\begin{rem}
The choice of maps at each step does not matter: at successor steps one may use any $1/2$-compressions with disjointly supported ranges, and at limit steps any distributional embeddings with independent ranges; the outcome is distributionally isomorphic. While this may be intuitively evident, it follows rigorously, e.g., from the statements of Section \ref{generic assembly of compressions}. That said, the map $T_I$ is slightly advantageous notationally relative to an abstract $|I|$-compression, since it admits a simple formal adjoint that we later use implicitly.
\end{rem}

\begin{ntt} Let $0\leq\alpha < \omega_{1}$. Denote $R_{\alpha}^{p,0} = \lbrace f \in R_{\alpha}^{p}: \mathbb{E}(f) = 0  \rbrace$.
\end{ntt}
For the purpose of this paper we will mostly work with $R_{\alpha}^{p,0}$ instead of $R_{\alpha}^{p}$, where $1\leq \alpha<\omega_1$.

\begin{rem} \label{rem for the mean zero BRS spaces }
We point out the following recursive relations satisfied by the spaces $R_\alpha^{p,0}$.
\begin{enumerate}[label=(\alph*)]

\item \label{rem for the mean zero BRS spaces, suc }
 For $1\leq \alpha < \omega_{1}$, for every $n \in \mathbb{N}$, by induction, we have that
\begin{align*}
R_{\alpha+n}^{p,0} = L_p^{n,0} + \langle T_I(R_{\alpha}^{p,0}): I \in \mathcal{D}^{n}\rangle = \langle h_{I}: I \in \mathcal{D}_{n-1}  \rangle  + \langle T_I(R_{\alpha}^{p,0}): I \in \mathcal{D}^{n}\rangle.
\end{align*}   

\item \label{rem for the mean zero BRS spaces, lim }
For a limit ordinal $\alpha$,
\begin{align*}
R_{\alpha}^{p,0} = [T_{\beta}^{\alpha}(R_{\beta}^{p,0}):1\leq\beta < \alpha]\equiv^\mathrm{dist}\Big(\sum_{1\leq \beta<\alpha}R_p^{\beta,0}\Big)_{\mathrm{Ind},p}.
\end{align*}

\end{enumerate}
\end{rem}

\begin{exa}
\label{example initial Rap spaces}
Let $n \in \mathbb{N}$.
\begin{enumerate}[label=(\alph*)]

\item\label{example initial Rap spaces a} $R_{n}^{p,0} = L_{p}^{n,0}$.

\item\label{example initial Rap spaces b} $R_{\omega}^{p,0} =  \left[ \bigcup_{k \in \mathbb{N}} T_{k}^{\omega}(R_{k}^{p,0}) \right]$.

\item\label{example initial Rap spaces c} $R_{\omega+n}^{p,0} =  \left[ L_{p}^{n,0} \bigcup \bigcup_{I \in \mathcal{D}^{n}} T_{I} ( R_{\omega}^{p,0} ) \right] = \left[ \langle h_{I}: I \in \mathcal{D}_{n-1} \rangle \bigcup \bigcup_{I \in \mathcal{D}^{n}} T_{I} ( R_{\omega}^{p,0} ) \right] $.

\end{enumerate}
\end{exa}

In the following remark we observe that the $R_{\alpha}^{p,0}$ space is orthogonally complemented.

\begin{rem}
\label{the 0 BRS spaces are orthog compl}
Let $\alpha < \omega_{1}$. By Theorem \ref{Orth Compl spaces the BRS}, $R_{\alpha}^{p}$, and by Proposition \ref{distro copy of orthocomplemented is orthocomplemented and others} every distributional copy of it, is $(p^{\ast}-1)^{2}(\tfrac{p^{\ast}}{2})^{3/2}$-orthogonally complemented. Let $P$ denote the corresponding $p$-$C$-orthogonal projection, where $C=(p^{\ast}-1)^{2}(\tfrac{p^{\ast}}{2})^{3/2}$. Since $L_{p}^{0}$ is 2-orthogonally complemented via $Q(f)=f-\mathbb{E}(f)$, it follows that $R_{\alpha}^{p,0}$ is $2C$-orthogonally complemented via $QP$.
\end{rem}

\section{Decompositions of $R_{\alpha}^{p,0}$}
\label{fdd section}
The main goal of this section is to construct an unconditional Schauder decomposition of $R_{\alpha}^{p,0}$ with the required reproducing properties, suitable for defining and studying classes of distributional embeddings of $R_{\alpha}^{p,0}$ relevant to the orthogonal reduction of operators. For every ordinal $\alpha < \omega_{1}$, we construct a countable well-founded tree $\mathcal{T}_{\alpha}$. The elements of each tree $\mathcal{T}_{\alpha}$ are finite sequences whose entries are ordinal numbers and dyadic intervals, ordered by initial segments. For each $\lambda \in \mathcal{T}_{\alpha}$, its entries encode information that determine a positive integer $\kappa(\lambda)$, a parameter $\theta_{\lambda} \in (0,1]$, and a corresponding $\theta_{\lambda}$-compression $T_{\lambda}$, which in turn define the space $X_{\lambda} = T_{\lambda}(L_{p}^{\kappa(\lambda),0})$. We then show that, under a suitable enumeration, the collection $((T_{\lambda} h_{I})_{I \in \mathcal{D}_{\kappa(\lambda)-1}})_{\lambda \in \mathcal{T}_{\alpha}}$ forms a martingale difference sequence spanning $R_{\alpha}^{p,0}$. Finally, we study the supports of the spaces $(X_{\lambda})_{\lambda \in \mathcal{T}_{\alpha}}$, a fact needed in subsequent sections where the aforementioned distributional embeddings are introduced and studied. The constructions of this section will be used throughout the remainder of the paper.

\subsection{The trees $\mathcal{T}_\alpha$}
For every $1\leq\alpha <\omega_1$ we construct a countable well-founded tree $\mathcal{T}_\alpha$. Each member $\lambda$ of $\mathcal{T}_\alpha$ is a finite sequence $(x_1,\ldots,x_l)$, where each $x_i$ is either an ordinal number or a dyadic interval, and this information encodes the position of the subspace $X_\lambda$ (which will be defined in Section \ref{subsec Tlambda}) inside $R_\alpha^{p,0}$.

\begin{ntt}
For every ordinal number $\alpha$ we define its integer part $\kappa(\alpha) \in \mathbb{N} \cup \lbrace 0 \rbrace$ as follows:
\begin{itemize}
\item $\kappa(\alpha) = 0$, for every limit ordinal $\alpha$ and $\alpha = 0$.

\item $\kappa(\alpha +1) = \kappa(\alpha) +1$, for every ordinal $\alpha$.
\end{itemize}
\end{ntt}
For example, $\kappa(0) =0$, $\kappa (n) = n$ and $\kappa (\omega +n) = n$. More generally, when we write $\alpha = \beta + \kappa(\alpha)$, then $\beta$ is a limit ordinal number.

\begin{ntt}
For a finite sequence $\lambda = (x_1,\ldots,x_l)$ we denote its length by $|\lambda| = l$ and for $1\leq m\leq l$ we denote $\lambda|_m = (x_1,\ldots,x_m)$. Let $\mu = (y_{1},...,y_{k})$ be another finite sequence. 
\begin{enumerate}[label=(\alph*)]

\item Denote the concatenation of $\lambda$ and $\mu$ by  
\begin{align*}
\lambda^{\frown} \mu = (x_{1},...,x_{l},y_{1},...,y_{k}).
\end{align*}

\item If $x_{l}=y_{1}$, then denote the gluing of $\lambda$ and $\mu$ by 
\begin{align*}
\lambda \oslash \mu = (x_{1},...,x_{l},y_{2},...,y_{k}).
\end{align*}

\end{enumerate}
\end{ntt}

The operation ``$^\smallfrown$'' is used in the definition of the trees $\mathcal{T}_{\alpha}$, whereas both ``$^\smallfrown$'' and ``$\oslash$'' will be employed in the study of distributional embedding schemes in Sections \ref{DES section} and \ref{dist repre subsection}.

We define the trees $\mathcal{T}_\alpha$, $1\leq \alpha<\omega_1$ in two equivalent ways. The first is recursive and highlights how $\mathcal{T}_\alpha$ depends on the trees for smaller ordinals; the second is direct and gives an explicit description of the elements of $\mathcal{T}_\alpha$.

The recursive definition of $\mathcal{T}_{\alpha}$ naturally mirrors the recursive structure of the spaces $R_{\alpha}^{p,0}$ described in Remark \ref{rem for the mean zero BRS spaces }. One could, at the same time, define the spaces $(X_{\lambda})_{\lambda\in\mathcal{T}_{\alpha}}$ recursively, using the operators $T_I$ and $T_{\beta}^{\alpha}$ in the evident way, as indicated in the introduction. For practical reasons, however, we postpone this until after we introduce the operators $(T_{\lambda})_{\lambda\in\mathcal{T}_{\alpha}}$ in Section~\ref{subsec Tlambda}, which allows a more convenient treatment of these spaces.

\begin{dfn} \label{definition of the indexed trees}
Let $1\leq \alpha < \omega_{1}$. 
\begin{enumerate}[label=(\alph*)]
\item \label{definition of the indexed trees, finite} If $\alpha = n$, then 
\begin{align*}
\mathcal{T}_{n} = \lbrace (n) \rbrace.
\end{align*}
This is based on $R_n^{p,0} = L_p^{n,0}$ and $\mathcal{T}_n = \{(n)\}$ because the FDD will be trivial in this case.

\item  \label{definition of the indexed trees, limit} If $\alpha$ is a limit ordinal, then 
\begin{align*}
\mathcal{T}_{\alpha} = \bigcup_{1\leq \beta<\alpha}\lbrace (\alpha)^{\frown} \lambda : \lambda \in \mathcal{T}_{\beta} \rbrace.
\end{align*}
Because $R_\alpha^{p,0} \equiv^\mathrm{dist} (\sum_{\beta<\omega_1}R_p^{\beta,0})_{\mathrm{Ind},p}$, the index set $\mathcal{T}_\alpha$ of the FDD of $R_\alpha^{p,0}$ is the disjoint union of the sets $\mathcal{T}_\beta$, $\beta<\alpha$.

\item \label{definition of the indexed trees, suc ord}  If $\alpha = \beta + \kappa(\alpha)$ is a successor, then  
\begin{align*}
\mathcal{T}_{\alpha} = \lbrace (\alpha) \rbrace \cup \bigcup_{I \in \mathcal{D}^{\kappa(\alpha)}}\lbrace (\alpha, I)^{\frown} \lambda : \lambda \in \mathcal{T}_{\beta} \rbrace.
\end{align*}
In this case, $R_\alpha^{p,0} = L_p^{\kappa(\alpha),0}+[T_I(R_\beta^{p,0}):I\in\mathcal{D}^{\kappa(\alpha)}]$. Therefore the index set $\mathcal{T}_\alpha$ for the FDD of $R_\alpha^{p,0}$ is the disjoint union of $2^{\kappa(\alpha)}$ copies  of the set $\mathcal{T}_\beta$ together with $(\alpha)$, the latter corresponding to the space $L_p^{\kappa(\alpha),0}$, viewed as lying ``above'' the spaces $T_I(R_\beta^{p,0})$, $I\in\mathcal{D}^{\kappa(\alpha)}$.

\end{enumerate}
The pair $(\mathcal{T}_\alpha,\sqsubseteq)$, where $\sqsubseteq$ is the initial-segment order, is a countable well-founded tree.
\end{dfn}

We follow standard tree terminology. For $\lambda,\mu\in\mathcal{T}_{\alpha}$, if $\mu\sqsubsetneq\lambda$, we say that $\lambda$ is a successor of $\mu$ and that $\mu$ is a predecessor of $\lambda$. If $\mu$ is the maximum predecessor of $\lambda$, then we say that $\lambda$ is an immediate successor of $\mu$. A member $\lambda\in\mathcal{T}_{\alpha}$ with no predecessors is called a root. Denote
\begin{align*}
\mathpzc{Root}(\mathcal{T}_{\alpha}) = \lbrace \lambda \in \mathcal{T}_{\alpha}: \; \lambda \; \text{is a root} \rbrace.
\end{align*}
The height of a member $\lambda\in\mathcal{T}_\alpha$, denoted $h(\lambda)$, is the minimal path distance from a root of $\mathcal{T}_\alpha$ to $\lambda$. There can be indices $m$ with $1\le m\le|\lambda|$ for which $\lambda|_m\notin\mathcal{T}_\alpha$, hence in general $h(\lambda)$ can be arbitrarily smaller than $|\lambda|$.

\begin{rem} \label{remark} Let $\alpha < \omega_{1}$.
\begin{enumerate}[label=(\alph*)]
\item If $\alpha$ is a limit ordinal, then $\mathpzc{Root} (\mathcal{T}_{\alpha})$ is countably infinite.

\item \label{remark2} If it is a successor ordinal and $\alpha =  \beta + \kappa(\alpha)$, then $\mathpzc{Root}(\mathcal{T}_{\alpha})$ is the singleton $\lbrace (\beta + \kappa(\alpha)) \rbrace$. 
\end{enumerate}
\end{rem}

\begin{exa} The trees $\mathcal{T}_\omega$, $\mathcal{T}_{\omega+n}$, and $\mathcal{T}_{\omega\cdot 2}$ can be easily described as follows:
\begin{align*}
\mathcal{T}_{\omega} &= \lbrace (\omega, n): n \in \mathbb{N} \rbrace,\\
\mathcal{T}_{\omega + n} &= \lbrace (\omega + n) \rbrace \cup \lbrace (\omega + n, I, \omega, k): I \in \mathcal{D}^{n}, k \in \mathbb{N} \rbrace,\text{ where $n\in\mathbb{N}$ is fixed, and}\\
\mathcal{T}_{\omega \cdot 2} &= \lbrace (\omega \cdot 2, \omega + n): n \in \mathbb{N} \rbrace \cup \lbrace (\omega \cdot 2, \omega + n, I, \omega, k): n \in \mathbb{N}, I \in  \mathcal{D}^{n}, k \in \mathbb{N} \rbrace \cup\\
&\mathrel{\phantom{=}}\lbrace (\omega \cdot 2, \omega, k): k \in \mathbb{N}  \rbrace \cup \lbrace (\omega \cdot 2, k): k \in \mathbb{N} \rbrace.
\end{align*}
\end{exa}

The following definition provides an explicit alternative description of all trees $\mathcal{T}_{\alpha}$, highlighting the structure of the entries of a sequence $\lambda = (x_{1}, \ldots, x_{l}) \in \mathcal{T}_{\alpha}$ and giving a clearer intuition for the form of these trees. Moreover, the entries of $\lambda$ carry a more specific meaning, as they can be used directly to construct   the operator $T_{\lambda}$ and consequently the space $X_{\lambda}$.

\begin{dfn} \label{dfn of ambient tree}
Let $\mathcal{T}$ be the collection of all finite sequences $(x_{1},...,x_{l})$ in $[1, \omega_{1}) \cup \mathcal{D}$ satisfying the following:
\begin{itemize}
\item $x_{1} \in [1, \omega_{1})$.

\item For $1 \leq i \leq l $, if $x_{i}$ is a limit ordinal, then $i < l$ and $x_{i+1}  \in [1, \omega_{1})$ with $x_{i+1} < x_{i}$.

\item For $1 \leq i \leq l $, if $x_{i}$ is a successor ordinal, then either $i = l$ or $i \leq l-2$. In the latter case $x_{i+1}  \in \mathcal{D}^{\kappa(x_{i})}$ and  $x_{i+2} + \kappa (x_{i}) =  x_{i}$.
\end{itemize}
\end{dfn}

The following remark can be shown easily by induction on the countable ordinal numbers.
\begin{rem} \label{alt def of the trees}
For every $1\leq \alpha < \omega_{1}$, we have that 
\begin{align*}
\mathcal{T}_{\alpha} = \lbrace (x_{1},...,x_{l}) \in \mathcal{T}: x_{1} = \alpha \rbrace.
\end{align*}
\end{rem}

\begin{ntt}
Let $\alpha < \omega_{1}$ and $\lambda = (x_{1},...,x_{l}) \in \mathcal{T}_{\alpha}$, Remark \ref{alt def of the trees} yields that $x_l$ is a successor ordinal number, i.e., there is a limit (or zero) ordinal $\xi$ such that $x_{l} = \xi + k(x_{l})$. Denote
\begin{align*}
b(\lambda) &= \xi\text{ and}\\
\kappa(\lambda) &= \kappa(x_l).
\end{align*}
\end{ntt}

For example, for $\lambda = (\omega \cdot 2, \omega + n) \in \mathcal{T}_{\omega \cdot 2}$ we have $\kappa(\lambda) = n$ and $b(\lambda) = \omega$.

\begin{rem} \label{characterization  of im suc} Let $\alpha < \omega_{1}$ and $\lambda \in \mathcal{T}_{\alpha}$. 
\begin{enumerate}[label=(\alph*)]

\item \label{characterization  of im suc, part 1} If $\mu$ is a predecessor of a $\lambda$, then there exist $I \in \mathcal{D}^{\kappa(\mu)}$ and $\nu \in \mathcal{T}_{b(\mu)}$ such that
\[\lambda = \mu^{\frown}I^{\frown} \nu.\]  
If, furthermore, $\lambda$ is an immediate successor of $\mu$ then $\nu$ is a root of $\mathcal{T}_{b(\mu)}$.

\item If $\nu\in\mathcal{T}_{b(\lambda)+\kappa(\lambda)}$ then $\lambda\oslash\nu\in\mathcal{T}_\alpha$.

\item \label{characterization  of im suc, part 2} $\lambda$ is a root if and only if $\lambda (i) \in [1, \omega_{1})$ for every $1 \leq i \leq  |\lambda|$. 

\end{enumerate}
\end{rem}

\subsection{Operators and spaces indexed over $\mathcal{T}_\alpha$}
\label{subsec Tlambda}

Following the recursive definition of the trees $\mathcal{T}_\alpha$, we define, for every $\lambda\in\mathcal{T}_\alpha$, the $\theta_\lambda$-compression $T_\lambda:L_p\to L_p$, which will be used to directly define $X_\lambda = T_\lambda(L_p^{\kappa(\lambda),0})$, but also later to study distributional copies of the FDD $(X_\lambda)_{\lambda\in\mathcal{T}_\alpha}$.

\begin{dfn}
\label{definition of T_lampda}
Recursively on the countable ordinal numbers $\alpha$, we define, for every $\lambda \in \mathcal{T}_{\alpha}$, a number $0< \theta_{\lambda} \leq 1$ and a $\theta_\lambda$-compression $T_{\lambda} \colon L_{p} \to L_{p}$ as follows.
\begin{enumerate}[label=(\alph*)]
\item \label{definition of T_lampda, finite} Let $\alpha = n$ and  $\lambda = (n) \in \mathcal{T}_{n}$. We let $\theta_{\lambda} = 1$ and $T_{\lambda} = id$.

\item  \label{definition of T_lampda, limit} Let $\alpha$ be a limit ordinal and $\lambda  \in \mathcal{T}_{\alpha}$. Then there are $1\leq \beta<\alpha$ and $\mu\in\mathcal{T}_\beta$ such that $\lambda = (\alpha)^{\frown} \mu$. We let $\theta_{\lambda} = \theta_{\mu}$ and $T_{\lambda} = T_{\beta}^{\alpha} \circ T_{\mu}$, where $T_\beta^\alpha$ is as in Notation \ref{ntt for independent case}.

\item \label{kostasc} Let $\alpha = \beta + \kappa(\alpha)$ be an infinite successor ordinal and $\lambda \in \mathcal{T}_{\alpha}$. Then 
\begin{enumerate}[label=(\roman*)]
\item \label{kostasc, head} if $\lambda = (\alpha)$, let $\theta_{\lambda} = 1$ and $T_{\lambda} = id$.

\item \label{kostasc, no head} if $\lambda = (\alpha, I)^{\frown} \mu$, where $I \in \mathcal{D}^{\kappa(\alpha)}$ and $\mu \in \mathcal{T}_{\beta}$, let $\theta_{\lambda} = \vert I \vert \theta_{\mu}$ and $T_{\lambda} = T_{I} T_{\mu}$, where $T_I$ is the $|I|$-compression as in Notation \ref{example for compression part b}.
\end{enumerate}
\end{enumerate}
\end{dfn}

\begin{exa}
Let $\lambda = (\omega \cdot 2, \omega + n, I, \omega, k) \in \mathcal{T}_{\omega \cdot 2}$, where $n \in \mathbb{N}$, $I \in  \mathcal{D}^{n}$ and $k \in \mathbb{N}$. Then,
\begin{align*}
\theta_{\lambda} = \theta_{(\omega + n, I, \omega, k )} = \vert I \vert \theta_{(\omega, k)} = \vert I \vert \theta_{(k)} = \vert I \vert
\end{align*} 
and
\begin{align*}
T_{\lambda} = T_{\omega + n}^{\omega \cdot 2} T_{ (\omega + n, I, \omega, k )} =  T_{\omega + n}^{\omega \cdot 2} T_{I} T_{( \omega,k)} =  T_{\omega + n}^{\omega \cdot 2} T_{I}  T_{k}^{\omega}.
\end{align*}
\end{exa}

\begin{rem}
Let $\alpha<\omega_1$ and $\lambda\in\mathcal{T}_\alpha$.
\begin{enumerate}[label=(\alph*)]

    \item If $b(\lambda)$ is infinite, for $I\in\mathcal{D}^{\kappa(\lambda)}$ and $\mu\in\mathcal{T}_{b(\lambda)}$,
    we have
    \[\theta_{\lambda^\smallfrown I^\smallfrown \mu} = \theta_\lambda|I|\theta_\mu\text{ and }T_{\lambda^\smallfrown I^\smallfrown \mu} = T_\lambda T_I T_\mu.\]

\item For $\mu\in\mathcal{T}_{b(\lambda)+\kappa(\lambda)}$, we have
\[\theta_{\lambda\oslash\mu} = \theta_\lambda\theta_\mu\text{ and }T_{\lambda\oslash\mu} = T_\lambda T_\mu.\]

\end{enumerate}
\end{rem}

In the next remark, we provide an explicit definition of $\theta_{\lambda}$ and $T_{\lambda}$, although it will not be used directly.

\begin{rem} \label{explicit def of Theta and T} Let $\alpha < \omega_{1}$ and $\lambda = (x_1,\ldots,x_l) \in \mathcal{T}_{\alpha}$.
\begin{enumerate}[label=(\alph*)]
\item \label{explicit def of Theta and T, part a} Following the convention $\prod_\emptyset = 1$,
\begin{align*}
\theta_{\lambda} = \underset {1 \leq i \leq l\atop x_i\in\mathcal{D}}{\prod} \vert x_i \vert = \prod_{1\leq i<l\atop x_i\in[1,\omega_1)}2^{-\kappa(x_i)}.
\end{align*}

\item \label{explicit def of Theta and T, part b} For $1\leq j\leq l$ define 
\begin{equation*}
T_{j} = 
    \begin{cases}
    id &:\;j=1\text{ or }x_{j-1}\in\mathcal{D},\\
        T_{x_{j}}^{x_{j-1}} &:\; x_{j-1},x_j \in [1,\omega_{1}), \\
       T_{J}  &:\; x_{j} = J \in \mathcal{D}.
    \end{cases}
\end{equation*}
Then, $T_\lambda = T_1T_2\cdots T_l$.
\end{enumerate}
\end{rem}

For example, if $\lambda = (\omega \cdot 3, \omega + n, I, \omega, k) \in \mathcal{T}_{\omega \cdot 3}$ then $\theta_{\lambda} = |I| = 2^{-n}$ and $T_{\lambda} = T_{\omega + n}^{\omega \cdot 3} T_{I} T_{k}^{\omega}$.

\begin{dfn} \label{def of Xlambda}
Let $\alpha < \omega_{1}$ and $\lambda \in \mathcal{T}_{\alpha}$.
\begin{enumerate}[label=(\alph*)]

\item For $I\in\mathcal{D}_{\kappa(\lambda)-1}$, let
\begin{align*}
h_{I}^{\lambda} = T_{\lambda}(h_{I}),
\end{align*}

\item\label{def of Xlambda 1}
and let
\begin{align*}
X_\lambda &= T_\lambda(L_p^{\kappa(\lambda),0}) = \langle  h^\lambda_{I}: I \in \mathcal{D}_{\kappa(\lambda)-1} \rangle\text{ and}\\
Y_\lambda &= [X_\mu:\mu\sqsupseteq\lambda].
\end{align*}
    
\end{enumerate}
\end{dfn}

\begin{rem}
\label{recursive structure of Xlambda}
Let $1 \leq \alpha < \omega_{1}$. Definitions \ref{definition of T_lampda} and \ref{def of Xlambda} immediately yield the following recursive structure for the spaces $(X_{\lambda})_{\lambda \in \mathcal{T}_{\alpha}}$, reflecting the recursive construction of the space $R_{\alpha}^{p,0}$. The remarks given inside Definition \ref{definition of the indexed trees} apply here as well.

\begin{enumerate}[label=(\alph*)]
\item If $\alpha = n$, then $\theta_{n} = 1$ and $X_{\lambda} = L_{p}^{n,0}$.

\item If $\alpha$ is a limit ordinal and $\lambda \in \mathcal{T}_{\alpha}$, then $\lambda = (\alpha)^{\frown}\mu$ for some $\mu \in \mathcal{T}_{\beta}$ with $\beta < \alpha$, and $\theta_\lambda = \theta_\mu$ and $X_{\lambda} = T_{\beta}^{\alpha}(X_{\mu})$.

\item If $\alpha = \beta + \kappa(\alpha)$ is a successor and $\lambda \in \mathcal{T}_{\alpha}$, there are two possibilities for $X_{\lambda}$:
\begin{enumerate}
\item If $\lambda = (\alpha)$, then $\theta_\lambda=1$ and $X_{\lambda} = L_{p}^{\kappa(\alpha),0}$.
\item If $\lambda = (\alpha, I)^{\frown}\mu$ with $I \in \mathcal{D}^{\kappa(\alpha)}$ and $\mu \in \mathcal{T}_{\beta}$, then $\theta_\lambda = |I|\theta_\mu$ and $X_{\lambda} = T_{I}(X_{\mu})$.
\end{enumerate}
\end{enumerate}
\end{rem}

\subsection{The finite-dimensional decomposition of $R_\alpha^{p,0}$}
We show that, under a suitable enumeration, the family $((h_{I}^{\lambda})_{I \in \mathcal{D}_{\kappa(\lambda)-1}})_{\lambda \in \mathcal{T}_{\alpha}}$ forms a martingale difference sequence spanning $R_{\alpha}^{p,0}$. This is essentially \cite[Proposition 2.8]{bourgain:rosenthal:schechtman:1981}, adapted to our notation and framework. In particular, $(X_{\lambda})_{\lambda \in \mathcal{T}_{\alpha}}$ is an unconditional FDD of $R_{\alpha}^{p,0}$.

We first show that $(X_\lambda)_{\lambda\in\mathcal{T}_\alpha}$ spans $R_\alpha^{p,0}$.

\begin{prop} \label{FDD proposition} Let $\alpha < \omega_{1}$. Then,
\[R_{\alpha}^{p,0}  = [(X_{\lambda})_{\lambda\in\mathcal{T}_\alpha} ] = [(Y_\lambda)_{\lambda \in \mathpzc{Root}( \mathcal{T}_\alpha)}].\]
\end{prop}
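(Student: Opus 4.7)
The plan is to prove the two equalities separately. The second equality
\[[(X_{\lambda})_{\lambda\in\mathcal{T}_\alpha}] = [(Y_\lambda)_{\lambda \in \mathpzc{Root}(\mathcal{T}_\alpha)}]\]
is a purely tree-structural observation: each $Y_\lambda$ is by definition a subspace of $[(X_\mu)_{\mu\in\mathcal{T}_\alpha}]$, and conversely every $\mu\in\mathcal{T}_\alpha$ has a unique shortest prefix lying in $\mathcal{T}_\alpha$, which by definition is a root of $\mathcal{T}_\alpha$. Hence $X_\mu\subset Y_{\lambda(\mu)}$ for some root $\lambda(\mu)\sqsubseteq\mu$, giving the reverse inclusion.

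For the first equality I would proceed by transfinite induction on $1\leq\alpha<\omega_1$, mirroring the trichotomy in Definitions \ref{definition of the indexed trees} and \ref{defin of BRS Spaces}. The base case $\alpha=n$ is immediate since $\mathcal{T}_n=\{(n)\}$, $T_{(n)}=\mathrm{id}$ and $X_{(n)}=L_p^{n,0}=R_n^{p,0}$ by Example \ref{example initial Rap spaces}\ref{example initial Rap spaces a}.

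For a limit $\alpha$, Remark \ref{rem for the mean zero BRS spaces}\ref{rem for the mean zero BRS spaces, lim } gives $R_\alpha^{p,0}=[T_\beta^\alpha(R_\beta^{p,0}):1\leq\beta<\alpha]$. Each $T_\beta^\alpha$ is a distributional embedding, hence an isometry that commutes with closed linear span. Combined with the inductive hypothesis $R_\beta^{p,0}=[(X_\mu)_{\mu\in\mathcal{T}_\beta}]$ and the identification $T_\beta^\alpha(X_\mu)=X_{(\alpha)^\frown\mu}$ from Remark \ref{recursive structure of Xlambda}, together with the description $\mathcal{T}_\alpha=\bigcup_{1\leq\beta<\alpha}\{(\alpha)^\frown\mu:\mu\in\mathcal{T}_\beta\}$, the required equality follows. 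For a successor $\alpha=\beta+\kappa(\alpha)$ with $\beta$ an infinite limit, Remark \ref{rem for the mean zero BRS spaces}\ref{rem for the mean zero BRS spaces, suc } gives
\[R_\alpha^{p,0} = L_p^{\kappa(\alpha),0} + \langle T_I(R_\beta^{p,0}):I\in\mathcal{D}^{\kappa(\alpha)}\rangle.\]
Applying the inductive hypothesis to $R_\beta^{p,0}$, the fact that $T_I$ is continuous, and the identifications $X_{(\alpha)}=L_p^{\kappa(\alpha),0}$ and $T_I(X_\mu)=X_{(\alpha,I)^\frown\mu}$ from Remark \ref{recursive structure of Xlambda}, the closed span on the right equals $[(X_\lambda)_{\lambda\in\mathcal{T}_\alpha}]$ using the decomposition of $\mathcal{T}_\alpha$ in Definition \ref{definition of the indexed trees}\ref{definition of the indexed trees, suc ord}.

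There is no real obstacle here; the argument amounts to unwinding the matching recursive definitions of $R_\alpha^{p,0}$ and of $\mathcal{T}_\alpha$ together with the compatibility of $X_\lambda$ with the operators $T_\beta^\alpha$ and $T_I$. The only point requiring minor care is to use the continuity (isometry) of the distributional embeddings $T_\beta^\alpha$ and of the compressions $T_I$ so that the closed linear span passes through these operators, and to correctly match indices when writing $\lambda=(\alpha)^\frown\mu$ or $\lambda=(\alpha,I)^\frown\mu$.
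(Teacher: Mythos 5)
Your proposal is correct and takes essentially the same route as the paper's own proof: the second equality is dispatched by the tree structure, and the first by transfinite induction on $\alpha$ using the recursive descriptions of $R_\alpha^{p,0}$ and of $(X_\lambda)_{\lambda\in\mathcal{T}_\alpha}$ in Remark~\ref{recursive structure of Xlambda}, distinguishing the finite, limit, and infinite-successor cases. The remark about continuity of $T_\beta^\alpha$ and $T_I$ to pass closed linear spans through these operators is the right justification for the chain of equalities, which the paper leaves implicit.
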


\begin{proof}
The second equality is immediate. For the first, argue by induction on $\alpha$.  Let $\alpha=n\in\mathbb{N}$. By Definition \ref{definition of the indexed trees}\ref{definition of the indexed trees, finite}, $\mathcal{T}_n=\{(n)\}$. For $\lambda=(n)$, by Remark \ref{recursive structure of Xlambda}, $X_\lambda=L_p^{n,0} = R_n^{p,0}$.

Let $\alpha$ be a limit ordinal. By Definition \ref{definition of the indexed trees} \ref{definition of the indexed trees, limit}, every $\lambda\in\mathcal{T}_\alpha$ has the form $(\alpha)^{\frown}\mu$ with $\mu\in\mathcal{T}_\beta$ for some $\beta<\alpha$. By the induction hypothesis and Remark \ref{recursive structure of Xlambda},
\[[(X_\lambda)_{\lambda\in\mathcal{T}_\alpha}]=\Big[\bigcup_{\beta<\alpha}T_\beta^\alpha\big( \bigcup_{\mu\in\mathcal{T}_\beta}  X_\mu\big)\Big] = \Big[\!\bigcup_{\beta<\alpha}T_\beta^\alpha(R_\beta^{p,0})\Big]=R_\alpha^{p,0}.\]

Let $\alpha = \beta + \kappa(\alpha)$. By the induction hypothesis and Remark \ref{recursive structure of Xlambda}, we conclude:
\begin{align*}
[(X_{\lambda})_{\lambda \in \mathcal{T}_{\alpha}}] &= \Big[X_{(\alpha)}  \bigcup \bigcup_{I \in \mathcal{D}^{\kappa (\alpha)}}\bigcup_{\mu \in \mathcal{T}_{\beta}} X_{(\alpha,I)^\smallfrown \mu}\Big] =
 L_p^{\kappa(\alpha),0} + \Big[ \bigcup_{I \in \mathcal{D}^{\kappa (\alpha)}} T_{I}\Big(\bigcup_{\mu \in \mathcal{T}_{\beta}} X_{\mu} \Big) \Big]\\
 &= L_p^{\kappa(\alpha),0}+\Big[ \bigcup_{I\in\mathcal{D}^{\kappa(\alpha)}}T_I(R_\beta^{p,0})\Big] = R_\alpha^{p,0}. 
\end{align*}
\end{proof}

The following is the main result of this section. Before proceeding to the proof, we introduce the necessary notation.

\begin{thm} \label{unconditional FDD of the R_a^p} Let $\alpha < \omega_{1}$ and $(\lambda_{n},I_n)_{n=1}^{\infty}$ be an enumeration of
\[\{(\lambda,I):\lambda\in\mathcal{T}_\alpha\text{ and }I\in\mathcal{D}_{\kappa(\lambda)-1}\}\]
such that for all $n<m$ one of the following holds:
\begin{enumerate}[label=(\alph*)]
    
    \item $\lambda_n$ and $\lambda_m$ are incomparable, or

    \item $\lambda_n\sqsubsetneq \lambda_m$, or

    \item $\lambda_n = \lambda_m$ and $I_n <I_m$ in the lexicographical order of $\mathcal{D}_{\kappa(\lambda_n)-1}$.
    
\end{enumerate}
Then, $(h_{I_n}^{\lambda_{n}})_{n=1}^\infty$ is a martingale difference sequence, and by the Burkholder inequality, it is an unconditional basis of $R_\alpha^{p,0}$, and $(X_\lambda)_{\lambda\in\mathcal{T}_\alpha}$ is an unconditional FDD of the same space.
\end{thm}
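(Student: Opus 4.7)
The plan is to establish the martingale difference property of $(h_{I_n}^{\lambda_n})_{n=1}^\infty$ by transfinite induction on $\alpha$, mirroring the recursive structures of $\mathcal{T}_\alpha$ from Definition~\ref{definition of the indexed trees} and of $(X_\lambda)_{\lambda\in\mathcal{T}_\alpha}$ from Remark~\ref{recursive structure of Xlambda}. Once the MDS property is verified with respect to the natural filtration $\mathcal{F}_n=\sigma(h_{I_m}^{\lambda_m}: m\leq n)$, the Burkholder inequality immediately yields unconditionality of $(h_{I_n}^{\lambda_n})_{n=1}^\infty$ as a Schauder basis in $L_p$; combined with Proposition~\ref{FDD proposition}, this establishes the unconditional basis statement. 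The unconditional FDD property of $(X_\lambda)_{\lambda\in\mathcal{T}_\alpha}$ is then automatic, since each $X_\lambda$ is the closed linear span of the subset $\{h_I^\lambda: I\in\mathcal{D}_{\kappa(\lambda)-1}\}$ of this unconditional basis.

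The base case $\alpha=n$ reduces to the classical fact that the Haar system on $L_p^{n,0}$, enumerated lexicographically, is a martingale difference sequence. For the inductive step, fix $n$, set $\lambda=\lambda_{n+1}$ and $I=I_{n+1}$, and classify the previous indices by their relation to $\lambda$ under the enumeration hypothesis: (i) strict predecessors $\lambda_m\sqsubsetneq\lambda$; (ii) the same node $\lambda_m=\lambda$ with $I_m<I$; (iii) incomparable $\lambda_m$. For case (i), by Remark~\ref{characterization  of im suc} one writes $\lambda=\lambda_m{}^\frown K{}^\frown\nu$, so that the support of $h_I^\lambda$ lies inside a single atom of the finite subalgebra of $\supp(T_{\lambda_m})$ generated by $\{h_K^{\lambda_m}: K\in\mathcal{D}_{\kappa(\lambda_m)-1}\}$; on that atom, $h_I^\lambda$ integrates to zero because it is globally mean-zero and vanishes outside. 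For case (ii), the standard Haar MDS property on $L_p^{\kappa(\lambda),0}$ is transferred by the $\theta_\lambda$-compression $T_\lambda$, which acts as a scaled distributional embedding on its support and thereby preserves conditional expectations relative to the induced subalgebra. For case (iii), the longest common prefix of $\lambda_m$ and $\lambda$ branches either into two distinct dyadic intervals at a successor-ordinal position (yielding disjointly supported ranges) or into two distinct ordinals under a limit-ordinal ancestor (yielding independent ranges through the operators $T_\beta^\alpha$ of Notation~\ref{ntt for independent case}); in either subcase the conditional expectation of $h_I^\lambda$ given $h_{I_m}^{\lambda_m}$ vanishes.

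The main obstacle is to combine these three local verifications into the single statement $\mathbb{E}(h_I^\lambda\mid\mathcal{F}_n)=0$. The plan is to introduce an auxiliary larger filtration $\mathcal{G}_n\supseteq\mathcal{F}_n$ generated by the full finite-dimensional spaces $\{X_{\lambda_m}:m\leq n\}$ together with the ambient $\sigma$-algebras produced by the independent distributional embeddings at limit-ordinal steps. The nested and independent structure of the compositions $T_\lambda=T_{I_1}T_{\beta_1}^{\alpha_1}\cdots$ exposed in Remark~\ref{explicit def of Theta and T}, the support properties from Remark~\ref{true form of compression}, and the enumeration hypothesis (predecessors before successors, lex order within a node) ensure that these algebras fit together compatibly; verifying $\mathbb{E}(h_I^\lambda\mid\mathcal{G}_n)=0$ through the three case arguments above and invoking the tower property then yields $\mathbb{E}(h_I^\lambda\mid\mathcal{F}_n)=0$. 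The overall argument is a reformulation in the present notation of \cite[Proposition~2.8]{bourgain:rosenthal:schechtman:1981}, with the Burkholder inequality completing the passage from MDS to unconditional basis.
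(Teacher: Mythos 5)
Your high-level plan (transfinite induction on $\alpha$, distinguishing predecessors, same-node indices, and incomparables) is the right general approach, and your individual case verifications are each plausible in isolation. But there is a genuine gap at precisely the point you flag as ``the main obstacle.'' The MDS property requires $\mathbb{E}(h_{I_{n+1}}^{\lambda_{n+1}}\mid\mathcal{F}_n)=0$ for the \emph{joint} $\sigma$-algebra $\mathcal{F}_n=\sigma(h_{I_m}^{\lambda_m}:m\leq n)$, but your cases (i)--(iii) each establish vanishing of a conditional expectation with respect to a $\sigma$-algebra generated by a \emph{single} prior function (or by a single predecessor node). Pairwise vanishing of conditional expectations does not in general propagate to vanishing with respect to the joint history, and your proposal never actually produces such an argument. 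You gesture at an auxiliary filtration $\mathcal{G}_n\supseteq\mathcal{F}_n$ and at ``invoking the tower property,'' but the construction of $\mathcal{G}_n$ is not pinned down, and no verification of $\mathbb{E}(h_{I_{n+1}}^{\lambda_{n+1}}\mid\mathcal{G}_n)=0$ is given -- it is asserted that the three case arguments ``above'' accomplish this, when in fact they are the pairwise statements and cannot be glued together for free.

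The paper circumvents this difficulty with a cleaner structural decomposition that works with the full $\sigma$-algebra at once rather than combining pairwise facts. It inducts on $\alpha$ and splits by the nature of $\alpha$: at a limit ordinal, every $h_{I_n}^{\lambda_n}$ lies in some $T_\beta^\alpha(R_\beta^{p,0})$, and because the subsequences $(h_{I_n}^{\mu_n})_{n\in F(\beta)}$ are MDS by the inductive hypothesis while the $T_\beta^\alpha$ have \emph{independent} ranges, the interleaved family is automatically an MDS -- independence handles the joint $\sigma$-algebra across components in one stroke. At a successor ordinal $\alpha=\beta+\kappa(\alpha)$, the paper replaces $\mathcal{F}_n$ by the explicitly-constructed larger $\sigma$-algebra $\mathcal{A}$ of Proposition~\ref{conditional expectation commutes with compression}, which is the assembly of the branch algebras $\mathcal{A}_J=\sigma(h_{I_k}^{\mu_k}:k\in F(J))$ across the dyadic intervals $J\in\mathcal{D}^{\kappa(\alpha)}$, and uses the commutation identity $\mathbb{E}(T_{J_n}h_{I_n}^{\mu_n}\mid\mathcal{A})=T_{J_n}\,\mathbb{E}(h_{I_n}^{\mu_n}\mid\mathcal{A}_{J_n})=0$ together with the tower property for $\sigma\subset\mathcal{A}$. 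This commutation lemma is the precise tool that makes your informal case (ii) (``preserves conditional expectations relative to the induced subalgebra'') rigorous and, crucially, is proved for the \emph{joint} $\sigma$-algebra over all earlier indices. To repair your argument you would need to give an analogous commutation or independence result adapted to your $\mathcal{G}_n$; without it the proposal does not close.
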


\begin{ntt}
For $0\leq a<b\leq 1$ and $I = [a,b)$ denote $q_I:[0,1]\to [0,1]$ given by $q_I(t) = a+(b-a)t$.    
\end{ntt}

\begin{lem}
\label{computations using compressions sets and integrals}
For $0\leq a<b\leq 1$, $I = [a,b)$, and $f\in L_p$ the following hold:
\begin{enumerate}[label=(\alph*)]
    
    \item\label{computations using compressions sets and integrals a} for every $A\subset \mathbb{R}\setminus\{0\}$,
    \[\big[T_If\in A\big] = q_I\big([f\in A])\text{ and}\]

    \item\label{computations using compressions sets and integrals b} for every $A\subset [0,1]$,
    \[\int_A(T_If)(t)dt = |I|\int_{q_I^{-1}(A)}f(t)dt.\]

\end{enumerate}
\end{lem}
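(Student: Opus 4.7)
The plan is to prove both parts by unpacking the explicit formula for $T_I$, namely $(T_I f)(t) = f(q_I^{-1}(t))$ for $t \in I = [a,b)$ and $(T_I f)(t)=0$ otherwise, together with the observation that $q_I \colon [0,1] \to I$ is an affine bijection with inverse $q_I^{-1}(t) = (t-a)/(b-a)$ and constant Jacobian $1/|I|$.

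For part \ref{computations using compressions sets and integrals a}, I would first note that because $0 \notin A$ and $T_I f$ vanishes off $I$, the set $[T_I f \in A]$ is automatically contained in $I$. For $t \in I$ the defining relation $(T_I f)(t) = f(q_I^{-1}(t))$ shows
\[
t \in [T_I f \in A] \iff f(q_I^{-1}(t)) \in A \iff q_I^{-1}(t) \in [f \in A] \iff t \in q_I([f \in A]).
\]
Since $q_I([f\in A]) \subset I$, this yields the claimed equality of sets.

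For part \ref{computations using compressions sets and integrals b}, I would split $A = (A \cap I) \cup (A \setminus I)$. On $A \setminus I$ the integrand $T_I f$ vanishes, so
\[
\int_A (T_I f)(t)\,dt = \int_{A \cap I} f(q_I^{-1}(t))\,dt.
\]
Then I would apply the affine substitution $u = q_I^{-1}(t)$, which has $du = dt/|I|$ and carries $A \cap I$ onto $q_I^{-1}(A \cap I) = q_I^{-1}(A)$ (where $q_I^{-1}(A)$ is interpreted as $\{u \in [0,1]: q_I(u) \in A\}$, automatically ignoring the part of $A$ outside $I$). This yields
\[
\int_{A \cap I} f(q_I^{-1}(t))\,dt = |I|\int_{q_I^{-1}(A)} f(u)\,du,
\]
which is the claim.

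There is no real obstacle here; the argument is a bookkeeping calculation once one keeps track of the affine bijection $q_I$ and its Jacobian. The only thing worth being careful about is the convention on $q_I^{-1}(A)$ when $A$ is not contained in $I$, and the fact that part \ref{computations using compressions sets and integrals a} requires excluding $0$ from $A$ so that the portion of $T_I f$ supported outside $I$ does not contribute to $[T_I f \in A]$.
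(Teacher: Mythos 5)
Your part \ref{computations using compressions sets and integrals a} is essentially the paper's argument: the paper states the same observation (points in $[T_If\in A]$ must lie in $I$ because $0\notin A$) and proves one inclusion of a double inclusion, while you run both directions as a chain of equivalences. These are the same idea.

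Your part \ref{computations using compressions sets and integrals b} is correct but takes a genuinely different route from the paper. The paper establishes the identity only for $f=\chi_B$ with $B$ measurable, by computing $T_I\chi_B=\chi_{a+(b-a)B}$ and then manipulating sets directly using translation invariance and homogeneity of Lebesgue measure; the extension to general $f\in L_p$ is left implicit (linearity for simple functions plus density). You instead split $A=(A\cap I)\cup(A\setminus I)$, discard the part where $T_If$ vanishes, and apply the affine change of variables $u=q_I^{-1}(t)$ with constant Jacobian $1/|I|$ directly to an arbitrary $f\in L_p$. Your argument is a bit more direct in that it covers general $f$ in one shot and makes the appearance of the factor $|I|$ transparent (it is precisely the Jacobian), at the cost of invoking the change-of-variables formula; the paper's argument stays entirely at the level of elementary measure manipulations on sets but needs an extension step to pass from indicators to general $f$. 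Both are valid, and both hinge on the same bookkeeping about $q_I$ and on $T_If$ vanishing off $I$, which you correctly flag as the reason for restricting $A$ to $\mathbb{R}\setminus\{0\}$ in part \ref{computations using compressions sets and integrals a}.
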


\begin{proof}
The first assertion is proved with an elementary double inclusion, of which we showcase one direction. Let $t\in[0,1]$ such that $Tf\in A$. Since $(T_If)\neq 0$, it follows that $t\in I$, and $(T_If)(t) = f((b-a)^{-1}(t -a))$. Defining $s = (b-a)^{-1}(t-a)$ leads to $s\in[ f\in A]$, resulting in $t = q_I(s)$.

We establish the second assertion for $f =\chi_B$, where $B$ is a measurable subset of $[0,1]$. We will use $T_I\chi_B = \chi_{a+(b-a)B}$, where $a+(b-a)B\subset I$, and $q_I^{-1}(A) = (b-a)^{-1}[(A\cap I)-a]$. Indeed, by the homogeneity of the Lebesgue measure:
\begin{align*}
\int_A(T_I\chi_B)(t)dt &= \Big|A\cap \big(a+(b-a)B\big)\Big| = \Big|(A\cap I)\cap\big (a+(b-a)B\big)\Big|\\
&= \Big|[(A\cap I)-a]\cap \big((b-a)B\big)\Big|\\
&= (b-a)\Big|[(b-a)^{-1}(A\cap I)-a]\cap B\Big| = |I|\int_{q^{-1}(A)}\chi_B(t)dt.
\end{align*}
\end{proof}

\begin{prop}
\label{conditional expectation commutes with compression}
Let $n\in\mathbb{N}$ and, for $I\in\mathcal{D}^n$, $\mathcal{A}_I$ be a $\sigma$-subalgebra of $\mathcal{B}[0,1]$. Denote
\[\mathcal{A} = \Big\{A\subset[0,1]:\text{ for }I\in\mathcal{D}^n,\;q^{-1}_I(A)\in\mathcal{A}_I\Big\}.\]
Then, $\mathcal{A}$ is a $\sigma$-subalgebra of $\mathcal{B}[0,1]$ and the following hold.
\begin{enumerate}[label=(\alph*)]
    
    \item \label{conditional expectation commutes with compression,1} For every $I\in\mathcal{D}^n$ and $\mathcal{A}_I$-measurable function $f$, the function $T_If$ is $\mathcal{A}$-measurable.

    \item \label{conditional expectation commutes with compression,2} For every $f\in L_p$,
    \[ \mathbb{E}\big(T_If|\mathcal{A}\big) = T_I\mathbb{E}\big(f|\mathcal{A}_I\big).\]

\end{enumerate}
\end{prop}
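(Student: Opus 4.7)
The proposition has three components, and the heart of the matter is that $q_I$ acts as a bijection between $[0,1]$ and $I$, so taking preimages under $q_I$ commutes with set-theoretic operations, while $T_I$ just transports $\mathcal{A}_I$-information from $[0,1]$ into $I$. Throughout I will rely on Lemma \ref{computations using compressions sets and integrals}.

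\textbf{That $\mathcal{A}$ is a $\sigma$-subalgebra of $\mathcal{B}[0,1]$.} First I would verify $\mathcal{A}\subset\mathcal{B}[0,1]$: if $A\in\mathcal{A}$ then $A=\bigcup_{I\in\mathcal{D}^n}q_I(q_I^{-1}(A))$ and each $q_I$ is an affine homeomorphism from $[0,1]$ onto $I$, hence carries $\mathcal{A}_I\subset\mathcal{B}[0,1]$ into $\mathcal{B}[0,1]$. The closure of $\mathcal{A}$ under complements and countable unions is immediate from the identities $q_I^{-1}(A^c)=q_I^{-1}(A)^c$ and $q_I^{-1}(\bigcup_k A_k)=\bigcup_k q_I^{-1}(A_k)$, applied coordinate-by-coordinate over $I\in\mathcal{D}^n$.

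\textbf{Step \ref{conditional expectation commutes with compression,1} (measurability of $T_If$).} Let $f$ be $\mathcal{A}_I$-measurable and $B\in\mathcal{B}(\mathbb{R})$. I split into two cases. If $0\notin B$, Lemma \ref{computations using compressions sets and integrals}\ref{computations using compressions sets and integrals a} gives $[T_If\in B]=q_I([f\in B])$; for $J\in\mathcal{D}^n$, if $J=I$ then $q_I^{-1}(q_I([f\in B]))=[f\in B]\in\mathcal{A}_I$, and if $J\neq I$ then $q_I([f\in B])\subset I$ is disjoint from $J$, so $q_J^{-1}(q_I([f\in B]))=\emptyset\in\mathcal{A}_J$. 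If $0\in B$, I would write $[T_If\in B]=([0,1]\setminus I)\cup q_I([f\in B])$ using the definition of $T_I$, and apply the same case analysis: $q_J^{-1}$ of this set is $[0,1]$ for $J\neq I$ and $[f\in B]$ for $J=I$.

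\textbf{Step \ref{conditional expectation commutes with compression,2} (the conditional expectation identity).} I verify the two defining properties of the conditional expectation. Measurability of $T_I\mathbb{E}(f|\mathcal{A}_I)$ with respect to $\mathcal{A}$ is immediate from (a) applied to the $\mathcal{A}_I$-measurable function $\mathbb{E}(f|\mathcal{A}_I)$. For the integral identity, fix $A\in\mathcal{A}$; by Lemma \ref{computations using compressions sets and integrals}\ref{computations using compressions sets and integrals b} applied to $T_I\mathbb{E}(f|\mathcal{A}_I)$ and then to $T_If$,
\[
\int_A T_I\mathbb{E}(f|\mathcal{A}_I)\,dt=|I|\int_{q_I^{-1}(A)}\mathbb{E}(f|\mathcal{A}_I)\,dt=|I|\int_{q_I^{-1}(A)}f\,dt=\int_A T_If\,dt,
\]
where the middle equality uses $q_I^{-1}(A)\in\mathcal{A}_I$, which is the defining property of $\mathcal{A}$. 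This closes the proof.

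\textbf{Anticipated obstacles.} None of the steps is technically demanding; the main bookkeeping point is the case $0\in B$ in (a), which must be handled separately from Lemma \ref{computations using compressions sets and integrals}\ref{computations using compressions sets and integrals a} (whose statement excludes $0$) by expanding $[T_If\in B]$ via the piecewise definition of $T_I$. Everything else is a straightforward change-of-variables argument under the affine map $q_I$.
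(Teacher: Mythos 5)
Your proof is correct and follows essentially the same route as the paper's: both parts rest on Lemma \ref{computations using compressions sets and integrals}, with (a) established by the two-case split on whether $0$ belongs to the Borel set in question, and (b) by verifying the two defining properties of conditional expectation via the change-of-variables formula. Your write-up is somewhat more explicit — you spell out the verification that $\mathcal{A}$ is a $\sigma$-algebra and track the preimages $q_J^{-1}$ for $J\neq I$ separately — but these are details the paper leaves implicit rather than a genuinely different argument.
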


\begin{proof} To prove (a), let  $I\in\mathcal{D}^n$, $f$ be a $\mathcal{A}_I$-measurable function and $A \in \mathcal{B}(\mathbb{R})$. If $A \subset \mathbb{R} \setminus \lbrace 0 \rbrace$, then from Lemma \ref{computations using compressions sets and integrals} \ref{computations using compressions sets and integrals a} we have that  $\big[T_If\in A\big] = q_I\big([f\in A]\big)\in \mathcal{A}$. Similarly, if $A = \{0\}$, then we have that $\big[T_If\in A\big] = q_I\big([f = 0]\big) \cup [0,1] \setminus I \in \mathcal{A}$, because $I \in \mathcal{A}$. To prove (b), let $f \in L_{p}$. By \ref{conditional expectation commutes with compression,1}, $T_I\mathbb{E}\big(f|\mathcal{A}_I\big)$ is $\mathcal{A}$-measurable. Therefore, it is enough to prove that, for $A\in\mathcal{A}$,
\[\int_A \Big(T_I\mathbb{E}\big(f|\mathcal{A}_I\big)\Big)(t)dt = \int_A(T_If)(t)dt.\] 
By Lemma \ref{computations using compressions sets and integrals} \ref{computations using compressions sets and integrals b} we have that
\[ \int_A \Big(T_I\mathbb{E}\big(f|\mathcal{A}_I\big)\Big)(t)dt = \vert I \vert  \int_{q_{I}^{-1}(A)} \Big(\mathbb{E}\big(f|\mathcal{A}_{I} \big) \Big)(t)dt
= \vert I \vert  \int_{q_{I}^{-1}(A)} f(t)dt =  \int_A(T_If)(t)dt.\] 
\end{proof}

\begin{proof}[Proof of Theorem \ref{unconditional FDD of the R_a^p}]
We proceed by induction on $\alpha$. For $\alpha = n$, we have $R_{n}^{p,0} = L_{p}^{n,0}$, and, as is well known, the basis $(h_{I})_{I \in \mathcal{D}_{n-1}}$ of $L_{p}^{n,0}$ forms a martingale difference sequence in the lexicographic order of $\mathcal{D}_{n-1}$. 

Let $\alpha$ be a limit ordinal. For every $n \in \mathbb{N}$, there exist $\beta < \alpha$ and $\mu_{n} \in \mathcal{T}_{\beta}$ such that $h_{I_{n}}^{\lambda_{n}} = T_{\beta}^{\alpha}(h_{I_{n}}^{\mu_{n}})$. For each $\beta < \alpha$, define $F(\beta) = \{ n \in \mathbb{N} : \mu_{n} \in \mathcal{T}_{\beta} \}$. The sets $F(\beta)$, $\beta < \alpha$, form a partition of $\mathbb{N}$. By the induction hypothesis, for every $\beta < \alpha$, the sequence $(h_{I_{n}}^{\mu_{n}})_{n \in F(\beta)}$ is a martingale difference sequence with respect to the increasing enumeration of $F(\beta)$. Because $(T^\alpha_\beta)_{\beta<\alpha}$ are distributional embeddings with independent ranges, $(h_{I_{n}}^{\lambda_{n}})_{n \in \mathbb{N}} = ((h_{I_{n}}^{\mu_{n}})_{n \in F(\beta)})_{\beta < \alpha}$ is a martingale difference sequence.

Let $\alpha = \beta + \kappa(\alpha)$, where $\beta$ is a limit ordinal, and let $n \in \mathbb{N}$ be arbitrary. Denote
\[\sigma = \sigma(h_{I_{k}}^{\lambda_{k}} : 1 \leq k \leq n - 1).\]
We will show that $\mathbb{E}(h_{I_{n}}^{\lambda_{n}} \mid \sigma) = 0$. We distinguish two cases for $\lambda_{n}$: either $\lambda_{n} = (\beta + \kappa(\alpha))$, or $\lambda_{n} \sqsupseteq (\beta + \kappa(\alpha))^{\frown} J_{n}$ for some $J_{n} \in \mathcal{D}^{\kappa(\alpha)}$. In the first case, by the enumeration of $\mathcal{T}_{\alpha}$, $\lambda_{1} = \lambda_{2} = \cdots = \lambda_{n} = (\beta + \kappa(\alpha))$. Since $(h_{I})_{I \in \mathcal{D}_{\kappa(\alpha)-1}}$ is a martingale difference sequence in the lexicographic order of $\mathcal{D}_{\kappa(\alpha)-1}$, it follows that $\mathbb{E}(h_{I_{n}}^{\lambda_{n}} \mid \sigma) = 0$. In the second case, we have $h_{I_{n}}^{\lambda_{n}} = T_{J_{n}}(h_{I_{n}}^{\mu_{n}})$ for some $\mu_{n} \in \mathcal{T}_{\beta}$. For every $J \in \mathcal{D}^{\kappa(\alpha)}$, define
\[F(J) = \{1 \leq k \leq n - 1 : \lambda_{k} \sqsupseteq (\beta + \kappa(\alpha))^{\frown} J\}.\]
If $k \in F(J)$, then $h_{I_{k}}^{\lambda_{k}} = T_{J}(h_{I_{k}}^{\mu_{k}})$ for some $\mu_{k} \in \mathcal{T}_{\beta}$. For each $J \in \mathcal{D}^{\kappa(\alpha)}$, let
\[\mathcal{A}_{J} = \sigma(h_{I_{k}}^{\mu_{k}} : k \in F(J)).\]
Observe     $\sigma \subset \mathcal{A}$, where $\mathcal{A}$ is as in Proposition \ref{conditional expectation commutes with compression}.  By the induction hypothesis, $(h_{I_{k}}^{\mu_{k}})_{k \in F(J_{n})} \cup \{h_{I_{n}}^{\mu_{n}}\}$ is a martingale difference sequence in the increasing enumeration of $F(J_{n})\cup\{n\}$. Combining this with Proposition \ref{conditional expectation commutes with compression} \ref{conditional expectation commutes with compression,2}, we obtain that
\[\mathbb{E}(h_{I_{n}}^{\lambda_{n}} \mid \mathcal{A}) = \mathbb{E}(T_{J_{n}} h_{I_{n}}^{\mu_{n}} \mid \mathcal{A}) = T_{J_{n}} \mathbb{E}(h_{I_{n}}^{\mu_{n}} \mid \mathcal{A}_{J_{n}}) = 0.\]
Therefore, $\mathbb{E}(h_{I_{n}}^{\lambda_{n}} \mid \sigma) = 0$.
\end{proof}

\begin{rem}
\label{IMPORTANT PROJECTION}
Let $\alpha < \omega_{1}$ and $(\lambda_{n}, I_{n})_{n=1}^{\infty}$ be an enumeration of $\{(\lambda, I) : \lambda \in \mathcal{T}_{\alpha},\, I \in \mathcal{D}_{\kappa(\lambda)-1}\}$ as in Theorem \ref{unconditional FDD of the R_a^p}. By Example \ref{Eaxmple that gives formula of projection}, if $(b_{I_{n}}^{\lambda_{n}})_{n=1}^{\infty}$ is a distributional copy of $(h_{I_{n}}^{\lambda_{n}})_{n=1}^{\infty}$, then the orthogonal projection $P$ onto $[(b_{I_{n}}^{\lambda_{n}})_{n=1}^{\infty}]$ is given by
\[P(f) = \sum_{n=1}^{\infty} \langle \|b_{I_{n}}^{\lambda_{n}}\|_{q}^{-1} b_{I_{n}}^{\lambda_{n}}, f \rangle\, \|b_{I_{n}}^{\lambda_{n}}\|_{p}^{-1} b_{I_{n}}^{\lambda_{n}}.\]
\end{rem}


\subsection{Independent decompositions of $R_\alpha^{p,0}$}
We show that for a limit ordinal $\alpha$, the space $R_{\alpha}^{p,0}$ is the independent sum of the successor ordinal spaces $R_{\beta}^{p,0}$, $1 \leq \beta < \alpha$, possibly with repetitions. The number of repetitions can, for instance, be determined from the Cantor normal form of $\alpha$. Although this result is not used explicitly in this form, certain parts of it, in particular the independence of the spaces $(Y_{\lambda})_{\lambda \in \mathpzc{Root}(\mathcal{T}_{\alpha})}$, are used in Section \ref{dist repre subsection}, where we prove that distributional embedding schemes induce distributional embeddings.

While the following statement holds in general, the second part is non-trivial only for limit ordinals $\alpha$, that is, when $\mathpzc{Root}(\mathcal{T}_{\alpha})$ is not a singleton.

\begin{prop}
\label{independent sum of successor ordinals}
For $\alpha<\omega_1$, the following hold.
\begin{enumerate}[label=(\alph*)]

    \item For every $\lambda\in\mathcal{T}_\alpha$, \[T_\lambda \big( R_{b(\lambda)+\kappa(\lambda)}^{p,0} \big) = Y_\lambda.\]

    \item The spaces $(Y_\lambda)_{\lambda\in\mathcal{R}_\alpha}$ are independent, and in particular,
\[R_{\alpha}^{p,0}\equiv^{\mathrm{dist}}\Big(\sum_{\lambda\in \mathpzc{Root}(\mathcal{T}_\alpha)} R^{p,0}_{b(\lambda)+\kappa(\lambda)}\Big)_{\mathrm{Ind},p}.\]

\end{enumerate}
\end{prop}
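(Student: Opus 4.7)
The plan is to handle (a) by a direct identification via the gluing operation and (b) by transfinite induction on $\alpha$, exploiting the two layers of independence built into the construction.

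For (a), the key identification is that $\nu\mapsto\lambda\oslash\nu$ is a bijection between $\mathcal{T}_{b(\lambda)+\kappa(\lambda)}$ and $\{\mu\in\mathcal{T}_\alpha:\mu\sqsupseteq\lambda\}$. This follows from Remark~\ref{characterization  of im suc}\ref{characterization  of im suc, part 1}, with $\nu=(b(\lambda)+\kappa(\lambda))$ covering the case $\mu=\lambda$ and $\nu=(b(\lambda)+\kappa(\lambda),I)^{\frown}\nu'$ covering $\mu=\lambda^{\frown} I^{\frown}\nu'$. The composition identity $T_{\lambda\oslash\nu}=T_\lambda T_\nu$ recorded after Definition~\ref{definition of T_lampda} then gives $X_{\lambda\oslash\nu}=T_\lambda(X_\nu)$. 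Taking closed linear spans and applying Proposition~\ref{FDD proposition} to the tree $\mathcal{T}_{b(\lambda)+\kappa(\lambda)}$, while using the continuity of $T_\lambda$, yields $T_\lambda(R_{b(\lambda)+\kappa(\lambda)}^{p,0})=Y_\lambda$.

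For the independence in (b), the cases where $\alpha$ is finite or a successor are trivial since $\mathpzc{Root}(\mathcal{T}_\alpha)$ is a singleton by Remark~\ref{remark}. For $\alpha$ a limit, Definition~\ref{definition of the indexed trees}\ref{definition of the indexed trees, limit} together with Remark~\ref{characterization  of im suc}\ref{characterization  of im suc, part 2} give
\[
\mathpzc{Root}(\mathcal{T}_\alpha)=\bigcup_{1\leq\beta<\alpha}\{(\alpha)^{\frown}\mu:\mu\in\mathpzc{Root}(\mathcal{T}_\beta)\},
\]
and Remark~\ref{recursive structure of Xlambda} (combined with part (a) applied to $\mu$) yields $Y_{(\alpha)^{\frown}\mu}=T_\beta^\alpha(Y_\mu)$. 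The inductive hypothesis makes $(Y_\mu)_{\mu\in\mathpzc{Root}(\mathcal{T}_\beta)}$ independent in $R_\beta^{p,0}$; since $T_\beta^\alpha$ is a distributional embedding, it preserves joint distributions (via the characteristic-function argument noted after Example~\ref{example for compression part a }), so $(T_\beta^\alpha(Y_\mu))_\mu$ is independent in $L_p$ for each fixed $\beta$. Across different $\beta<\alpha$, independence follows from the defining property of $(T_\beta^\alpha)_{\beta<\alpha}$ in Notation~\ref{ntt for independent case}. A short joint-moment computation combines these two layers into joint independence of the entire family $(Y_\lambda)_{\lambda\in\mathpzc{Root}(\mathcal{T}_\alpha)}$.

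For the distributional isomorphism, a one-line induction (or the explicit formula of Remark~\ref{explicit def of Theta and T}\ref{explicit def of Theta and T, part a}) shows $\theta_\lambda=1$ for every root $\lambda$, so $T_\lambda$ is itself a distributional embedding there; part (a) then identifies $Y_\lambda$ as a distributional copy of $R^{p,0}_{b(\lambda)+\kappa(\lambda)}$. Combined with the independence above and Proposition~\ref{FDD proposition}, the map sending $\sum_\lambda T_\lambda(f_\lambda)$ to the element of the independent sum with $\lambda$-th coordinate $f_\lambda$ is the required distributional isomorphism. The main obstacle I foresee is the bookkeeping in the limit step of (b): correctly combining the intra-$\beta$ independence (from the inductive hypothesis, transferred through $T_\beta^\alpha$) with the inter-$\beta$ independence (from Notation~\ref{ntt for independent case}) into a single joint-independence statement requires careful tracking of which $\sigma$-algebras lie inside which range.
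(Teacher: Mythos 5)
Your proof is correct, and part (a) takes a genuinely different route from the paper. The paper proves (a) by transfinite induction on $\alpha$, unwinding the recursive definitions of $T_\lambda$ and $X_\lambda$ in each of the three cases (finite, limit, successor); you instead give a direct, non-inductive identification via the gluing bijection $\nu\mapsto\lambda\oslash\nu$ from $\mathcal{T}_{b(\lambda)+\kappa(\lambda)}$ onto $\{\mu\in\mathcal{T}_\alpha:\mu\sqsupseteq\lambda\}$ together with $T_{\lambda\oslash\nu}=T_\lambda T_\nu$ and Proposition~\ref{FDD proposition}. Your route is cleaner and avoids the case analysis entirely; it is in fact the same identification the paper itself uses later in Remark~\ref{Relation to Elambda}, just applied here. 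For (b), your argument matches the paper's: the finite and successor cases are vacuous since $\mathpzc{Root}(\mathcal{T}_\alpha)$ is a singleton, and in the limit case you split the roots by $\beta<\alpha$, transfer intra-$\beta$ independence through the distributional embedding $T_\beta^\alpha$, and get inter-$\beta$ independence from the independent ranges of $(T_\beta^\alpha)_{\beta<\alpha}$. Your final bookkeeping worry is legitimate but standard (combining a two-level grouping into joint independence), and the paper compresses it into a single sentence just as you would. One small thing you do that the paper leaves implicit: you explicitly note $\theta_\lambda=1$ for roots, which is what makes $T_\lambda$ a genuine distributional embedding in part (a) and is needed to read off the distributional isomorphism in (b); it is worth stating.
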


\begin{proof}
Firstly, we prove (a) by induction on $\alpha$. If $\alpha = n \in \mathbb{N}$, $\mathcal{T}_{\alpha} = \lbrace  (n) \rbrace$. For $\lambda = (n)$ we have that $T_{\lambda} = id$, and thus,
\begin{align*}
T_{\lambda} (R_{b(\lambda) + \kappa(\lambda)}^{p,0} ) = R_{n}^{p,0} = X_{(n)} = Y_{(n)}.
\end{align*}

Let $\alpha$ be a limit ordinal. Let $\lambda\in\mathcal{T}_\alpha$. Then $\lambda = (\alpha)^{\frown} \mu $, where $\mu \in \mathcal{T}_{\beta}$, for some $\beta<\alpha$. By the induction hypothesis, we observe that 
\begin{align*}
T_{\lambda} \big( R_{b(\lambda) + \kappa(\lambda)}^{p,0} \big) &= T_{\beta}^{\alpha} T_{\mu} \big( R_{b(\mu) + \kappa(\mu)}^{p,0} \big) = T_{\beta}^{\alpha} Y_{\mu} =  \big[  T_{\beta}^{\alpha} X_{\mu^{\prime}}: \mu \sqsubseteq \mu^{\prime} \big] \\
&= \big[  T_{\beta}^{\alpha} T_{\mu^{\prime}} L_{p}^{\kappa(\mu^{\prime}),0}: \mu \sqsubseteq \mu^{\prime} \big] = \big[  T_{\lambda^{\prime}} L_{p}^{\kappa(\lambda^{\prime}),0}: \lambda \sqsubseteq \lambda^{\prime} \big]  =  [X_{\lambda'}:\lambda\sqsubseteq \lambda'] = Y_{\lambda}.
\end{align*}

Now, let $\alpha = \beta + \kappa(\alpha)$. Let $\lambda\in\mathcal{T}_\alpha$. Then $\lambda = (\beta + \kappa(\alpha))$ or  $\lambda = (\beta + \kappa(\alpha))^{\frown} I^{\frown} \mu $, where $I \in \mathcal{D}^{\kappa(\alpha)}$ and $\mu \in \mathcal{T}_{\beta}$. In the first case, $T_\lambda = id$, consequently, 
\begin{align*}
 T_{\lambda} \big(R_{b(\lambda) + \kappa(\lambda)}^{p,0}\big) =  R_{\alpha}^{p,0} = Y_{\lambda}. 
\end{align*}
For the latter case, by the induction hypothesis, we observe that 
\begin{align*}
T_{\lambda} \big( R_{b(\lambda) + \kappa(\lambda)}^{p,0} \big) &= T_{I} T_{\mu} \big( R_{b(\mu) + \kappa(\mu)}^{p,0} \big) = T_{I} Y_{\mu}  = \big[  T_{I} X_{\mu^{\prime}}: \mu \sqsubseteq \mu^{\prime} \big] \\ 
&= \big[  T_{I} T_{\mu^{\prime}} L_{p}^{\kappa(\mu^{\prime}),0}: \mu \sqsubseteq \mu^{\prime} \big] = \big[  T_{\lambda^{\prime}} L_{p}^{\kappa(\lambda^{\prime}),0}: \lambda \sqsubseteq \lambda^{\prime} \big]  =[X_{\lambda'}:\lambda\sqsubseteq \lambda'] =Y_{\lambda}.
\end{align*}
Now, we prove (b) by induction on $\alpha$. From Remark \ref{remark} \ref{remark2} the finite and successor cases trivially hold. Let $\alpha$ be a limit ordinal and note $\mathpzc{Root}(\mathcal{T}_{\alpha}) = \lbrace (\alpha)^{\frown} \mu : \mu \in \mathpzc{Root}(\mathcal{T}_{\beta}), \beta < \alpha \rbrace$. For every $\beta<\alpha$, by the induction hypothesis and because $T_\beta^\alpha$ is a distributional embedding, the spaces $(Y_{(\alpha)^\smallfrown\mu})_{\mu\in\mathcal{T}_\beta} = (T_\beta^\alpha (Y_{\mu}))_{\mu\in\mathcal{T}_\beta}$ are independent. Because the operators $(T_\beta^\alpha)_{\beta<\alpha}$ have independent ranges, the spaces $((Y_{(\alpha)^\smallfrown\mu})_{\mu\in\mathcal{T}_\beta})_{\beta<\alpha}$ are independent. Proposition \ref{FDD proposition} and the above assertion (a) and (b) imply
\begin{align*}
R_{\alpha}^{p,0}\equiv^{\mathrm{dist}}\Big(\sum_{\lambda\in \mathpzc{Root}(\mathcal{T}_\alpha)} R^{p,0}_{b(\lambda)+\kappa(\lambda)}\Big)_{\mathrm{Ind},p}.
\end{align*}
\end{proof}

We do not use the following remark, but it can serve to make the last statement of Proposition \ref{independent sum of successor ordinals} more explicit. There are parallels between it and material in \cite[Section 2]{alspach:1999}.

\begin{rem}
Let $\alpha<\omega_1$ be a limit ordinal with Cantor normal form
\[\alpha = \omega^{\xi_1}k_1+\cdots+\omega^{\xi_m}k_m,\]
where $\omega_1>\xi_1>\cdots>\xi_m\geq 1$ and $k_1,\ldots,k_m\in\mathbb{N}$. By induction on $\alpha$, the following hold. 
\begin{enumerate}[label=(\alph*)]
    
    \item If $\xi_m\geq 2$, then, for every successor ordinal $1\leq \beta<\alpha$ the set
    \[\{\lambda\in\mathpzc{Root}(\mathcal{T}_\alpha):b(\lambda)+\kappa(\lambda) = \beta\}\]
    is countably infinite. In particular, $R_\alpha^{p,0}$ is distributionally isomorphic to the independent sum of infinitely many copies of itself.

    \item If $\xi_m = 1$, denote $\gamma = \omega^{\xi_1}k_1+\cdots+\omega^{\xi_{m-1}}k_{m-1}$. Then, for all $0\leq k<k_m$ and $n\in\mathbb{N}$, if $\beta =\gamma +\omega k + n$, 
    \[\#\{\lambda\in\mathpzc{Root}(\mathcal{T}_\alpha):b(\lambda)+\kappa(\lambda) = \beta\} = 2^{k_m - k-1},\]
    while for all successor ordinals $1\leq \beta<\gamma$ this set is countably infinite. In particular,
    \[R_\alpha^{p,0} \equiv^{\mathrm{dist}}\Big(R^{p,0}_\gamma \oplus \sum_{k = 0}^{k_m-1}\sum_{n=1}^\infty \big(\underbrace{R^{p,0}_{\gamma + \omega k + n}\oplus\cdots\oplus R^{p,0}_{\gamma + \omega k + n}}_{2^{k_m-k-1}\text{ terms}}\big)\Big)_{\mathrm{Ind},p}.\]
    
\end{enumerate}
\end{rem}


\subsection{The supports of the spaces $X_\lambda$, $\lambda\in\mathcal{T}_\alpha$}
For two incomparable $\lambda, \mu \in \mathcal{T}_{\alpha}$, we show that the spaces $X_{\lambda}$ and $X_{\mu}$ have disjoint supports if and only if the sequences $\lambda$ and $\mu$ first split at a position where their entries are dyadic intervals. This fact is used in Sections \ref{DES section} and \ref{dist repre subsection}, where distributional embedding schemes are studied.

Recall that all set-theoretic relations are understood in the essential sense.

\begin{dfn}
\label{support of subspace definition}
Let $X$ be a (not necessarily closed) subspace of $L_p$ and $A$ be a measurable subset of $[0,1]$.
\begin{enumerate}[label=(\alph*)]

\item  We write $\mathrm{supp}(X)\subset A$ if, for every $f\in X$, $\supp(f) \subset A$.

\item If $\mathrm{supp}(X)\subset A$ and, additionally, for every measurable subset $B$ of $[0,1]$ such that $\mathrm{supp}(X)\subset B$, we have $A\subset B$, then we write $\mathrm{supp}(X) = A$. 

\end{enumerate}

\end{dfn}

It is standard that there exists a set $A$ such that $\mathrm{supp}(X) = A$, and that $A$ is essentially unique.

\begin{prop} \label{remark for supp and lenghts} For $\alpha < \omega_{1}$ and $\lambda \in \mathcal{T}_{\alpha}$, the following hold.
\begin{enumerate}[label=(\alph*)]

\item \label{remark for supp and lenghts, part zero} We have
\[\supp(X_\lambda) = \supp(T_{\lambda} \chi_{[0,1]}),\]
and in particular $\theta_{\lambda} = \vert \supp(X_{\lambda}) \vert$.

\item \label{remark for supp and lenghts, part e} If $\mu$ is a successor of $\lambda$, then
\begin{align*}
\supp(X_{\mu}) \subset \supp(X_{\lambda} ).
\end{align*}
More precisely, if $\mu = \lambda^{\smallfrown} I^{\smallfrown} \nu$, where $I \in \mathcal{D}^{\kappa (\lambda)}$ and $\nu \in \mathcal{T}_{b(\lambda)}$, then 
\begin{align*}
\supp(X_{\mu}) \subset [T_{\lambda} \chi_{I} = 1  ].
\end{align*}

\end{enumerate}
\end{prop}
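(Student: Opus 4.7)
The plan is to handle part \ref{remark for supp and lenghts, part zero} by a double inclusion argument and then deduce part \ref{remark for supp and lenghts, part e} by iterating the same idea through the factorization $T_{\mu} = T_{\lambda} T_{I} T_{\nu}$ obtained from the recursive construction of $T_\mu$.

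For part \ref{remark for supp and lenghts, part zero}, I would first show $\supp(X_{\lambda}) \subset \supp(T_{\lambda} \chi_{[0,1]})$: every $g \in L_{p}^{\kappa(\lambda),0}$ satisfies $\supp(g) \subset [0,1] = \supp(\chi_{[0,1]})$, so the first assertion of Remark \ref{true form of compression} applied to $T_\lambda$ yields $\supp(T_\lambda g) \subset \supp(T_\lambda \chi_{[0,1]})$. For the reverse direction, I would observe that the constraints in Definition \ref{dfn of ambient tree} force the final entry $x_l$ of any $\lambda = (x_1,\ldots,x_l) \in \mathcal{T}_\alpha$ to be a successor ordinal, so $\kappa(\lambda) \geq 1$, and hence the Haar function $h_{[0,1]} = \chi_{[0,1/2)} - \chi_{[1/2,1)}$ lies in $L_{p}^{\kappa(\lambda),0}$. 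Since $\supp(h_{[0,1]}) = [0,1] = \supp(\chi_{[0,1]})$, Remark \ref{true form of compression} applied in both directions gives $\supp(T_\lambda h_{[0,1]}) = \supp(T_\lambda \chi_{[0,1]})$, which forces any measurable set containing $\supp(X_\lambda)$ to contain $\supp(T_\lambda \chi_{[0,1]})$. The identity $\theta_\lambda = |\supp(X_\lambda)|$ then follows from Definition \ref{comp dist embed}: $\mathrm{dist}(T_\lambda \chi_{[0,1]}) = \theta_\lambda \delta_1 + (1-\theta_\lambda)\delta_0$, so $T_\lambda \chi_{[0,1]}$ takes the value $1$ on a set of measure $\theta_\lambda$.

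For part \ref{remark for supp and lenghts, part e}, by Remark \ref{characterization  of im suc}\ref{characterization  of im suc, part 1}, any successor $\mu$ of $\lambda$ factors as $\mu = \lambda^{\smallfrown} I^{\smallfrown} \nu$ with $I \in \mathcal{D}^{\kappa(\lambda)}$ and $\nu \in \mathcal{T}_{b(\lambda)}$; the existence of such a $\nu$ forces $b(\lambda)$ to be an infinite limit ordinal. The recursive formula stated immediately after Definition \ref{definition of T_lampda} then gives $T_\mu = T_\lambda T_I T_\nu$, so $X_\mu = T_\lambda T_I T_\nu(L_p^{\kappa(\mu),0})$. Since $T_I$ vanishes outside $I$ by Notation \ref{example for compression part b}, every element of $T_I T_\nu(L_p^{\kappa(\mu),0})$ has support contained in $I = \supp(\chi_I)$. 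Applying Remark \ref{true form of compression} to $T_\lambda$ then yields $\supp(X_\mu) \subset \supp(T_\lambda \chi_I)$. The compression property applied to $\chi_I$ shows $T_\lambda \chi_I$ is $\{0,1\}$-valued, whence $\supp(T_\lambda \chi_I) = [T_\lambda \chi_I = 1]$, which is the precise assertion. The coarser inclusion $\supp(X_\mu) \subset \supp(X_\lambda)$ follows by combining this with part \ref{remark for supp and lenghts, part zero} and one further application of Remark \ref{true form of compression} to the containment $\supp(\chi_I) \subset \supp(\chi_{[0,1]})$.

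The argument is essentially routine; the only technical point is repeatedly invoking the compression property to propagate support inclusions through $T_\lambda$, $T_I$, and $T_\nu$. The key conceptual move is exhibiting $h_{[0,1]}$ as a full-support element of $L_p^{\kappa(\lambda),0}$, which upgrades a one-sided inclusion to a support equality in part \ref{remark for supp and lenghts, part zero}, after which part \ref{remark for supp and lenghts, part e} reduces to iterating the same principle through the factorization $T_\mu = T_\lambda T_I T_\nu$.
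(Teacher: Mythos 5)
Your proposal is correct and follows essentially the same route as the paper's proof: both hinge on Remark \ref{true form of compression} to propagate support inclusions, use $h_{[0,1]}$ as a full-support element of $L_p^{\kappa(\lambda),0}$ to upgrade the one-sided inclusion in part \ref{remark for supp and lenghts, part zero} to an equality, and derive part \ref{remark for supp and lenghts, part e} from the factorization $T_\mu = T_\lambda T_I T_\nu$. The only difference is that your write-up is somewhat more verbose than the paper's compact proof, which compresses the two inclusions into a single display chain.
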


\begin{proof}
Both assertions are based on Remark \ref{true form of compression}. For $f\in X_\lambda$ there is $g\in L_p^{\kappa(\lambda),0}$ such that $f = T_\lambda g$. Recalling that $T_\lambda h_{[0,1]}\in X_\lambda$, we obtain 
\[\supp(f)\subset\supp(T_\lambda\chi_{[0,1]}) = \supp(T_\lambda h_{[0,1]})  \subset \supp(X_\lambda).\]
In other words, $\supp(X_\lambda)\subset [T_{\lambda} \chi_{[0,1]} = 1 ]\subset\supp(X_\lambda)$. Also,
\[|\supp(T_\lambda\chi_{[0,1]})| = |[T_\lambda\chi_{[0,1]} = 1]| = \theta_\lambda|[\chi_{[0,1]}=1]| = \theta_\lambda.\]
Next, for $\mu = \lambda^\smallfrown I^\smallfrown \nu$,
\[\supp(T_\mu\chi_{[0,1]}) = \supp(T_\lambda T_I T_\nu\chi_{[0,1]})\subset \supp(T_\lambda T_I\chi_{[0,1]}) = \supp(T_\lambda\chi_I).\]
\end{proof}

\begin{rem}
By Proposition \ref{remark for supp and lenghts} \ref{remark for supp and lenghts, part e}, for $\alpha < \omega_{1}$ and $\lambda \in \mathcal{T}_{\alpha}$, we have $\mathrm{supp}(Y_\lambda) = \mathrm{supp}(X_\lambda)$. 
\end{rem}

\begin{dfn} Let $\alpha < \omega_{1}$ and $\lambda, \mu \in \mathcal{T}_{\alpha}$. We say that $\lambda$ and $\mu$ are disjointly supported if 
\begin{align*}
\supp({X_\lambda}) \cap \supp(X_{\mu}) = \emptyset.
\end{align*}
\end{dfn}

\begin{prop} \label{incomparable and disj supp character via interval} Let $\alpha < \omega_{1}$.  Let $\lambda, \mu \in \mathcal{T}_{\alpha}$ be incomparable and denote
\[i_{0} = \min \lbrace i : \lambda(i) \neq \mu (i) \rbrace.\]
\begin{enumerate}[label=(\alph*)]
\item The entries $\lambda(i_{0})$ and  $\mu (i_{0})$ are  either both ordinals or both intervals.

\item \label{incomparable and disj supp character via interval, part 2} $\lambda$ and $\mu$ are disjointly supported if and only if $\lambda(i_{0})$ and  $\mu (i_{0})$ are both intervals.
\end{enumerate}
\end{prop}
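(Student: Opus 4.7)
The plan is to prove both parts by transfinite induction on $\alpha$, leveraging the recursive structure of $\mathcal{T}_\alpha$ from Definition \ref{definition of the indexed trees} together with Remark \ref{true form of compression}: every $\theta$-compression preserves both disjointness and non-disjointness of supports. The base case $\alpha \in \mathbb{N}$ is vacuous since $\mathcal{T}_n=\{(n)\}$ admits no incomparable pairs. In each inductive step, a splitting near the root of the relevant subtree is handled directly (via independence in the limit case, or disjointness of dyadic intervals in the successor case), while a deeper splitting is reduced to the inductive hypothesis on a smaller tree.

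For the limit case, two incomparable $\lambda,\mu \in \mathcal{T}_\alpha$ take the form $\lambda=(\alpha)^{\frown}\tilde\lambda$ and $\mu=(\alpha)^{\frown}\tilde\mu$ with $\tilde\lambda \in \mathcal{T}_{\tilde\beta}$ and $\tilde\mu \in \mathcal{T}_{\tilde\beta'}$. If $\tilde\beta=\tilde\beta'$, then $\tilde\lambda,\tilde\mu$ are incomparable in $\mathcal{T}_{\tilde\beta}$; since $X_\lambda = T_{\tilde\beta}^\alpha(X_{\tilde\lambda})$ and $X_\mu = T_{\tilde\beta}^\alpha(X_{\tilde\mu})$ by Remark \ref{recursive structure of Xlambda}, and $T_{\tilde\beta}^\alpha$ is a $1$-compression, the claim follows from the inductive hypothesis applied at the shifted position $i_0-1$, whose splitting entries retain their type. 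If $\tilde\beta\neq\tilde\beta'$, then $i_0=2$ and both $\lambda(2),\mu(2)$ are ordinals, giving (a). For (b), by Proposition \ref{remark for supp and lenghts} \ref{remark for supp and lenghts, part zero}, $\mathrm{supp}(X_{\tilde\lambda})$ and $\mathrm{supp}(X_{\tilde\mu})$ are sets $B_{\tilde\lambda}, B_{\tilde\mu}$ of positive measure. The independence of the ranges of $T_{\tilde\beta}^\alpha$ and $T_{\tilde\beta'}^\alpha$ from Notation \ref{ntt for independent case} then makes $T_{\tilde\beta}^\alpha\chi_{B_{\tilde\lambda}}$ and $T_{\tilde\beta'}^\alpha \chi_{B_{\tilde\mu}}$ independent Bernoulli-type random variables whose supports intersect in a set of measure $|B_{\tilde\lambda}||B_{\tilde\mu}|>0$; hence $X_\lambda,X_\mu$ fail to be disjointly supported.

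For the successor case $\alpha=\beta+\kappa(\alpha)$, Remark \ref{remark} \ref{remark2} forces the unique root $(\alpha)$ to be comparable with every other member, so any incomparable pair has the form $\lambda=(\alpha,I)^{\frown}\nu$ and $\mu=(\alpha,J)^{\frown}\nu'$ with $I,J\in\mathcal{D}^{\kappa(\alpha)}$ and $\nu,\nu'\in\mathcal{T}_\beta$. When $I=J$, the prefix operator $T_I$ is an $|I|$-compression, so the inductive hypothesis on $\mathcal{T}_\beta$ applied to $\nu,\nu'$ transfers directly. When $I\neq J$, $i_0=2$ and both $\lambda(2),\mu(2)$ are dyadic intervals, giving (a); moreover, Proposition \ref{remark for supp and lenghts} \ref{remark for supp and lenghts, part e} places $\mathrm{supp}(X_\lambda)\subset I$ and $\mathrm{supp}(X_\mu)\subset J$, and these intervals are disjoint since they are distinct elements of $\mathcal{D}^{\kappa(\alpha)}$, yielding disjoint supports.

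The principal care point is tracking the shift of $i_0$ through the inductive step and confirming that the splitting entries preserve their type under this shift; this is automatic because concatenation prepends fixed-type entries. No individual step is conceptually deep; the argument amounts to a careful unwinding of the recursive definitions combined with the two mechanisms that govern (b), namely independence of the $T_\beta^\alpha$ and geometric disjointness of dyadic intervals.
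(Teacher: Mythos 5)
Your proof is correct and follows essentially the same route as the paper: induction on $\alpha$, using independence of the ranges of the $T^\alpha_\beta$ (plus positive measure of supports, Proposition \ref{remark for supp and lenghts}) when the split occurs at an ordinal right after a limit root, disjointness of distinct dyadic intervals when the split occurs at the interval position in the successor case, and Remark \ref{true form of compression} to transfer (non-)disjointness of supports along the compressions $T^\alpha_\beta$ and $T_I$ in the remaining cases. The only difference is organizational: the paper proves the ``both intervals $\Rightarrow$ disjointly supported'' direction directly at the splitting node via Proposition \ref{remark for supp and lenghts}\ref{remark for supp and lenghts, part e} and reserves the induction for the converse, whereas you run both directions (and part (a)) through the induction, which is a harmless reorganization.
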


\begin{proof}
We prove the second assertion, since the first follows directly from Definition \ref{dfn of ambient tree}. Assume that $\lambda(i_{0})$ and $\mu(i_{0})$ are both intervals, say $\lambda(i_{0}) = I$ and $\mu(i_{0}) = J$. Then $\lambda = \nu^{\frown} I^{\frown} \tilde{\lambda}$ and $\mu = \nu^{\frown} J^{\frown} \tilde{\mu}$, where $I \neq J \in \mathcal{D}^{\kappa(\nu)}$ and $\tilde{\lambda}, \tilde{\mu} \in \mathcal{T}_{b(\nu)}$. By Proposition \ref{remark for supp and lenghts} \ref{remark for supp and lenghts, part e}, we have $\supp(X_{\lambda}) \subset [T_{\nu} \chi_{I} = 1]$ and $\supp(X_{\mu}) \subset [T_{\nu} \chi_{J} = 1]$. Remark \ref{true form of compression}, together with the disjointness of $I$ and $J$, implies that $\supp(X_{\lambda})$ and $\supp(X_{\mu})$ are disjoint.

We prove the other direction by induction on $\alpha$, i.e., in the $\alpha$ inductive step we assume that for every $\beta<\alpha$, any $\lambda, \mu$ in $\mathcal{T}_\beta$  that split at an ordinal are not disjointly supported and we will show this also holds in $\mathcal{T}_\alpha$. The finite case $\alpha = n$ trivially holds, because $\mathcal{T}_n$ is a singleton.

Let $\alpha$ be a limit ordinal, and assume that $\lambda,\mu\in\mathcal{T}_\alpha$ split at an ordinal. We have that $\lambda = \nu^{\frown} \lambda(i_{0})^{\frown} \tilde{\lambda}$ and $\mu= \nu^{\frown} \mu(i_{0})^{\frown} \tilde{\mu}$. From Definition \ref{dfn of ambient tree} we observe that $\lambda(i_{0}-1) = \mu(i_{0}-1)$ is a limit ordinal. Thus, either $\lambda(i_{0}-1) = \alpha$ or $\lambda(i_{0}-1) < \alpha$. For the first case, we have that  $\lambda = (\alpha)^{\frown} \lambda(i_{0})^{\frown} \tilde{\lambda}$ and $\mu= (\alpha)^{\frown} \mu(i_{0})^{\frown} \tilde{\mu}$. Thus, $X_{\lambda} = T_{\lambda(i_{0})}^{\alpha}(X_{(\lambda(i_{0}))^{\frown}\tilde{\lambda}})$ and  $X_{\mu} = T_{\mu(i_{0})}^{\alpha}(X_{(\mu(i_{0}))^{\frown}\tilde{\mu}})$. Recall that $T_{\lambda(i_{0})}^{\alpha}$ and $T_{\mu(i_{0})}^{\alpha}$ have independent ranges. Because $\supp(X_{\lambda}) = [T_{\lambda} h_{[0,1]} \neq 0]$ and $\supp(X_{\mu}) = [T_{\mu} h_{[0,1]} \neq 0]$, they are independent. From Proposition \ref{remark for supp and lenghts} \ref{remark for supp and lenghts, part zero} we have that $\supp(X_{\lambda})$ and $\supp(X_{\mu})$ are of positive measure. Hence, $\lambda$ and $\mu$ are not disjointly supported. For the latter case, we have that $\lambda = \nu^{\frown} \lambda(i_{0})^{\frown} \tilde{\lambda}$ and $\mu= \nu^{\frown} \mu(i_{0})^{\frown} \tilde{\mu}$, where $\lambda(i_{0}-1) = \mu(i_{0}-1) < \alpha$. We have that $\lambda(i_{0}-1)^{\frown} \lambda(i_{0})^{\frown} \tilde{\lambda}$ and $\mu(i_{0}-1)^{\frown} \mu(i_{0})^{\frown} \tilde{\mu}$ belong to $\mathcal{T}_{\lambda(i_{0}-1)}$ and split at an ordinal. By the induction hypothesis, we have that $\lambda(i_{0}-1)^{\frown} \lambda(i_{0})^{\frown} \tilde{\lambda}$ and $\mu(i_{0}-1)^{\frown} \mu(i_{0})^{\frown} \tilde{\mu}$ are not disjointly supported. Combining this with Remark \ref{true form of compression}, we obtain that $\lambda$ and $\mu$ are not disjointly supported.

Now, let $\alpha = \beta + \kappa(\alpha)$ and assume that $\lambda, \mu \in \mathcal{T}_{\alpha}$ split at an ordinal. By the definition of $\mathcal{T}_\alpha$, there exist $I \in \mathcal{D}^{\kappa(\alpha)}$ and $\tilde{\lambda}, \tilde{\mu} \in \mathcal{T}_{\beta}$ such that $\lambda = (\beta + \kappa(\alpha))^{\frown} I^{\frown} \tilde{\lambda}$ and $\mu= (\beta + \kappa(\alpha))^{\frown} I^{\frown} \tilde{\mu}$ and $\tilde{\lambda}$ and $\tilde{\mu}$ split at an ordinal. By the induction hypothesis we have that $\tilde{\lambda}$ and $\tilde{\mu}$ are not disjointly supported. Thus, Remark \ref{true form of compression} implies that $\lambda$ and  $\mu$ are not disjointly supported.
\end{proof}


\section{Distributional embedding schemes}
\label{DES section}

{In this section, we present distributional embedding schemes, a fundamental concept in our paper. For ordinal numbers $\alpha,\beta<\omega_1$, a distributional embedding scheme from $R_\alpha^{p,0}$ to $R_\beta^{p,0}$ is an object $\Psi$ comprising a collection of finite subsets of $\mathcal{T}_\beta$ with a tree structure resembling $\mathcal{T}_\alpha$ and an accompanying collection of distributional embeddings of finite-dimensional $L_p$-spaces. The object $\Psi$ is designed so that it induces a distributional embedding $T_\Psi:R_\alpha^{p,0}\to R_\beta^{p,0}$. In Section 7, this tool will facilitate the stabilization of operators. Although $T_\Psi$ is easily defined as a linear operator, establishing that it preserves the distribution is an extensive process that will be concluded in Section \ref{dist repre subsection}. Here, we will deliver essential estimates related to $\Psi$ and introduce specific sub-distributional embedding schemes of $\Psi$, which are utilized in Section \ref{dist repre subsection}.}

{

We define distributional embedding schemes in a slightly broader setting than that described in the preceding paragraph. Specifically, we work with subtrees of $\mathcal{T}_{\alpha}$ and with the corresponding subspaces of $R_{\alpha}^{p,0}$ that they determine. This degree of generality is unnecessary for the reduction to diagonal operators but becomes essential for the scalar reduction. Although this latter step is not included in the present paper, the additional generality does not alter the proof, apart from minor adjustments in notation.

\begin{dfn}
Let $\alpha<\omega_1$, and let $\mathcal{E}$ be a subset of $\mathcal{T}_\alpha$.
\begin{enumerate}[label=(\alph*)]

\item We call $\mathcal{E}$ a subtree of $\mathcal{T}_\alpha$ if, for every $\lambda_0\in\mathcal{E}$, $\{\lambda\in\mathcal{T}_\alpha:\lambda \sqsubseteq\lambda_0\}\subset\mathcal{E}$.

\item We call $\mathcal{E}$ an interval-branching subtree of $\mathcal{T}_\alpha$ if is it is a subtree and, for every $\lambda_0\in\mathcal{E}$ that is non-maximal in $\mathcal{E}$ and $I\in\mathcal{D}^{\kappa(\lambda_0)}$, there exists $\lambda\in\mathcal{E}$ such that ${\lambda_0}^\smallfrown I\sqsubseteq \lambda$.

\item  We call $\mathcal{E}$ a perfectly branching subtree of $\mathcal{T}_\alpha$ if it is a subtree and, for every $\lambda_0\in\mathcal{E}$ that is non-maximal in $\mathcal{T}_\alpha$ and for every $I\in\mathcal{D}^{\kappa(\lambda_0)}$,
\[\sup\Big\{b(\lambda)+\kappa(\lambda): \lambda\in\mathcal{E}\text{ and }{\lambda_0}^\smallfrown I\sqsubseteq\lambda\Big\} = b(\lambda_0).\]

\end{enumerate}
Given a subtree $\mathcal{E}$ of $\mathcal{T}_\alpha$, we define the subspace $R_\mathcal{E} = [(X_\lambda)_{\lambda\in\mathcal{E}}]$ of $R_{\alpha}^{p,0}$.
\end{dfn}

\begin{rem}
A subtree $\mathcal{E}$ is a downward-closed subset of $\mathcal{T}_\alpha$. It is termed interval-branching if every element $\lambda$ either has no extensions or can be extended along all intervals $I\in\mathcal{D}^{\kappa(\lambda)}$. The class of spaces $R_\mathcal{E}$, where $\mathcal{E}$ ranges over all interval-branching subtrees of all $\mathcal{T}_\alpha$, encompasses the tree subspaces  $X_T^p$ of $L_p$, found in \cite[Section 3]{bourgain:rosenthal:schechtman:1981}, but we will not delve into this. A subtree is perfectly branching if the complexity of its extensions matches that of $\mathcal{T}_\alpha$.

We will prove the main result concerning distributional embedding schemes, namely Theorem \ref{Dist Isom theorem}, in the setting of interval-branching subtrees. For the purposes of our main Theorem \ref{reduction to FDD diagonal} this additional generality is not required, since it would have been sufficient to consider the full trees $\mathcal{E}=\mathcal{T}_\alpha$. Our reason for working in this broader setting is to establish a framework that may later serve in proving the primary factorization property for the spaces $R_{\omega^\alpha}^{p,0}$, $1\le \alpha<\omega_1$. Perfectly branching subtrees were introduced for the same purpose; although they play no role in the present argument and will not be mentioned again, we retain them to maintain continuity with a prospective extension of this work.
\end{rem}

\begin{dfn} \label{DES} 
Let $\alpha, \beta < \omega_{1}$ and consider the subtrees $\mathcal{E}$ of $\mathcal{T}_\alpha$ and $\mathcal{F}$ of $\mathcal{T}_\beta$. Let $\Psi = (\mathcal{M}_{\lambda}, (J_{\mu})_{\mu \in \mathcal{M}_{\lambda}})_{ \lambda \in  \mathcal{E}}$ be a collection such that, for every $\lambda \in  \mathcal{E}$, $\mathcal{M}_{\lambda}$ is a finite subset of $\mathcal{F}$ and, for every $\mu \in \mathcal{M}_{\lambda}$, $J_\mu: L_p^{\kappa(\lambda),0} \to L_p^{\kappa(\mu),0}$ is a distributional embedding.  We call $\Psi$ a distributional embedding scheme of $R_{\mathcal{E}}$ in $R_{\mathcal{F}}$ if the following hold.
\begin{enumerate}[label=(\alph*)]
\item \label{DES part a}  For $\lambda \in  \mathcal{E}$, 
\begin{enumerate}[label=(\roman*)]

\item \label{DES part a, part 1} the members of $\mathcal{M}_{\lambda}$ are pairwise disjointly supported and

\item \label{DES part a, part 2} $\underset{\mu \in \mathcal{M}_{\lambda}}{\sum} \theta_{\mu} = \theta_{\lambda}$.
\end{enumerate}

\item \label{DES part b} If $\lambda_{1}, \lambda_{2} \in \mathcal{E}$ are incomparable and $\mu_{1} \in \mathcal{M}_{\lambda_{1}}$, $\mu_{2} \in \mathcal{M}_{\lambda_{2}}$, then $\mu_{1}, \mu_{2}$ are incomparable.

\item \label{DES part c} Let $\lambda\in\mathcal{E}$ be a non-root, $\lambda_0$ be the immediate predecessor of $\lambda$, and let $I\in\mathcal{D}^{\kappa(\lambda_0)}$ such that ${\lambda_0}^\smallfrown I\sqsubseteq \lambda$. Then, for every $\mu \in \mathcal{M}_{\lambda}$, there exist $\mu_{0} \in \mathcal{M}_{\lambda_{0}}$ and $P\in\mathcal{D}^{\kappa(\mu_{0})}$ such that
\[{\mu_{0}}^\smallfrown P \sqsubseteq  \mu\text{ and }P \subset [J_{\mu_0}h_{\pi (I)} = \epsilon (I)].\]

\end{enumerate}
\end{dfn}

For the definitions of $\pi(I)$ and $\epsilon(I)$, see Notation \ref{haar system notation}.

\begin{rem} \label{lemma for existnce of eleme in th epredic}
A brief explanation of the definition of $\Psi$ is necessary.
\begin{enumerate}[label=(\roman*)]

\item\label{lemma for existnce of eleme in th epredic i} When focusing solely on the tree structure of the subtrees $\mathcal{E}$ and $\mathcal{F}$, then $(\mathcal{M}_{\lambda})_{\lambda\in\mathcal{E}}$ is a disjoint collection of non-empty finite antichains of $\mathcal{F}$, indexed over $\mathcal{E}$, such that, for every $\lambda_{1}, \lambda_{2}\in\mathcal{E}$, $\lambda_1\sqsubsetneq \lambda_2$ if and only if there exists $\mu_1\in\mathcal{M}_{\lambda_1}$, $\mu_2\in\mathcal{M}_{\lambda_2}$ such that $\mu_1\sqsubsetneq \mu_2$. While this aspect helps establish some properties of $\Psi$, $\Psi$ encodes significantly more information required for representing $R_{\mathcal{E}}$ in $R_{\mathcal{F}}$.

\item Each set $\mathcal{M}_\lambda$ will be used to define a distributional copy of $X_\lambda$ in $\langle X_\mu:\mu\in\mathcal{M}_\lambda\rangle$ via $(J_\mu)_{\mu\in\mathcal{M}_\lambda}$. Item \ref{DES part a} asserts that the spaces $( X_\mu)_{\mu\in\mathcal{M}_\lambda}$ are disjointly supported and the measure of the support of their union aligns with the measure of the support of $X_\lambda$.

\item While \ref{DES part b} appears to be an innocent condition related only to the tree structure, when considered alongside the other items, it guarantees the preservation of independence.

\item Item \ref{DES part c} indicates that each $\mu \in \mathcal{M}_\lambda$ is the (not necessarily immediate) successor of some $\mu_0 \in \mathcal{M}_{\lambda_0}$ along an interval $P$. Although it may seem logical to choose $P$ so that $\epsilon(P) = \epsilon(I)$, this perspective is overly simplistic because of the involvement of $J_{\mu_0}$. Therefore, $P$ is restricted by a condition linked to $J_{\mu_0}$ and $\pi(I)$ and $\epsilon(I)$.

\end{enumerate}
\end{rem}

We provide the next piece of notation for context, but we will not address it yet.

\begin{ntt}
\label{notation TPsi}
Fix $\alpha, \beta < \omega_{1}$, subtrees $\mathcal{E}$ of $\mathcal{T}_\alpha$ and $\mathcal{F}$ of $\mathcal{T}_\beta$, and a distributional embedding scheme $\Psi = (\mathcal{M}_{\lambda}, (J_{\mu})_{\mu \in \mathcal{M}_{\lambda}})_{ \lambda \in  \mathcal{E}}$ of $R_{\mathcal{E}}$ in $R_{\mathcal{F}}$. We denote by $T_\Psi:\langle (X_\lambda)_{\lambda\in\mathcal{E}}\rangle\to \langle (X_\mu)_{\mu\in\mathcal{F}}\rangle$ the linear map given by
\[T_\Psi|_{X_\lambda} = \sum_{\mu\in\mathcal{M}_\lambda}T_\mu J_\mu T_{\lambda}^{-1}|_{X_\lambda},\]
for $\lambda\in\mathcal{E}$. That is, for $\lambda\in\mathcal{E}\text{ and }I\in\mathcal{D}_{\kappa(\lambda)-1}$,
\[T_\Psi h_I^\lambda = \sum_{\mu\in\mathcal{M}_\lambda}T_\mu J_\mu h_I.\]
\end{ntt}

\begin{rem}
In Section \ref{dist repre subsection}, we will show that if $\mathcal{E}$ and $\mathcal{F}$ are interval-branching subtrees, then $T_{\Psi}$ is a distributional embedding. The same result also holds for non-interval-branching subtrees, with an identical proof apart from heavier notation needed to distinguish the intervals along which a given $\lambda \in \mathcal{E}$ may extend. Since the proof is already long and no application of the more general statement is known to us, we state and prove the result under the interval-branching assumption.
\end{rem}

\begin{rem}
It is possible to construct distributional embeddings $T : R_{\alpha}^{p,0} \to R_{\alpha}^{p,0}$ that are not induced by distributional embedding schemes. While the definition of a distributional embedding scheme could likely be extended to include all distributional embeddings $T : R_{\alpha}^{p,0} \to R_{\beta}^{p,0}$, we do not pursue this level of generality, as it would complicate the presentation. That said, it would be of independent interest to provide a combinatorial description of all distributional embeddings between Bourgain-Rosenthal-Schechtman spaces.
\end{rem}

\subsection{Estimates on distributional embedding schemes}
We will provide precise estimates for quantities of the form $\sum_{\nu\in\Gamma}\theta_\nu$, for subsets $\Gamma$ of $\mathcal{M}_\lambda$ of a special type stemming from the tree structure of $\Psi$.

\begin{ntt}
For the rest of this subsection, we fix $\alpha, \beta < \omega_{1}$, interval-branching subtrees $\mathcal{E}$ of $\mathcal{T}_\alpha$ and $\mathcal{F}$ of $\mathcal{T}_\beta$, and a distributional embedding scheme $\Psi = (\mathcal{M}_{\lambda}, (J_{\mu})_{\mu \in \mathcal{M}_{\lambda}})_{ \lambda \in  \mathcal{E}}$ of $R_{\mathcal{E}}$ in $R_{\mathcal{F}}$. The partial relation ``$\sqsubseteq$'' should be understood as referring to $\mathcal{E}$ or $\mathcal{F}$, rather than $\mathcal{T}_\alpha$ or $\mathcal{T}_\beta$.
\end{ntt}

 To facilitate the main statement of this subsection, we introduce some notation, which implicitly depends on $\Psi$.

\begin{ntt}
\label{notation for Ds with direction given by interval}
Let $\lambda\in\mathcal{E}$.
\begin{enumerate}[label=(\alph*)]

\item For $\mu\in\mathcal{F}$ denote
\[\Gamma_\lambda^{\mu} = \Big\{\nu\in\mathcal{M}_\lambda: \mu\sqsubseteq \nu\Big\},\]
and, for $J\in\mathcal{D}^{\kappa(\mu)}$, denote
\[\Gamma_\lambda^{{\mu}^\smallfrown J} = \Big\{\nu\in\mathcal{M}_\lambda: {\mu}^\smallfrown J\sqsubseteq \nu\Big\}.\]

\item For $I\in\mathcal{D}^{\kappa(\lambda)}$ and $\mu\in\mathcal{M}_\lambda$ denote
\[\mathcal{D}^{\kappa(\mu)}_I = \Big\{P\in\mathcal{D}^{\kappa(\mu)}: P\subset [J_\mu h_{\pi(I)} = \epsilon(I)]\Big\}.\]

\end{enumerate}

\end{ntt}

The above will be used extensively in the sequel. The reader may realize that $\mathcal{D}^{\kappa(\mu)}_I$ depends on $\mu$, rather than only $\kappa(\mu)$. However, no confusion should arise from this.

\begin{rem}
\label{remark about partitioning a successor}
Based on the above notation, the following hold.
\begin{enumerate}[label=(\alph*)]

\item\label{remark about partitioning a successor a} For $\lambda\in\mathcal{E}$, $I\in\mathcal{D}^{\kappa(\lambda)}$, and $\mu\in\mathcal{M}_\lambda$ we have
\[\sum_{P\in\mathcal{D}^{\kappa(\mu)}_I}|P| = |I|,\]
because $D_\mu h_{\pi(I)}$ is a distributional copy of $h_{\pi(I)}$ in $L_p^{\kappa(\mu),0}$.

\item\label{remark about partitioning a successor b} Definition \ref{DES} \ref{DES part c} is reformulated as follows. Let $\lambda_0\in\mathcal{E}$, $I\in\mathcal{D}^{\kappa(\lambda_0)}$, and $\lambda$ be an immediate successor of $\lambda_0$ with ${\lambda_0}^\smallfrown I\sqsubseteq \lambda$. Then the collections of sets
\[\Big\{\Gamma_\lambda^{\mu^\smallfrown P}: \mu\in\mathcal{M}_{\lambda_0}\text{ and }P\in\mathcal{D}^{\kappa(\mu)}_I\Big\}\]
forms a partition of $\mathcal{M}_\lambda$. By iteration, the same applies if we substitute ``immediate successor'' with ``successor''.

\item\label{remark about partitioning a successor c} There exist variations of \ref{remark about partitioning a successor b}, such as the following. Let $\lambda_0\in\mathcal{E}$, $I\in\mathcal{D}^{\kappa(\lambda_0)}$, and $\lambda$ be a successor of $\lambda_0$ with ${\lambda_0}^\smallfrown I\sqsubseteq \lambda$. Let  $\mu_0\in\mathcal{F}$ and $J\in\mathcal{D}^{\kappa(\mu_0)}$ such that $\Gamma_{\lambda_0}^{{\mu_0}^\smallfrown J}\neq\emptyset$. Then the collections of sets
\[\Big\{\Gamma_\lambda^{\mu^\smallfrown P}: \mu\in\Gamma_{\lambda_0}^{{\mu_0}^\smallfrown J}\text{ and }P\in\mathcal{D}^{\kappa(\mu)}_I\Big\}\]
forms a partition of $\Gamma_\lambda^{{\mu_0}^\smallfrown J}$.

\end{enumerate}
 
\end{rem}

The following is the main initial result about distributional embedding schemes, focusing on specific estimates. It will be used in the next subsection, where we define sub-distributional embedding schemes of $\Psi$.

\begin{prop}
\label{such and such}
Fix $\mu_0\in\mathcal{F}$ and let $\lambda_0\in\mathcal{E}$ be minimal such that $\Gamma_{\lambda_0}^{\mu_0}\neq\emptyset$, assuming it exists. The following hold.
\begin{enumerate}[label=(\alph*)]

\item\label{such and such a} For every $\lambda \sqsupseteq\lambda_0$,
\[\sum_{\nu\in\Gamma_{\lambda}^{\mu_0}}\theta_\nu = \theta_{\mu_0}\frac{\theta_\lambda}{\theta_{\lambda_0}}.\]

\item\label{such and such b} If $\mu_0\notin\mathcal{M}_{\lambda_0}$ then, for every $J\in\mathcal{D}^{\kappa(\mu_0)}$ and $\lambda \sqsupseteq\lambda_0$,
\[\sum_{\nu\in\Gamma_{\lambda}^{{\mu_0}^\smallfrown J}}\theta_\nu = \theta_{\mu_0}|J|\frac{\theta_\lambda}{\theta_{\lambda_0}}.\]

\item\label{such and such c} If $\mu_0\in\mathcal{M}_{\lambda_0}$ then, for every $I\in\mathcal{D}^{\kappa(\lambda_0)}$ and $J\in\mathcal{D}^{\kappa(\mu_0)}_I$,  and for every  $\lambda \sqsupseteq {\lambda_0}^\smallfrown I$,
\[\sum_{\nu\in\Gamma_{\lambda}^{{\mu_0}^\smallfrown J}}\theta_\nu = \theta_{\mu_0}\frac{|J|}{|I|}\frac{\theta_\lambda}{\theta_{\lambda_0}}.\]

\end{enumerate}
\end{prop}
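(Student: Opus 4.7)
The plan is to prove (a), (b), and (c) simultaneously by induction on $\lambda \in \mathcal{E}$, based on a mass-conservation lemma computing the total $\theta$-mass over $\Gamma_\lambda^{{\nu'}^{\frown} P}$ at immediate-successor steps. First, I establish this lemma: if $\lambda$ is the immediate successor of $\lambda^-$ in $\mathcal{E}$ along $I \in \mathcal{D}^{\kappa(\lambda^-)}$, then for every $\nu' \in \mathcal{M}_{\lambda^-}$ and $P \in \mathcal{D}^{\kappa(\nu')}_I$,
\[\sum_{\nu \in \Gamma_\lambda^{{\nu'}^{\frown} P}} \theta_\nu = \theta_{\nu'}|P|.\]
The proof uses the partition of $\mathcal{M}_\lambda$ from Remark \ref{remark about partitioning a successor} \ref{remark about partitioning a successor b}, the support inclusion $\mathrm{supp}(X_\nu) \subseteq \mathrm{supp}(X_{{\nu'}^{\frown} P})$ for each $\nu \in \Gamma_\lambda^{{\nu'}^{\frown} P}$, and the pairwise disjointness of the supports $\mathrm{supp}(X_{{\nu'}^{\frown} P})$ across $(\nu', P)$ (distinct $\nu'$: via DES axiom (a)(i); common $\nu'$ but different $P$: via Proposition \ref{incomparable and disj supp character via interval}, as they first differ at intervals). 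The totals $\sum_{(\nu', P)} \theta_{\nu'}|P| = \theta_{\lambda^-}|I| = \theta_\lambda = \sum_{\nu \in \mathcal{M}_\lambda}\theta_\nu$ coincide, and the term-wise upper bounds force equality summand-by-summand.

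Using this lemma, I prove (c) by induction on $\lambda \sqsupseteq {\lambda_0}^{\frown} I$. The base case---$\lambda$ the immediate successor of $\lambda_0$ along $I$---applies the lemma with $\nu' = \mu_0$ and $P = J$, together with $\theta_\lambda = \theta_{\lambda_0}|I|$. For the inductive step from $\lambda'$ to $\lambda$ (its immediate successor along some $I'$), the decomposition
\[\Gamma_\lambda^{{\mu_0}^{\frown} J} = \bigsqcup_{\nu' \in \Gamma_{\lambda'}^{{\mu_0}^{\frown} J}}\bigsqcup_{P\in\mathcal{D}^{\kappa(\nu')}_{I'}}\Gamma_\lambda^{{\nu'}^{\frown} P}\]
plays the central role. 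To justify it, I verify that no $\nu' \in \mathcal{M}_{\lambda'}$ lies strictly below ${\mu_0}^{\frown} J$: this follows from $\mu_0 \in \mathcal{M}_{\lambda_0}$, the antichain property, the cross-level disjointness of the $\mathcal{M}_\lambda$'s, and the fact that ${\mu_0}^{\frown} J \notin \mathcal{T}_\beta$ (since it would end in an interval). Applying the mass-conservation lemma to each inner sum and the inductive hypothesis on (c) at $\lambda'$ to $\sum_{\nu' \in \Gamma_{\lambda'}^{{\mu_0}^{\frown} J}}\theta_{\nu'}$ then produces the formula via $\theta_\lambda = \theta_{\lambda'}|I'|$.

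For (a) in Case 1 ($\mu_0 \in \mathcal{M}_{\lambda_0}$), the base $\lambda = \lambda_0$ is immediate from $\Gamma_{\lambda_0}^{\mu_0} = \{\mu_0\}$; for $\lambda \sqsupsetneq \lambda_0$, letting $I$ be the direction from $\lambda_0$ in $\lambda$, the decomposition $\Gamma_\lambda^{\mu_0} = \bigsqcup_{J\in\mathcal{D}^{\kappa(\mu_0)}_I}\Gamma_\lambda^{{\mu_0}^{\frown} J}$, combined with (c) and $\sum_{J\in\mathcal{D}^{\kappa(\mu_0)}_I}|J| = |I|$, yields the formula. In Case 2 ($\mu_0 \notin \mathcal{M}_{\lambda_0}$) and part (b), I first observe $\mu_0 \notin \mathcal{M}_\lambda$ for every $\lambda \sqsupseteq \lambda_0$ (by antichain plus cross-level disjointness), so $\Gamma_\lambda^{\mu_0} = \bigsqcup_J \Gamma_\lambda^{{\mu_0}^{\frown} J}$ still holds and (a) reduces to (b). The minimality of $\lambda_0$ combined with DES (c) identifies a unique $\mu^* \in \mathcal{M}_{\lambda_0^-}$ with $\mu^* \sqsubsetneq \mu_0$, giving $\mu_0 = {\mu^*}^{\frown} {Q^*}^{\frown} \xi'$ with $Q^* \in \mathcal{D}^{\kappa(\mu^*)}_{I_0}$ (where $I_0$ is the direction from $\lambda_0^-$ to $\lambda_0$). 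Applying (c) at $(\mu^*, \lambda_0^-)$ gives $\sum_{\nu \in \Gamma_\lambda^{{\mu^*}^{\frown} Q^*}}\theta_\nu = \theta_{\mu^*}|Q^*|\theta_\lambda/\theta_{\lambda_0}$, and iterating the mass-conservation argument along the chain ${\mu^*}^{\frown} Q^* \sqsubseteq \mu_0 \sqsubseteq {\mu_0}^{\frown} J$---where ordinal entries leave $\Gamma_\lambda^{\cdot}$ unchanged (since the next entry is forced by the $\mathcal{T}_\beta$ structure) and each interval entry sub-partitions it exactly as in the key lemma---produces (b).

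The main obstacle is the iteration along $\xi'$ in Case 2 of (b), which requires a nested application of the mass-conservation argument inside the sub-DES structure rooted at ${\mu^*}^{\frown} Q^*$, handled stepwise for each interval entry of $\xi'^{\frown} J$. A separate variant of the iterative mass argument, bypassing (c) at $(\mu^*, \lambda_0^-)$ and starting directly from the common-prefix structure of $\mathcal{M}_{\lambda_0}$ enforced by DES (a), handles the edge case where $\lambda_0$ is a root of $\mathcal{E}$ and $\lambda_0^-$ does not exist.
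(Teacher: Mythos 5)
Your proof of part \ref{such and such c} (the mass-conservation lemma is exactly the $n=1$ case of the paper's induction, and your inductive step via the decomposition of $\Gamma_\lambda^{{\mu_0}^\smallfrown J}$ matches Remark \ref{remark about partitioning a successor}), and your derivation of \ref{such and such a} when $\mu_0\in\mathcal{M}_{\lambda_0}$, are sound and essentially the paper's argument. The genuine gap is in part \ref{such and such b} (and hence in \ref{such and such a} when $\mu_0\notin\mathcal{M}_{\lambda_0}$). You run the chain iteration from ${\mu^*}^\smallfrown Q^*$ up to ${\mu_0}^\smallfrown J$ at a \emph{general} $\lambda\sqsupseteq\lambda_0$, starting from the value $\theta_{\mu^*}|Q^*|\theta_\lambda/\theta_{\lambda_0}$ supplied by \ref{such and such c}. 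But at each interval entry of ${\xi'}^\smallfrown J$ the only per-piece control available is Lemma \ref{measure bounded by disjointness and inclusion}, namely $\sum_{\nu\in\Gamma_\lambda^{\sigma^\smallfrown P}}\theta_\nu\le\theta_\sigma|P|$, and the ``forcing'' mechanism of your key lemma (upper bounds whose sum equals the known total) only pins down the proportional split when the total saturates these bounds, i.e.\ when $\theta_\lambda/\theta_{\lambda_0}=1$, i.e.\ $\lambda=\lambda_0$. For $\lambda\sqsupsetneq\lambda_0$ the known total is $\theta_\sigma\,\theta_\lambda/\theta_{\lambda_0}<\theta_\sigma$, so nothing is forced; and the interior interval entries of $\xi'$ do not correspond to steps of $\mathcal{E}$, so the mass-conservation lemma itself gives no information about how the mass distributes across them. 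The appeal to ``a nested application of the mass-conservation argument inside the sub-DES structure rooted at ${\mu^*}^\smallfrown Q^*$'' does not repair this as stated: to know the relativized family is a distributional embedding scheme one needs the normalization $\sum_{\mu\in\mathcal{M}'_\lambda}\theta_\mu=\theta_\lambda$, which is itself an instance of the statement being proven (in the paper it is Proposition \ref{first dist from old one}, deduced \emph{from} \ref{such and such c}), and inside the relativized scheme the same general-$\lambda$ difficulty recurs unless you set up an induction on the length (or height in $\mathcal{F}$) of $\mu_0$ across all schemes, which your plan never declares. A secondary inaccuracy: ordinal entries are not ``forced by the $\mathcal{T}_\beta$ structure'' (after a limit ordinal any smaller ordinal may follow); the correct reason $\Gamma_\lambda^{\sigma}=\Gamma_\lambda^{\sigma^\smallfrown x}$ is that members of $\mathcal{M}_\lambda$ are disjointly supported and hence first split at an interval, anchored by a nonempty $\Gamma_\lambda^{\mu_0}$ — this is Lemma \ref{roots associated to M lambda}.

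The repair is the two-stage structure the paper uses: first prove \ref{such and such b} in the case $\lambda=\lambda_0$, where the forcing argument does work because the total $\theta_{\mu_0}$ (established by the paper's Claim, via induction on the height of $\mu_0$ in $\mathcal{F}$, using either the inductive hypothesis or \ref{such and such c} at the predecessor) saturates $\sum_J\theta_{\mu_0}|J|$; then pass to general $\lambda\sqsupseteq\lambda_0$ by the partition of Remark \ref{remark about partitioning a successor} applied to the elements of $\Gamma_{\lambda_0}^{{\mu_0}^\smallfrown J}\subset\mathcal{M}_{\lambda_0}$, computing each inner sum with \ref{such and such c} (or your key lemma iterated along $\mathcal{E}$). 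Your chain iteration is fine at $\lambda=\lambda_0$; it is the attempt to run it directly at general $\lambda$ that fails.
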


We will prove Proposition \ref{such and such} in steps, by first showing \ref{such and such c}, then \ref{such and such b}, and finally we will conclude \ref{such and such a}. We start with a lemma.

\begin{lem}
\label{measure bounded by disjointness and inclusion}
Let $\Gamma$ be a set of disjointly supported members of $\mathcal{F}$.
\begin{enumerate}[label=(\alph*)]

\item\label{measure bounded by disjointness and inclusion a} Assume that there is $\mu\in\mathcal{F}$ such that, for all $\nu\in\Gamma$, $\mu\sqsubseteq \nu$. Then,
\[\sum_{\nu\in\Gamma}\theta_\nu\leq\theta_\mu.\]

\item\label{measure bounded by disjointness and inclusion b} Assume that there are $\mu\in\mathcal{F}$ and $I\in\mathcal{D}^{\kappa(\mu)}$ such that, for all $\nu\in\Gamma$, $\mu^\smallfrown J\sqsubseteq \nu$. Then,
\[\sum_{\nu\in\Gamma}\theta_\nu\leq\theta_\mu|J|.\]

\end{enumerate}
\end{lem}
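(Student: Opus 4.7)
The plan is to deduce both inequalities from the basic geometry of supports. By Proposition \ref{remark for supp and lenghts}\ref{remark for supp and lenghts, part zero}, for every $\nu \in \mathcal{F}$ we have $\theta_\nu = |\supp(X_\nu)|$, so the sum on the left-hand side of either inequality equals $\sum_{\nu\in\Gamma} |\supp(X_\nu)|$. Since $\Gamma$ consists of disjointly supported members, the essential disjointness of the supports upgrades this sum to a measure of a union:
\begin{equation*}
\sum_{\nu\in\Gamma}\theta_\nu = \Big|\bigcup_{\nu\in\Gamma}\supp(X_\nu)\Big|.
\end{equation*}
From this common starting point, the two parts amount to locating the union inside a set of known measure.

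For part \ref{measure bounded by disjointness and inclusion a}, I would apply Proposition \ref{remark for supp and lenghts}\ref{remark for supp and lenghts, part e} directly: since $\mu \sqsubseteq \nu$ for every $\nu \in \Gamma$, we have $\supp(X_\nu) \subset \supp(X_\mu)$, hence $\bigcup_{\nu\in\Gamma}\supp(X_\nu)\subset\supp(X_\mu)$, and the measure of the latter is $\theta_\mu$, again by Proposition \ref{remark for supp and lenghts}\ref{remark for supp and lenghts, part zero}.

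For part \ref{measure bounded by disjointness and inclusion b} (reading the typo so that $\mu^\smallfrown J \sqsubseteq \nu$ with $J\in\mathcal{D}^{\kappa(\mu)}$), the refined inclusion from Proposition \ref{remark for supp and lenghts}\ref{remark for supp and lenghts, part e} gives $\supp(X_\nu) \subset [T_\mu \chi_J = 1]$ for each $\nu \in \Gamma$, so the union is contained in $[T_\mu \chi_J = 1]$. The remaining step is a one-line measure computation using that $T_\mu$ is a $\theta_\mu$-compression: applied to $\chi_J$ with $A=\{1\}\subset\R\setminus\{0\}$, Definition \ref{comp dist embed} yields
\begin{equation*}
|[T_\mu\chi_J=1]| = \theta_\mu|[\chi_J=1]| + (1-\theta_\mu)\chi_{\{1\}}(0) = \theta_\mu |J|,
\end{equation*}
which gives the claimed bound. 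I do not anticipate any real obstacle here; the only subtlety is to invoke the essential interpretation of set relations when passing from a sum of measures to the measure of the union, and to be careful that in part \ref{measure bounded by disjointness and inclusion b} the set $\{1\}$ does not contain $0$, so the compression formula gives exactly $\theta_\mu|J|$ without a correction.
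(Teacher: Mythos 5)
Your proof is correct and follows essentially the same route as the paper: express $\sum_{\nu\in\Gamma}\theta_\nu$ as the measure of a disjoint union of supports via Proposition \ref{remark for supp and lenghts}\ref{remark for supp and lenghts, part zero}, then bound by the containment from Proposition \ref{remark for supp and lenghts}\ref{remark for supp and lenghts, part e} in $\supp(X_\mu)$ for part \ref{measure bounded by disjointness and inclusion a} and in $[T_\mu\chi_J=1]$ for part \ref{measure bounded by disjointness and inclusion b}. You also correctly spotted and resolved the $I$/$J$ typo in the statement, and the compression computation $|[T_\mu\chi_J=1]|=\theta_\mu|J|$ matches the paper's (implicit) justification.
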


\begin{proof}
The first statement directly follows from Proposition \ref{remark for supp and lenghts} \ref{remark for supp and lenghts, part e}, according to which, $\mathrm{supp}(X_\nu)$, $\nu\in\Gamma$ are disjoint subsets of $\mathrm{supp}(X_\mu)$, and thus,
\[\underset {\nu \in \Gamma}{\sum} \theta_{\nu} = \underset {\nu \in \Gamma}{\sum}  \vert \supp(X_{\nu}) \vert = \bigg| \underset {\nu \in \Gamma}{\bigcup}  \supp(X_{\nu}) \bigg| \leq |\mathrm{supp}(X_\mu)| = \theta_\mu.\]

The proof of the second statement is a variation of the above, also involving Proposition \ref{remark for supp and lenghts} \ref{remark for supp and lenghts, part e} stating, for $\nu\in\Gamma$, $\supp(X_\nu)\subset [T_\mu\chi_J = 1]$. Therefore,
\[\underset {\nu \in \Gamma}{\sum} \theta_{\nu} = \underset {\nu \in \Gamma}{\sum}  \vert \supp(X_{\nu}) \vert = \bigg| \underset {\nu \in \Gamma}{\bigcup}  \supp(X_{\nu}) \bigg| \leq |[T_\mu\chi_J = 1]| = \theta_\mu|J|,\]
because  $T_\mu$ is a $\theta_\mu$-compression.
\end{proof}

\begin{proof}[Proof of Proposition \ref{such and such} \ref{such and such c}]
We proceed by induction on $n = h(\lambda) - h(\lambda_0)$, i.e., the path distance from $\lambda_0$ to $\lambda$ in the tree $\mathcal{E}$. To elaborate further, the inductive hypothesis in the $n$'th step will state that the conclusion holds for all $\mu_0$, $\lambda_0$, and $\lambda$, as outlined, for which $h(\lambda) - h(\lambda_0) < n$.  We initiate with the base case, $n=1$. Let  $\lambda_0\in\mathcal{E}$, $\mu_0\in\mathcal{M}_{\lambda_0}$, $I\in\mathcal{D}^{\kappa(\lambda_0)}$ and $J\in\mathcal{D}^{\kappa(\mu_0)}_I$, and let $\lambda$ be an immediate successor of $\lambda_0$ with ${\lambda_0}^\smallfrown I \sqsubseteq \lambda$. Note that, in particular, $\theta_\lambda = \theta_{\lambda_0}|I|$. We will prove
\[\sum_{\nu\in\Gamma_{\lambda}^{{\mu_0}^\smallfrown J}}\theta_\nu = \theta_{\mu_0}|J| = \theta_{\mu_0}\frac{|J|}{|I|}\frac{\theta_\lambda}{\theta_{\lambda_0}}.\]
By Lemma \ref{measure bounded by disjointness and inclusion} \ref{measure bounded by disjointness and inclusion b}, for any $\mu\in\mathcal{M}_\lambda$ and $P\in\mathcal{D}^{\kappa(\mu)}_I$, we have
\[\sum_{\nu\in\Gamma_{\lambda}^{{\mu}^\smallfrown P}}\theta_\nu \leq \theta_{\mu}|P|.\]
If we assume, towards a contradiction, that $\sum_{\nu\in\Gamma_{\lambda}^{{\mu_0}^\smallfrown J}}\theta_\nu < \theta_{\mu_0}|J|$, then
\begin{align*}
\theta_\lambda  &= \sum_{\nu\in\mathcal{M}_\lambda}\theta_\nu \text{ (by Definition \ref{DES} \ref{DES part a})}\\
&=\sum_{\mu\in\mathcal{M}_{\lambda_0}}\sum_{P\in\mathcal{D}^{\kappa(\mu)}_I}\sum_{\nu\in \Gamma^{\mu^\smallfrown P}_{\lambda_0}}\theta_\nu\text{ (by Remark \ref{remark about partitioning a successor} \ref{remark about partitioning a successor b})}\\
&< \sum_{\mu\in\mathcal{M}_{\lambda_0}}\theta_\mu\sum_{P\in\mathcal{D}^{\kappa(\mu)}_I}|P| = \sum_{\mu\in\mathcal{M}_{\lambda_0}}\theta_\mu|I| \text{ (by Remark \ref{remark about partitioning a successor} \ref{remark about partitioning a successor a})}\\
& = \theta_{\lambda_0}|I| = \theta_\lambda.
\end{align*}

We now perform the $n$'th inductive step. Let  $\lambda_0\in\mathcal{T}_\alpha$, $\mu_0\in\mathcal{M}_{\lambda_0}$, $I\in\mathcal{D}^{\kappa(\lambda_0)}$ and $J\in\mathcal{D}^{\kappa(\mu_0)}_I$, and let $\lambda\sqsupseteq{\lambda_0}^\smallfrown I$ with $h(\lambda) - h(\lambda_0) = n$. Let $\lambda_{n-1}$ denote the immediate predecessor of $\lambda$ and let $I_n\in\mathcal{D}^{\kappa(\lambda_{n-1})}$ such that ${\lambda_{n-1}}^\smallfrown I_n\sqsubseteq \lambda$. By Remark \ref{remark about partitioning a successor} \ref{remark about partitioning a successor c}, the sets
\[\Big\{\Gamma_\lambda^{\mu^\smallfrown P}:\mu\in\Gamma_{\lambda_{n-1}}^{{\mu_0}^\smallfrown J}\text{ and }P\in\mathcal{D}^{\kappa(\mu)}_{I_n}\Big\}\]
form a partition of $\Gamma_{\lambda}^{{\mu_0}^\smallfrown J}$, and thus,
\begin{align*}
\sum_{\nu\in\Gamma_{\lambda}^{{\mu_0}^\smallfrown J}}\theta_\nu = \sum_{\mu\in\Gamma_{\lambda_{n-1}}^{{\mu_0}^\smallfrown J}}\sum_{P\in\mathcal{D}^{\kappa(\mu)}_{I_n}}\sum_{\nu\in\Gamma_{\lambda}^{{\mu}^\smallfrown P}}\theta_\nu.
\end{align*}
By the base case, we have that, for each $\mu\in\Gamma_{\lambda_{n-1}}^{{\mu_0}^\smallfrown J}$ and $P\in\mathcal{D}^{\kappa(\mu)}_{I_n}$,
\[\sum_{\nu\in\Gamma_{\lambda}^{{\mu}^\smallfrown P}}\theta_\nu = \theta_\mu|P|,\]
and therefore
\begin{align*}
\sum_{\nu\in\Gamma_{\lambda}^{{\mu_0}^\smallfrown J}}\theta_\nu &= \sum_{\mu\in\Gamma_{\lambda_{n-1}}^{{\mu_0}^\smallfrown J}}\theta_\mu\sum_{P\in\mathcal{D}^{\kappa(\mu)}_{I_n}}|P| = \sum_{\mu\in\Gamma_{\lambda_{n-1}}^{{\mu_0}^\smallfrown J}}\theta_\mu|I_n| \text{ (by Remark \ref{remark about partitioning a successor} \ref{remark about partitioning a successor a})}\\
&= \Big(\theta_{\mu_0}\frac{|J|}{|I|}\frac{\lambda_{n-1}}{\lambda_0}\Big)|I_n| \text{ (by the case $n-1$)}\\
&= \theta_{\mu_0}\frac{|J|}{|I|}\frac{\lambda_{n}}{\lambda_0}.
\end{align*}
\end{proof}

Before proving Proposition \ref{such and such} \ref{such and such b}, we establish a lemma that will also be used frequently later, in Section \ref{dist repre subsection}.

\begin{lem}
\label{roots associated to M lambda}
Let $\lambda\in\mathcal{E}$.
\begin{enumerate}[label=(\alph*)]

\item\label{roots associated to M lambda a} There exists a unique root $\mu\in\mathcal{F}$ such that $\Gamma^{\mu}_\lambda = \mathcal{M}_\lambda$.

\item\label{roots associated to M lambda b} Let $\mu\in\mathcal{F}$ and $J\in\mathcal{D}^{\kappa(\mu)}$ such that $\Gamma_\lambda^{\mu^\smallfrown J}\neq\emptyset$. Then there exists a unique root $\xi\in\mathcal{T}_{b(\mu)}$ such that $\mu^\smallfrown J^\smallfrown\xi\in\mathcal{F}$ and $\Gamma_\lambda^{\mu^\smallfrown J} = \Gamma_\lambda^{\mu^\smallfrown J^\smallfrown\xi}$.

\end{enumerate}
\end{lem}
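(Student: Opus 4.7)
The plan is to derive both parts from condition \ref{DES part a}\ref{DES part a, part 1} of Definition \ref{DES} together with Proposition \ref{incomparable and disj supp character via interval}\ref{incomparable and disj supp character via interval, part 2}. The first observation is that distinct elements of $\mathcal{M}_\lambda$ are incomparable in $\mathcal{T}_\beta$: if one were a proper prefix of the other, Proposition \ref{remark for supp and lenghts}\ref{remark for supp and lenghts, part e} would force one support to be contained in the other, contradicting disjoint support together with $\theta_\nu>0$. Applying Proposition \ref{incomparable and disj supp character via interval}\ref{incomparable and disj supp character via interval, part 2} to each pair then yields that any two distinct $\nu_1,\nu_2\in\mathcal{M}_\lambda$ split at a dyadic interval; in particular, their longest common prefix ends at a successor ordinal, lies in $\mathcal{T}_\beta$, and witnesses that $\nu_1$ and $\nu_2$ extend the same root of $\mathcal{T}_\beta$.

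For part (a), I would fix any $\nu_0\in\mathcal{M}_\lambda$ and let $\mu$ denote the unique root of $\mathcal{T}_\beta$ with $\mu\sqsubseteq\nu_0$; downward closedness of $\mathcal{F}$ gives $\mu\in\mathcal{F}$, and $\mu$ is then a root of $\mathcal{F}$ because it has no predecessor in $\mathcal{T}_\beta$. The previous paragraph yields $\mu\sqsubseteq\nu$ for every $\nu\in\mathcal{M}_\lambda$, that is, $\Gamma_\lambda^\mu=\mathcal{M}_\lambda$. Uniqueness is immediate from the general fact that no two distinct roots of $\mathcal{T}_\beta$ admit a common successor, which follows because one would then be a proper prefix of the other and hence a predecessor, contradicting the rootness.

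For part (b), I would transfer the same argument into the tree $\mathcal{T}_{b(\mu)}$. By Definition \ref{dfn of ambient tree}, the entry immediately after $J$ in any element of $\mathcal{T}_\beta$ extending $\mu^\smallfrown J$ is necessarily the ordinal $b(\mu)$, so every $\nu\in\Gamma_\lambda^{\mu^\smallfrown J}$ factors uniquely as $\nu=\mu^\smallfrown J^\smallfrown\nu'$ with $\nu'\in\mathcal{T}_{b(\mu)}$. Since the splitting position of any two $\nu_1,\nu_2$ lies strictly past $\mu^\smallfrown J$, the tails $\nu'_1,\nu'_2$ are incomparable in $\mathcal{T}_{b(\mu)}$ and again split at intervals, so the reasoning of part (a), applied inside $\mathcal{T}_{b(\mu)}$, produces a unique root $\xi\in\mathcal{T}_{b(\mu)}$ common to every such tail. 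Then $\Gamma_\lambda^{\mu^\smallfrown J}=\Gamma_\lambda^{\mu^\smallfrown J^\smallfrown\xi}$, and $\mu^\smallfrown J^\smallfrown\xi\in\mathcal{F}$ follows from downward closedness, since it is a prefix of any $\nu\in\Gamma_\lambda^{\mu^\smallfrown J}\subseteq\mathcal{F}$; uniqueness of $\xi$ is identical to (a). I do not foresee a serious obstacle, as the entire argument is a clean reduction to Proposition \ref{incomparable and disj supp character via interval}\ref{incomparable and disj supp character via interval, part 2}.
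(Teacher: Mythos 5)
Your argument is correct and follows essentially the same route as the paper: fix one element of $\mathcal{M}_\lambda$ (resp.\ of $\Gamma_\lambda^{\mu^\smallfrown J}$), take the unique root prefix consisting of ordinal entries, and use disjoint supportedness together with Proposition \ref{incomparable and disj supp character via interval}\ref{incomparable and disj supp character via interval, part 2} to force the first splitting entry of any two members to be an interval, hence past that prefix. Your explicit verification that members of $\mathcal{M}_\lambda$ are incomparable is a detail the paper leaves implicit, but otherwise the two proofs coincide.
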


\begin{proof}
The two statements are shown with virtually the same argument. We start with statement \ref{roots associated to M lambda a}. We fix an arbitrary $\nu\in\mathcal{M}_\lambda$ and let $\mu$ be the unique root such that $\mu\sqsubseteq\nu$. By Remark \ref{characterization  of im suc} \ref{characterization  of im suc, part 2} all the entries of $\mu$ are ordinal numbers. For $\nu'\in\mathcal{M}_\lambda\setminus\{\nu\}$, we will show $\nu'\in\Gamma_\lambda^\mu$. Because $\nu,\nu'$ are disjointly supported, by Proposition \ref{incomparable and disj supp character via interval}, for the minimum entry $i$ for which $\nu(i)\neq\nu'(i)$, $\nu(i)$ is some interval $I$, and since $\mu\sqsubseteq \nu$, $i>|\mu|$. In conclusion, $\nu'|_{|\mu|} = \nu|_{|\mu|} = \mu$.

We now demonstrate \ref{roots associated to M lambda b}. Fix an arbitrary $\nu\in\Gamma_\lambda^{\mu^\smallfrown J}$. Because $\mu^\smallfrown J\sqsubseteq \nu$, there exists a unique root $\xi\in\mathcal{T}_{b(\mu)}$ such that $\mu^\smallfrown J^\smallfrown\xi\sqsubseteq\nu$. In particular, for $1\leq i\leq |\xi|$, $\xi(i)$ is an ordinal number. Clearly, $\Gamma_\lambda^{\mu^\smallfrown J^\smallfrown\xi} \subset \Gamma_\lambda^{\mu^\smallfrown J}$. For $\nu'\in\Gamma_\lambda^{\mu^\smallfrown J}\setminus\{\nu\}$, we will show $\nu'\in\Gamma_\lambda^{\mu^\smallfrown J^\smallfrown\xi}$. Let $i$ denote the minimum entry for which $\nu(i)\neq\nu'(i)$. Because $\nu,\nu'\in\Gamma_\lambda^{\mu^\smallfrown J}$, $i\notin\{1,\ldots, |\mu|+1\}$. Since $\nu,\nu'$ are disjointly supported, for the minimum entry $i$ for which $\nu(i)\neq\nu'(i)$, $\nu(i)$ is not an ordinal number, and thus, $i\notin\{|\mu|+2,\ldots,|\mu|+1+|\xi|\}$. Therefore, $\nu'|_{|\mu|+1+|\xi|} = \nu|_{|\mu|+1+|\xi|} =\mu^\smallfrown J^\smallfrown \xi$.
\end{proof}

\begin{proof}[Proof of Proposition \ref{such and such} \ref{such and such b}]
We will first prove the conclusion only in the case $\lambda = \lambda_0$. More precisely, we will prove that for every $\mu_0\in\mathcal{F}$, for which there is a minimal $\lambda_0\in\mathcal{T}_\alpha$ with $\Gamma^{\mu_0}_{\lambda_0}\neq\emptyset$, if $\mu_0\notin\mathcal{M}_{\lambda_0}$ then, for every $J\in\mathcal{D}^{\kappa(\mu_0)}$, we have
\begin{equation}
\label{such and such proof b equation 1}
\sum_{\nu\in\Gamma^{{\mu_0}^\smallfrown J}_{\lambda_0}} \theta_\nu = \theta_{\mu_0}|J|.
\end{equation}
This will be proved by induction on $h(\mu_0)$, i.e., the height of $\mu_0$ in the tree $\mathcal{F}$. In the base case, $\mu_0$ is a root. By Remark \ref{lemma for existnce of eleme in th epredic} \ref{lemma for existnce of eleme in th epredic i}, $\lambda_0$ is a root, and by Lemma \ref{roots associated to M lambda} \ref{roots associated to M lambda a}, $\Gamma_{\lambda_0}^{\mu_0} = \mathcal{M}_{\lambda_0}$. We may assume $\mu_0\notin\mathcal{M}_{\lambda_0}$, as, otherwise, the conclusion vacuously holds. This, in particular, suggests that the collection of sets
\[\Big\{\Gamma_{\lambda_0}^{{\mu_0}^\smallfrown P}:P\in\mathcal{D}^{\kappa(\mu_0)}\Big\}\]
form a partition of $\Gamma_{\lambda_0}^{\mu_0} = \mathcal{M}_{\lambda_0}$. By Lemma \ref{measure bounded by disjointness and inclusion}, for any $J\in\mathcal{D}^{\kappa(\mu_0)}$,
\[\sum_{\nu\in\Gamma_{\lambda_0}^{{\mu_0}^\smallfrown J}}\theta_\nu \leq \theta_{\mu_0}|J| = |J|.\]
If, for some $J\in\mathcal{D}^{\kappa(\mu_0)}$, we had $\sum_{\nu\in\Gamma_{\lambda_0}^{{\mu_0}^\smallfrown J}}\theta_\nu <|J|$, then,
\[1 = \theta_{\lambda_0} = \sum_{\nu\in\mathcal{M}_{\lambda_0}}\theta_\nu = \sum_{\nu\in\Gamma_{\lambda_0}^{{\mu_0}}}\theta_\nu = \sum_{J\in\mathcal{D}^{\kappa(\mu_0)}} \sum_{\nu\in\Gamma_{\lambda_0}^{{\mu_0}^\smallfrown J}}\theta_\nu< \sum_{J\in \mathcal{D}^{\kappa(\mu_0)}}|J|=1.\]
This contradiction concludes the base case. Assume now that $\mu_0$ is a non-root with immediate predecessor $\mu$, for which the conclusion is assumed to hold. Let $P_0\in\mathcal{D}^{\kappa(\mu)}$ such that $\mu^\smallfrown P_0\sqsubseteq \mu_0$. By Lemma \ref{roots associated to M lambda} \ref{roots associated to M lambda b}, we have
\begin{equation}
\label{such and such proof b equation 2}
\sum_{\nu\in \Gamma_{\lambda_0}^{\mu_0}}\theta_\nu = \sum_{\nu\in \Gamma_{\lambda_0}^{\mu^\smallfrown P_0}}\theta_\nu.
\end{equation}

\noindent {\bf Claim:}
\[\sum_{\nu\in \Gamma_{\lambda_0}^{\mu_0}}\theta_\nu = \theta_{\mu_0}.\]

To establish the claim, we distinguish two cases regarding $\mu$, the immediate predecessor of $\mu_0$. In the first case, $\lambda_0$ is minimal such that $\Gamma_{\lambda_0}^\mu\neq\emptyset$. Then, by the inductive hypothesis,
\[\sum_{\nu\in \Gamma_{\lambda_0}^{\mu_0}}\theta_\nu = \sum_{\nu\in \Gamma_{\lambda_0}^{\mu^\smallfrown P_0}}\theta_\nu = \theta_\mu|P_0| = |\theta_{\mu_0}|.\]
In the second case, the immediate predecessor $\lambda_{p}$ of $\lambda_0$ is minimal such that $\Gamma_{\lambda_{p}}^{\mu}\neq\emptyset$, and if this is the case, $\mu\in\Gamma_{\lambda_{p}}^\mu$. Let $I_0\in\mathcal{D}^{\kappa(\lambda_p)}$ such that ${\lambda_p}^\smallfrown I_0\sqsubseteq \lambda_0$. By Proposition \ref{such and such} \ref{such and such c},
\[\sum_{\nu\in \Gamma_{\lambda_0}^{\mu_0}}\theta_\nu = \sum_{\nu\in \Gamma_{\lambda_0}^{\mu^\smallfrown P_0}}\theta_\nu = \theta_\mu|P_0|\frac{\theta_{\lambda_0}}{|I_0|\theta_{\lambda_p}} = |\theta_{\mu_0}|.\]
The proof of the claim is complete.

We use the claim as follows. By Lemma \ref{measure bounded by disjointness and inclusion}, for any $J\in\mathcal{D}^{\kappa(\mu_0)}$,
\[\sum_{\nu\in\Gamma_{\lambda_0}^{{\mu_0}^\smallfrown J}}\theta_\nu \leq \theta_{\mu_0}|J|.\]
If, for some $J\in\mathcal{D}^{\kappa(\mu_0)}$, we had $\sum_{\nu\in\Gamma_{\lambda_0}^{{\mu_0}^\smallfrown J}}\theta_\nu <|J|\theta_{\mu_0}$, then,
\[\theta_{\mu_0} = \sum_{\nu\in\Gamma_{\lambda_0}^{{\mu_0}}}\theta_\nu = \sum_{J\in\mathcal{D}^{\kappa(\mu_0)}}\sum_{\nu\in\Gamma_{\lambda_0}^{{\mu_0}^\smallfrown J}}\theta_\nu < \sum_{J\in\mathcal{D}^{\kappa(\mu_0)}}|J|\theta_{\mu_0}| = \theta_{\mu_0}.\]
This contradiction concludes the proof of the case $\lambda = \lambda_0$. 

We finally prove the general case. Let $\mu_0\in\mathcal{F}$ and let $\lambda_0\in\mathcal{E}$ be minimal such that $\Gamma^{\mu_0}_{\lambda_0}\neq\emptyset$, assuming it exists, and assume $\mu_0\notin\mathcal{M}_{\lambda_0}$, as, otherwise, the conclusion vacuously holds. Let $J\in\mathcal{D}^{\kappa(\mu_0)}$ and $\lambda \sqsupsetneq\lambda_0$. Let $I\in\mathcal{D}^{\kappa(\lambda_0)}$ such that $\lambda\sqsupseteq {\lambda_0}^\smallfrown I$, and note that, by Remark \ref{remark about partitioning a successor} \ref{remark about partitioning a successor c}, the sets
\[\Big\{\Gamma_\lambda^{{\mu}^\smallfrown P}:\mu\in\Gamma^{{\mu_0}^\smallfrown J}_{\lambda_0}\text{ and }P\in\mathcal{D}^{\kappa(\mu)}_I\Big\}\]
form a partition of $\Gamma_{\lambda}^{{\mu_0}^\smallfrown J}$. We now compute
\begin{align*}
\sum_{\nu\in\Gamma_{\lambda}^{{\mu_0}^\smallfrown J}}\theta_\nu &= \sum_{\mu\in\Gamma^{{\mu_0}^\smallfrown J}_{\lambda_0}}\sum_{P\in \mathcal{D}^{\kappa(\mu)}_I}\sum_{\nu\in\Gamma^{\mu^\smallfrown P}_\lambda}\theta_\nu = \sum_{\mu\in\Gamma^{{\mu_0}^\smallfrown J}_{\lambda_0}}\sum_{P\in \mathcal{D}^{\kappa(\mu)}_I} \theta_\mu\frac{|P|}{|I|}\frac{|\lambda|}{|\lambda_0|} \text{ (by Proposition \ref{such and such} \ref{such and such c})}\\
& = \sum_{\mu\in\Gamma^{{\mu_0}^\smallfrown J}_{\lambda_0}}\theta_\mu \frac{|I|}{|I|}\frac{|\lambda|}{|\lambda_0|}\text{ (by Remark \ref{remark about partitioning a successor} \ref{remark about partitioning a successor a})}\\
& = \theta_{\mu_0}|J|\frac{|\lambda|}{|\lambda_0|} \text{ (by the case $\lambda = \lambda_0$)}.
\end{align*}

\end{proof}

\begin{rem}
Proposition \ref{such and such} \ref{such and such a} follows from the argument in the Claim inside the proof of Proposition \ref{such and such} \ref{such and such b}.
\end{rem}

{
\subsection{Sub-distributional embedding schemes associated with $\Psi$} \label{new from old subsection}
We construct three new types of distributional embedding schemes by isolating substructures of $\Psi$ emanating from nodes $\lambda_0\in\mathcal{E}$ and $\mu_0\in\mathcal{F}$. These new schemes have lower ordinal numbers associated with them. This reduction in complexity is important, and it is used in Section \ref{dist repre subsection} to prove, by induction on $\alpha$, that $T_\Psi$ is a distributional embedding.

\begin{ntt}
Let $\alpha<\omega_1$ and $\mathcal{E}$ be an interval-branching subtree  of $\mathcal{T}_\alpha$. For $\lambda_0\in\mathcal{E}$ and $I\in\mathcal{D}^{\kappa(\lambda)}$, we define the interval-branching subtrees
\[\mathcal{E}^{{\lambda_0}^\smallfrown I} = \Big\{\lambda\in\mathcal{T}_{b(\lambda_0)}: \lambda_{0}^{\frown} I^{\frown} \lambda \in\mathcal{E}\Big\}\text{ and }\mathcal{E}^{{\lambda_0}} = \Big\{\lambda\in\mathcal{T}_{b(\lambda_0)+\kappa(\lambda_0)}: \lambda_{0}\oslash \lambda \in\mathcal{E}\Big\}\]
of $\mathcal{T}_{b(\lambda_0)}$ and $\mathcal{T}_{b(\lambda_0)+\kappa(\lambda_0)}$, respectively.
\end{ntt}

\begin{ntt}
For the remainder of this subsection, we fix $\alpha, \beta < \omega_{1}$, interval-branching subtrees $\mathcal{E}$ of $\mathcal{T}_\alpha$ and $\mathcal{F}$ of $\mathcal{T}_\beta$, and a distributional embedding scheme $\Psi = (\mathcal{M}_{\lambda}, (J_{\mu})_{\mu \in \mathcal{M}_{\lambda}})_{ \lambda \in  \mathcal{E}}$ of $R_{\mathcal{E}}$ in $R_{\mathcal{F}}$.
\end{ntt}

In the following proposition, we construct the first type of a sub-distributional embedding scheme of $\Psi$, which is used in the successor case of the inductive step in Section \ref{dist repre subsection}. 
 
\begin{prop} \label{first dist from old one}
Let $\lambda_{0} \in  \mathcal{E}$, $\mu_0\in\mathcal{M}_{\lambda_0}$, $I \in \mathcal{D}^{\kappa(\lambda_{0})}$, and $J \in  \mathcal{D}^{\kappa(\mu_{0})}_I$. For every $\lambda \in  \mathcal{E}^{{\lambda_0}^\smallfrown I}$, denote 
\begin{align*}
 \mathcal{M}^{({\lambda_0}^\smallfrown I,{\mu_0}^\smallfrown J)}_{\lambda} = \lbrace \mu \in \mathcal{F}^{{\mu_0}^\smallfrown J}: \mu_{0}^{\frown} J^{ \frown} \mu \in \mathcal{M}_{\lambda_{0}^{\frown} I^{\frown} \lambda } \rbrace
\end{align*}
and, for every $\mu \in \mathcal{M}^{({\lambda_0}^\smallfrown I,{\mu_0}^\smallfrown J)}_{\lambda}$, denote
\begin{align*}
J^{({\lambda_0}^\smallfrown I,{\mu_0}^\smallfrown J)}_{\mu} = J_{\mu_{0}^{ \frown} J^{ \frown} \mu}.
\end{align*}
Then the collection 
\begin{align*}
\Psi_{{\lambda_0}^\smallfrown I}^{{\mu_0}^\smallfrown J} = (\mathcal{M}^{({\lambda_0}^\smallfrown I,{\mu_0}^\smallfrown J)}_{\lambda}, (J^{({\lambda_0}^\smallfrown I,{\mu_0}^\smallfrown J)}_{\mu})_{\mu \in \mathcal{M}^{({\lambda_0}^\smallfrown I,{\mu_0}^\smallfrown J)}_{\lambda}})_{ \lambda \in  \mathcal{E}^{{\lambda_0}^\smallfrown I}}
\end{align*}
is a distributional embedding scheme of $R_{\mathcal{E}^{{\lambda_0}^\smallfrown I}}$ in  $R_{\mathcal{F}^{{\mu_0}^\smallfrown J}}$.
\end{prop}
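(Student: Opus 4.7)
The plan is to verify the four conditions of Definition \ref{DES} for $\Psi_{{\lambda_0}^\smallfrown I}^{{\mu_0}^\smallfrown J}$ by transferring them from $\Psi$ via the bijections $\lambda \leftrightarrow {\lambda_0}^\smallfrown I^\smallfrown \lambda$ and $\mu \leftrightarrow {\mu_0}^\smallfrown J^\smallfrown \mu$. The subtree property of $\mathcal{E}^{{\lambda_0}^\smallfrown I}$ and $\mathcal{F}^{{\mu_0}^\smallfrown J}$ is inherited from the downward-closure of $\mathcal{E}$ and $\mathcal{F}$, and each $J^{({\lambda_0}^\smallfrown I, {\mu_0}^\smallfrown J)}_\mu = J_{{\mu_0}^\smallfrown J^\smallfrown \mu}$ already has the correct domain and codomain because $\kappa$ depends only on the last entry of a sequence.

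Conditions (a)(i), (a)(ii), and (b) are routine. For (a)(i), the map $\mu \mapsto {\mu_0}^\smallfrown J^\smallfrown \mu$ is a bijection from $\mathcal{M}^{({\lambda_0}^\smallfrown I, {\mu_0}^\smallfrown J)}_\lambda$ onto $\Gamma^{{\mu_0}^\smallfrown J}_{{\lambda_0}^\smallfrown I^\smallfrown \lambda}$; if the images ${\mu_0}^\smallfrown J^\smallfrown \mu_1$ and ${\mu_0}^\smallfrown J^\smallfrown \mu_2$ are disjointly supported, Proposition \ref{incomparable and disj supp character via interval} shows they first split at an interval, necessarily at a position beyond $|\mu_0|+1$, so the same holds for $\mu_1, \mu_2$. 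For (a)(ii), Proposition \ref{such and such}\ref{such and such c} combined with $\theta_{{\lambda_0}^\smallfrown I^\smallfrown \lambda} = \theta_{\lambda_0}|I|\theta_\lambda$ yields
\[
\sum_{\mu \in \mathcal{M}^{({\lambda_0}^\smallfrown I, {\mu_0}^\smallfrown J)}_\lambda} \theta_{{\mu_0}^\smallfrown J^\smallfrown \mu} = \sum_{\nu \in \Gamma^{{\mu_0}^\smallfrown J}_{{\lambda_0}^\smallfrown I^\smallfrown \lambda}} \theta_\nu = \theta_{\mu_0}\,|J|\,\theta_\lambda,
\]
and dividing both sides by $\theta_{\mu_0}|J|$ via $\theta_{{\mu_0}^\smallfrown J^\smallfrown \mu} = \theta_{\mu_0}|J|\theta_\mu$ gives the required $\sum_\mu \theta_\mu = \theta_\lambda$. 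Condition (b) is immediate: if $\lambda_1, \lambda_2$ are incomparable in $\mathcal{E}^{{\lambda_0}^\smallfrown I}$, then so are ${\lambda_0}^\smallfrown I^\smallfrown \lambda_1, {\lambda_0}^\smallfrown I^\smallfrown \lambda_2$ in $\mathcal{E}$, whence the original (b) makes ${\mu_0}^\smallfrown J^\smallfrown \mu_1, {\mu_0}^\smallfrown J^\smallfrown \mu_2$ incomparable and hence $\mu_1, \mu_2$ incomparable.

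The main obstacle is condition (c). Let $\lambda$ be a non-root of $\mathcal{E}^{{\lambda_0}^\smallfrown I}$, with immediate predecessor $\lambda'_0$ and linking interval $I' \in \mathcal{D}^{\kappa(\lambda'_0)}$. Since ${\lambda_0}^\smallfrown I$ is not a legitimate node of $\mathcal{T}$ (Definition \ref{dfn of ambient tree}), the immediate predecessor of ${\lambda_0}^\smallfrown I^\smallfrown \lambda$ in $\mathcal{E}$ is precisely ${\lambda_0}^\smallfrown I^\smallfrown \lambda'_0$, still linked by $I'$. For $\mu \in \mathcal{M}^{({\lambda_0}^\smallfrown I, {\mu_0}^\smallfrown J)}_\lambda$, set $\nu := {\mu_0}^\smallfrown J^\smallfrown \mu \in \mathcal{M}_{{\lambda_0}^\smallfrown I^\smallfrown \lambda}$; the original (c) supplies $\nu_0 \in \mathcal{M}_{{\lambda_0}^\smallfrown I^\smallfrown \lambda'_0}$ and $P \in \mathcal{D}^{\kappa(\nu_0)}$ with ${\nu_0}^\smallfrown P \sqsubseteq \nu$ and $P \subset [J_{\nu_0}h_{\pi(I')} = \epsilon(I')]$. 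The heart of the argument is showing ${\mu_0}^\smallfrown J \sqsubseteq \nu_0$, for then $\nu_0 = {\mu_0}^\smallfrown J^\smallfrown \mu'_0$ with $\mu'_0 \in \mathcal{M}^{({\lambda_0}^\smallfrown I, {\mu_0}^\smallfrown J)}_{\lambda'_0}$, and ${\mu'_0}^\smallfrown P \sqsubseteq \mu$ together with $P \subset [J^{({\lambda_0}^\smallfrown I, {\mu_0}^\smallfrown J)}_{\mu'_0}h_{\pi(I')} = \epsilon(I')]$ follows by unwinding definitions. To establish the containment, iterate (c) along the chain of immediate predecessors in $\mathcal{E}$ from ${\lambda_0}^\smallfrown I^\smallfrown \lambda'_0$ down to $\lambda_0$; the final step, which connects $\lambda_0$ to its immediate successor in $\mathcal{E}$ on this chain, uses the interval $I$, producing some $\mu^*_0 \in \mathcal{M}_{\lambda_0}$ and $P_0 \in \mathcal{D}^{\kappa(\mu^*_0)}$ with ${\mu^*_0}^\smallfrown P_0 \sqsubseteq \nu_0 \sqsubseteq \nu$ and $P_0 \subset [J_{\mu^*_0}h_{\pi(I)} = \epsilon(I)]$. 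Since $\mathcal{M}_{\lambda_0}$ is an antichain (its elements are pairwise disjointly supported, hence by Proposition \ref{remark for supp and lenghts}\ref{remark for supp and lenghts, part e} incomparable) and both $\mu_0$ and $\mu^*_0$ are initial segments of $\nu$, they must coincide; comparing ${\mu_0}^\smallfrown P_0 \sqsubseteq \nu = {\mu_0}^\smallfrown J^\smallfrown \mu$ at position $|\mu_0|+1$ then forces $P_0 = J$, whence ${\mu_0}^\smallfrown J \sqsubseteq \nu_0$, completing the verification.
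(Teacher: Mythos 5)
Your proposal is correct, and for conditions (a)(i), (a)(ii), and (b) it runs along essentially the same lines as the paper, using the bijection $\mu\mapsto{\mu_0}^\smallfrown J^\smallfrown\mu$ onto $\Gamma^{{\mu_0}^\smallfrown J}_{{\lambda_0}^\smallfrown I^\smallfrown\lambda}$ (Remark \ref{bijection between M and Gamma}) and Proposition \ref{such and such}\ref{such and such c}. Where you genuinely diverge is in verifying condition (c). The paper dispatches this step by invoking Remark \ref{remark about partitioning a successor}\ref{remark about partitioning a successor c}: for $\bar\lambda={\lambda_0}^\smallfrown I^\smallfrown\lambda$ and its immediate predecessor $\pi(\bar\lambda)={\lambda_0}^\smallfrown I^\smallfrown\pi(\lambda)$, the sets $\Gamma_{\bar\lambda}^{\nu^\smallfrown Q}$ indexed by $\nu\in\Gamma^{{\mu_0}^\smallfrown J}_{\pi(\bar\lambda)}$ and $Q\in\mathcal{D}^{\kappa(\nu)}_{I'}$ already partition $\Gamma_{\bar\lambda}^{{\mu_0}^\smallfrown J}$, so the existence of the required predecessor in $\mathcal{M}'_{\pi(\lambda)}$ comes for free. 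You instead apply the original scheme's condition (c) at $\bar\lambda$ to produce some $\nu_0\in\mathcal{M}_{\pi(\bar\lambda)}$ and then face the genuine issue of showing ${\mu_0}^\smallfrown J\sqsubseteq\nu_0$; your solution — iterating (c) down the chain to $\lambda_0$, using that $\mathcal{M}_{\lambda_0}$ is an antichain to force the resulting element to equal $\mu_0$, and comparing positions to extract $P_0=J$ — is sound. The trade-off is clear: your route is self-contained and does not rely on the packaged partitioning remark, at the cost of redoing by hand the iteration that Remark \ref{remark about partitioning a successor}\ref{remark about partitioning a successor c} already encapsulates. Since you are essentially reproving a special case of that remark inside the proof of (c), citing it directly would shorten the argument without loss of rigor.
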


To aid the proof of Proposition \ref{first dist from old one}, we note the following observation, which is evident from the relevant definitions.

\begin{rem}
\label{bijection between M and Gamma}
For every $\lambda\in\mathcal{E}^{{\lambda_0}^\smallfrown I}$, considering the set $\Gamma_{\lambda} = \Gamma^{{\mu_0}^\smallfrown J}_{{\lambda_0}^\smallfrown I^\smallfrown \lambda}$, the map
\[\phi_\lambda: \mathcal{M}^{({\lambda_0}^\smallfrown I,{\mu_0}^\smallfrown J)}_\lambda\to \Gamma_\lambda\]
given by $\phi_\lambda(\mu) = {\mu_0}^\smallfrown J^\smallfrown \mu$ is a bijection and, for all $\mu\in\mathcal{M}^{({\lambda_0}^\smallfrown I,{\mu_0}^\smallfrown J)}_\lambda$,
\[\theta_{\phi_\lambda(\mu)} = \theta_{\mu_0}|J|\theta_\mu.\]
\end{rem}

\begin{proof}[Proof of Proposition \ref{first dist from old one}]
For the rest of the poof, for $\lambda \in  \mathcal{E}^{{\lambda_0}^\smallfrown I}$, we denote $\mathcal{M}_\lambda^{({\lambda_0}^\smallfrown I,{\mu_0}^\smallfrown J)} = \mathcal{M}_\lambda'$ and, for $\mu\in\mathcal{M}_\lambda'$, we denote $J_\mu^{({\lambda_0}^\smallfrown I,{\mu_0}^\smallfrown J)} = J_\mu'$. For $\lambda\in\mathcal{E}^{{\lambda_0}^\smallfrown I}$, have that  $\mathcal{M}'_{\lambda}$ is finite and $\mathcal{M}'_{\lambda} \subset  \mathcal{F}^{{\mu_{0}}^\smallfrown J}$. Clearly, for every $\mu \in \mathcal{M^{\prime}}_{\lambda}$ we have that $J'_{\mu}:L_{p}^{\kappa(\lambda),0}\to L_{p}^{\kappa(\mu),0}$ is distributional embedding. We need to verify Definition \ref{DES} \ref{DES part a} \ref{DES part a, part 1} and \ref{DES part a, part 2}, \ref{DES part b}, and \ref{DES part c}.

First, we establish Definition \ref{DES} \ref{DES part a} \ref{DES part a, part 1}, so, let $\lambda \in  \mathcal{E}^{{\lambda_0}^\smallfrown I}$ and $\mu, \mu^{\prime} \in \mathcal{M}'_{\lambda}$ with $\mu \neq \mu^{\prime}$. Hence, we have that $\mu_{0}^{\frown} J^{ \frown} \mu \neq \mu_{0}^{\frown} J^{ \frown} \mu^{\prime} \in \mathcal{M}_{\lambda_{0}^{\frown} I^{\frown} \lambda}$. Thus, $\mu_{0}^{\frown} J^{ \frown} \mu$ and $\mu_{0}^{\frown} J^{ \frown} \mu^{\prime}$ are disjointly supported. Applying Proposition \ref{incomparable and disj supp character via interval} \ref{incomparable and disj supp character via interval, part 2} we easily obtain that $\mu$ and $\mu^{\prime}$ are disjointly supported. The property in Definition \ref{DES} \ref{DES part b} is a clear consequence of the tree structure of the distributional described in Remark \ref{lemma for existnce of eleme in th epredic} \ref{lemma for existnce of eleme in th epredic i}. We use Remark \ref{bijection between M and Gamma} and Proposition \ref{such and such} \ref{such and such c} to verify Definition \ref{DES} \ref{DES part b} as follows. For $\lambda\in\mathcal{E}^{{\lambda_0}^\smallfrown I}$,
\[\sum_{\mu\in\mathcal{M}'_\lambda}\theta_\mu = \theta_{\mu_0}^{-1}|J|^{-1}\sum_{\nu\in \Gamma_\lambda}\theta_\nu = \theta_{\mu_0}^{-1}|J|^{-1}\theta_{\mu_0}\frac{|J|}{|I|}\frac{\theta_{{\lambda_0}^\smallfrown I^\smallfrown \lambda}}{\theta_{\lambda_0}} = \theta_\lambda.\]

Finally, we prove that Definition \ref{DES} \ref{DES part c} is satisfied. Let $\lambda \in \mathcal{E}^{{\lambda_0}^\smallfrown I}$ be an immediate successor of $\pi(\lambda)$ and $J \in \mathcal{D}^{\kappa( \pi(\lambda) )}$ such that $\pi(\lambda)^\smallfrown J\sqsubseteq \lambda$. We will show that, for $\mu \in \mathcal{M}^{\prime}_{\lambda}$, there exist $\nu \in \mathcal{M}^{\prime}_{\pi( \lambda )}$ and $Q\in\mathcal{D}_J^{\kappa(\nu)}$ such that ${\nu}^\smallfrown Q\sqsubseteq \mu$. It is clear that the immediate predecessor of $\bar\lambda = {\lambda_0}^\smallfrown I^\smallfrown \lambda$ is $\pi(\bar\lambda) = {\lambda_0}^\smallfrown I^\smallfrown \pi(\lambda)$ and $\pi(\bar\lambda)^\smallfrown J\sqsubseteq\bar\lambda$. By Remark \ref{remark about partitioning a successor} \ref{remark about partitioning a successor c}, the collection of sets
\[\Big\{\Gamma_{\bar\lambda}^{\nu^\smallfrown Q}:\nu\in\Gamma^{{\mu_0}^\smallfrown J}_{\pi(\bar \lambda)}\text{ and }Q\in\mathcal{D}^{\kappa(\nu)}_J\Big\}\]
form a partition of $\Gamma_{\bar\lambda}^{{\mu_0}^\smallfrown J}$. Therefore, for every $\mu\in\mathcal{M}'_\lambda$, there exists $\nu\in\mathcal{M}'_{\pi(\lambda)}$ and $Q\in \mathcal{D}^{\kappa(\phi_{\pi(\lambda)}(\nu))}_J = \mathcal{D}^{\kappa(\nu)}_J$ such that ${\phi_{\pi(\lambda)}(\nu)}^\smallfrown Q\sqsubseteq \phi_\lambda(\mu)$, and thus, $\nu^\smallfrown J\sqsubseteq \mu$.
\end{proof}

The last two types of distributional embedding schemes from $\Psi$ are utilized in the limit case of the inductive step in Section \ref{dist repre subsection}. The proofs closely resemble that of Proposition \ref{first dist from old one}. Although we highlight some differences, we choose to omit specific details to avoid repetition.

\begin{prop} \label{second DES from old one}
Let $\lambda_{0} \in  \mathcal{E}$ be a root, $\mu_0\in\mathcal{F}$, and $J\in\mathcal{D}^{\kappa(\mu_0)}$ such that $\Gamma^{{\mu_0}^\smallfrown J}_{\lambda_0}\neq\emptyset$. For every $\lambda \in \mathcal{E}^{{\lambda_0}}$, denote  
\begin{align*}
 \mathcal{M}^{(\lambda_0,{\mu_0}^\smallfrown J)}_{\lambda} = \lbrace \mu \in \mathcal{F}^{{\mu_0}^\smallfrown J}: {\mu_0}^\smallfrown {J}^\smallfrown \mu \in \mathcal{M}_{\lambda_{0} \oslash \lambda } \rbrace
\end{align*}
and, for every $\mu \in  \mathcal{M}^{(\lambda_0,{\mu_0}^\smallfrown J)}_{\lambda}$, denote 
\begin{align*}
J^{(\lambda_0,{\mu_0}^\smallfrown J)}_{\mu} = J_{{\mu_0}^\smallfrown {J}^\smallfrown \mu}.
\end{align*}
Then the collection 
\begin{align*}
\Psi_{\lambda_0}^{{\mu_0}^\smallfrown J} = \left( \mathcal{M}_{\lambda}^{(\lambda_0,{\mu_0}^\smallfrown J)}, ( J^{(\lambda_0,{\mu_0}^\smallfrown J)}_{\mu})_{\mu \in \mathcal{M}^{(\lambda_0,{\mu_0}^\smallfrown J)}_{\lambda}} \right)_{ \lambda \in  \mathcal{E}^{{\lambda_0}}}
\end{align*}
is a distributional embedding scheme of $R_{\mathcal{E}^{{\lambda_0}}}$ in  $R_{\mathcal{F}^{{\mu_0}^\smallfrown J}}$.
\end{prop}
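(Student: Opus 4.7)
The plan is to mimic the proof of Proposition \ref{first dist from old one} line by line, with the $\oslash$ operation playing the role of the interval concatenation. For brevity I will write $\mathcal{M}'_\lambda$ for $\mathcal{M}^{(\lambda_0,\mu_0^\smallfrown J)}_\lambda$ and $J'_\mu$ for $J^{(\lambda_0,\mu_0^\smallfrown J)}_\mu$. First I would verify the basic well-definedness: since $\kappa(\lambda_0\oslash\lambda)=\kappa(\lambda)$ and $\kappa(\mu_0^\smallfrown J^\smallfrown\mu)=\kappa(\mu)$, each $J'_\mu$ is indeed a distributional embedding from $L_p^{\kappa(\lambda),0}$ to $L_p^{\kappa(\mu),0}$; finiteness of $\mathcal{M}'_\lambda$ and the inclusion $\mathcal{M}'_\lambda\subset\mathcal{F}^{\mu_0^\smallfrown J}$ are immediate from the corresponding properties of $\mathcal{M}_{\lambda_0\oslash\lambda}$. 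The key structural observation, analogous to Remark \ref{bijection between M and Gamma}, is that the map $\phi_\lambda:\mathcal{M}'_\lambda\to\Gamma^{\mu_0^\smallfrown J}_{\lambda_0\oslash\lambda}$ given by $\phi_\lambda(\mu)=\mu_0^\smallfrown J^\smallfrown\mu$ is a bijection and that $\theta_{\phi_\lambda(\mu)}=\theta_{\mu_0}|J|\theta_\mu$.

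Conditions Definition \ref{DES} \ref{DES part a} \ref{DES part a, part 1} and \ref{DES part b} then follow exactly as in Proposition \ref{first dist from old one}, by pulling disjoint supportedness and incomparability through $\phi_\lambda$ via Proposition \ref{incomparable and disj supp character via interval} \ref{incomparable and disj supp character via interval, part 2}. The principal analytic step is Definition \ref{DES} \ref{DES part a} \ref{DES part a, part 2}. Using the bijection, it reduces to verifying $\sum_{\nu\in\Gamma^{\mu_0^\smallfrown J}_{\lambda_0\oslash\lambda}}\theta_\nu=\theta_{\mu_0}|J|\theta_\lambda$. To invoke Proposition \ref{such and such} \ref{such and such b}, I would first observe that $\lambda_0$ is the minimal element of $\mathcal{E}$ with $\Gamma^{\mu_0}_{\lambda_0}\neq\emptyset$, because $\lambda_0$ is a root and $\Gamma^{\mu_0^\smallfrown J}_{\lambda_0}\subseteq\Gamma^{\mu_0}_{\lambda_0}$ is nonempty. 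Next I would rule out $\mu_0\in\mathcal{M}_{\lambda_0}$: if it were, then antichain-ness of $\mathcal{M}_{\lambda_0}$ (via Definition \ref{DES} \ref{DES part a} \ref{DES part a, part 1}) would force $\Gamma^{\mu_0}_{\lambda_0}=\{\mu_0\}$, contradicting the nonemptyness of $\Gamma^{\mu_0^\smallfrown J}_{\lambda_0}$. Proposition \ref{such and such} \ref{such and such b} then yields the identity, once one uses that $\theta_{\lambda_0\oslash\lambda}/\theta_{\lambda_0}=\theta_\lambda$.

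The main obstacle is Definition \ref{DES} \ref{DES part c}. Given a non-root $\lambda\in\mathcal{E}^{\lambda_0}$ with immediate predecessor $\lambda_p$, an interval $I\in\mathcal{D}^{\kappa(\lambda_p)}$ with $\lambda_p^\smallfrown I\sqsubseteq\lambda$, and $\mu\in\mathcal{M}'_\lambda$, applying property (c) of $\Psi$ to $\phi_\lambda(\mu)\in\mathcal{M}_{\lambda_0\oslash\lambda}$ relative to the immediate predecessor $\lambda_0\oslash\lambda_p$ yields $\tilde\mu_p\in\mathcal{M}_{\lambda_0\oslash\lambda_p}$ and $P\in\mathcal{D}^{\kappa(\tilde\mu_p)}$ with $\tilde\mu_p^\smallfrown P\sqsubseteq\phi_\lambda(\mu)$ and $P\subset[J_{\tilde\mu_p}h_{\pi(I)}=\epsilon(I)]$. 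The crux is to show that $\tilde\mu_p$ in fact extends $\mu_0^\smallfrown J$, so that it lies in the image of $\phi_{\lambda_p}$. For this I would iterate property (c) downward through the path in $\mathcal{E}$ from $\lambda_0\oslash\lambda_p$ to $\lambda_0$ to produce the unique $\tilde\mu'\in\mathcal{M}_{\lambda_0}$ with $\tilde\mu'\sqsubseteq\tilde\mu_p$. Since $\tilde\mu'\in\mathcal{F}\subseteq\mathcal{T}_\beta$ is a prefix of $\phi_\lambda(\mu)=\mu_0^\smallfrown J^\smallfrown\mu$ and since the intermediate sequence $\mu_0^\smallfrown J$ fails to belong to $\mathcal{T}_\beta$ (its final entry is a dyadic interval immediately following a successor ordinal that is not the terminal entry, violating Definition \ref{dfn of ambient tree}), one has either $\tilde\mu'\sqsubseteq\mu_0$ or $\mu_0^\smallfrown J\sqsubseteq\tilde\mu'$. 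The first alternative is excluded by antichain-ness of $\mathcal{M}_{\lambda_0}$ applied to $\tilde\mu'$ and to any member of the nonempty set $\Gamma^{\mu_0^\smallfrown J}_{\lambda_0}$. Hence $\mu_0^\smallfrown J\sqsubseteq\tilde\mu_p$, so $\tilde\mu_p=\phi_{\lambda_p}(\mu_p)$ for a unique $\mu_p\in\mathcal{M}'_{\lambda_p}$, and stripping the common prefix $\mu_0^\smallfrown J$ gives $\mu_p^\smallfrown P\sqsubseteq\mu$ and $P\subset[J'_{\mu_p}h_{\pi(I)}=\epsilon(I)]$, as required.
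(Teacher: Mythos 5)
Your proof is correct and follows the same strategy as the paper: the paper verifies only the measure condition via Remark \ref{second bijection between M and Gamma} and Proposition \ref{such and such}\ref{such and such b}, delegating conditions (a)(i), (b), and (c) to the argument of Proposition \ref{first dist from old one}, and your argument does the same in substance, while additionally spelling out the two side conditions needed to invoke Proposition \ref{such and such}\ref{such and such b} (minimality of $\lambda_0$, which is automatic since it is a root, and $\mu_0\notin\mathcal{M}_{\lambda_0}$ via antichain-ness) and unwinding Remark \ref{remark about partitioning a successor}\ref{remark about partitioning a successor c} by hand when verifying condition (c); these extra details are sound and supply exactly what the paper leaves implicit.
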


The key distinction between the proof of Proposition \ref{second DES from old one} from Proposition \ref{first dist from old one} lies in its dependence on Proposition \ref{such and such} \ref{such and such b} instead of \ref{such and such c}. We point out an observation that is analogous to Remark \ref{bijection between M and Gamma}.

\begin{rem}
\label{second bijection between M and Gamma}
For every $\lambda\in\mathcal{E}^{{\lambda_0}}$, considering the set $\Gamma_{\lambda} = \Gamma^{{\mu_0}^\smallfrown J}_{\lambda_0\oslash \lambda}$, the map
\[\phi_\lambda: \mathcal{M}^{(\lambda_0,{\mu_0}^\smallfrown J)}_\lambda\to \Gamma_\lambda\]
given by $\phi_\lambda(\mu) = {\mu_0}^\smallfrown J^\smallfrown \mu$ is a bijection and, for all $\mu\in\mathcal{M}^{(\lambda_0,{\mu_0}^\smallfrown J)}_\lambda$,
\[\theta_{\phi_\lambda(\mu)} = \theta_{\mu_0}|J|\theta_\mu.\]
\end{rem}

\begin{proof}[Proof of Proposition \ref{second DES from old one}]
We will only verify Definition \ref{DES} \ref{DES part b}, as the rest are shown in the exact same way as in the proof of Proposition \ref{first dist from old one}. By Remark \ref{second bijection between M and Gamma} and Proposition \ref{such and such} \ref{such and such b}. For $\lambda\in\mathcal{E}^{{\lambda_0}}$,
\[\sum_{\mu\in\mathcal{M}^{(\lambda_0,{\mu_0}^\smallfrown J)}_\lambda}\theta_\mu = \theta_{\mu_0}^{-1}|J|^{-1}\sum_{\nu\in \Gamma_\lambda}\theta_\nu = \theta_{\mu_0}^{-1}|J|^{-1}\theta_{\mu_0}|J|\frac{\theta_{\lambda_0\oslash\lambda}}{\theta_{\lambda_0}} = \theta_\lambda.\]
\end{proof}

The proof of the final proposition is based on a remark analogous to Remarks \ref{bijection between M and Gamma} and \ref{second bijection between M and Gamma}, as well as Proposition \ref{such and such} \ref{such and such a}. The process is identical to the previous two propositions; therefore, we will not repeat it.

\begin{prop} \label{third DES from old one}
Let $\lambda_{0} \in  \mathcal{E}$ and $\mu_0\in\mathcal{F}$ such that $\Gamma^{\mu_0}_{\lambda_0}\neq\emptyset$. For every $\lambda \in  \mathcal{E}^{{\lambda_0}}$, denote  
\begin{align*}
 \mathcal{M}^{(\lambda_0,\mu_0)}_{\lambda} = \lbrace \mu \in \mathcal{F}^{{\mu_0}}: {\mu_0}\oslash \mu \in \mathcal{M}_{\lambda_{0} \oslash \lambda } \rbrace
\end{align*}
and, for every $\mu \in  \mathcal{M}^{(\lambda_0,\mu_0)}_{\lambda}$, denote 
\begin{align*}
J^{(\lambda_0,\mu_0)}_{\mu} = J_{{\mu_0}\oslash\mu}.
\end{align*}
Then the collection 
\begin{align*}
\Psi^{\mu_0}_{\lambda_0} = \left( \mathcal{M}_{\lambda}^{(\lambda_0,\mu_0)}, ( \mathcal{H}^{(\lambda_0,\mu_0)}_{\mu})_{\mu \in \mathcal{M}^{(\lambda_0,\mu_0)}_{\lambda}} \right)_{ \lambda \in  \mathcal{E}^{{\lambda_0}}}
\end{align*}
is a distributional embedding scheme of $R_{\mathcal{E}^{{\lambda_0}}}$ in  $R_{\mathcal{F}^{{\mu_0}}}$.
\end{prop}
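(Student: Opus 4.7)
The plan is to imitate the proofs of Propositions \ref{first dist from old one} and \ref{second DES from old one}, replacing the concatenation ``$^{\smallfrown}$'' by the gluing operation ``$\oslash$'' and invoking Proposition \ref{such and such}\ref{such and such a} in the role played there by parts \ref{such and such c} and \ref{such and such b}. The preliminary step is to record the analog of Remarks \ref{bijection between M and Gamma} and \ref{second bijection between M and Gamma}: for every $\lambda\in\mathcal{E}^{\lambda_0}$, the map
\[
\phi_\lambda\colon\mathcal{M}_\lambda^{(\lambda_0,\mu_0)}\longrightarrow\Gamma^{\mu_0}_{\lambda_0\oslash\lambda},\qquad \phi_\lambda(\mu)=\mu_0\oslash\mu,
\]
is a bijection and satisfies $\theta_{\phi_\lambda(\mu)}=\theta_{\mu_0}\theta_\mu$. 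Bijectivity is immediate from the definition of $\mathcal{M}_\lambda^{(\lambda_0,\mu_0)}$ once one notes that the first entry of any element of $\mathcal{F}^{\mu_0}$ equals the last entry of $\mu_0$, so the gluing is always defined; the $\theta$-identity is the multiplicativity $\theta_{\mu_0\oslash\mu}=\theta_{\mu_0}\theta_\mu$ recorded just after Definition \ref{definition of T_lampda}.

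With $\phi_\lambda$ in hand, I would verify the four conditions of Definition \ref{DES} in direct parallel with the earlier propositions. For disjointness \ref{DES part a}\ref{DES part a, part 1} and incomparability \ref{DES part b}, distinct $\mu,\mu'\in\mathcal{M}_\lambda^{(\lambda_0,\mu_0)}$, or elements indexed by incomparable $\lambda,\lambda'\in\mathcal{E}^{\lambda_0}$, are sent by $\phi$ to elements of $\mathcal{M}_{\lambda_0\oslash\lambda}$ (respectively $\mathcal{M}_{\lambda_0\oslash\lambda'}$) that share the initial segment $\mu_0$; their first disagreement therefore lies past position $|\mu_0|$, the corresponding property for $\Psi$ together with Proposition \ref{incomparable and disj supp character via interval}\ref{incomparable and disj supp character via interval, part 2} forces it to be at an interval entry, and this transfers back through $\phi$ to the required disjointness or incomparability of $\mu$ and $\mu'$. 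For the measure identity \ref{DES part a}\ref{DES part a, part 2}, the bijection, the multiplicativity of $\theta$ under $\oslash$, and Proposition \ref{such and such}\ref{such and such a} combine to give
\[
\sum_{\mu\in\mathcal{M}_\lambda^{(\lambda_0,\mu_0)}}\theta_\mu \;=\; \theta_{\mu_0}^{-1}\!\sum_{\nu\in\Gamma^{\mu_0}_{\lambda_0\oslash\lambda}}\theta_\nu \;=\; \theta_{\mu_0}^{-1}\cdot\theta_{\mu_0}\,\frac{\theta_{\lambda_0\oslash\lambda}}{\theta_{\lambda_0}} \;=\; \theta_\lambda,
\]
where the last equality uses $\theta_{\lambda_0\oslash\lambda}=\theta_{\lambda_0}\theta_\lambda$. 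For the predecessor-interval compatibility \ref{DES part c}, an immediate successor $\lambda$ of its immediate predecessor $\pi(\lambda)$ in $\mathcal{E}^{\lambda_0}$ with $\pi(\lambda)^{\smallfrown}K\sqsubseteq\lambda$ corresponds via $\oslash$ to an immediate successor of $\lambda_0\oslash\pi(\lambda)$ in $\mathcal{E}$ along the same interval $K$, so property \ref{DES part c} for $\Psi$ applied to $\mu_0\oslash\mu$ produces a predecessor $\nu_p\in\mathcal{M}_{\lambda_0\oslash\pi(\lambda)}$ and an interval $P\in\mathcal{D}^{\kappa(\nu_p)}$ with $\nu_p^{\smallfrown}P\sqsubseteq\mu_0\oslash\mu$ and $P\subset[J_{\nu_p}h_{\pi(K)}=\epsilon(K)]$. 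A short disjointness argument inside $\mathcal{M}_{\lambda_0}$, using that $\Gamma^{\mu_0}_{\lambda_0}\neq\emptyset$ supplies some $\nu^{\ast}\in\mathcal{M}_{\lambda_0}$ with $\mu_0\sqsubseteq\nu^{\ast}$ and that distinct elements of any $\mathcal{M}_{\bar\lambda}$ are disjointly supported, then rules out $\nu_p\sqsubsetneq\mu_0$ and forces $\nu_p=\mu_0\oslash\mu_p$ for a unique $\mu_p\in\mathcal{M}_{\pi(\lambda)}^{(\lambda_0,\mu_0)}$; the equality $J_{\nu_p}=J^{(\lambda_0,\mu_0)}_{\mu_p}$ transports the condition on $P$ across.

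The one point to be careful about, and the place where this is not a verbatim transcription of Proposition \ref{second DES from old one}, is the application of Proposition \ref{such and such}\ref{such and such a}, whose statement places $\lambda_0$ at the minimal ancestor at which $\Gamma^{\mu_0}$ is non-empty. The hypothesis $\Gamma^{\mu_0}_{\lambda_0}\neq\emptyset$ is to be read at this minimal witness, so that the two occurrences of $\theta_{\lambda_0}$ in the displayed identity refer to the same node; in the earlier propositions this minimality was enforced automatically by $\mu_0\in\mathcal{M}_{\lambda_0}$ and by $\lambda_0$ being a root of $\mathcal{E}$, respectively, whereas here it has to be observed explicitly. Beyond this bookkeeping no genuine obstacle appears, and the remainder is a routine copy of the previous two proofs with ``$^{\smallfrown}$'' replaced by ``$\oslash$''.
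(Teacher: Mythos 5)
Your proof is correct and is essentially the one the paper sketches: you set up the analog of Remarks~\ref{bijection between M and Gamma} and~\ref{second bijection between M and Gamma} for the gluing map $\phi_\lambda(\mu)=\mu_0\oslash\mu$ (with the identity $\theta_{\mu_0\oslash\mu}=\theta_{\mu_0}\theta_\mu$), then transfer each clause of Definition~\ref{DES} through $\phi_\lambda$, with Proposition~\ref{such and such}\ref{such and such a} providing the measure identity. Your handling of~\ref{DES part c} is a direct disjoint-support argument, whereas the paper's proof of Proposition~\ref{first dist from old one} routes through Remark~\ref{remark about partitioning a successor}\ref{remark about partitioning a successor c} and the sets $\Gamma_{\bar\lambda}^{\nu^\smallfrown Q}$; both work and give the same conclusion.

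The point you flag about minimality is well taken. Proposition~\ref{such and such}\ref{such and such a} requires $\lambda_0$ to be the \emph{minimal} node of $\mathcal{E}$ at which $\Gamma^{\mu_0}$ is non-empty; read without that, the hypothesis $\Gamma^{\mu_0}_{\lambda_0}\neq\emptyset$ of Proposition~\ref{third DES from old one} is strictly weaker, and the computation verifying~\ref{DES part a}\ref{DES part a, part 2} would produce $\theta_{\mu_0}\theta_{\lambda_0\oslash\lambda}/\theta_{\lambda_0^{*}}$ where $\lambda_0^{*}$ is the true minimal witness, so the identity can genuinely fail when $\lambda_0^{*}\sqsubsetneq\lambda_0$. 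In Propositions~\ref{first dist from old one} and~\ref{second DES from old one} minimality is automatic — forced by $\mu_0\in\mathcal{M}_{\lambda_0}$ together with the disjoint-support argument you sketch, and by $\lambda_0\in\mathpzc{Root}(\mathcal{E})$, respectively — whereas here it must be added to the hypotheses. Every actual invocation of Proposition~\ref{third DES from old one} in the paper (Lemma~\ref{operator reduction to roots}, Lemma~\ref{operator reduction going down the path}, and the claim inside the proof of Proposition~\ref{successor step long proof}) takes $\lambda_0\in\mathpzc{Root}(\mathcal{E})$, so minimality holds in every use; your reading of the hypothesis is the intended one.
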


\section{Distributional representation of $R_{\alpha}^{p,0}$ in $R_{\beta}^{p,0}$} \label{dist repre subsection}
The objective of this section is to establish the following theorem about the map $T_\Psi$ given in Notation \ref{notation TPsi}.

\begin{thm}
 \label{Dist Isom theorem}
Let $\alpha, \beta < \omega_{1}$. For interval-branching subtrees $\mathcal{E}$ of $\mathcal{T}_\alpha$ and $\mathcal{F}$ of $\mathcal{T}_\beta$ and a distributional embedding scheme $\Psi = (\mathcal{M}_{\lambda}, (J_{\mu})_{\mu \in \mathcal{M}_{\lambda}})_{ \lambda \in  \mathcal{E}}$ of $R_{\mathcal{E}}$ in $R_{\mathcal{F}}$, we define the linear map $T_\Psi:\langle (X_\lambda)_{\lambda\in\mathcal{E}}\rangle\to \langle (X_\mu)_{\mu\in\mathcal{F}}\rangle$ given by
\[T_\Psi|_{X_\lambda} = \sum_{\mu\in\mathcal{M}_\lambda}T_\mu J_\mu T_{\lambda}^{-1},\]
 where $\lambda\in\mathcal{E}$. Then, $T_\Psi$ extends to a distributional embedding $T_{\Psi} \colon R_{\mathcal{E}} \to R_{\mathcal{F}}$.
 \end{thm}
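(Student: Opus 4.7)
The plan is transfinite induction on $\alpha$. Since $\langle(X_\lambda)_{\lambda\in\mathcal{E}}\rangle$ is dense in $R_\mathcal{E}$ and any distributional embedding extends to the norm closure of its domain, it suffices to verify $\mathrm{dist}(T_\Psi f)=\mathrm{dist}(f)$ for finite combinations $f=\sum_{\lambda\in F}f_\lambda$, $f_\lambda\in X_\lambda$. A local fact used throughout is that each $T_\Psi|_{X_\lambda}$ is itself a distributional embedding: by Proposition~\ref{remark for supp and lenghts} and Definition~\ref{DES}\ref{DES part a} the ranges $T_\mu J_\mu T_\lambda^{-1}(X_\lambda)$, $\mu\in\mathcal{M}_\lambda$, are pairwise disjointly supported with total support measure $\theta_\lambda$, and a direct computation using Lemma~\ref{computations using compressions sets and integrals} delivers the claim.

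The base case $\alpha=n$ is immediate, since $\mathcal{E}=\{(n)\}$ forces $R_\mathcal{E}=X_{(n)}$. For $\alpha$ a limit ordinal, I would decompose $f=\sum_{\lambda_0\in\mathpzc{Root}(\mathcal{E})}f_{\lambda_0}$ with $f_{\lambda_0}\in Y^\mathcal{E}_{\lambda_0}=T_{\lambda_0}(R_{\mathcal{E}^{\lambda_0}})$, these summands being independent by Proposition~\ref{independent sum of successor ordinals}. By Lemma~\ref{roots associated to M lambda}\ref{roots associated to M lambda a} and Proposition~\ref{such and such}\ref{such and such a}, each $\lambda_0$ determines a unique root $\mu_0\in\mathpzc{Root}(\mathcal{F})$ with $\mathcal{M}_{\lambda_0}\subset\{\nu:\mu_0\sqsubseteq\nu\}$ and $\theta_{\mu_0}=\theta_{\lambda_0}$; distinct $\lambda_0$ yield distinct $\mu_0$ by Definition~\ref{DES}\ref{DES part b}, so the $T_\Psi(Y^\mathcal{E}_{\lambda_0})\subseteq Y^\mathcal{F}_{\mu_0}$ remain independent in $R_\mathcal{F}$. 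Unfolding the definitions gives the intertwining $T_\Psi\circ T_{\lambda_0}=T_{\mu_0}\circ T_{\Psi_{\lambda_0}^{\mu_0}}$ on $R_{\mathcal{E}^{\lambda_0}}$, where $\Psi_{\lambda_0}^{\mu_0}$ is the sub-scheme from Proposition~\ref{third DES from old one} living on the strictly smaller ordinal $b(\lambda_0)+\kappa(\lambda_0)<\alpha$. The inductive hypothesis then makes each $T_\Psi|_{Y^\mathcal{E}_{\lambda_0}}$ a distributional embedding, and independence closes the case.

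The hard part is the successor case $\alpha=\beta+\kappa(\alpha)$ with $\beta$ a limit ordinal. Here $\mathcal{E}$ has the unique root $(\alpha)$, so $f=f_{(\alpha)}+\sum_{I\in\mathcal{D}^{\kappa(\alpha)}}T_Ih_I$ with $f_{(\alpha)}\in L_p^{\kappa(\alpha),0}$ taking constant value $c_I$ on each $I$ and $h_I\in R_{\mathcal{E}^{(\alpha)^\smallfrown I}}$, giving $\mathrm{dist}(f)=\sum_I|I|\,\mathrm{dist}(c_I+h_I)$. Regrouping $T_\Psi f$ by $\mu_0\in\mathcal{M}_{(\alpha)}$ and invoking Proposition~\ref{first dist from old one}, the piece attached to $\mu_0$ is $T_{\mu_0}\tilde f_{\mu_0}$, where
\[\tilde f_{\mu_0}=J_{\mu_0}f_{(\alpha)}+\sum_{I\in\mathcal{D}^{\kappa(\alpha)}}\sum_{J\in\mathcal{D}_I^{\kappa(\mu_0)}}T_J\bigl(T_{\Psi_{(\alpha)^\smallfrown I}^{\mu_0^\smallfrown J}}h_I\bigr).\]
The crucial point, which is where Definition~\ref{DES}\ref{DES part c} is essential, is that $J_{\mu_0}\colon L_p^{\kappa(\alpha),0}\to L_p^{\kappa(\mu_0),0}$, being a distributional embedding between simple-function spaces, is induced by a measure-preserving permutation of dyadic atoms; since $I\in\{\pi(I)^+,\pi(I)^-\}$ is selected by the sign $\epsilon(I)$, this yields the identity $[J_{\mu_0}f_{(\alpha)}=c_I]=[J_{\mu_0}h_{\pi(I)}=\epsilon(I)]=\bigsqcup_{J\in\mathcal{D}_I^{\kappa(\mu_0)}}J$. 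Consequently $\tilde f_{\mu_0}|_J=c_I+T_J(T_{\Psi_{(\alpha)^\smallfrown I}^{\mu_0^\smallfrown J}}h_I)|_J$ on each $J\in\mathcal{D}_I^{\kappa(\mu_0)}$, and combining the inductive hypothesis on the sub-schemes with Remark~\ref{remark about partitioning a successor}\ref{remark about partitioning a successor a} yields $\mathrm{dist}(\tilde f_{\mu_0})=\sum_I|I|\,\mathrm{dist}(c_I+h_I)=\mathrm{dist}(f)$. The $T_{\mu_0}\tilde f_{\mu_0}$ are disjointly supported and $\sum_{\mu_0\in\mathcal{M}_{(\alpha)}}\theta_{\mu_0}=\theta_{(\alpha)}=1$, so $\mathrm{dist}(T_\Psi f)=\mathrm{dist}(f)$, completing the induction.
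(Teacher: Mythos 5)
Your high-level plan (transfinite induction on $\alpha$, handling base, successor, and limit cases separately; locality of the distributional property at the $X_\lambda$ level; use of the sub-schemes from Propositions~\ref{first dist from old one} and \ref{third DES from old one}) tracks the paper's proof, and the successor case is essentially correct modulo one unproved structural assertion about $J_{\mu_0}$ (see below). The limit case, however, has a genuine gap.

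\textbf{The gap in the limit case.} You assert that ``distinct $\lambda_0$ yield distinct $\mu_0$ by Definition~\ref{DES}\ref{DES part b}''; this is false. Definition~\ref{DES}\ref{DES part b} only guarantees that the antichains $\mathcal{M}_{\lambda_0^{(1)}}$ and $\mathcal{M}_{\lambda_0^{(2)}}$ consist of pairwise \emph{incomparable} elements of $\mathcal{F}$. Incomparable elements of $\mathcal{F}$ may well share a common root. For a concrete instance, take $\mathcal{E}=\mathcal{T}_\omega$ and $\mathcal{F}=\mathcal{T}_{\omega\cdot 2}$, and for the two roots $\lambda_0^{(1)}=(\omega,1)$ and $\lambda_0^{(2)}=(\omega,2)$ of $\mathcal{E}$ choose
\[
\mathcal{M}_{\lambda_0^{(1)}}=\{(\omega\cdot 2,\omega+1,[0,\tfrac12),\omega,k_1),(\omega\cdot 2,\omega+1,[\tfrac12,1),\omega,k_2)\},
\]
\[
\mathcal{M}_{\lambda_0^{(2)}}=\{(\omega\cdot 2,\omega+1,[0,\tfrac12),\omega,k_3),(\omega\cdot 2,\omega+1,[\tfrac12,1),\omega,k_4)\}
\]
with $k_1,k_2,k_3,k_4$ pairwise distinct. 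This satisfies all clauses of Definition~\ref{DES} (clause \ref{DES part c} is vacuous at roots), yet both antichains live under the same root $\mu_0=(\omega\cdot 2,\omega+1)$ of $\mathcal{F}$. Consequently $T_\Psi(Y^\mathcal{E}_{\lambda_0^{(1)}})$ and $T_\Psi(Y^\mathcal{E}_{\lambda_0^{(2)}})$ both sit inside $Y_{\mu_0}$, and independence cannot be deduced from the independence of the family $(Y_\mu)_{\mu\in\mathpzc{Root}(\mathcal{F})}$. Their independence is indeed true, but it requires a separate argument: the paper proves it by grouping the roots of $\mathcal{E}$ into $Z_\mu=[(Y^\mathcal{E}_\lambda)_{\lambda\in\mathpzc{Root}(\mathcal{E},\mu)}]$ and running a secondary induction on the subtree $\mathcal{S}=\{\mu\in\mathcal{F}:\mathpzc{Root}(\mathcal{E},\mu)\neq\emptyset\}$ to build distributional embeddings $S_\mu\colon Z_\mu\to R_{\mathcal{F}^\mu}$, whose assembly via Lemmas~\ref{operator reduction to roots}--\ref{operator reduction going down the path} and Proposition~\ref{independent extension to distributional isomorphism} recovers $T_\Psi$. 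That inner tree induction is the load-bearing step you omit.

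\textbf{A smaller point in the successor case.} You invoke the fact that $J_{\mu_0}\colon L_p^{\kappa(\alpha),0}\to L_p^{\kappa(\mu_0),0}$ is induced by a measure-preserving map of dyadic atoms, and from this derive $[J_{\mu_0}f_{(\alpha)}=c_I]\supseteq\bigcup\mathcal{D}^{\kappa(\mu_0)}_I$. This characterization is true and can be proved (consider the images of $\chi_{I}-2^{-\kappa(\alpha)}$ and check that the level sets are pairwise disjoint with the right measures), but you state it without proof and the paper deliberately circumvents it: Proposition~\ref{successor disassembly} establishes the disassembly of $\widetilde T_\Psi$ purely by means of the Haar identity $\chi_{I_0}=2^{-\kappa(\lambda_0)}\chi_{[0,1]}+\sum_{r}2^{-\kappa(\lambda_0)+r-1}\epsilon(\pi^{r-1}(I_0))h_{\pi^r(I_0)}$, never appealing to a Koopman representation of $J_{\mu_0}$. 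Your version works, but the unproved claim should be flagged and justified.
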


The proof is carried out by transfinite induction on $\alpha$, and to aid the process, we introduce the following notation.

\begin{ntt}
For $\alpha < \omega_{1}$, we say that property $P(\alpha)$ is true if the conclusion of Theorem \ref{Dist Isom theorem} holds for the given $\alpha$ and all $\beta<\omega_1$.
\end{ntt}

We have structured the proof by breaking it down into three parts. The first part, Subsection \ref{generic assembly of compressions}, addresses the general theory of distributional embeddings and compressions, and the ways they can be assembled to create new such operators. Its results will enable an increase in the complexity of distributional embeddings in the inductive step of the proof of Theorem \ref{Dist Isom theorem}. In the second part, Subsection \ref{successor step distributional embedding}, we prove the base case and develop the specialized tools required to carry out the successor inductive step. In the third, and lengthiest, part, Subsection \ref{limit step distributional embedding}, we prove a series of operator reductions and use them to carry out the limit inductive step, thus completing the proof of Theorem \ref{Dist Isom theorem}.

\subsection{Assembling abstract $\theta$-compressions}
\label{generic assembly of compressions}
Here, we establish the necessary elementary tools used to assemble compressions and distributional embeddings, creating new operators of the same type. We initially recall that distributional embeddings with independent domains and independent ranges can define an ambient distributional embedding. We then define disjointly supported compressions and demonstrate how they can be utilized to create distributional embeddings.

The following is a standard result proved using the characteristic function of a random variable, which characterizes its distribution. Recall that for a linear operator $T$ we let $\mathrm{R}(T)$ denote its range.

\begin{prop}
\label{independent extension to distributional isomorphism}
Let $(X_n)_{n=1}^\infty$ be a sequence of independent subspaces of $L_p$ and $(T_n:X_n\to L_p)_{n=1}^\infty$, be a sequence distributional embeddings such that the spaces $(\mathrm{R}(T_n))_{n=1}^\infty$ are independent. Then, there exists a distributional embedding
\[T:[(X_n)_{n=1}^\infty]\to L_p\]
such that, for all $n\in \mathcal \mathbb{N}$,
\[T|_{X_n} = T_n.\]
\end{prop}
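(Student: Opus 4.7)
The plan is to define $T$ on the algebraic sum $\langle(X_n)_{n=1}^\infty\rangle$ by the formula $T\bigl(\sum_{n=1}^N f_n\bigr) = \sum_{n=1}^N T_n f_n$, to verify that $T$ is a distributional embedding on this algebraic sum through a characteristic-function computation, and then to extend $T$ by continuity to $[(X_n)_{n=1}^\infty]$.

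For well-definedness on the algebraic sum, I would show that any representation $\sum_{n=1}^N h_n = 0$ with $h_n \in X_n$ forces each $h_n$ to be a.s. constant: indeed, $h_n = -\sum_{m \ne n} h_m$ is simultaneously $\sigma(X_n)$-measurable and $\sigma(\bigcup_{m \ne n} X_m)$-measurable, and these $\sigma$-algebras are independent by hypothesis, so each $h_n$ is independent of itself. Since $T_n$ is a distributional embedding it sends constants to constants of the same value, and the prescription $T(\sum h_n)=\sum T_n h_n$ is unambiguous. The core identity is the distributional preservation on the algebraic sum: for $f = \sum_{n=1}^N f_n$, the characteristic function factors as
\[\varphi_f(t) = \mathbb{E}\bigl(e^{it\sum_n f_n}\bigr) = \prod_{n=1}^N \mathbb{E}\bigl(e^{it f_n}\bigr) = \prod_{n=1}^N \varphi_{f_n}(t),\]
using independence of $(X_n)$, and the analogous factorization applies to $\varphi_{Tf}$ using independence of the ranges $(\mathrm{R}(T_n))$. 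Since each $T_n$ is a distributional embedding, $\varphi_{T_n f_n} = \varphi_{f_n}$, hence $\varphi_{Tf} = \varphi_f$, and uniqueness of characteristic functions yields $\mathrm{dist}(Tf) = \mathrm{dist}(f)$.

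Being a distributional embedding on the algebraic sum, $T$ is in particular an $L_p$-isometry there (since the $L_p$-norm is distributionally determined), so it extends uniquely to a bounded isometry on $[(X_n)_{n=1}^\infty]$. The extension is again a distributional embedding: if $f_k \in \langle(X_n)\rangle$ converges to $f$ in $L_p$, then $Tf_k \to Tf$ in $L_p$, and $L_p$-convergence implies convergence in probability, which by bounded convergence gives pointwise convergence of characteristic functions $\varphi_{f_k} \to \varphi_f$ and $\varphi_{Tf_k} \to \varphi_{Tf}$; the identity $\varphi_{f_k} = \varphi_{Tf_k}$ passes to the limit. The only mildly technical step is the well-definedness argument above, which genuinely requires the independence hypothesis on $(X_n)$; once past that, the distributional identity is a formal consequence of the factorization property of characteristic functions of independent random variables, and the continuity argument is routine.
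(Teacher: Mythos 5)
Your proof is correct and uses exactly the characteristic-function method the paper indicates (the paper states this as a standard result and does not write out the argument). The one genuinely non-routine point, well-definedness of $T$ on the algebraic span, is handled correctly: a relation $\sum h_n = 0$ forces each $h_n$ to be measurable with respect to two independent $\sigma$-algebras, hence a.s.\ constant, and distributional embeddings fix constants, so the relation is transported to the target; the rest is the standard factorization of characteristic functions of independent variables plus passage to the $L_p$-closure via convergence in distribution.
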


The notion of the support of a linear subspace of $L_p$ was given in Definition \ref{support of subspace definition}. This leads to the natural notion of compressions with disjointly supported ranges. We will prove that the sum of such compressions with a common domain is still a compression, a result which is visually rather obvious. At the end of this section, we formulate and prove another intuitive statement of how to define a distributional embedding on a disjoint sum $\big(\oplus_{I\in\mathcal{D}^n}X_I\big) = \langle \{T_I[X_I]:I\in\mathcal{D}^n\}\rangle$, for a collection of subspaces $(X_I)_{I\in\mathcal{D}^n}$ of $L_p$.

\begin{dfn}
Let $n\in\N$, $X_1,\ldots,X_n$ be subspaces of $L_p$, and $T_i:X_i\to L_p$, $1\leq i\leq n$, be bounded linear operators. We say that $(T_i)_{i=1}^n$ have disjointly supported ranges if the sets $(\mathrm{supp}(\mathrm{R}(T_i))_{i=1}^n$ are essentially disjoint.
\end{dfn}

\begin{exa}
These are disjointly supported compressions that will be used later.
\begin{enumerate}[label=(\alph*)]

\item For $0\leq a<b\leq 1$, $\mathrm{supp}(\mathrm{R}(T_{[a,b]})) = [a,b]$ and, for $n\in\N$, the $1/2^n$-compressions $(T_I)_{I\in\mathcal{D}^n}$ have disjointly supported ranges.

\item For $\alpha<\omega_1$ and $\mu\in\mathcal{T}_\alpha$, $\mathrm{supp}(\mathrm{R}(T_\mu)) = \supp(X_\mu)$. Therefore, if $\Psi = (\mathcal{M}_\lambda,(\mathcal{H}_\mu)_{\mu\in \mathcal{M}_\lambda})_{\lambda\in\mathcal{E}}$ is a distributional embedding scheme, then, for every $\lambda\in\mathcal{T}_\alpha$, the compressions $(T_\mu)_{\mu\in\mathcal{M}_\lambda}$ have disjointly supported ranges.

\end{enumerate}
\end{exa}

\begin{lem}
\label{lemma disjointly supported ranges put together}
The following identities hold.
\begin{enumerate}[label=(\alph*)]

\item\label{lemma disjointly supported ranges put together a} Let $X$ be a subspace of $L_p$ and $T:X\to L_p$ be a $\theta$-compression. Then, for every bounded measurable function $\phi:\mathbb R\to\mathbb R$ and $f\in X$,
\[\int\phi\big(\big(Tf\big)(t)\big) dt = \theta\int \phi\big(f(t)\big)dt + (1-\theta)\phi(0).\]

\item\label{lemma disjointly supported ranges put together b} Let $\phi:\mathbb R\to\R$ be a bounded measurable function. Let $X_1,\ldots,X_n$ be subspaces of $L_p$, and, for $1\leq i\leq n$, $T_i:X\to L_p$ be a $\theta_i$-compression such that $(T_i)_{i=1}^n$ have disjointly supported ranges. Then, for $f_1\in X_1,\ldots,f_n\in X_n$,
\[\int \phi\Big(\sum_{i=1}^n(T_if_i\big)(t)\Big) dt = \sum_{i=1}^n\theta_i\int \phi\big(f_i(t)\big)d t + \big(1-\sum_{i=1}^n\theta_i\big)\phi(0).\]

\end{enumerate}

\end{lem}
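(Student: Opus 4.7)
For part \ref{lemma disjointly supported ranges put together a}, my plan is to recognize the identity as a direct reformulation of the definition of a $\theta$-compression via the change-of-variables formula $\int \phi\circ Y\, d\mu = \int \phi\, d(\mathrm{dist}(Y))$. Indeed, Definition \ref{comp dist embed} asserts precisely that $\mathrm{dist}(Tf) = \theta\,\mathrm{dist}(f) + (1-\theta)\delta_0$ as Borel measures on $\R$, which yields the desired identity on indicators $\phi = \chi_A$; a routine extension by linearity to simple functions and by dominated (or monotone) convergence to all bounded measurable $\phi$ completes that part.

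For part \ref{lemma disjointly supported ranges put together b}, I would let $A_i = \supp(\mathrm{R}(T_i))$ and exploit the hypothesis that the $A_i$ are pairwise essentially disjoint. The key geometric observation is that each $T_i f_i$ lies in $\mathrm{R}(T_i)$ and is therefore supported in $A_i$; hence on $A_i$ the only non-vanishing summand is $T_i f_i$ itself, so $\sum_j T_j f_j$ collapses to $T_i f_i$ there, while on the complement of $\bigcup_i A_i$ every summand vanishes. Decomposing the integral over $[0,1]$ with respect to the sets $A_1,\dots,A_n$ and their complement yields
\[
\int \phi\Big(\sum_{j=1}^n T_j f_j\Big)\, dt = \sum_{i=1}^n \int_{A_i} \phi(T_i f_i)\, dt + \Big(1 - \sum_{i=1}^n |A_i|\Big)\phi(0).
\]

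The remaining step will be to express each local integral $\int_{A_i}\phi(T_i f_i)\,dt$ in terms of the corresponding global one: since $T_i f_i$ also vanishes outside $A_i$, one has $\int \phi(T_i f_i)\, dt = \int_{A_i}\phi(T_i f_i)\,dt + (1-|A_i|)\phi(0)$, and applying part \ref{lemma disjointly supported ranges put together a} to the left-hand side produces the local integral as $\theta_i \int \phi(f_i)\, dt + (|A_i|-\theta_i)\phi(0)$. Summing over $i$, the auxiliary $|A_i|$ contributions cancel cleanly against the complement term, leaving exactly $\sum_i \theta_i \int \phi(f_i)\, dt + (1-\sum_i \theta_i)\phi(0)$. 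The argument is pure bookkeeping and presents no real obstacle; the only subtle point worth flagging is that one neither needs nor can generally assert $|A_i| = \theta_i$ (only $|A_i|\le \theta_i$ holds a priori), which is precisely why the derivation is organized so that the $|A_i|$'s cancel only after summation.
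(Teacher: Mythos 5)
Your proposal is correct and follows essentially the same route as the paper: part (a) is the change-of-variables consequence of $\mathrm{dist}(Tf)=\theta\,\mathrm{dist}(f)+(1-\theta)\delta_0$, and part (b) splits the integral over the essentially disjoint supports of the ranges and then applies part (a) to each summand. If anything, your explicit bookkeeping of the $\phi(0)$ contributions from the complements of the sets $A_i$ (together with the observation that only $|A_i|\le\theta_i$ is available a priori) is more careful than the paper's displayed chain of equalities, which tacitly reduces to the normalization $\phi(0)=0$.
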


\begin{proof}
Let us promptly prove the first assertion. Assuming that $T$ is a $\theta$-compression,
\begin{align*}
\int\phi\big(\big(Tf\big)(t)\big) dt &= \int \phi(x) \;d\big(\mathrm{dist}(Tf)\big)(x) = \theta \int \phi(x) d\big(\mathrm{dist}(f)\big)(x) + (1-\theta)\int \phi(x)d\delta_0(x)\\
&= \theta\int \phi\big(f(t)\big)d t + (1-\theta)\phi(0).
\end{align*}
In order to deduce the second assertion, observe
\begin{align*}
\int \phi\Big(\sum_{i=1}^n\big(T_if_i\big)(t)\Big) dt &= \sum_{j=1}^n\int_{\mathrm{supp}(\mathrm{R}(T_j))} \phi\Big(\sum_{i=1}^n\big(T_if_i\big)(t)\Big) dt = \sum_{i=1}^n\int_{\mathrm{supp}(\mathrm{R}(T_i))} \phi\big(\big(T_if_i\big)(t)\big) dt\\
&= \sum_{i=1}^n\int \phi\big(\big(T_if_i\big)(t)\big) dt.
\end{align*}
Apply the first statement to conclude the proof.
\end{proof}

\begin{prop}
\label{common domain disjointly supported compressions can be added}
Let $X$ be a subspace of $L_p$, $n\in\N$, and, for $1\leq i\leq n$, $T_i:X\to L_p$ be a $\theta_i$-compression such that $(T_i)_{i=1}^n$ have disjointly supported ranges. Then,
\[T = \sum_{i=1}^nT_i:X\to L_p\]
is a $(\sum_{i=1}^n\theta_i)$-compression.
\end{prop}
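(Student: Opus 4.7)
The statement is essentially an immediate corollary of Lemma \ref{lemma disjointly supported ranges put together} \ref{lemma disjointly supported ranges put together b}. My plan is to specialize that lemma to the single-function case $f_1 = f_2 = \cdots = f_n = f$ and to the test function $\phi = \chi_A$ for an arbitrary Borel set $A \subset \mathbb{R}$, and then to recognize the resulting identity as precisely the definition of a $\theta$-compression for $\theta = \sum_{i=1}^n \theta_i$.

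Concretely, fix $f \in X$ and $A \in \mathcal{B}(\mathbb{R})$. Since $(T_i)_{i=1}^n$ have disjointly supported ranges and share the common domain $X$, Lemma \ref{lemma disjointly supported ranges put together} \ref{lemma disjointly supported ranges put together b} applied with $f_i = f$ and $\phi = \chi_A$ yields
\[
\int \chi_A\Big(\sum_{i=1}^n (T_i f)(t)\Big)\, dt \;=\; \sum_{i=1}^n \theta_i \int \chi_A(f(t))\, dt \;+\; \Big(1 - \sum_{i=1}^n \theta_i\Big) \chi_A(0).
\]
The left-hand side equals $|[Tf \in A]|$ and the first term on the right equals $(\sum_i \theta_i)\, |[f \in A]|$, so setting $\theta = \sum_{i=1}^n \theta_i$ we obtain
\[
|[Tf \in A]| \;=\; \theta\, |[f \in A]| \;+\; (1 - \theta)\, \chi_A(0),
\]
which is exactly Definition \ref{comp dist embed} for $T$.

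The only remaining point is to confirm that $\theta \in (0,1]$, so that the notion of a $\theta$-compression applies. Each $\theta_i > 0$ by hypothesis, so $\theta > 0$. For $\theta \le 1$, observe that for any non-zero $f \in X$ the set $[T_i f \neq 0]$ has measure $\theta_i |[f \neq 0]|$ (taking $A = \mathbb{R} \setminus \{0\}$ in the compression identity for $T_i$), and these sets are pairwise essentially disjoint since they are contained in the pairwise disjoint sets $\mathrm{supp}(\mathrm{R}(T_i))$; summing and using $|[f \neq 0]| \le 1$ gives $\sum_i \theta_i \le 1$. (Alternatively, take $A = \mathbb{R}$ in the identity derived above: the left-hand side is at most $1$ while the right-hand side equals $\theta + (1 - \theta) = 1$ — this gives no obstruction, so one really does need the disjointness-of-supports argument.) No genuine obstacle arises; the proof is a one-line reduction to the preceding lemma.
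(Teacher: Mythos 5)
Your core argument is exactly the paper's: apply Lemma \ref{lemma disjointly supported ranges put together}\ref{lemma disjointly supported ranges put together b} with $X_1 = \cdots = X_n = X$, $f_1 = \cdots = f_n = f$, and $\phi = \chi_A$, and read off the resulting identity as Definition \ref{comp dist embed} with $\theta = \sum_i \theta_i$. That part is correct and is the entire argument the paper gives.

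However, your added check that $\theta \le 1$ contains a real error. You derive $\sum_i \theta_i\,|[f\neq 0]| \le 1$ from the disjointness of the sets $[T_if\neq 0]$, and then assert this combines with $|[f\neq 0]|\le 1$ to yield $\sum_i\theta_i\le 1$; but the second inequality goes the wrong way — it only gives $\sum_i\theta_i \le |[f\neq 0]|^{-1}$, which may exceed $1$ if $|[f\neq 0]|<1$. And indeed the hypotheses as literally stated do not force $\sum_i\theta_i\le 1$: take $X=\langle\chi_{[0,1/2)}\rangle$, $T_1=\mathrm{id}$, and $T_2$ the map sending $\chi_{[0,1/2)}$ to $\chi_{[1/2,1)}$; both are $1$-compressions with disjointly supported ranges $[0,1/2)$ and $[1/2,1)$, yet $\theta_1+\theta_2=2$. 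So $\sum_i\theta_i\le 1$ is an implicit hypothesis here (equivalently, $X$ contains a function of full support, which then makes your argument work, since one may take $|[f\neq 0]|=1$). The paper does not flag this, but in all its applications the compressions are of the form $T_\mu$ on domains like $L_p^{\kappa,0}$, which contain $h_{[0,1]}$, so the condition always holds. Your instinct to verify $\theta\in(0,1]$ was sound; the deduction as written is not.
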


\begin{proof}
Let $A$ be a measurable subset of $\mathbb{R}$ and $f\in X$. We apply Lemma \ref{lemma disjointly supported ranges put together} \ref{lemma disjointly supported ranges put together b} for $X_1 = \cdots = X_n = X$, $f_1=\cdots=f_n = f$, and $\phi = \chi_A$, to obtain
\[\Big|\Big[\sum_{i=1}^nT_if\in A\Big]\Big| = \sum_{i=1}^n\theta_i|[f_i\in A]| + (1-\sum_{i=1}^n\theta_i)\chi_A(0).\]
\end{proof}

\begin{ntt}
For $0\leq a<b\leq 1$ and $I = [a,b)$ we define the operator $Q_I:L_p\to L_p$ given by
\[\big(Q_If\big)(t) = f\big(q_I(t)\big) = f\big(a+t(b-a)\big).\]
\end{ntt}

\begin{rem}
\label{Q_I remarks}
For $n\in\mathbb{N}$ and $I,J\in\mathcal{D}^n$, $Q_IT_Jf = \delta_{I,J}f.$
\end{rem}

\begin{prop}
\label{assembly that increases order}
Let $n\in\mathbb{N}$ and, for $I\in\mathcal{D}^n$, let $X_I$ be a subspace of $L_p$ and $S_I:X_I\to L_p$ be a $(1/2^n)$-compression such that $(S_I)_{I\in\mathcal{D}^n}$ have disjointly supported ranges. Then,
\[S = \sum_{I\in\mathcal{D}^n}S_IQ_I:\langle(T_I[X_I])_{I\in\mathcal{D}^n}\rangle\to L_p\]
is a distributional embedding.
\end{prop}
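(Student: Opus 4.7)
The plan is to verify directly that $\mathrm{dist}(Sf)=\mathrm{dist}(f)$ for every $f$ in the (algebraic) span, by computing the integrals $\int\phi\circ Sf$ and $\int\phi\circ f$ against arbitrary bounded measurable test functions $\phi$ and showing they agree.

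First, I would observe that every $f\in\langle(T_I[X_I])_{I\in\mathcal{D}^n}\rangle$ has a \emph{unique} representation $f=\sum_{I\in\mathcal{D}^n}T_If_I$ with $f_I\in X_I$: indeed, the ranges $T_I[X_I]$ live in the essentially disjoint supports $I\in\mathcal{D}^n$. Using the identity $Q_JT_I=\delta_{I,J}\,\mathrm{id}$ from Remark \ref{Q_I remarks}, we get $Q_Jf=f_J\in X_J$, and therefore
\[
Sf \;=\; \sum_{J\in\mathcal{D}^n} S_JQ_Jf \;=\; \sum_{J\in\mathcal{D}^n} S_Jf_J.
\]
In particular $Sf$ is a well-defined element of $L_p$, and $S$ is linear on its domain.

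Next, I would apply Lemma \ref{lemma disjointly supported ranges put together} \ref{lemma disjointly supported ranges put together b} twice. On the domain side, the family $(T_I)_{I\in\mathcal{D}^n}$ consists of $(1/2^n)$-compressions with pairwise disjointly supported ranges, and $\sum_{I\in\mathcal{D}^n}1/2^n=1$, so for every bounded measurable $\phi:\mathbb R\to\mathbb R$,
\[
\int\phi\big(f(t)\big)\,dt \;=\; \sum_{I\in\mathcal{D}^n}\frac{1}{2^n}\int\phi\big(f_I(t)\big)\,dt.
\]
On the target side, the hypothesis is exactly that $(S_I)_{I\in\mathcal{D}^n}$ are $(1/2^n)$-compressions with disjointly supported ranges, again summing to $1$, so the same lemma yields
\[
\int\phi\big((Sf)(t)\big)\,dt \;=\; \sum_{I\in\mathcal{D}^n}\frac{1}{2^n}\int\phi\big(f_I(t)\big)\,dt.
\]

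Comparing the two identities gives $\int\phi\circ Sf=\int\phi\circ f$ for every bounded measurable $\phi$. Specializing $\phi=\chi_A$ for Borel sets $A\subset\mathbb R$ gives $|[Sf\in A]|=|[f\in A]|$, i.e.\ $\mathrm{dist}(Sf)=\mathrm{dist}(f)$, so $S$ is a distributional embedding on the algebraic span (and hence, by the comment following the definition of distributional embeddings, extends to its closure). There is no real obstacle here; the content of the proposition is essentially bookkeeping, with all the analytic work already packaged in Lemma \ref{lemma disjointly supported ranges put together} and the identity $Q_JT_I=\delta_{I,J}\,\mathrm{id}$.
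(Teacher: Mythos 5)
Your proposal is correct and follows essentially the same route as the paper's proof: decompose $f=\sum_{I}T_If_I$, use $Q_JT_I=\delta_{IJ}\,\mathrm{id}$ to identify $Sf=\sum_I S_If_I$, and apply Lemma~\ref{lemma disjointly supported ranges put together}~\ref{lemma disjointly supported ranges put together b} twice, once on the domain side via the $(T_I)$ and once on the target side via the $(S_I)$, with the $\phi(0)$ terms canceling since $\sum_I 1/2^n=1$. The only cosmetic difference is that you work with general bounded measurable $\phi$ while the paper specializes directly to $\phi=\chi_A$; these are of course equivalent.
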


\begin{proof}
Let $f\in\langle(T_I[X_I])_{I\in\mathcal{D}^n}\rangle$, i.e., for $I\in\mathcal{D}^n$, there is $f_I\in X_I$ such that $f = \sum_{I\in\mathcal{D}^n}T_If_I$. By Remark \ref{Q_I remarks}, $Sf = \sum_{I\in\mathcal{D}^n}S_If_I$. Let now $A$ be a measurable subset of $\mathbb{R}$, and apply Lemma \ref{lemma disjointly supported ranges put together} \ref{lemma disjointly supported ranges put together b} twice for $\phi =\chi_A$ to obtain
\begin{align*}
|[Sf \in A]| = \Big|\Big[\sum_{I\in\mathcal{D}^n}S_If_I\Big]\Big| = \sum_{I\in\mathcal{D}^n}\frac{1}{2^n}|[f_I\in A]| =  \Big|\Big[\sum_{I\in\mathcal{D}^n}T_If_I\Big]\Big| = |[f\in A]|,
\end{align*}
i.e., $\mathrm{dist}(Sf) = \mathrm{dist}(f)$.
\end{proof}

\subsection{The base case and the inductive step for successor ordinals}
\label{successor step distributional embedding}
We initially establish the base case, i.e., when $\alpha$ is a positive integer, and then we prepare to prove the successor case of the inductive step in the proof of Theorem \ref{Dist Isom theorem}. If $\alpha = \alpha_0+n$, where $\alpha_0$ is a non-zero limit ordinal number and $n\in\mathbb{N}$, and $\mathcal{E}$ is a interval-branching subtree of $\mathcal{T}_\alpha$, we analyze $T_\Psi$ as a sum of operators that includes sub-distributional embedding schemes with domains over interval-branching subtrees of $\mathcal{T}_{\alpha_0}$, meaning schemes of lesser complexity. We combine this with the inductive hypothesis $P(\alpha_0)$ and Proposition \ref{assembly that increases order} to achieve the desired conclusion.

\begin{prop}
Let $n\in\mathbb{N}$, $\beta<\omega_1$, $\mathcal{E}$ be an interval-branching subtree of $\mathcal{T}_n$, $\mathcal{F}$ be an interval-branching subtree of $\mathcal{T}_\beta$, and $\Psi$ be a distributional embedding scheme from $R_\mathcal{E}$ to $R_\mathcal{F}$. Then $T_\Psi$ is a distributional embedding.
\end{prop}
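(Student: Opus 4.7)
My plan is to reduce the base case to a direct application of Proposition \ref{common domain disjointly supported compressions can be added}. Since $\mathcal{T}_n = \{(n)\}$, the only non-trivial subtree is $\mathcal{E} = \{(n)\}$, so the entire distributional embedding scheme collapses to a single finite family $(\mathcal{M}_{(n)}, (J_\mu)_{\mu \in \mathcal{M}_{(n)}})$. Because $T_{(n)} = \mathrm{id}$ by Definition \ref{definition of T_lampda} \ref{definition of T_lampda, finite}, and $X_{(n)} = L_p^{n,0} = R_{\mathcal{E}}$, the operator $T_\Psi$ acts simply as
\[T_\Psi = \sum_{\mu \in \mathcal{M}_{(n)}} T_\mu J_\mu : L_p^{n,0} \to L_p.\]

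Next I would observe that each summand $T_\mu J_\mu$ is a $\theta_\mu$-compression: $J_\mu : L_p^{n,0} \to L_p^{\kappa(\mu),0}$ is a distributional embedding (i.e., a $1$-compression), $T_\mu$ is a $\theta_\mu$-compression on $L_p$, and compositions of compressions multiply their parameters. Moreover, these operators all share the common domain $L_p^{n,0}$ and have pairwise disjointly supported ranges, since $\mathrm{supp}(\mathrm{R}(T_\mu J_\mu)) \subset \mathrm{supp}(\mathrm{R}(T_\mu)) = \mathrm{supp}(X_\mu)$, and Definition \ref{DES} \ref{DES part a} \ref{DES part a, part 1} guarantees that the supports $\mathrm{supp}(X_\mu)$, $\mu \in \mathcal{M}_{(n)}$, are pairwise disjoint.

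Applying Proposition \ref{common domain disjointly supported compressions can be added} then gives that $T_\Psi$ is a $\big(\sum_{\mu \in \mathcal{M}_{(n)}} \theta_\mu\big)$-compression. By Definition \ref{DES} \ref{DES part a} \ref{DES part a, part 2}, this sum equals $\theta_{(n)} = 1$, so $T_\Psi$ is a $1$-compression, that is, a distributional embedding. There is no real obstacle in the base case; the entire argument is a direct application of the assembly lemma from Subsection \ref{generic assembly of compressions}, and the recursive machinery of distributional embedding schemes plays no role until $\alpha$ is infinite. The only thing to be slightly careful about is identifying $R_{\mathcal{E}}$ with $L_p^{n,0}$ and observing that $T_\Psi$ is defined on all of $R_{\mathcal{E}}$ (not merely a finite-dimensional piece), so no further closure argument is needed.
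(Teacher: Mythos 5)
Your proof is correct and matches the paper's argument exactly: reduce to $\mathcal{E}=\{(n)\}$, use $T_{(n)}=\mathrm{id}$ to write $T_\Psi=\sum_{\mu\in\mathcal{M}_{(n)}}T_\mu J_\mu$, and apply Proposition \ref{common domain disjointly supported compressions can be added} together with Definition \ref{DES} \ref{DES part a}. The extra detail you supply (each $T_\mu J_\mu$ is a $\theta_\mu$-compression with disjointly supported ranges, and the $\theta_\mu$ sum to $1$) is precisely the verification that the paper leaves implicit when invoking that proposition.
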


\begin{proof}
In this case, $\mathcal{T}_n = \{(n)\}$, and thus $\mathcal{E} = \{(n)\}$ and $R_\mathcal{E} = L_p^{n,0}$. Because $T_{(n)} = id$,
\[T_\Psi = \sum_{\mu\in\mathcal{M}_{(n)}}T_\mu J_\mu: R_\mathcal{E}\to R_\mathcal{F},\]
and thus, by Lemma \ref{common domain disjointly supported compressions can be added}, it is a distributional embedding.
\end{proof}

\begin{rem}
The same argument shows that, for any $\alpha<\omega_1$ and trivial subtree $\mathcal{E} = \{\lambda_0\}$ of $\mathcal{T}_\alpha$, any distributional embedding scheme $\Psi$ from $R_\mathcal{E}$ to some space $R_\mathcal{F}$, the operator $T_\Psi$ is a distributional embedding.
\end{rem}

We augment the operator $T_\Psi$ by incorporating the constant functions into its domain. This adjustment simplifies the proof of the successor case in the inductive step, making it more intuitive and eliminating some technicalities that would arise otherwise.

\begin{ntt}
We set the following notation.
\begin{enumerate}[label=(\alph*)]

\item For $\alpha<\omega_1$ and an interval-branching subtree $\mathcal{E}$ of $\mathcal{T}_\alpha$ we denote $\widetilde R_\mathcal{E} = \langle \{\chi_{[0,1]}\}\cup R_\mathcal{E}\rangle$.

\item For $\alpha, \beta < \omega_{1}$, interval-branching subtrees $\mathcal{E}$ of $\mathcal{T}_\alpha$ and $\mathcal{F}$ of $\mathcal{T}_\beta$, and a distributional embedding scheme $\Psi$  of $R_\mathcal{E}$ in $R_\mathcal{F}$, we denote by $\widetilde T_\Psi:\widetilde R_\mathcal{E}\to\widetilde R_\mathcal{F}$ the extension of $T_\Psi$ given by letting $\widetilde T_\Psi\chi_{[0,1]} = \chi_{[0,1]}$.
 
 \end{enumerate}
\end{ntt}

Clearly, $T_\Psi$ is a distributional embedding if and only if $\widetilde T_\Psi$ is.

\begin{ntt}
For this subsection, fix $\alpha, \beta < \omega_{1}$, with $\alpha = \alpha_0+n$, where $\alpha_0$ is a non-zero limit ordinal number and $n\in\mathbb{N}$, interval-branching subtrees $\mathcal{E}$ of $\mathcal{T}_\alpha$ and $\mathcal{F}$ of $\mathcal{T}_\beta$ and a distributional embedding scheme $\Psi = (\mathcal{M}_{\lambda}, (J_{\mu})_{\mu \in \mathcal{M}_{\lambda}})_{ \lambda \in  \mathcal{E}}$  of $R_\mathcal{E}$ in $R_\mathcal{F}$.
\end{ntt}

\begin{prop}
\label{distributional embedding successor proof}
If $P(\alpha_0)$ is true, then $T_\Psi$ is a distributional embedding.
\end{prop}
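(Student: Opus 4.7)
My plan is to write $\widetilde{T}_\Psi$ as a sum of disjointly-supported $(1/2^n)$-compressions indexed by $I\in\mathcal{D}^n$ and invoke Proposition \ref{assembly that increases order}. The trivial case $\mathcal{E}=\{(\alpha)\}$ is immediate: here $R_\mathcal{E}=X_{(\alpha)}=L_p^{n,0}$ and $T_\Psi=\sum_{\mu\in\mathcal{M}_{(\alpha)}}T_\mu J_\mu$, so Definition \ref{DES} \ref{DES part a} and Proposition \ref{common domain disjointly supported compressions can be added} make it a $1$-compression. Assume henceforth that $(\alpha)$ is non-maximal in $\mathcal{E}$, and for each $I\in\mathcal{D}^n$ set $\mathcal{E}_I=\mathcal{E}^{(\alpha)^\smallfrown I}$, a nonempty interval-branching subtree of $\mathcal{T}_{\alpha_0}$. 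Then $\widetilde{R}_\mathcal{E}=\sum_{I\in\mathcal{D}^n}T_I\widetilde{R}_{\mathcal{E}_I}$.

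For each triple $(I,\mu_0,P)$ with $\mu_0\in\mathcal{M}_{(\alpha)}$ and $P\in\mathcal{D}^{\kappa(\mu_0)}_I$, Proposition \ref{first dist from old one} supplies the sub-distributional embedding scheme $\Psi_{(\alpha)^\smallfrown I}^{\mu_0^\smallfrown P}$ of $R_{\mathcal{E}_I}$ in $R_{\mathcal{F}^{\mu_0^\smallfrown P}}$, and by the inductive hypothesis $P(\alpha_0)$ each induced operator $T_{\Psi_{(\alpha)^\smallfrown I}^{\mu_0^\smallfrown P}}$ is a distributional embedding. Using the partition $\mathcal{M}_{(\alpha,I)^\smallfrown\nu}=\bigsqcup_{\mu_0,P}\Gamma_{(\alpha,I)^\smallfrown\nu}^{\mu_0^\smallfrown P}$ from Remark \ref{remark about partitioning a successor} and unfolding the definition of $T_\Psi$, I obtain $T_\Psi(T_Ig)=V_I g$ for every $g\in R_{\mathcal{E}_I}$, where
\[
V_I:=\sum_{\mu_0\in\mathcal{M}_{(\alpha)}}T_{\mu_0}\sum_{P\in\mathcal{D}^{\kappa(\mu_0)}_I}T_P\,T_{\Psi_{(\alpha)^\smallfrown I}^{\mu_0^\smallfrown P}}.
\]
For fixed $\mu_0$, the inner summands are $|P|$-compressions with pairwise disjointly supported ranges, and $\sum_P|P|=|I|=1/2^n$ by Remark \ref{remark about partitioning a successor} \ref{remark about partitioning a successor a}, so Proposition \ref{common domain disjointly supported compressions can be added} produces a $(1/2^n)$-compression. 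Precomposing by the pairwise disjointly supported $T_{\mu_0}$ (Definition \ref{DES} \ref{DES part a} \ref{DES part a, part 1}) with $\sum_{\mu_0}\theta_{\mu_0}=1$, a second application of Proposition \ref{common domain disjointly supported compressions can be added} shows that $V_I:R_{\mathcal{E}_I}\to L_p$ is itself a $(1/2^n)$-compression.

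I then extend $V_I$ to $\widetilde{V}_I:\widetilde{R}_{\mathcal{E}_I}\to L_p$ by setting $\widetilde{V}_I\chi_{[0,1]}=\chi_{B_I}$, where $B_I=\mathrm{supp}(\mathrm{R}(V_I))$, and check via Lemma \ref{lemma disjointly supported ranges put together} \ref{lemma disjointly supported ranges put together b} that $\widetilde{V}_I$ remains a $(1/2^n)$-compression. Using the decomposition $\chi_I=\chi_{[0,1]}/2^n+(\chi_I-\chi_{[0,1]}/2^n)$ together with the explicit form of $T_\Psi$ on $X_{(\alpha)}=L_p^{n,0}$, a short computation yields $\widetilde{V}_I\chi_{[0,1]}=\widetilde{T}_\Psi\chi_I$, so $\widetilde{T}_\Psi(T_Ig)=\widetilde{V}_I g$ for every $g\in\widetilde{R}_{\mathcal{E}_I}$. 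For distinct $I\neq I'$, the ranges of $\widetilde{V}_I$ and $\widetilde{V}_{I'}$ are disjointly supported: contributions from different $\mu_0$'s are separated by the disjoint $\mathrm{supp}(X_{\mu_0})$'s, while for a common $\mu_0$ the sets $\mathrm{supp}(\widetilde{J}_{\mu_0}\chi_I)$ and $\mathrm{supp}(\widetilde{J}_{\mu_0}\chi_{I'})$ are disjoint because a distributional embedding sends disjointly supported characteristic functions to disjointly supported characteristic functions. Proposition \ref{assembly that increases order} now gives that $\widetilde{T}_\Psi=\sum_{I\in\mathcal{D}^n}\widetilde{V}_I Q_I$ extends to a distributional embedding on $\widetilde{R}_\mathcal{E}$, and restricting to $R_\mathcal{E}$ finishes the proof.

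The main obstacle will be the bookkeeping of the three-level partition $\mathcal{M}_{(\alpha,I)^\smallfrown\nu}\leftrightarrow(\mu_0,P,\mu')$ and verifying that the $\theta$-weights collapse correctly at each stage: the innermost sum over $P$ must hit measure $|I|$, and the outer sum over $\mu_0$ must hit measure $\theta_{(\alpha)}=1$. This is precisely what the estimates of Proposition \ref{such and such} and the sub-DES of Proposition \ref{first dist from old one} were designed to provide, so the assembly should flow cleanly once they are in place.
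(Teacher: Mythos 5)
Your argument is correct and rests on essentially the same ingredients as the paper's proof, only with the assembly order rearranged. The paper first proves the disassembly identity $\widetilde T_\Psi=\sum_{\mu\in\mathcal{M}_{\lambda_0}}\sum_{I}\sum_{J\in\mathcal{D}^{\kappa(\mu)}_I}T_\mu T_J\widetilde T_{\Psi^{\mu^\smallfrown J}_{{\lambda_0}^\smallfrown I}}Q_I$ (Proposition \ref{successor disassembly}), then for each fixed $\mu\in\mathcal{M}_{\lambda_0}$ forms $S^\mu_I=\sum_J T_J\widetilde T_{\Psi^{\mu^\smallfrown J}_{{\lambda_0}^\smallfrown I}}$, applies Proposition \ref{assembly that increases order} over $I$ to obtain a distributional embedding $S_\mu$ on $\widetilde R_\mathcal{E}$ (via Lemma \ref{successors subspaces reconstruct space}), and finally sums $\sum_\mu T_\mu S_\mu$ by Proposition \ref{common domain disjointly supported compressions can be added}; you instead fix $I$, fuse the entire $(\mu_0,P)$-block into a single $(1/2^n)$-compression $V_I$, and invoke Proposition \ref{assembly that increases order} only once. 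Both routes use Proposition \ref{first dist from old one}, the partitions of Remark \ref{remark about partitioning a successor}, the hypothesis $P(\alpha_0)$, and Propositions \ref{common domain disjointly supported compressions can be added} and \ref{assembly that increases order}, so your regrouping is legitimate and buys a single global application of the assembly proposition at the cost of a bulkier block. The one place where your sketch glosses over a real step is the treatment of constants: with $\widetilde V_I\chi_{[0,1]}:=\chi_{B_I}$, $B_I=\mathrm{supp}(\mathrm{R}(V_I))$, the identity $\widetilde V_I\chi_{[0,1]}=\widetilde T_\Psi\chi_I$ requires knowing that $B_I$ coincides with $\bigcup_{\mu_0\in\mathcal{M}_{(\alpha)}}\bigcup_{P\in\mathcal{D}^{\kappa(\mu_0)}_I}[T_{\mu_0}\chi_P=1]$, equivalently that $|B_I|=2^{-n}$; this is true, but it needs an argument (e.g., every nonempty subtree $\mathcal{E}_I$ of $\mathcal{T}_{\alpha_0}$ contains a root of $\mathcal{T}_{\alpha_0}$, and such roots have $\theta=1$, so the $2^{-n}$-compression $V_I$ has range whose support exhausts the measure-$2^{-n}$ envelope), not merely the ``short computation'' you indicate. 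The cleaner repair, and what the paper effectively does, is to augment at the inner level, i.e.\ set $\widetilde V_I=\sum_{\mu_0}\sum_{P\in\mathcal{D}^{\kappa(\mu_0)}_I}T_{\mu_0}T_P\widetilde T_{\Psi^{{\mu_0}^\smallfrown P}_{(\alpha)^\smallfrown I}}$, which makes both the compression property of $\widetilde V_I$ and the value on $\chi_{[0,1]}$ immediate (using $\sum_{\mu_0}T_{\mu_0}\chi_{[0,1]}=\chi_{[0,1]}$ and the Haar identity from Proposition \ref{successor disassembly}); the undefined $\widetilde J_{\mu_0}$ in your disjointness argument should likewise be read as this augmentation of $J_{\mu_0}$ to $L_p^{n}$, after which the disjointness of the ranges of the $\widetilde V_I$ follows as you say.
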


Before proving this, we will disassemble the operator $\widetilde T_\Psi$. It is worth noting that the ensuing formula is not valid in its current form if we do not use the augmented operators.

\begin{prop}
\label{successor disassembly}
Let $\lambda_0 = (\alpha_0+n)$ denote the unique root of $\mathcal{E}$. Then,
\[\widetilde T_\Psi = \sum_{\mu\in\mathcal{M}_{\lambda_0}}\sum_{I\in\mathcal{D}^{\kappa(\lambda_0)}}\sum_{J\in\mathcal{D}_I^{\kappa(\mu)}}T_\mu T_J \widetilde T_{\Psi^{\mu^\smallfrown J}_{{\lambda_0}^\smallfrown I}}Q_I.\]

\end{prop}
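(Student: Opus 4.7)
The strategy is to verify that both sides, as linear operators on $\widetilde R_\mathcal{E}$, agree on the spanning family $\{\chi_{[0,1]}\}\cup\bigcup_{\lambda\in\mathcal{E}}X_\lambda$. Every $\lambda\in\mathcal{E}$ is either the root $\lambda_0$, with $X_{\lambda_0}=L_p^{n,0}$ and $T_{\lambda_0}=\mathrm{id}$, or of the form $\nu=\lambda_0^\smallfrown I_0^\smallfrown\tilde\lambda$ with $I_0\in\mathcal{D}^n$, $\tilde\lambda\in\mathcal{E}^{\lambda_0^\smallfrown I_0}$, and $T_\nu=T_{I_0}T_{\tilde\lambda}$. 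For $\chi_{[0,1]}$, the left-hand side equals $\chi_{[0,1]}$ by definition of the tilde extension, while the right-hand side reduces, via $Q_I\chi_{[0,1]}=\chi_{[0,1]}$, $\widetilde T_{\Psi_{\lambda_0^\smallfrown I}^{\mu^\smallfrown J}}\chi_{[0,1]}=\chi_{[0,1]}$, and $T_J\chi_{[0,1]}=\chi_J$, to $\sum_{\mu\in\mathcal{M}_{\lambda_0}}T_\mu\bigl(\sum_{I}\sum_{J\in\mathcal{D}^{\kappa(\mu)}_I}\chi_J\bigr)$. Since $\bigcup_{J\in\mathcal{D}^{\kappa(\mu)}_I}J=[J_\mu h_{\pi(I)}=\epsilon(I)]$ and these sets over $I\in\mathcal{D}^n$ partition $[0,1]$, the inner double sum equals $\chi_{[0,1]}$; by Proposition~\ref{common domain disjointly supported compressions can be added} and Definition~\ref{DES}\,\ref{DES part a}\,\ref{DES part a, part 2}, $\sum_\mu T_\mu$ is a $\theta_{\lambda_0}=1$-compression, so $\sum_\mu T_\mu\chi_{[0,1]}=\chi_{[0,1]}$.

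For $f\in X_\nu$ with $\nu=\lambda_0^\smallfrown I_0^\smallfrown\tilde\lambda$, write $f=T_{I_0}T_{\tilde\lambda}g$ with $g\in L_p^{\kappa(\tilde\lambda),0}$. Remark~\ref{Q_I remarks} gives $Q_If=\delta_{I,I_0}T_{\tilde\lambda}g$, so only $I=I_0$ survives in the right-hand side. Unfolding $\widetilde T_{\Psi_{\lambda_0^\smallfrown I_0}^{\mu^\smallfrown J}}$ on $T_{\tilde\lambda}g\in X_{\tilde\lambda}$ via its algebraic definition yields $\sum_\eta T_\eta J_{\mu^\smallfrown J^\smallfrown\eta}g$ indexed by $\eta\in\mathcal{M}_{\tilde\lambda}^{(\lambda_0^\smallfrown I_0,\mu^\smallfrown J)}$, which is in bijection with $\Gamma_\nu^{\mu^\smallfrown J}$ via $\eta\mapsto\mu^\smallfrown J^\smallfrown\eta$ (as in Proposition~\ref{first dist from old one}). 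Applying $T_\mu T_J$ and using $T_\mu T_J T_\eta=T_{\mu^\smallfrown J^\smallfrown\eta}$, the triple sum becomes $\sum_{\mu,J}\sum_{\mu^*\in\Gamma_\nu^{\mu^\smallfrown J}}T_{\mu^*}J_{\mu^*}g$, which by the partition of $\mathcal{M}_\nu$ in Remark~\ref{remark about partitioning a successor}\,\ref{remark about partitioning a successor b} (iterated) equals $\sum_{\mu^*\in\mathcal{M}_\nu}T_{\mu^*}J_{\mu^*}g=T_\Psi f$.

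The main step is $f\in X_{\lambda_0}=L_p^{n,0}$. Writing $f=\sum_{I\in\mathcal{D}^n}f_I\chi_I$ with $\sum_I f_I=0$, we have $Q_If=f_I\chi_{[0,1]}$, $\widetilde T_{\Psi_{\lambda_0^\smallfrown I}^{\mu^\smallfrown J}}Q_If=f_I\chi_{[0,1]}$, and $T_J(f_I\chi_{[0,1]})=f_I\chi_J$, so the right-hand side becomes $\sum_\mu T_\mu\bigl(\sum_I f_I\chi_{B_I^\mu}\bigr)$ with $B_I^\mu:=\bigcup_{J\in\mathcal{D}^{\kappa(\mu)}_I}J=[J_\mu h_{\pi(I)}=\epsilon(I)]$. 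The crucial identification is $\widetilde J_\mu\chi_I=\chi_{B_I^\mu}$, where $\widetilde J_\mu\colon L_p^n\to L_p^{\kappa(\mu)}$ is the distributional extension of $J_\mu$ sending $\chi_{[0,1]}$ to $\chi_{[0,1]}$. This is proved by induction on the depth of $I$: the base case $\widetilde J_\mu\chi_{[0,1]}=\chi_{[0,1]}$ is immediate; for the step, use $\chi_I=\tfrac{1}{2}(\chi_{\pi(I)}+\epsilon(I)h_{\pi(I)})$ together with the fact that $J_\mu h_{\pi(I)}=\widetilde J_\mu\chi_{\pi(I)^+}-\widetilde J_\mu\chi_{\pi(I)^-}$ is supported on $B_{\pi(I)}^\mu$ with $\pm 1$-values, whence $\widetilde J_\mu\chi_I=\chi_{[J_\mu h_{\pi(I)}=\epsilon(I)]\cap B_{\pi(I)}^\mu}=\chi_{B_I^\mu}$. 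Since $f$ is mean-zero, $\sum_I f_I\chi_{B_I^\mu}=\widetilde J_\mu f=J_\mu f$, so the right-hand side equals $\sum_\mu T_\mu J_\mu f=T_\Psi f$.

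The principal obstacle is the identification $\widetilde J_\mu\chi_I=\chi_{B_I^\mu}$, which forges the link between the abstract distributional data of $J_\mu$ and the combinatorial sets $\mathcal{D}^{\kappa(\mu)}_I$ of Notation~\ref{notation for Ds with direction given by interval}; it hinges on the $L_p^{\kappa(\mu),0}$-measurability of the range of $J_\mu$ combined with the ambient Haar-system geometry. Once this is in hand, the partition properties of the sets $\mathcal{M}_\lambda$ from Remark~\ref{remark about partitioning a successor} handle the remaining bookkeeping uniformly across all three cases.
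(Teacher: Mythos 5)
Your proof is correct and follows essentially the same line of attack as the paper: verify the two operators agree on the spanning set $\{\chi_{[0,1]}\}\cup\bigcup_\lambda X_\lambda$, handle the non-root $\lambda$ case via Remark~\ref{Q_I remarks} and the partition property of Remark~\ref{remark about partitioning a successor}\,\ref{remark about partitioning a successor b}, and reduce the root case to the identity $\sum_{J\in\mathcal{D}_I^{\kappa(\mu)}}\chi_J=\chi_{[J_\mu h_{\pi(I)}=\epsilon(I)]}$ together with the preservation of Haar-support geometry by distributional embeddings. The one stylistic difference is that the paper checks agreement on $L_p^{\kappa(\lambda_0)}$ directly against the basis $(\chi_{I_0})_{I_0\in\mathcal{D}^{\kappa(\lambda_0)}}$ by expanding via the Haar identity, whereas you factor the same fact through an explicitly named extension $\widetilde J_\mu$ and the auxiliary lemma $\widetilde J_\mu\chi_I=\chi_{B_I^\mu}$ (proved by induction on the depth of $I$); this is a clean reformulation of the same underlying computation.
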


\begin{proof}
We denote by $S$ the map on the right of the above equation. We initially show that, for $f\in \langle \{\chi_{[0,1]}\}\cup X_{\lambda_0}\rangle = L_p^{\kappa(\lambda_0)}$,  $Sf = \widetilde T_\Psi f$. It suffices to verify this for $f = \chi_{I_0}$, $I_0\in\mathcal{D}^{\kappa(\lambda_0)}$. By a standard identity already known to Haar,
\[\chi_{I_0} = \chi_{[h_{\pi(I_0)}=\epsilon(I_0)]}  = 2^{-\kappa(\lambda_0)}\chi_{[0,1]}+\sum_{r=1}^{\kappa(\lambda_0)}2^{-\kappa(\lambda_0)+r-1}\epsilon(\pi^{r-1}(I_0)) h_{\pi^r(I_0)}.\]
This is established by a straightforward induction argument using that, for all $I\in\mathcal{D}\setminus\mathcal{D}^0$, $\chi_I = (1/2)(\chi_{\pi(I)}+\epsilon(I)h_{\pi(I)})$. The same identity holds if we replace $(h_I)_{I\in\mathcal{D}_{\kappa(\lambda_0)-1}}$ with a distributional copy of it. We now compute:
\begin{align*}
S\chi_{I_0} &= \sum_{\mu\in\mathcal{M}_{\lambda_0}}\sum_{I\in\mathcal{D}^{\kappa(\lambda_0)}}\sum_{J\in\mathcal{D}_I^{\kappa(\mu)}}T_{\mu}T_J\widetilde T_{\Psi^{\mu^\smallfrown J}_{{\lambda_0}^\smallfrown I}} Q_I\chi_{I_0} = \sum_{\mu\in\mathcal{M}_{\lambda_0}}\sum_{J\in\mathcal{D}_{I_0}^{\kappa(\mu)}}T_{\mu}T_J\widetilde T_{\Psi^{\mu^\smallfrown J}_{{\lambda_0}^\smallfrown I}}\chi_{[0,1]}\\
&= \sum_{\mu\in\mathcal{M}_{\lambda_0}}\sum_{J\in\mathcal{D}_{I_0}^{\kappa(\mu)}}T_{\mu}T_J\chi_{[0,1]} = \sum_{\mu\in\mathcal{M}_{\lambda_0}}T_{\mu}\sum_{J\in\mathcal{D}_{I_0}^{\kappa(\mu)}}\chi_{J} = \sum_{\mu\in\mathcal{M}_{\lambda_0}} T_\mu\chi_{[J_\mu h_{\pi(I_0)} = \epsilon(I_0)]}\\
& =\sum_{\mu\in\mathcal{M}_{\lambda_0}} T_\mu\Big(2^{-\kappa(\lambda_0)}\chi_{[0,1]}+\sum_{r=1}^{\kappa(\lambda_0)}2^{-\kappa(\lambda_0)+r-1}\epsilon(\pi^{r-1}(I_0)) J_\mu h_{\pi(I_0)}\Big)\\
& =\Big(\sum_{\mu\in\mathcal{M}_{\lambda_0}} T_\mu\Big)2^{-\kappa(\lambda_0)}\chi_{[0,1]}+\sum_{r=1}^{\kappa(\lambda_0)}2^{-\kappa(\lambda_0)+r-1}\epsilon(\pi^{r-1}(I_0)) \Big(\sum_{\mu\in\mathcal{M}_{\lambda_0}}T_\mu J_\mu h_{\pi(I_0)}\Big)\\
&=\widetilde T_\Psi2^{-\kappa(\lambda_0)}\chi_{[0,1]} + \sum_{r=1}^{\kappa(\lambda_0)}2^{-\kappa(\lambda_0)+r-1}\epsilon(\pi^{r-1}(I_0))\widetilde T_\Psi h^{\lambda_0}_{\pi^r(I_0)} = \widetilde T_\Psi(\chi_{I_0}).
\end{align*}

We fix $\lambda\sqsupsetneq\lambda_0$, i.e., there are $I_0\in\mathcal{D}^{\kappa(\lambda_0)}$ and $\xi\in\mathcal{E}^{{\lambda_0}^\smallfrown I_0}$ such that $\lambda = {\lambda_0}^\smallfrown {I_0}^\smallfrown \xi$. We will show $S|_{X_\lambda} = T_\Psi|_{X_\lambda}$, or, equivalently, $ST_\lambda|_{L_p^{\kappa(\lambda),0}} = T_\Psi T_\lambda|_{L_p^{\kappa(\lambda),0}}$. Fix $f\in L_p^{\kappa(\lambda),0}$. By Remark \ref{remark about partitioning a successor} \ref{remark about partitioning a successor b}, the sets
\[\Big\{\Gamma_\lambda^{\mu^\smallfrown P}:\mu\in\mathcal{D}^{\kappa(\lambda_0)}\text{ and }P\in\mathcal{D}_{I_0}^\mu\Big\}\]
form a partition of $\mathcal{M}_\lambda$.
We thus calculate
\begin{align*}
ST_\lambda f & = \sum_{I\in\mathcal{D}^{\kappa(\lambda_0)}}\Big(\sum_{\mu\in\mathcal{M}_{\lambda_0}} \sum_{J\in\mathcal{D}_I^{\kappa(\mu)}}T_\mu T_J \widetilde T_{\Psi^{\mu^\smallfrown J}_{{\lambda_0}^\smallfrown I}}\Big)Q_I (T_{I_0}T_\xi f)= \Big(\sum_{\mu\in\mathcal{M}_{\lambda_0}} \sum_{J\in\mathcal{D}_{I_0}^{\kappa(\mu)}}T_\mu T_J \widetilde T_{\Psi^{\mu^\smallfrown J}_{{\lambda_0}^\smallfrown {I_0}}}\Big) T_\xi f\\
&= \Big(\sum_{\mu\in\mathcal{M}_{\lambda_0}} \sum_{J\in\mathcal{D}_{I_0}^{\kappa(\mu)}}T_\mu T_J\Big)\sum_{\nu\in\mathcal{M}_\xi^{({\lambda_0}^\smallfrown {I_0},\mu^\smallfrown J)}}T_\nu J^{({\lambda_0}^\smallfrown {I_0},\mu^\smallfrown J)}_\nu f\text{ (definition of $T_{\Psi^{\mu^\smallfrown J}_{{\lambda_0}^\smallfrown {I_0}}}$)}\\
&= \sum_{\mu\in\mathcal{M}_{\lambda_0}} \sum_{J\in\mathcal{D}_{I_0}^{\kappa(\mu)}}\sum_{\nu\in\mathcal{M}_\xi^{({\lambda_0}^\smallfrown {I_0},\mu^\smallfrown J)}}T_{\mu^\smallfrown J^\smallfrown\nu} J_{\mu^\smallfrown J^\smallfrown\nu} f\text{ (definition of $J_\nu^{({\lambda_0}^\smallfrown {I_0},\mu^\smallfrown J)}$)} \\
& = \sum_{\mu\in\mathcal{M}_{\lambda_0}} \sum_{J\in\mathcal{D}_{I_0}^{\kappa(\mu)}}\sum_{\nu\in\Gamma_\lambda^{\mu^\smallfrown J}}T_\nu J_\nu f =\sum_{\nu\in\mathcal{M}_\lambda} T_\nu J_\nu f= T_\Psi T_\lambda f.
\end{align*}
\end{proof}

\begin{lem}
\label{successors subspaces reconstruct space}
We have
\[\widetilde R_\mathcal{E} = \langle\{T_I\big(\widetilde R_{\mathcal{E}^{{\lambda_0}^\smallfrown I}}\big):I\in\mathcal{D}^n\}\rangle.\]
\end{lem}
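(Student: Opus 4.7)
The plan is to unpack the definition of $\widetilde R_\mathcal{E}$ using the explicit description of $\mathcal{E}$ provided by the definition of $\mathcal{T}_\alpha$ for a successor ordinal. Since $\alpha=\alpha_0+n$ with $\alpha_0$ limit and $\lambda_0=(\alpha_0+n)$ is the unique root of $\mathcal{T}_\alpha$, every $\lambda\in\mathcal{E}\setminus\{\lambda_0\}$ has the form $\lambda=\lambda_0{}^\smallfrown I{}^\smallfrown\mu$ with $I\in\mathcal{D}^n$ and $\mu\in\mathcal{E}^{{\lambda_0}^\smallfrown I}\subset\mathcal{T}_{\alpha_0}$. From Remark \ref{recursive structure of Xlambda} applied at the successor step, together with $T_{\lambda_0}=id$ and $T_{{\lambda_0}^\smallfrown I{}^\smallfrown\mu}=T_I T_\mu$, we get $X_{{\lambda_0}^\smallfrown I{}^\smallfrown \mu}=T_I(X_\mu)$. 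Consequently, splitting the span according to the interval $I$ along which each non-root element of $\mathcal{E}$ extends,
\[
R_\mathcal{E}=\Big[X_{\lambda_0}\cup\bigcup_{I\in\mathcal{D}^n}T_I\big(R_{\mathcal{E}^{{\lambda_0}^\smallfrown I}}\big)\Big]=L_p^{n,0}+\sum_{I\in\mathcal{D}^n}T_I\big(R_{\mathcal{E}^{{\lambda_0}^\smallfrown I}}\big),
\]
since $X_{\lambda_0}=L_p^{\kappa(\lambda_0),0}=L_p^{n,0}$.

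Next I would handle the constant function separately and merge it with $L_p^{n,0}$. The elementary identity $\langle\chi_{[0,1]}\rangle+L_p^{n,0}=L_p^n=\langle\chi_I:I\in\mathcal{D}^n\rangle$ lets one redistribute the constant-and-mean-zero part across the intervals in $\mathcal{D}^n$, using $T_I(\chi_{[0,1]})=\chi_I$. Concretely,
\[
\langle\chi_{[0,1]}\rangle+L_p^{n,0}=\sum_{I\in\mathcal{D}^n}\langle\chi_I\rangle=\sum_{I\in\mathcal{D}^n}T_I\big(\langle\chi_{[0,1]}\rangle\big).
\]
Combining with the previous display and using $T_I(\widetilde R_{\mathcal{E}^{{\lambda_0}^\smallfrown I}})=T_I(\langle\chi_{[0,1]}\rangle)+T_I(R_{\mathcal{E}^{{\lambda_0}^\smallfrown I}})$, we obtain
\[
\widetilde R_\mathcal{E}=\langle\chi_{[0,1]}\rangle+R_\mathcal{E}=\sum_{I\in\mathcal{D}^n}T_I\big(\langle\chi_{[0,1]}\rangle\big)+\sum_{I\in\mathcal{D}^n}T_I\big(R_{\mathcal{E}^{{\lambda_0}^\smallfrown I}}\big)=\Big\langle\bigcup_{I\in\mathcal{D}^n}T_I\big(\widetilde R_{\mathcal{E}^{{\lambda_0}^\smallfrown I}}\big)\Big\rangle.
\]

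This is essentially a bookkeeping argument and I do not expect any real obstacle; the only subtlety is that one must implicitly use that $\mathcal{E}$ is interval-branching to ensure that for every $I\in\mathcal{D}^n$ the set $\mathcal{E}^{{\lambda_0}^\smallfrown I}$ is non-empty (so no interval contribution is missing on either side), but the equality of the two spans holds irrespective of this, since an empty $\mathcal{E}^{{\lambda_0}^\smallfrown I}$ simply contributes the trivial summand $T_I(\langle\chi_{[0,1]}\rangle)=\langle\chi_I\rangle$ on the right, which is already absorbed into the left-hand side through $L_p^{n,0}+\langle\chi_{[0,1]}\rangle$. The whole argument is parallel in spirit to Remark \ref{rem for the mean zero BRS spaces } \ref{rem for the mean zero BRS spaces, suc }, adapted to the subtree $\mathcal{E}$.
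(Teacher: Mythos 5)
Your proposal is correct and takes essentially the same approach as the paper: both decompose $\widetilde R_\mathcal{E}$ at the unique root $\lambda_0$, use the identity $\langle\chi_{[0,1]}\rangle + L_p^{n,0} = \langle\chi_I : I\in\mathcal{D}^n\rangle$ together with $T_I\chi_{[0,1]}=\chi_I$ to redistribute the constant across the intervals, and then reassemble each $T_I(\widetilde R_{\mathcal{E}^{{\lambda_0}^\smallfrown I}})$. Your side remark on the possibly empty $\mathcal{E}^{{\lambda_0}^\smallfrown I}$ is a harmless extra, not needed by the paper's argument.
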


\begin{proof}
Noting, $\langle \{\chi_{[0,1]}\}\cup X_{\lambda_0}\rangle = L_p^{\kappa(\lambda_0)}$, we have
\begin{align*}
\widetilde R_\mathcal{E} &= \big[ L_p^{\kappa(\lambda_0)}\cup \bigcup_{I\in\mathcal{D}^{\kappa(\lambda_0)}}\bigcup_{\mu\in\mathcal{E}^{{\lambda_0}^\smallfrown I}} X_{{\lambda_0}^\smallfrown I^\smallfrown \mu}\big] = \big[ (T_I\chi_{[0,1]})_{I\in\mathcal{D}^{\kappa(\lambda_0)}} \cup \bigcup_{I\in\mathcal{D}^{\kappa(\lambda_0)}}T_I\big([(X_\mu)_{\mu\in \mathcal{E}^{{\lambda_0}^\smallfrown I}}]\big)\big]\\
&= \big[\bigcup_{I\in\mathcal{D}^{\kappa(\lambda_0)}}T_I\big([\{\chi_{[0,1]}\} \cup R_{\mathcal{E}^{{\lambda_0}^\smallfrown I}}]\big)\big] = \big\langle \bigcup_{I\in\mathcal{D}^{\kappa(\lambda_0)}}T_I\big(\widetilde R_{\mathcal{E}^{{\lambda_0}^\smallfrown I}}\big)\big\rangle.
\end{align*}
\end{proof}

\begin{proof}[Proof of Proposition \ref{distributional embedding successor proof}]
We will prove that $\widetilde T_\Psi$ is a distributional embedding, which is equivalent to the conclusion. Using Proposition \ref{successor disassembly}, write
\[\widetilde T_\Psi =\sum_{\mu\in\mathcal{M}_{\lambda_0}} T_\mu\Big[\sum_{I\in\mathcal{D}^{\kappa(\lambda_0)}}\underbrace{\Big(\sum_{J\in\mathcal{D}_I^{\kappa(\mu)}}T_J \widetilde T_{\Psi^{\mu^\smallfrown J}_{{\lambda_0}^\smallfrown I}}\Big)}_{=:S_I^\mu}Q_I.\Big]\]
For $\mu\in\mathcal{M}_{\lambda_0}$, $I\in\mathcal{D}^{\kappa(\lambda_0)}$, and $J\in\mathcal{D}^{\kappa(\mu_0)}_I$, because $P(\alpha_0)$ is true, $\widetilde T_{\Psi^{\mu^\smallfrown J}_{{\lambda_0}^\smallfrown I}}$ is a distributional embedding with domain $\widetilde R_{\mathcal{E}^{{\lambda_0}^\smallfrown I}}$. It follows that $(T_J\widetilde T_{\Psi^{\mu^\smallfrown J}_{{\lambda_0}^\smallfrown I}})_{J\in\mathcal{D}_I^{\kappa(\mu)}}$ are $2^{-\kappa(\mu)}$-compressions with disjointly supported ranges. By Proposition \ref{common domain disjointly supported compressions can be added}, for $\mu\in\mathcal{M}_{\lambda_0}$ and $I\in\mathcal{D}^{\kappa(\lambda_0)}$,
\[S_I^\mu = \sum_{J\in\mathcal{D}_I^{\kappa(\mu)}}T_J \widetilde T_{\Psi^{\mu^\smallfrown J}_{{\lambda_0}^\smallfrown I}}\]
is a $|I|$-compression with domain $\widetilde R_{\mathcal{E}^{{\lambda_0}^\smallfrown I}}$ and range supported in $\cup\mathcal{D}^\mu_I$. By Proposition \ref{assembly that increases order},
\[S_\mu = \sum_{I\in\mathcal{D}^{\kappa(\lambda_0)}}S_I^\mu Q_I\]
is a distributional embedding with domain $\langle\{T_I[\widetilde R_{\mathcal{E}^{{\lambda_0}^\smallfrown I}}]:I\in\mathcal{D}^n\}\rangle = \widetilde R_\mathcal{E}$, by Lemma \ref{successors subspaces reconstruct space}. By Proposition \ref{common domain disjointly supported compressions can be added}, $\widetilde T_\Psi =\sum_{\mu\in\mathcal{M}_{\lambda_0}} T_\mu S_\mu$ is a distributional embedding.
\end{proof}

\subsection{The inductive step for limit ordinals}
\label{limit step distributional embedding} The limit ordinal case of the inductive step is more involved than the successor case. We first establish some notation.

\begin{ntt}
For $\alpha<\omega_1$ and an interval-branching subtree $\mathcal{E}$ of $\mathcal{T}_\alpha$, we denote
\begin{enumerate}[label=(\alph*)]

\item $\mathpzc{Root}(\mathcal{E}) = \{\lambda\text{ is a root of }\mathcal{E}\}$ and,

\item for $\lambda\in\mathpzc{Root}(\mathcal{E})$, $Y^\mathcal{E}_\lambda = \big[(X_{\lambda'})_{\lambda'\in\mathcal{E},\lambda'\sqsupseteq\lambda}\big]$.

\end{enumerate}
\end{ntt}

\begin{rem}
\label{remark about splitting the roots in E along the roots in F}
By definition, we have $R_\mathcal{E} = [(Y^\mathcal{E}_\lambda)_{\lambda\in\mathpzc{Root}(\mathcal{E})}]$. Furthermore, for every $\lambda\in\mathpzc{Root}(\mathcal{E})$, $Y^\mathcal{E}_\lambda\subset Y_\lambda$, and therefore by Proposition \ref{independent sum of successor ordinals}, the spaces $(Y^\mathcal{E}_\lambda)_{\lambda\in\mathpzc{Root}(\mathcal{E})}$ are independent.
\end{rem}

\begin{rem}
\label{Relation to Elambda}
For $\lambda\in\mathpzc{Root}(\mathcal{E})$, there exists a relation between the spaces $R_{\mathcal{E}^\lambda}$ and $Y^\mathcal{E}_\lambda$:
\[T_\lambda[R_{\mathcal{E}^\lambda}] = Y_\lambda^\mathcal{E}.\]
Therefore,
\[R_\mathcal{E} \equiv^\mathrm{dist}\Big(\sum_{\lambda\in\mathpzc{Root}(\mathcal{E})}R_{\mathcal{E}^\lambda}\Big)_{\mathrm{Ind},p}.\]

Indeed, for $\mu\in \mathcal{E}^\lambda$,
\[T_\lambda[X_\mu] = T_\lambda T_\mu[L_p^{\kappa(\mu),0}] =  T_\lambda T_\mu[L_p^{\kappa(\lambda\oslash\mu),0}] =  T_{\lambda\oslash\mu}[L_p^{\kappa(\lambda\oslash\mu),0}] = X_{\lambda\oslash\mu}.\]
Because $\mathcal{E}^\lambda = \{\mu\in\mathcal{T}_{b(\lambda)+\kappa(\lambda)}:\lambda\oslash\mu \in\mathcal{E}\}$, the map $\mu\mapsto \lambda\oslash \mu$ defines a bijection from $\mathcal{E}^\lambda$ to $\{\lambda'\in\mathcal{E}:\lambda\sqsubseteq \lambda'\}$. We conclude
\[T_\lambda[R_{\mathcal{E}^\lambda}] = [(T_\lambda[X_\mu])_{\mu\in\mathcal{E}^\lambda}] = [(X_{\lambda\oslash\mu})_{\mu\in\mathcal{E}^\lambda}] =  [(X_{\lambda'})_{\lambda'\in\mathcal{E}\text{ and }\lambda\sqsubseteq\lambda'}] = Y_\lambda^\mathcal{E}.\]
\end{rem}

\begin{ntt}
For this subsection, fix $\alpha, \beta < \omega_{1}$, where $\alpha$ is a limit ordinal, interval-branching subtrees $\mathcal{E}$ of $\mathcal{T}_\alpha$ and $\mathcal{F}$ of $\mathcal{T}_\beta$ and a distributional embedding scheme $\Psi = (\mathcal{M}_{\lambda}, (J_{\mu})_{\mu \in \mathcal{M}_{\lambda}})_{ \lambda \in  \mathcal{E}}$  of $R_\mathcal{E}$ in $R_\mathcal{F}$.
\end{ntt}

We will prove the following.

\begin{prop}
\label{successor step long proof}
If, for all successor ordinals $\gamma < \alpha$, $P(\gamma)$ is true then $T_\Psi$ is a distributional embedding.
\end{prop}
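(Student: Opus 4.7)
The plan is to reduce to the independent root decomposition of $R_{\mathcal{E}}$, handle each root-piece through a suitable sub-DES of smaller successor complexity, and assemble via Proposition~\ref{independent extension to distributional isomorphism}. By Remark~\ref{remark about splitting the roots in E along the roots in F}, $R_{\mathcal{E}}=[(Y_{\lambda_0}^{\mathcal{E}})_{\lambda_0\in\mathpzc{Root}(\mathcal{E})}]$ with the summands forming an independent family. Fix $\lambda_0\in\mathpzc{Root}(\mathcal{E})$; since $\alpha$ is a limit ordinal, $\gamma_{\lambda_0}:=b(\lambda_0)+\kappa(\lambda_0)$ is a successor ordinal strictly less than $\alpha$, so the inductive hypothesis supplies $P(\gamma_{\lambda_0})$. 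Lemma~\ref{roots associated to M lambda}\ref{roots associated to M lambda a} produces a unique root $\mu_0:=\mu_0(\lambda_0)\in\mathpzc{Root}(\mathcal{F})$ with $\Gamma_{\lambda_0}^{\mu_0}=\mathcal{M}_{\lambda_0}$, and iterating Definition~\ref{DES}\ref{DES part c} up the tree from each $\lambda\sqsupseteq\lambda_0$ in $\mathcal{E}$ down to $\lambda_0$ yields $\mu_0\sqsubseteq\nu$ for every $\nu\in\mathcal{M}_\lambda$; in particular $T_\Psi[Y_{\lambda_0}^{\mathcal{E}}]\subseteq Y_{\mu_0}^{\mathcal{F}}$.

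Next, apply Proposition~\ref{third DES from old one} to construct the sub-scheme $\Psi^{\mu_0}_{\lambda_0}$ of $R_{\mathcal{E}^{\lambda_0}}$ in $R_{\mathcal{F}^{\mu_0}}$; invoking $P(\gamma_{\lambda_0})$, the operator $T_{\Psi^{\mu_0}_{\lambda_0}}$ is a distributional embedding. A direct computation using the bijection $\mathcal{M}^{(\lambda_0,\mu_0)}_\kappa\ni\mu\mapsto\mu_0\oslash\mu\in\mathcal{M}_{\lambda_0\oslash\kappa}$, which is surjective because every $\nu\in\mathcal{M}_{\lambda_0\oslash\kappa}$ extends $\mu_0$ (as noted above), together with the factorizations $T_{\lambda_0\oslash\kappa}=T_{\lambda_0}T_\kappa$ and $T_{\mu_0\oslash\mu}=T_{\mu_0}T_\mu$, yields the key identity
\[
T_\Psi\circ T_{\lambda_0}\big|_{R_{\mathcal{E}^{\lambda_0}}}=T_{\mu_0}\circ T_{\Psi^{\mu_0}_{\lambda_0}}.
\]
Since $\lambda_0$ and $\mu_0$ are roots, $\theta_{\lambda_0}=\theta_{\mu_0}=1$ (both are empty products in Remark~\ref{explicit def of Theta and T}\ref{explicit def of Theta and T, part a}), so $T_{\lambda_0}|_{R_{\mathcal{E}^{\lambda_0}}}$ and $T_{\mu_0}|_{R_{\mathcal{F}^{\mu_0}}}$ are themselves distributional embeddings, being compositions of the independent embeddings $T_\beta^{(\cdot)}$ of Notation~\ref{ntt for independent case}. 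Consequently $T_\Psi|_{Y_{\lambda_0}^{\mathcal{E}}}$ is a distributional embedding with range contained in $Y_{\mu_0}^{\mathcal{F}}$.

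The main obstacle is proving that the ranges $(T_\Psi[Y_{\lambda_0}^{\mathcal{E}}])_{\lambda_0\in\mathpzc{Root}(\mathcal{E})}$ are independent in $R_{\mathcal{F}}$, which is exactly what Proposition~\ref{independent extension to distributional isomorphism} requires to assemble the root-pieces into a global distributional embedding on $R_{\mathcal{E}}$. When $\mu_0(\lambda_0)\neq\mu_0(\lambda_0')$, the images lie in the distinct independent root spaces $Y_{\mu_0(\lambda_0)}^{\mathcal{F}}$ and $Y_{\mu_0(\lambda_0')}^{\mathcal{F}}$, and independence follows from Proposition~\ref{independent sum of successor ordinals} applied to $\mathcal{F}$. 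The delicate case $\mu_0(\lambda_0)=\mu_0(\lambda_0')=:\mu_0$ is handled by refining via Lemma~\ref{roots associated to M lambda}\ref{roots associated to M lambda b} and Proposition~\ref{second DES from old one}: for each $J\in\mathcal{D}^{\kappa(\mu_0)}$ with $\Gamma_{\lambda_0}^{\mu_0^\smallfrown J}\neq\emptyset$, a unique sub-root $\xi_{\lambda_0,J}\in\mathpzc{Root}(\mathcal{T}_{b(\mu_0)})$ gathers all of $\Gamma_{\lambda_0}^{\mu_0^\smallfrown J}$ under $\mu_0^\smallfrown J^\smallfrown\xi_{\lambda_0,J}$, and Proposition~\ref{second DES from old one} produces a sub-scheme $\Psi_{\lambda_0}^{\mu_0^\smallfrown J}$ whose associated operator is a distributional embedding by $P(\gamma_{\lambda_0})$. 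The essential structural input, combining Proposition~\ref{incomparable and disj supp character via interval} with Definition~\ref{DES}\ref{DES part a}\ref{DES part a, part 1} and~\ref{DES part b}, is that any two elements $\nu,\nu'$ coming from different $\lambda_0$-images whose supports overlap must split at an ordinal entry, placing $X_\nu$ and $X_{\nu'}$ inside ranges of distinct independent distributional embeddings $T_\beta^{\delta}$ and $T_{\beta'}^{\delta}$ associated to the appropriate limit ordinal $\delta$ from the chain $\alpha>\beta>b(\mu_0)>\cdots$. This rules out coupled interval splits between different $\lambda_0$-images on a common support and furnishes the required independence, after which Proposition~\ref{independent extension to distributional isomorphism} assembles $T_\Psi$ as a distributional embedding on all of $R_{\mathcal{E}}$.
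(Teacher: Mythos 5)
Your treatment of each individual root piece is fine and matches the paper's reduction: the identity $T_\Psi T_{\lambda_0}|_{R_{\mathcal{E}^{\lambda_0}}}=T_{\mu_0}T_{\Psi^{\mu_0}_{\lambda_0}}$ is exactly Lemma \ref{operator reduction to roots}, and combining it with Proposition \ref{third DES from old one}, the hypothesis $P(b(\lambda_0)+\kappa(\lambda_0))$, and Remark \ref{Relation to Elambda} does show that $T_\Psi|_{Y^{\mathcal{E}}_{\lambda_0}}$ is a distributional embedding into $Y_{\mu_0}$. The case of distinct target roots is also handled correctly via Proposition \ref{independent sum of successor ordinals}. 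The gap is in the remaining case, which is precisely the hard part of the limit step: when several roots $\lambda_0\neq\lambda_0'$ of $\mathcal{E}$ share the same target root $\mu_0$, you need that $T_\Psi$ restricted to the whole span $[Y^{\mathcal{E}}_{\lambda_0}:\Gamma^{\mu_0}_{\lambda_0}\neq\emptyset]$ preserves distribution, equivalently that the images are independent; your argument for this is a non sequitur. Knowing that any two overlapping-support elements $\nu\in\mathcal{M}_\lambda$ ($\lambda\sqsupseteq\lambda_0$) and $\nu'\in\mathcal{M}_{\lambda'}$ ($\lambda'\sqsupseteq\lambda_0'$) split at an ordinal entry (Proposition \ref{incomparable and disj supp character via interval}) does \emph{not} place $X_\nu$ and $X_{\nu'}$ in independent positions: below $\mu_0$ the common prefix $\rho$ at which they split typically contains interval entries, so $X_\nu\subset T_\rho T^{\delta}_{x}(\cdots)$ and $X_{\nu'}\subset T_\rho T^{\delta}_{y}(\cdots)$ with $\theta_\rho<1$, and a proper compression $T_\rho$ destroys independence (for independent full-support $f,g$ and Borel sets $B,C$ avoiding $0$ one has $|[T_\rho f\in B]\cap[T_\rho g\in C]|=\theta_\rho\,|[f\in B]|\,|[g\in C]|\neq \theta_\rho^2\,|[f\in B]|\,|[g\in C]|$). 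So independence of the spanned images cannot be read off from pairwise positions of the FDD components, and Proposition \ref{independent extension to distributional isomorphism} cannot yet be invoked.

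What is actually needed — and what the paper supplies — is a recursive argument down the subtree $\mathcal{S}=\{\mu\in\mathcal{F}:\exists\,\lambda_0\in\mathpzc{Root}(\mathcal{E}),\ \Gamma^\mu_{\lambda_0}\neq\emptyset\}$: one proves, by induction from the leaves of $\mathcal{S}$ upward, that for each $\mu$ there is a single distributional embedding $S_\mu$ defined on the \emph{entire} space $Z_\mu=[Y^{\mathcal{E}}_{\lambda_0}:\Gamma^{\mu}_{\lambda_0}\neq\emptyset]$ agreeing with $T_{\Psi^\mu_{\lambda_0}}T_{\lambda_0}^{-1}$ on each piece. The inductive step assembles, for each fixed $J\in\mathcal{D}^{\kappa(\mu)}$, the embeddings at the sub-roots $\nu$ of $\mathcal{F}^{\mu^\smallfrown J}$ via Proposition \ref{independent extension to distributional isomorphism} (where independence is available, inside distinct $Y_\nu$'s) into an operator $S_J$ with common domain $Z_\mu$ — this uses the identity $\mathpzc{Root}(\mathcal{E},\mu)=\bigcup_{\nu}\mathpzc{Root}(\mathcal{E},\mu^\smallfrown J^\smallfrown\nu)$ for \emph{every} $J$, which rests on Proposition \ref{such and such} \ref{such and such b} and Lemma \ref{roots associated to M lambda} \ref{roots associated to M lambda b} — and only then sums $\sum_J T_J S_J$ by Proposition \ref{common domain disjointly supported compressions can be added}, finally identifying the result with $T_{\Psi^\mu_{\lambda_0}}T_{\lambda_0}^{-1}$ via Lemmas \ref{operator reduction going down the path} and \ref{operator reduction successor to limit}. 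It is only through this interleaving (independence inside each slice, then summation of compressions over complete slices, by Definition \ref{DES} \ref{DES part a} and \ref{DES part c}) that the correct joint distributions, and hence the independence of the images you assert, emerge; your proposal cites Proposition \ref{second DES from old one} and Lemma \ref{roots associated to M lambda} \ref{roots associated to M lambda b} as a "refinement" but never carries out this recursion, so the central case of the proposition remains unproved.
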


We begin by outlining a sequence of disassemblies of operators associated with sub-distributional embedding schemes derived from $\Psi$.

\begin{lem}
\label{operator reduction to roots}
Let $\lambda_0\in\mathpzc{Root}(\mathcal{E})$ and let $\mu_0$ be the unique root of $\mathcal{F}$ such that $\Gamma_{\lambda_0}^{{\mu_0}} = \mathcal{M}_{\lambda_0}$ (which exists by Lemma \ref{roots associated to M lambda} \ref{roots associated to M lambda a}). Then
\[T_\Psi|_{Y^\mathcal{E}_{\lambda_0}} = T_{\mu_0}T_{\Psi^{{\mu_0}}_{\lambda_0}}T^{-1}_{\lambda_0}|_{Y^\mathcal{E}_{\lambda_0}}.\]
\end{lem}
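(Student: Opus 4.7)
The plan is to verify the equality on each $X_\lambda$ with $\lambda\in\mathcal{E}$ and $\lambda\sqsupseteq\lambda_0$, and then invoke density and linearity, since $Y^\mathcal{E}_{\lambda_0}=[(X_\lambda)_{\lambda\in\mathcal{E},\lambda\sqsupseteq\lambda_0}]$. For such a $\lambda$, there is a unique $\nu\in\mathcal{E}^{\lambda_0}$ with $\lambda=\lambda_0\oslash\nu$, and by the gluing rule one has $T_\lambda=T_{\lambda_0}T_\nu$ and $X_\lambda=T_{\lambda_0}[X_\nu]$ (where $X_\nu$ is the space associated to $\nu$ inside $\mathcal{T}_{b(\lambda_0)+\kappa(\lambda_0)}$). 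In particular, the restriction of $T_{\lambda_0}$ to $R_{\mathcal{E}^{\lambda_0}}$ is a distributional embedding onto $Y^\mathcal{E}_{\lambda_0}$ (Remark \ref{Relation to Elambda}), so $T_{\lambda_0}^{-1}|_{Y^\mathcal{E}_{\lambda_0}}$ is well-defined.

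The principal structural step is the following claim: for every $\lambda\in\mathcal{E}$ with $\lambda\sqsupseteq\lambda_0$, $\Gamma_\lambda^{\mu_0}=\mathcal{M}_\lambda$, i.e.\ every $\mu\in\mathcal{M}_\lambda$ extends $\mu_0$. I would prove this by induction on the height of $\lambda$ above $\lambda_0$ inside $\mathcal{E}$. The base case $\lambda=\lambda_0$ is the standing hypothesis $\Gamma_{\lambda_0}^{\mu_0}=\mathcal{M}_{\lambda_0}$. For the inductive step, let $\lambda_1$ be the immediate predecessor of $\lambda$ in $\mathcal{E}$; by Definition \ref{DES}\ref{DES part c}, any $\mu\in\mathcal{M}_\lambda$ satisfies $\mu_1^\smallfrown P\sqsubseteq \mu$ for some $\mu_1\in\mathcal{M}_{\lambda_1}$, and the inductive hypothesis gives $\mu_0\sqsubseteq \mu_1\sqsubseteq \mu$.

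With this claim, the analogue of Remarks \ref{bijection between M and Gamma} and \ref{second bijection between M and Gamma} for the scheme $\Psi^{\mu_0}_{\lambda_0}$ built in Proposition \ref{third DES from old one} shows that the map $\eta\mapsto \mu_0\oslash\eta$ is a bijection from $\mathcal{M}^{(\lambda_0,\mu_0)}_\nu$ onto $\Gamma_\lambda^{\mu_0}=\mathcal{M}_\lambda$, and that $J^{(\lambda_0,\mu_0)}_\eta = J_{\mu_0\oslash\eta}$. Fix $f\in X_\lambda$ and write $f=T_{\lambda_0}T_\nu h$ for $h\in L_p^{\kappa(\nu),0}=L_p^{\kappa(\lambda),0}$. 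Then
\[
T_\Psi f\;=\;\sum_{\mu\in\mathcal{M}_\lambda} T_\mu J_\mu T_\lambda^{-1}f\;=\;\sum_{\mu\in\mathcal{M}_\lambda} T_\mu J_\mu h,
\]
whereas, using $T_{\lambda_0}^{-1}f=T_\nu h$ and the definition of $T_{\Psi^{\mu_0}_{\lambda_0}}$ on $X_\nu$,
\[
T_{\mu_0}T_{\Psi^{\mu_0}_{\lambda_0}}T_{\lambda_0}^{-1}f\;=\;T_{\mu_0}\!\!\sum_{\eta\in\mathcal{M}^{(\lambda_0,\mu_0)}_\nu}\!\!T_\eta J^{(\lambda_0,\mu_0)}_\eta h\;=\;\sum_{\eta\in\mathcal{M}^{(\lambda_0,\mu_0)}_\nu}\!\!T_{\mu_0\oslash\eta}J_{\mu_0\oslash\eta}h,
\]
where the second equality uses the identity $T_{\mu_0}T_\eta=T_{\mu_0\oslash\eta}$ (valid since $\eta\in\mathcal{T}_{b(\mu_0)+\kappa(\mu_0)}$). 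The bijection $\eta\mapsto\mu_0\oslash\eta$ from $\mathcal{M}^{(\lambda_0,\mu_0)}_\nu$ onto $\mathcal{M}_\lambda$ turns the last sum into $\sum_{\mu\in\mathcal{M}_\lambda}T_\mu J_\mu h$, matching $T_\Psi f$.

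The only substantive point is the height induction establishing $\Gamma_\lambda^{\mu_0}=\mathcal{M}_\lambda$ for all $\lambda\sqsupseteq\lambda_0$ in $\mathcal{E}$; everything else is an unfolding of the definitions of $\Psi^{\mu_0}_{\lambda_0}$, $T_\Psi$, and the gluing identity $T_{\mu_0}T_\eta=T_{\mu_0\oslash\eta}$, combined with density of $\langle(X_\lambda)_{\lambda\sqsupseteq\lambda_0,\lambda\in\mathcal{E}}\rangle$ in $Y^\mathcal{E}_{\lambda_0}$.
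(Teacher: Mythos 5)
Your proposal is correct and follows essentially the same route as the paper: reduce to $X_\lambda$ for $\lambda\sqsupseteq\lambda_0$, observe $\Gamma_\lambda^{\mu_0}=\mathcal{M}_\lambda$ for all such $\lambda$, use the gluing identity $T_{\mu_0}T_\eta=T_{\mu_0\oslash\eta}$ together with the definition of $\Psi^{\mu_0}_{\lambda_0}$, and match the two sums via the bijection $\eta\mapsto\mu_0\oslash\eta$. The only difference is that you spell out the height induction for $\Gamma_\lambda^{\mu_0}=\mathcal{M}_\lambda$ explicitly, whereas the paper simply cites the tree structure from Remark~\ref{lemma for existnce of eleme in th epredic}~\ref{lemma for existnce of eleme in th epredic i}.
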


\begin{proof}
The tree structure of $(\mathcal{M}_\lambda)_{\lambda\in\mathcal{E}}$ (see Remark \ref{lemma for existnce of eleme in th epredic} \ref{lemma for existnce of eleme in th epredic i}) yields that, for $\lambda \sqsupsetneq \lambda_0$, $\Gamma_\lambda^{\mu_0} = \mathcal{M}_\lambda$. In particular,  if $\mu\in\mathcal{E}^{\lambda_0}$ is such that $\lambda = \lambda_0\oslash\mu$ then, by the definition of $\mathcal{M}_{\mu}^{(\lambda_0,\mu_0)}$,
\[\mathcal{M}_\lambda = \Big\{\mu_0\oslash\nu:\nu\in\mathcal{M}_{\mu}^{(\lambda_0,\mu_0)}\Big\}.\]
Let us fix $f\in X_\lambda$, i.e., there is $g\in L_p^{\kappa(\lambda),0}$ such that $f = T_\lambda g = T_{\lambda_0\oslash \mu} g = T_{\lambda_0}T_{\mu} g$. Then
\begin{align*}
T_{\mu_0}T_{\Psi^{{\mu_0}}_{\lambda_0}}T^{-1}_{\lambda_0} f = T_{\mu_0}T_{\Psi^{{\mu_0}}_{\lambda_0}}T_{\mu} g =& \sum_{\nu\in\mathcal{M}^{(\lambda_0,\mu_0)}_\mu} T_{\mu_0}T_\nu J_\nu^{(\lambda_0,\mu_0)} T_\mu^{-1}T_\mu g\\
&= \sum_{\nu\in\mathcal{M}^{(\lambda_0,\mu_0)}_\mu} T_{\mu_0\oslash \nu} J_{\mu_0\oslash \nu} g = \sum_{\nu\in\mathcal{M}_\lambda} T_{\nu} J_\nu g = T_\Psi f.
\end{align*}
\end{proof}

\begin{lem}
\label{operator reduction successor to limit}
Let $\lambda_0 \in  \mathpzc{Root}(\mathcal{E})$ and $\mu_0\in\mathcal{F}\setminus\mathcal{M}_{\lambda_0}$ such that $\Gamma^{{\mu_0}}_{\lambda_0}\neq\emptyset$. Then
\[T_{\Psi_{\lambda_0}^{\mu_0}} = \sum_{J\in\mathcal{D}^{\kappa(\mu_0)}}T_JT_{\Psi^{{\mu_0}^\smallfrown J}_{\lambda_0}}.\]
\end{lem}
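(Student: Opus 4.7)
Both sides define linear maps on $R_{\mathcal{E}^{\lambda_0}}=[(X_\lambda)_{\lambda\in\mathcal{E}^{\lambda_0}}]$, so it suffices to verify the equality on each $X_\lambda$, $\lambda\in\mathcal{E}^{\lambda_0}$. For $f=T_\lambda g\in X_\lambda$ with $g\in L_p^{\kappa(\lambda),0}$, unwinding Notation~\ref{notation TPsi} together with the descriptions of $\Psi^{\mu_0}_{\lambda_0}$ and $\Psi^{\mu_0^\smallfrown J}_{\lambda_0}$ in Propositions~\ref{third DES from old one} and~\ref{second DES from old one}, the claimed identity at $f$ reduces to
\[
\sum_{\nu\in\mathcal{M}^{(\lambda_0,\mu_0)}_\lambda} T_\nu\, J_{\mu_0\oslash\nu}(g) \;=\; \sum_{J\in\mathcal{D}^{\kappa(\mu_0)}}\;\sum_{\xi\in\mathcal{M}^{(\lambda_0,\mu_0^\smallfrown J)}_\lambda} T_J T_\xi\, J_{\mu_0^\smallfrown J^\smallfrown \xi}(g).
\]

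The plan is to exhibit a bijection between the two index sets. Each $\nu\in\mathcal{F}^{\mu_0}\subset\mathcal{T}_{b(\mu_0)+\kappa(\mu_0)}$ is either the root $(b(\mu_0)+\kappa(\mu_0))$ of $\mathcal{T}_{b(\mu_0)+\kappa(\mu_0)}$ or has the form $\nu=(b(\mu_0)+\kappa(\mu_0),J)^\smallfrown\xi$ for some $J\in\mathcal{D}^{\kappa(\mu_0)}$ and $\xi\in\mathcal{T}_{b(\mu_0)}$. In the non-root case $\mu_0\oslash\nu=\mu_0^\smallfrown J^\smallfrown\xi$, so the defining conditions in Propositions~\ref{second DES from old one} and~\ref{third DES from old one} give $\nu\in\mathcal{M}^{(\lambda_0,\mu_0)}_\lambda$ if and only if $\xi\in\mathcal{M}^{(\lambda_0,\mu_0^\smallfrown J)}_\lambda$; moreover, Definition~\ref{definition of T_lampda}\ref{kostasc}\ref{kostasc, no head} applied inside $\mathcal{T}_{b(\mu_0)+\kappa(\mu_0)}$ yields $T_\nu=T_JT_\xi$, and trivially $J_{\mu_0\oslash\nu}=J_{\mu_0^\smallfrown J^\smallfrown\xi}$. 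Reindexing the left-hand sum via $\nu\leftrightarrow(J,\xi)$ then transforms it term by term into the right-hand sum.

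The only substantive obstacle is to show that the root $(b(\mu_0)+\kappa(\mu_0))$ does \emph{not} lie in $\mathcal{M}^{(\lambda_0,\mu_0)}_\lambda$; equivalently, $\mu_0\notin\mathcal{M}_{\lambda_0\oslash\lambda}$. When $\lambda$ is the root of $\mathcal{E}^{\lambda_0}$, so that $\lambda_0\oslash\lambda=\lambda_0$, this is precisely the standing hypothesis $\mu_0\notin\mathcal{M}_{\lambda_0}$. Otherwise $\lambda_0\sqsubsetneq\lambda_0\oslash\lambda$ in $\mathcal{E}$, and iterating Definition~\ref{DES}\ref{DES part c} along the path from $\lambda_0\oslash\lambda$ down to $\lambda_0$ produces some $\mu_1\in\mathcal{M}_{\lambda_0}$ with $\mu_1\sqsubsetneq\mu_0$. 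On the other hand, $\Gamma^{\mu_0}_{\lambda_0}\neq\emptyset$ furnishes $\nu_0\in\mathcal{M}_{\lambda_0}$ with $\mu_0\sqsubseteq\nu_0$, and strict inclusion $\mu_0\sqsubsetneq\nu_0$ follows from $\mu_0\notin\mathcal{M}_{\lambda_0}$. Hence $\mu_1\sqsubsetneq\nu_0$ with both in $\mathcal{M}_{\lambda_0}$; but then $\mathrm{supp}(X_{\nu_0})\subset\mathrm{supp}(X_{\mu_1})$ by Proposition~\ref{remark for supp and lenghts}\ref{remark for supp and lenghts, part e}, while $|\mathrm{supp}(X_{\nu_0})|=\theta_{\nu_0}>0$, contradicting the pairwise disjoint support of the members of $\mathcal{M}_{\lambda_0}$ imposed by Definition~\ref{DES}\ref{DES part a}\ref{DES part a, part 1}. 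This rules out the root and completes the bijection, and hence the proof.
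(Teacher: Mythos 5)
Your proof is correct and follows essentially the same route as the paper's: verify the identity on each $X_\lambda$, $\lambda\in\mathcal{E}^{\lambda_0}$, unwind the definitions of $\Psi_{\lambda_0}^{\mu_0}$ and $\Psi_{\lambda_0}^{{\mu_0}^\smallfrown J}$, and reindex $\mathcal{M}^{(\lambda_0,\mu_0)}_\lambda$ as the disjoint union over $J\in\mathcal{D}^{\kappa(\mu_0)}$ of the sets $\mathcal{M}^{(\lambda_0,{\mu_0}^\smallfrown J)}_\lambda$, using $T_{(b(\mu_0)+\kappa(\mu_0),J)^\frown\xi}=T_JT_\xi$. Your explicit exclusion of the root (via iterating Definition~\ref{DES}\ref{DES part c} and the disjoint-support contradiction) is precisely the content of the paper's assertion that $\{\Gamma^{{\mu_0}^\smallfrown J}_{\lambda}\}_{J}$ partitions $\Gamma^{\mu_0}_{\lambda}$, which the paper instead verifies by passing through the bijections with the $\Gamma$-sets.
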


\begin{proof}
Because $\Gamma^{{\mu_0}}_{\lambda_0}\neq\emptyset$ and $\mu_0\notin\mathcal{M}_{\lambda_0}$, we have, for all $\lambda\sqsupseteq \lambda_0$, the sets
\begin{equation}
\label{operator reduction successor to limit eq 1}
\big\{\Gamma_{\lambda}^{{\mu_0}^\smallfrown J}: J\in \mathcal{D}^{\kappa(\mu_0)}\big\}
\end{equation}
form a partition of $\Gamma^{{\mu_0}}_{\lambda}$. We fix $\lambda\in\mathcal{E}^{\lambda_0}$ and $f\in X_\lambda$, i.e., $f = T_\lambda g$, for some $g\in L_p^{\kappa(\lambda),0}$. By definition,
\[T_{\Psi_{\lambda_0}^{{\mu_0}}} f= \sum_{\nu\in\mathcal{M}_\lambda^{(\lambda_0,\mu_0)}}T_\nu J_\nu^{(\lambda_0,\mu_0)}T_\lambda^{-1}Tg =  \sum_{\nu\in\mathcal{M}_\lambda^{(\lambda_0,\mu_0)}}T_\nu J_{\mu_0\oslash \nu}g\]
and, for $J\in\mathcal{D}^{\kappa(\mu_0)}$
\[T_{\Psi_{\lambda_0}^{{\mu_0}^\smallfrown J}}f = \sum_{\nu\in\mathcal{M}_\lambda^{(\lambda_0,{\mu_0}^\smallfrown J)}}T_\nu J_\nu^{(\lambda_0,{\mu_0}^\smallfrown J)} T_\lambda^{-1}T_\lambda g = \sum_{\nu\in\mathcal{M}_\lambda^{(\lambda_0,{\mu_0}^\smallfrown J)}}T_\nu J_{{\mu_0}^\smallfrown J^\smallfrown \nu} g.\]
We associate the index sets in the rightmost above sums as follows:
\begin{equation}
\label{operator reduction successor to limit eq 2}
\mathcal{M}_\lambda^{(\lambda_0,\mu_0)} = \bigcup_{J\in\mathcal{D}^{\kappa(\mu_0)}}\Big\{(b(\mu_0)+\kappa(\mu_0))^\smallfrown J^\smallfrown \nu:\nu\in\mathcal{M}_\lambda^{(\lambda_0,{\mu_0}^\smallfrown J)}\Big\}.
\end{equation}
Indeed, by Remark \ref{second bijection between M and Gamma}, for $J\in \mathcal{D}^{\kappa(\mu_0)}$, the map $\phi^J_\lambda:\mathcal{M}_\lambda^{(\lambda_0,{\mu_0}^\smallfrown J)}\to \Gamma^{{\mu_0}^\smallfrown J}_{\lambda_0\oslash\lambda}$ given by
\[\phi_\lambda(\nu) = {\mu_0}^\smallfrown J^\smallfrown \nu\]
is a bijection. Similarly, the map $\psi_\lambda:\mathcal{M}_\lambda^{(\lambda_0,{\mu_0})}\to \Gamma^{{\mu_0}}_{\lambda_0\oslash\lambda}$ given by
\[\psi_\lambda(\nu) = {\mu_0}\oslash \nu\]
is a bijection. Therefore, by \eqref{operator reduction successor to limit eq 1}, for $J\in\mathcal{D}^{\kappa(\mu_0)}$ the map
\[\phi_\lambda^{-1}\psi_\lambda^J(\nu) = (b(\mu_0)+\kappa(\mu_0))^\smallfrown J^\smallfrown \nu\]
defines an injection from $\mathcal{M}_\lambda^{(\lambda_0,{\mu_0}^\smallfrown J)}$ to $\mathcal{M}_\lambda^{(\lambda_0,\mu_0)}$, and by \eqref{operator reduction successor to limit eq 1}, the images of $(\phi_\lambda^{-1}\psi_\lambda^J)_{J\in\mathcal{D}^{\kappa(\mu_0)}}$ partition $\mathcal{M}_\lambda^{(\lambda_0,\mu_0)}$, which establishes \eqref{operator reduction successor to limit eq 2}. We conclude the proof as follows:
\begin{align*}
T_{\Psi^{\mu_0}_{\lambda_0}}f &= \sum_{J\in\mathcal{D}^{\kappa(\mu_0)}}\sum_{\nu\in\mathcal{M}_\lambda^{(\lambda_0,{\mu_0}^\smallfrown J)}} T_{(b(\mu_0)+\kappa(\mu_0))^\smallfrown J^\smallfrown \nu} J_{{\mu_0}^\smallfrown J^\smallfrown \nu}g\\
&=\sum_{J\in\mathcal{D}^{\kappa(\mu_0)}}\sum_{\nu\in\mathcal{M}_\lambda^{(\lambda_0,{\mu_0}^\smallfrown J)}} T_JT_\nu J_{{\mu_0}^\smallfrown J^\smallfrown \nu}g = \sum_{J\in\mathcal{D}^{\kappa(\mu_0)}}T_JT_{\Psi^{{\mu_0}^\smallfrown J}_{\lambda_0}}f.
\end{align*}
\end{proof}

\begin{lem}
\label{operator reduction going down the path}
Let $\lambda_0 \in  \mathpzc{Root}(\mathcal{E})$, $\mu_0\in\mathcal{F}$, and $J\in\mathcal{D}^{\kappa(\mu_0)}$ such that $\Gamma^{{\mu_0}^\smallfrown J}_{\lambda_0}\neq\emptyset$. Let $\mu$ be the unique root of $\mathcal{F}^{{\mu_0}^\smallfrown J}$ such that $\Gamma_{\lambda_0}^{{\mu_0}^\smallfrown J} = \Gamma_{\lambda_0}^{{\mu_0}^\smallfrown J^\smallfrown \mu}$ (which exists by Lemma \ref{roots associated to M lambda} \ref{roots associated to M lambda b}). Then
\[T_{\Psi^{{\mu_0}^\smallfrown J}_{\lambda_0}} = T_{\mu}T_{\Psi_{\lambda_0}^{{\mu_0}^\smallfrown J^\smallfrown\mu}}.\]
\end{lem}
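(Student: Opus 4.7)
The strategy is to compute both sides term by term on each summand $X_\lambda$, $\lambda\in\mathcal{E}^{\lambda_0}$, after setting up a bijection between the indexing sets appearing in $\mathcal{M}_\lambda^{(\lambda_0,\mu_0^\smallfrown J)}$ and $\mathcal{M}_\lambda^{(\lambda_0,\nu_0)}$, where $\nu_0:=\mu_0^\smallfrown J^\smallfrown\mu\in\mathcal{F}$. Since $\mu$ necessarily terminates in a successor ordinal (being a member of $\mathcal{T}_{b(\mu_0)}$) we have $b(\nu_0)=b(\mu)$ and $\kappa(\nu_0)=\kappa(\mu)$, so $\Psi_{\lambda_0}^{\nu_0}$ is the distributional embedding scheme furnished by Proposition \ref{third DES from old one} and has the same domain $R_{\mathcal{E}^{\lambda_0}}$ as $\Psi_{\lambda_0}^{\mu_0^\smallfrown J}$.

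The central point is to propagate the hypothesis $\Gamma_{\lambda_0}^{\mu_0^\smallfrown J}=\Gamma_{\lambda_0}^{\nu_0}$ to every $\lambda_0\oslash\lambda$ with $\lambda\in\mathcal{E}^{\lambda_0}$. Given $\nu\in\Gamma_{\lambda_0\oslash\lambda}^{\mu_0^\smallfrown J}$, iterated application of Definition \ref{DES}\ref{DES part c} produces a unique $\nu'\in\mathcal{M}_{\lambda_0}$ with $\nu'\sqsubseteq\nu$. Since $\mu_0^\smallfrown J\sqsubseteq\nu$ as well, $\nu'$ and $\mu_0^\smallfrown J$ are comparable. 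The case $\nu'\sqsubseteq\mu_0^\smallfrown J$, combined with the non-emptiness of $\Gamma_{\lambda_0}^{\mu_0^\smallfrown J}$ and the antichain property of $\mathcal{M}_{\lambda_0}$ (Remark \ref{lemma for existnce of eleme in th epredic}\ref{lemma for existnce of eleme in th epredic i}), would force $\nu'=\mu_0^\smallfrown J$, contradicting the requirement from Definition \ref{dfn of ambient tree} that an element of $\mathcal{T}_\beta$ cannot terminate in an interval entry. Hence $\mu_0^\smallfrown J\sqsubseteq\nu'\in\Gamma_{\lambda_0}^{\mu_0^\smallfrown J}=\Gamma_{\lambda_0}^{\nu_0}$, so $\nu_0\sqsubseteq\nu'\sqsubseteq\nu$ and $\nu\in\Gamma_{\lambda_0\oslash\lambda}^{\nu_0}$; the reverse inclusion is immediate.

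Using this identification and the bijections of Remarks \ref{bijection between M and Gamma} and \ref{second bijection between M and Gamma} (together with the analogous remark underlying Proposition \ref{third DES from old one}), the map $\nu\mapsto\mu\oslash\nu$ is a bijection $\phi_\lambda:\mathcal{M}_\lambda^{(\lambda_0,\nu_0)}\to\mathcal{M}_\lambda^{(\lambda_0,\mu_0^\smallfrown J)}$, since its composition with $\mu'\mapsto\mu_0^\smallfrown J^\smallfrown\mu'$ reproduces $\nu\mapsto\nu_0\oslash\nu$. Invoking the identities $T_{\mu\oslash\nu}=T_\mu T_\nu$ (valid because $\nu\in\mathcal{T}_{b(\mu)+\kappa(\mu)}$) and $\mu_0^\smallfrown J^\smallfrown(\mu\oslash\nu)=\nu_0\oslash\nu$, the expansion of both sides on $f=T_\lambda g\in X_\lambda$ proceeds as
\begin{align*}
T_{\Psi^{\mu_0^\smallfrown J}_{\lambda_0}}f
&=\sum_{\mu'\in\mathcal{M}_\lambda^{(\lambda_0,\mu_0^\smallfrown J)}}T_{\mu'}J_{\mu_0^\smallfrown J^\smallfrown\mu'}g
=\sum_{\nu\in\mathcal{M}_\lambda^{(\lambda_0,\nu_0)}}T_\mu T_\nu J_{\nu_0\oslash\nu}g \\
&=T_\mu\sum_{\nu\in\mathcal{M}_\lambda^{(\lambda_0,\nu_0)}}T_\nu J_\nu^{(\lambda_0,\nu_0)}T_\lambda^{-1}f
=T_\mu T_{\Psi_{\lambda_0}^{\nu_0}}f.
\end{align*}
The main obstacle is the propagation step in the second paragraph; once the two families $\Gamma^{\mu_0^\smallfrown J}$ and $\Gamma^{\nu_0}$ are identified at every $\lambda_0\oslash\lambda$, everything else is bookkeeping on the defining formulas of $\Psi^{\mu_0^\smallfrown J}_{\lambda_0}$ and $\Psi_{\lambda_0}^{\nu_0}$.
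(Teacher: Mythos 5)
Your proof is correct and follows essentially the same route as the paper: establish the propagation identity $\Gamma_{\lambda_0\oslash\lambda}^{\mu_0^\smallfrown J}=\Gamma_{\lambda_0\oslash\lambda}^{\nu_0}$ for all $\lambda\in\mathcal{E}^{\lambda_0}$, use Remark~\ref{second bijection between M and Gamma} (and its analogue for $\Psi_{\lambda_0}^{\nu_0}$) to obtain the bijection $\nu\mapsto\mu\oslash\nu$ between the two index sets, and then expand both sides of the operator identity on a generic $f=T_\lambda g\in X_\lambda$. The only stylistic difference is that you make the propagation step fully explicit via the antichain property, comparability of initial segments of $\nu$, and the fact that elements of $\mathcal{T}_\beta$ cannot terminate in an interval entry, whereas the paper states it tersely by citing Remark~\ref{remark about partitioning a successor}\ref{remark about partitioning a successor c}.
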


\begin{proof}
The ensuing argument is along the lines of the proof of Lemma \ref{operator reduction successor to limit}, but we include it for completeness. Similarly to Remark \ref{remark about partitioning a successor} \ref{remark about partitioning a successor c}, for $\lambda \sqsupsetneq \lambda_0$,
\begin{equation}
\label{operator reduction going down the path eq1}
\Gamma_{\lambda}^{{\mu_0}^\smallfrown J} = \bigcup_{\mu\in\Gamma_{\lambda_0}^{{\mu_0}^\smallfrown J}}\Gamma_{\lambda}^{\mu} = \bigcup_{\mu\in\Gamma_{\lambda_0}^{{\mu_0}^\smallfrown J^\smallfrown\mu}}\Gamma_{\lambda}^{\mu} = \Gamma_{\lambda}^{{\mu_0}^\smallfrown J^\smallfrown \mu}.
\end{equation}
We fix $\lambda\in\mathcal{E}^{\lambda_0}$ and $f\in X_\lambda$, i.e., $f = T_\lambda g$, for some $g\in L_p^{\kappa(\lambda),0}$. By definition,
\[T_{\Psi_{\lambda_0}^{{\mu_0}^\smallfrown J}}f = \sum_{\nu\in\mathcal{M}_\lambda^{(\lambda_0,{\mu_0}^\smallfrown J)}}T_\nu J_\nu^{(\lambda_0,{\mu_0}^\smallfrown J)} T_\lambda^{-1}T_\lambda g = \sum_{\nu\in\mathcal{M}_\lambda^{(\lambda_0,{\mu_0}^\smallfrown J)}}T_\nu J_{{\mu_0}^\smallfrown J^\smallfrown \nu} g.\]
and
\[T_{\Psi_{\lambda_0}^{{\mu_0}^\smallfrown J^\smallfrown\mu}}f = \sum_{\nu\in\mathcal{M}_\lambda^{(\lambda_0,{\mu_0}^\smallfrown J^\smallfrown\mu)}}T_\nu J_\nu^{(\lambda_0,{\mu_0}^\smallfrown J^\smallfrown \mu)} T_\lambda^{-1}T_\lambda f =  \sum_{\nu\in\mathcal{M}_\lambda^{(\lambda_0,{\mu_0}^\smallfrown J^\smallfrown\mu)}}T_\nu J_{{\mu_0}^\smallfrown J^\smallfrown \mu\oslash\nu}f\]
We associate the index sets of the rightmost above sums as follows:
\begin{equation}
\label{operator reduction going down the path eq2}
\mathcal{M}_\lambda^{(\lambda_0,{\mu_0}^\smallfrown J)} = \Big\{\mu\oslash\nu: \nu\in\mathcal{M}_\lambda^{(\lambda_0,{\mu_0}^\smallfrown J^\smallfrown\mu)}\Big\}.
\end{equation}
Indeed, by Remark \ref{second bijection between M and Gamma}, the map $\phi_\lambda:\mathcal{M}_\lambda^{(\lambda_0,{\mu_0}^\smallfrown J)}\to \Gamma^{{\mu_0}^\smallfrown J}_{\lambda_0\oslash\lambda}$ given by
\[\phi_\lambda(\nu) = {\mu_0}^\smallfrown J^\smallfrown \nu\]
is a bijection. Similarly, the map $\psi_\lambda:\mathcal{M}_\lambda^{(\lambda_0,{\mu_0}^\smallfrown J^\smallfrown \mu)}\to \Gamma^{{\mu_0}^\smallfrown J^\smallfrown \mu}_{\lambda_0\oslash\lambda}$ given by
\[\psi_\lambda(\nu) = {\mu_0}^\smallfrown J^\smallfrown\mu\oslash \nu\]
is a bijection. Given $\Gamma^{{\mu_0}^\smallfrown J}_{\lambda_0\oslash\lambda} = \Gamma^{{\mu_0}^\smallfrown J^\smallfrown \mu}_{\lambda_0\oslash\lambda}$, the map $\phi_\lambda^{-1}\psi_\lambda(\nu) = \mu\oslash\nu$ is a bijection witnessing \eqref{operator reduction going down the path eq2}. We conclude as follows:
\begin{align*}
T_{\Psi_{\lambda_0}^{{\mu_0}^\smallfrown J}}f &= \sum_{\nu\in\mathcal{M}_\lambda^{(\lambda_0,{\mu_0}^\smallfrown J)}}T_\nu J_{{\mu_0}^\smallfrown J^\smallfrown \nu} g= \sum_{\nu\in\mathcal{M}_\lambda^{(\lambda_0,{\mu_0}^\smallfrown J^\smallfrown\mu)}} T_{\mu\oslash \nu} J_{{\mu_0}^\smallfrown J^\smallfrown\mu\oslash \nu} g = T_\mu T_{\Psi_{\lambda_0}^{{\mu_0}^\smallfrown J^\smallfrown\mu}}f .
\end{align*}
\end{proof}

We now perform the inductive step. To reach the intended conclusion, we carry out an induction on a specific sub-tree $\mathcal{S}$ of $\mathcal{F}$, associated with $\Psi$. As we progress from the leaves to the roots of $\mathcal{S}$, the intention is to show that for each $\mu\in\mathcal{S}$, a certain operator $S_\mu$ associated with $\mu$ and $\Psi$ sufficiently preserves independence.

\begin{proof}[Proof of Proposition \ref{successor step long proof}]
For every $\mu\in\mathcal{F}$ (whether a root or otherwise) denote
\[\mathpzc{Root}(\mathcal{E},\mu) = \{\lambda\in\mathpzc{Root}(\mathcal{E}):\Gamma_{\lambda}^\mu\neq\emptyset\}\]
and $Z_\mu = [(Y^\mathcal{E}_{\lambda})_{\lambda\in\mathpzc{Root}(\mathcal{E},\mu)}]$. Note that, by Lemma \ref{roots associated to M lambda} \ref{roots associated to M lambda b}, the sets $\big(\mathpzc{Root}(\mathcal{E},\mu)\big)_{\mu\in\mathpzc{Root}(\mathcal{F})}$ form a partition of the roots of $\mathpzc{Root}(\mathcal{E})$. By Remark \ref{remark about splitting the roots in E along the roots in F}, $R_{\mathcal{E}} = [(Z_\mu)_{\mu\in\mathpzc{Root}(\mathcal{F})}]$, and the spaces $(Z_\mu)_{\mu\in\mathpzc{Root}(\mathcal{F})}$ are independent.

We formulate the following claim.
\begin{clm}
For all $\mu\in\mathcal{F}$ for which $\mathpzc{Root}(\mathcal{E},\mu)\neq\emptyset$, there exists a distributional embedding
\[S_\mu: Z_\mu \to R_{\mathcal{F}^\mu}\]
such that, for all $\lambda\in\mathpzc{Root}(\mathcal{E},\mu)$,
\[S_\mu|_{Y^\mathcal{E}_{\lambda}} = T_{\Psi_\lambda^{\mu}}T^{-1}_\lambda|_{Y^\mathcal{E}_{\lambda}}.\]
\end{clm}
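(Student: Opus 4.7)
The plan is to construct $S_\mu$ by invoking Proposition \ref{independent extension to distributional isomorphism}, reducing the task to: (i) verifying each piece $T_{\Psi_\lambda^\mu}T_\lambda^{-1}|_{Y_\lambda^\mathcal{E}}$ is a distributional embedding, and (ii) verifying the resulting ranges are independent. For (i), two observations are crucial. Since $\alpha$ is a limit ordinal and $\lambda \in \mathpzc{Root}(\mathcal{E}) \subset \mathpzc{Root}(\mathcal{T}_\alpha)$, Remark \ref{characterization  of im suc}\ref{characterization  of im suc, part 2} implies that all entries of $\lambda$ are ordinal numbers; hence by Remark \ref{explicit def of Theta and T}\ref{explicit def of Theta and T, part a}, $\theta_\lambda = 1$ and $T_\lambda$ is a distributional embedding. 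Combined with Remark \ref{Relation to Elambda}, this yields a distributional isomorphism $T_\lambda^{-1}|_{Y_\lambda^\mathcal{E}} \colon Y_\lambda^\mathcal{E} \to R_{\mathcal{E}^\lambda}$. Moreover, the final entry of $\lambda$ is a successor ordinal $b(\lambda)+\kappa(\lambda) < \alpha$, so the outer inductive hypothesis $P(b(\lambda)+\kappa(\lambda))$ applies to the scheme $\Psi_\lambda^\mu$ from Proposition \ref{third DES from old one}, producing a distributional embedding $T_{\Psi_\lambda^\mu} \colon R_{\mathcal{E}^\lambda} \to R_{\mathcal{F}^\mu}$. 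Composing, $T_{\Psi_\lambda^\mu} T_\lambda^{-1}|_{Y_\lambda^\mathcal{E}}$ is indeed a distributional embedding, so the candidate formula for $S_\mu|_{Y_\lambda^\mathcal{E}}$ is well defined.

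By Remark \ref{remark about splitting the roots in E along the roots in F}, the spaces $(Y_\lambda^\mathcal{E})_{\lambda \in \mathpzc{Root}(\mathcal{E},\mu)}$ are independent subspaces of $L_p$. Proposition \ref{independent extension to distributional isomorphism} thus reduces the existence of the desired $S_\mu \colon Z_\mu \to R_{\mathcal{F}^\mu}$ to verifying that the ranges $(T_{\Psi_\lambda^\mu}[R_{\mathcal{E}^\lambda}])_{\lambda \in \mathpzc{Root}(\mathcal{E},\mu)}$ are independent subspaces of $R_{\mathcal{F}^\mu}$.

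The main technical obstacle is precisely this range-independence. For distinct $\lambda, \lambda' \in \mathpzc{Root}(\mathcal{E},\mu)$, the ranges lie in the linear spans of $\{X_\nu\}$ where $\nu$ ranges over elements of $\mathcal{F}$ extending $\mu$ and belonging to $\mathcal{M}_\xi$ for some $\xi \in \mathcal{E}$ with $\lambda \sqsubseteq \xi$ (respectively $\lambda' \sqsubseteq \xi'$). Definition \ref{DES}\ref{DES part b}, applied to the incomparable $\lambda$ and $\lambda'$ and the associated $\xi, \xi'$, makes every such $\nu$ and $\nu'$ pairwise incomparable; since both extend $\mu$, their split position must exceed $|\mu|$. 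Proposition \ref{incomparable and disj supp character via interval} forces the entries at the split position to be of the same type: when both are intervals, $X_\nu$ and $X_{\nu'}$ are disjointly supported and hence independent; when both are ordinals, independence follows from the independence of the fixed Koopman operators from Notation \ref{ntt for independent case}. To upgrade this pairwise independence to the required subspace-level independence, we apply Proposition \ref{independent sum of successor ordinals} in the ambient tree $\mathcal{T}_{b(\mu)+\kappa(\mu)}$, which decomposes $R_{\mathcal{F}^\mu}$ as an independent sum of subspaces $Y_\sigma$ indexed by the roots of the relevant subtree, and then verify by a careful combinatorial bookkeeping, using the tree structure of $\Psi$ described in Remark \ref{lemma for existnce of eleme in th epredic}\ref{lemma for existnce of eleme in th epredic i}, that the images $T_{\Psi_\lambda^\mu}[R_{\mathcal{E}^\lambda}]$ for distinct $\lambda$ accumulate in disjoint collections of these independent factors. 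This separation step is the principal difficulty of the argument.
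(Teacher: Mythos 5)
Your step (i) is correct and coincides with the paper's base case: for a root $\lambda$ one has $\theta_\lambda=1$, $T_\lambda^{-1}|_{Y^\mathcal{E}_\lambda}$ is a distributional isomorphism onto $R_{\mathcal{E}^\lambda}$ by Remark \ref{Relation to Elambda}, and $P(b(\lambda)+\kappa(\lambda))$ applied to $\Psi_\lambda^\mu$ makes each piece a distributional embedding. The gap is in step (ii), which is where all the content lies. First, the asserted mechanism for independence is wrong: disjointly supported spaces are \emph{not} independent (e.g.\ $h_{[0,1/2)}$ and $h_{[1/2,1)}$ are disjointly supported, yet $\mathbb{P}(h_{[0,1/2)}=1,\,h_{[1/2,1)}=1)=0\neq 1/16$), so the case "both entries are intervals" of Proposition \ref{incomparable and disj supp character via interval} gives you no independence whatsoever. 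Second, the fallback separation argument cannot work as described: $\mathcal{F}^\mu$ is indexed by a subtree of $\mathcal{T}_{b(\mu)+\kappa(\mu)}$, a successor-ordinal tree with a \emph{single} root, so Proposition \ref{independent sum of successor ordinals} yields no nontrivial independent factors at that level; and since $\sum_{\nu\in\mathcal{M}_\xi}\theta_\nu=\theta_\xi$ for every $\xi\sqsupseteq\lambda$, the ranges attached to distinct $\lambda,\lambda'\in\mathpzc{Root}(\mathcal{E},\mu)$ necessarily overlap every dyadic region $[T_\mu\chi_J=1]$, $J\in\mathcal{D}^{\kappa(\mu)}$, so they cannot "accumulate in disjoint collections of independent factors." Note also that independence of the ranges is, via Cram\'er--Wold, essentially equivalent to the claim itself, so a proof that presupposes a one-shot application of Proposition \ref{independent extension to distributional isomorphism} at the level of $\mu$ begs the question.

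The paper avoids ever proving range-independence at the level of $\mu$. It proves the claim by induction on the subtree $\mathcal{S}=\{\mu\in\mathcal{F}:\mathpzc{Root}(\mathcal{E},\mu)\neq\emptyset\}$, from its maximal nodes downwards, interleaving two different assembly mechanisms. At a non-maximal $\mu$ one first fixes $J\in\mathcal{D}^{\kappa(\mu)}$ and shows, using Proposition \ref{such and such} \ref{such and such b}, that $\mathpzc{Root}(\mathcal{E},\mu)=\bigcup_{\nu\in\mathpzc{Root}(\mathcal{F}^{\mu^\smallfrown J})}\mathpzc{Root}(\mathcal{E},\mu^\smallfrown J^\smallfrown\nu)$; the inductive hypothesis and Proposition \ref{independent extension to distributional isomorphism} are then applied only over the roots $\nu$ of $\mathcal{F}^{\mu^\smallfrown J}$, where the images genuinely land in the independent spaces $Y_\nu$ (Lemma \ref{roots associated to M lambda}), producing an embedding $S_J$ defined on all of $Z_\mu$. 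The disjoint-support direction is then handled not by independence but by Proposition \ref{common domain disjointly supported compressions can be added}: the operators $T_JS_J$, $J\in\mathcal{D}^{\kappa(\mu)}$, are $2^{-\kappa(\mu)}$-compressions with disjointly supported ranges and \emph{common domain} $Z_\mu$, and their sum preserves the distribution of every element of $Z_\mu$ (mixtures across different $\lambda$ included), which is exactly what replaces the independence you try to prove. Finally, one must still identify the assembled operator with the prescribed restrictions $T_{\Psi_\lambda^\mu}T_\lambda^{-1}|_{Y^\mathcal{E}_\lambda}$, which requires the disassembly Lemmas \ref{operator reduction going down the path} and \ref{operator reduction successor to limit}; your outline has no analogue of either the interleaved induction or this identification step, so the proposal as it stands does not prove the claim.
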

Assuming the above is true, we will first conclude the proof by applying the claim. For $\mu\in\mathpzc{Root}(\mathcal{F})$, because $T_\mu$ is a distributional embedding, we have that $T_\mu S_\mu:Z_\mu\to R_\beta^{p,0}$ is a distributional embedding with range inside $Y_\mu$, and thus, by Proposition \ref{independent sum of successor ordinals}, $(T_\mu S_\mu)_{\mu\in\mathpzc{Root}(\mathcal{F})}$ have independent ranges. By Proposition \ref{independent extension to distributional isomorphism}, the operator $S:R_\mathcal{E} = [(Z_\mu)_{\mu\in\mathpzc{Root}(\mathcal{F})}]\to R_\beta^{p,0}$ given by $S|_{Z_\mu} = T_\mu S_\mu$, $\mu\in\mathpzc{Root}(\mathcal{F})$, is a distributional embedding. We will use Lemma \ref{operator reduction to roots} to show that $S = T_\Psi$. Indeed, for $\lambda\in\mathpzc{Root}(\mathcal{E})$ and $\mu\in\mathpzc{Root}(\mathcal{F})$ such that $\lambda\in\mathpzc{Root}(\mathcal{E},\mu)$,
\[S|_{Y^\mathcal{E}_{\lambda}} = T_\mu S_\mu|_{Y^\mathcal{E}_{\lambda}} = T_\mu T_{\Psi_\lambda^{\mu}}T^{-1}_\lambda|_{Y^\mathcal{E}_{\lambda}} = T_\Psi|_{Y^\mathcal{E}_{\lambda}}.\]

We proceed to prove the claim by induction on the sub-tree $\mathcal{S} = \{\mu\in\mathcal{F}:\mathpzc{Root}(\mathcal{E},\mu)\neq\emptyset\}$ of $\mathcal{F}$ with maximal nodes the members of $\cup_{\lambda\in\mathpzc{Root}(\mathcal{E})}\mathcal{M}_\lambda$. The inductive step will essentially be a refinement of the preceding argument.

In the basis step, let $\mu\in\mathcal S$ be a terminal node. Then $\mathpzc{Root}(\mathcal{E},\mu) = \{\lambda\}$, for some $\lambda\in\mathpzc{Root}(\mathcal{E})$. The claim can be rephrased as follows: $T_{\Psi_\lambda^\mu}T_\lambda^{-1}:Y^\mathcal{E}_{\lambda}\to R_{\mathcal{F}^\mu}$ is a distributional embedding, which we will now verify. Because $P(b(\lambda)+\kappa(\lambda))$ is true, $T_{\Psi^\mu_\lambda}:R_{\mathcal{E}^\lambda}\to R_{\mathcal{F}^\mu}$ is a distributional embedding. By Remark \ref{Relation to Elambda}, $T _\lambda^{-1}:Y^\mathcal{E}_{\lambda}\to R_{\mathcal{E}^\lambda}$ is a distributional isomorphism. We conclude that $T_{\Psi_\lambda^\mu}T_\lambda^{-1}:Y^\mathcal{E}_{\lambda}\to R_{\mathcal{F}^\mu}$ is a distributional embedding.

Let $\mu$ be non-maximal with the property $\mathpzc{Root}(\mathcal{E},\mu)\neq\emptyset$ and assume that all successors of $\mu$ satisfy the claim. We will first prove that, for $J\in\mathcal{D}^{\kappa(\mu)}$,
\begin{equation}
\label{roots united never divided}
\mathpzc{Root}(\mathcal{E},\mu) = \bigcup_{\nu\in\mathpzc{Root}(\mathcal{F}^{\mu^\smallfrown J})}\mathpzc{Root}(\mathcal{E},\mu^\smallfrown J^\smallfrown \nu).
\end{equation}
The ``$\supset$'' inclusion is trivial. Let $\lambda\in \mathpzc{Root}(\mathcal{E},\mu)$. Because $\mu$ is non-maximal in $\mathcal{S}$, $\mu\notin \mathcal{M}_\lambda$, and thus, there is $P\in\mathcal{D}^{\kappa(\mu)}$ such that $\Gamma_\lambda^{\mu^\smallfrown P}\neq\emptyset$. By Proposition \ref{such and such} \ref{such and such b}, $\sum_{\xi\in \Gamma_\lambda^{\mu^\smallfrown J}} \theta_\xi= \sum_{\xi\in \Gamma_\lambda^{\mu^\smallfrown P}}\theta_\xi$, and thus $\Gamma_\lambda^{\mu^\smallfrown J}\neq\emptyset$. We deduce that there exists $\nu\in\mathpzc{Root}(\mathcal{F}^{\mu^\smallfrown J})$ such that $\Gamma_\lambda^{\mu^\smallfrown J^\smallfrown\nu}\neq\emptyset$, i.e., $\lambda\in\mathpzc{Root}(\mathcal{E},\mu^\smallfrown J^\smallfrown \nu)$.

Fix momentarily $J\in\mathcal{D}^{\kappa(\mu)}$. By the inductive hypothesis, for every $\nu\in\mathpzc{Root}(\mathcal{F}^{\mu^\smallfrown J})$, there exists a distributional embedding
\[S_{\mu^\smallfrown J^\smallfrown \nu}:Z_{\mu^\smallfrown J^\smallfrown \nu}\to R_{\mathcal{F}^{\mu^\smallfrown J^\smallfrown \nu}}\]
such that, for all $\lambda\in \mathpzc{Root}(\mathcal{E},\mu^\smallfrown J^\smallfrown \nu)$,
\[S_{\mu^\smallfrown J^\smallfrown \nu}|_{Y^\mathcal{E}_\lambda} = T_{\Psi_\lambda^{\mu^\smallfrown J^\smallfrown \nu}}T_\lambda^{-1}|_{Y^\mathcal{E}_\lambda}.\]
Therefore, for $\nu\in\mathpzc{Root}(\mathcal{F}^{\mu^\smallfrown J})$, the operator $T_\nu S_{\mu^\smallfrown J^\smallfrown \nu}:Z_{\mu^\smallfrown J^\smallfrown \nu}\to R_{b(\mu)}^{p,0}$ is a distributional embedding with image in $Y_\nu$. By Proposition \ref{independent extension to distributional isomorphism}, there exists a distributional embedding
\[S_J:[(Z_{\mu^\smallfrown J^\smallfrown \nu})_{\nu\in\mathpzc{Root}(\mathcal{F}^{\mu^\smallfrown J})}] = [\big((Y^\mathcal{E}_\lambda)_{\lambda\in\mathpzc{Root}(\mathcal{E},\mu^\smallfrown J^\smallfrown \nu)}\big)_{\nu\in\mathpzc{Root}(\mathcal{F}^{\mu^\smallfrown J})}] \stackrel{\eqref{roots united never divided}}{=} Z_\mu
\to R_{b(\mu)}^{p,0}\]
such that, for all $\nu\in\mathpzc{Root}(\mathcal{F}^{\mu^\smallfrown J})$,
\[S_J|_{Z_{\mu^\smallfrown J^\smallfrown \nu}} = T_\nu S_{\mu^\smallfrown J^\smallfrown \nu}.\]
By using Proposition \ref{common domain disjointly supported compressions can be added}, we deduce that the operator $S = \sum_{J\in\mathcal{D}^{\kappa(\mu)}}T_JS_J:Z_\mu
\to R_{b(\mu)}^{p,0}$ is a distributional embedding. To conclude the proof, it remains to be verified that, for $\lambda\in\mathpzc{Root}(\mathcal{E},\mu)$, $S|_{Y^\mathcal{E}_\lambda} = T_{\Psi_\lambda^\mu}T_\lambda^{-1}|_{Y^\mathcal{E}_\lambda}$. For $J\in\mathcal{D}^{\kappa(\mu)}$, let $\nu_J$ be the unique root of $\mathcal{F}^{\mu^\smallfrown J}$ such that $\lambda\in\mathpzc{Root}(\mathcal{E},\mu^\smallfrown J^\smallfrown \nu_J)$. Then,
\begin{align*}
S|_{Y^\mathcal{E}_\lambda} &= \sum_{J\in\mathcal{D}^{\kappa(\mu)}}T_JS_J|_{Y^\mathcal{E}_\lambda} = \sum_{J\in\mathcal{D}^{\kappa(\mu)}}T_JS_{\mu^\smallfrown J^\smallfrown \nu_J}|_{Y^\mathcal{E}_\lambda} = \sum_{J\in\mathcal{D}^{\kappa(\mu)}}T_J  T_{\Psi_\lambda^{\mu^\smallfrown J^\smallfrown \nu_J}}T_\lambda^{-1}|_{Y^\mathcal{E}_\lambda}\\
&= \sum_{J\in\mathcal{D}^{\kappa(\mu)}}T_J  T_{\Psi_\lambda^{\mu^\smallfrown J}}T_\lambda^{-1}|_{Y^\mathcal{E}_\lambda}\text{ (by Lemma \ref{operator reduction going down the path})}\\
&= T_{\Psi_\lambda^\mu}T_{\lambda}^{-1}|_{Y^\mathcal{E}_\lambda}\text{ (by Lemma \ref{operator reduction successor to limit})}.
\end{align*}
\end{proof}
}
}


\section{The orthogonal reduction to a scalar FDD-diagonal operator}

\label{reduction to FDD}


{
We show that for any bounded linear operator $T:R_p^{\alpha,0}\to R_p^{\alpha,0}$, with $\alpha$ a countable limit ordinal, and any $\epsilon>0$, there exist a distributional embedding scheme $\Psi$ of $R_\alpha^{p,0}$ into itself and a scalar FDD-diagonal operator $R:R_p^{\alpha,0}\to R_p^{\alpha,0}$ such that $\|T_\Psi^\dagger TT_\Psi-R\|<\epsilon$. The entries of $R$ are averages of the diagonal entries of $T$, and hence, as an application, we obtain the factorization property of $R_\alpha^{p,0}$.  The construction of $\Psi$ is inductive. Enumerate $\mathcal{T}_\alpha$ as $(\lambda_n)_{n=1}^\infty$, compatible with the tree order. At each step we select $(\mathcal{M}_{\lambda_n},(J_\mu)_{\mu\in\mathcal{M}_{\lambda_n}})$ and a scalar $r_n$ such that, letting $S_n=\sum_{\mu\in\mathcal{M}_{\lambda_n}}T_\mu J_\mu:L_p^{\kappa(\lambda_n),0}\to L_p^0$, $Y_n$ its image, and $E_n$ the orthogonal projection onto $Y_n$, we have
\[
\Big\|\big(E_nT-r_n\cdot id\big)|_{Y_n}\Big\|<\frac{\epsilon}{3\cdot 2^n},\quad 
\Big\|\big(\sum_{m=1}^{n-1}E_m\big)TE_n\Big\|<\frac{\epsilon}{3\cdot 2^n},\quad 
\Big\|E_nT\big(\sum_{m=1}^{n-1}E_m\big)\Big\|<\frac{\epsilon}{3\cdot 2^n}.
\]
Provided Definition~\ref{DES} is respected, this yields $\|T_\Psi^\dagger T T_\Psi-R\|<\epsilon$, where $R|_{X_{\lambda_n}}=r_n\cdot id|_{X_{\lambda_n}}$.  The inductive choice of $(\mathcal{M}_{\lambda_{n+1}},(J_\mu)_{\mu\in\mathcal{M}_{\lambda_{n+1}}})$ rests on a finite-dimensional stabilization argument depending on the parameters $\dim(X_{\lambda_{n+1}})$, $\sum_{m=1}^n\dim(X_{\lambda_m})$, $\|T\|$, and the error $\epsilon/2^{n+1}$. These determine a large integer $\kappa_{n+1}$, permitting the choice of $\mathcal{M}_{\lambda_{n+1}}$, respecting $(\mathcal{M}_{\lambda_m},(J_\mu)_{\mu\in\mathcal{M}_{\lambda_m}})_{m=1}^n$, with $\kappa(\mu)=\kappa_{n+1}$ for all $\mu\in\mathcal{M}_{\lambda_{n+1}}$. This provides enough room to carry out a probabilistic choice of the family $(J_\mu)_{\mu\in\mathcal{M}_{\lambda_{n+1}}}$ and define $S_{n+1}$. The probabilistic techniques employed here are by now standard in the study of factorization and related properties, originating in \cite{lechner:2018:1-d} (diagonal reduction) and \cite{lechner:motakis:mueller:schlumprecht:2022,lechner:motakis:mueller:schlumprecht:2023} (scalar reduction). Our method is close to \cite{konstantos:motakis:2025}, one of whose statements we use directly, though non-trivial modifications to the approach are required to handle simultaneously the compressions $(T_\mu)_{\mu\in\mathcal{M}_{\lambda_{n+1}}}$ and to extract the distributional embeddings $(J_\mu)_{\mu\in\mathcal{M}_{\lambda_{n+1}}}$. The section is divided into two parts: the finite-dimensional stabilization and the construction of the distributional embedding scheme, the latter completing the proof.

\subsection{A finite-dimensional stabilization}

We now outline the main result of this subsection and its proof. Given $n,m\in\mathbb{N}$, $\Gamma>0$, and $0<\epsilon<1$, we show the existence of a large integer $k_0''$ with the following property: for every operator $T$ of norm at most $\Gamma$, operators $Q_1$, $Q_2$ of norm at most $\Gamma$ and rank at most $m$, every $k\geq k_0''$, and compressions $S_l:L_p^{k,0}\to L_p^0$, $1\leq l\leq N$, with pairwise disjointly supported ranges ($k_0''$ does not depend on $N$), there exist distributional embeddings $J_l:L_p^{n,0}\to L_p^{k,0}$, $1\leq l\leq N$, such that, letting $S=\sum_{l=1}^N S_lJ_l:L_p^{n,0}\to L_p^0$, with image $Y$ and $E$ the orthogonal projection onto $Y$, the operator $ET|_Y$ is $\epsilon$-close to $r\cdot id|_Y$, with $r$ given by an explicit formula, and $EQ_1$ and $Q_2E$ have norm less than $\epsilon$. This result is directly applicable in the context outlined above. It is analogous to the stabilization of \cite[Theorems 5.7 and 6.5]{konstantos:motakis:2025} (see also \cite{speckhofer:2025}), but here $Y$ is a compressed copy of a finite-dimensional $L_p^0$-space constructed by simultaneously taking pieces from all finite-dimensional compressed $L_p^0$-spaces $S_l[L_p^{k,0}]$, $1\leq l\leq N$.

The proof proceeds in two stages: first, a reduction where $ET|_Y$ is shown to be close to a diagonal operator $R:Y\to Y$, and then a further reduction to a scalar multiple of the identity, each step incurring a dimensional penalty. The first step is proved here in full; although similar arguments appear in the literature, to our knowledge it does not follow directly from previous work. The construction proceeds by enumerating $\mathcal{D}_{n-1}$ lexicographically and, assuming $(J_l h_J)_{l=1}^N$ has been defined for $J<I$, simultaneously defining $(J_l h_I)_{l=1}^N$ by means of a random family $(J_l h_I(\vartheta))_{l=1}^N$, where $\vartheta$ ranges over a suitable probability space. One then shows that, with high probability, sufficient criteria are satisfied. The second step is not proved from scratch: we first dilate an appropriate compressed finite-dimensional $L_p^0$-space to a non-compressed one, apply the stabilization of \cite[Theorem 6.5]{konstantos:motakis:2025}, and finally compress back to the original setting.

To implement this scheme, we establish the probabilistic framework in advance. Vectors in $L_p^0$ are randomized to produce distributional embeddings. The notation together with the variance bounds that follow are essentially from \cite{lechner:2018:1-d}.

\begin{ntt}
\label{random variables}
Fix a finite collection of disjointly supported and symmetric $\{-1,0,1\}$-valued measurable functions $\mathcal{B} = (h_k)_{k=1}^m$. Denote
\[H_k  =\mathrm{supp}(h_k), 1\leq k\leq m,\text{ and }H = \cup_{k=1}^mH_k.\]
Considering the probability space $\Omega = \{-1,1\}^m$ equipped with the uniform probability measure, we define the random vector $b_\mathcal{B}:\Omega\to\langle\{h_k:1\leq k\leq m\}\rangle$ given by
\[b_\mathcal{B}(\vartheta) = \sum_{k=1}^m\theta_kh_k,\]
where $\vartheta = (\theta_k)_{k=1}^m\in\Omega$. For $g\in L_q$, $f\in L_p$, and linear operator $T:\langle\{h_k:1\leq k\leq m\}\rangle\to L_p$, we define the random variables $Y_{\mathcal{B},g},W_{\mathcal{B},f}, Z_{\mathcal{B},T}:\Omega\to\mathbb R$ given by
\begin{align*}
 Y_{\mathcal{B},g}(\vartheta) = \langle g,  b_{\mathcal{B}}(\vartheta) \rangle\text{ and }W_{\mathcal{B},f}(\vartheta) = \langle  b_{\mathcal{B}}(\vartheta), f \rangle,
\end{align*}
and
\begin{align*}
 Z_{\mathcal{B},T}(\vartheta) = \langle b_{\mathcal{B}}(\vartheta), Tb_{\mathcal{B}}(\vartheta) \rangle - \sum_{k=1}^m \langle h_{k}, Th_{k}\rangle.
\end{align*}
\end{ntt}

\begin{lem} \label{aupper estimates} 
Following Notation \ref{random variables}, we have $\mathbb{E}(Y_{\mathcal{B},g}) = \mathbb{E}(W_{\mathcal{B},f}) = \mathbb{E}(Z_{\mathcal{B},T}) = 0$ and
\begin{align*}
\mathbb{V} (Y_{\mathcal{B},g}) &\leq \| g \|_{q}^{2}\Big(|H|\max_{1\leq k\leq m}|H_K|\Big)^{1/p} ,\\
\mathbb{V}(W_{\mathcal{B},f}) &\leq \| f \|_{p}^{2}\Big(|H|\max_{1\leq k\leq m}|H_K|\Big)^{1/q} ,\text{ and}\\
\mathbb{V}(Z_{\mathcal{B},T}) &\leq 2\|T\|^2|H|^{1+1/p}\Big(\max_{1\leq k\leq m}|H_k|\Big)^{1/q}.
\end{align*}
\end{lem}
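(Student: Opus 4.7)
The expectations are immediate: expanding $Y_{\mathcal{B},g}(\vartheta) = \sum_{k}\theta_k\langle g,h_k\rangle$ and $W_{\mathcal{B},f}(\vartheta) = \sum_k\theta_k\langle h_k,f\rangle$, the identity $\mathbb{E}(\theta_k) = 0$ gives $\mathbb{E}(Y_{\mathcal{B},g}) = \mathbb{E}(W_{\mathcal{B},f}) = 0$. For $Z_{\mathcal{B},T}$, expanding $\langle b_\mathcal{B},Tb_\mathcal{B}\rangle = \sum_{k,l}\theta_k\theta_l\langle h_k,Th_l\rangle$ and using $\mathbb{E}(\theta_k\theta_l) = \delta_{kl}$, only the diagonal terms survive and contribute exactly the subtracted $\sum_k\langle h_k,Th_k\rangle$.

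For the variance of $Y_{\mathcal{B},g}$, independence yields $\mathbb{V}(Y_{\mathcal{B},g}) = \sum_k\langle g,h_k\rangle^2$. The heart of the estimate will be to apply Hölder's inequality to $|\langle g,h_k\rangle|\leq\int_{H_k}|g|$ in two distinct ways: globally, giving $|\langle g,h_k\rangle|\leq\|g\|_q|H_k|^{1/p}$, and locally, giving $|\langle g,h_k\rangle|\leq\|g\chi_{H_k}\|_q|H_k|^{1/p}$. Multiplying these bounds and using $|H_k|^{2/p}\leq|H_k|^{1/p}(\max_j|H_j|)^{1/p}$ produces
\[\langle g,h_k\rangle^2 \leq \|g\|_q\,\|g\chi_{H_k}\|_q\,|H_k|^{1/p}(\max_j|H_j|)^{1/p}.\]
Summing over $k$ and applying discrete Hölder with conjugate exponents $q$ and $p$, while using disjointness to get $\sum_k\|g\chi_{H_k}\|_q^q = \|g\chi_H\|_q^q\leq\|g\|_q^q$, yields the claimed bound. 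The variance estimate for $W_{\mathcal{B},f}$ follows by the same argument with $p$ and $q$ interchanged.

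For the variance of $Z_{\mathcal{B},T}$, I will use that $Z_{\mathcal{B},T}(\vartheta) = \sum_{k\neq l}\theta_k\theta_l\langle h_k,Th_l\rangle$. Expanding $Z_{\mathcal{B},T}^2$ and taking expectation, the only surviving four-fold products come from the pairings $k=k'\!,l=l'$ and $k=l'\!,l=k'$ (with $k\neq l$), giving
\[\mathbb{V}(Z_{\mathcal{B},T}) = \sum_{k\neq l}\langle h_k,Th_l\rangle^2 + \sum_{k\neq l}\langle h_k,Th_l\rangle\langle h_l,Th_k\rangle \leq 2\sum_{k,l}\langle h_k,Th_l\rangle^2\]
by Cauchy--Schwarz followed by relabeling. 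To control the remaining double sum, I will introduce an independent copy $\vartheta'$ of $\vartheta$ and observe that $\sum_{k,l}\langle h_k,Th_l\rangle^2 = \mathbb{E}_{\vartheta'}\sum_k\langle h_k,Tb_\mathcal{B}(\vartheta')\rangle^2$. Applying the $W$-bound just established with $f = Tb_\mathcal{B}(\vartheta')\in L_p$, together with $\|Tb_\mathcal{B}(\vartheta')\|_p\leq\|T\|\,|H|^{1/p}$ (which follows from $|b_\mathcal{B}|=\chi_H$), gives $\sum_{k,l}\langle h_k,Th_l\rangle^2 \leq \|T\|^2|H|^{2/p}(|H|\max_j|H_j|)^{1/q}$, and the exponents combine as $2/p + 1/q = 1 + 1/p$ to produce the advertised bound.

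The main technical point will be the double application of Hölder in the $Y_{\mathcal{B},g}$-variance estimate: a single, purely global application would yield only $\|g\|_q^2\, m(\max_j|H_j|)^{2/p}$, which is too weak when the number of blocks $m$ is large, and the factor $|H|$ in the sharp bound requires exploiting the local norms $\|g\chi_{H_k}\|_q$ to recover summability in $k$. Once this step is in place, the $W$-bound is symmetric and the $Z$-bound reduces, via an independent randomization, to one application of the $W$-bound; the remaining manipulations are routine.
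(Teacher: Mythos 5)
Your proof is correct, and for the $Z$-variance it takes a genuinely different route from the paper. The paper, which only proves the $Z$ bound explicitly, splits $\mathbb{V}(Z_{\mathcal{B},T}) = A + B$ with $A = \sum_{k\neq l}\langle h_k,Th_l\rangle\langle h_l,Th_k\rangle$ and $B=\sum_{k\neq l}\langle h_k,Th_l\rangle^2$, then bounds each directly by regrouping the sum into a single pairing $\langle h_l, T(\sum_{k\neq l}\langle h_k,Th_l\rangle h_k)\rangle$ and exploiting the pointwise bound $\|\sum_k a_k h_k\|_p\leq\max_k|a_k|\,|H|^{1/p}$ available for disjointly supported $\{-1,0,1\}$-valued blocks. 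You instead first majorize $\mathbb{V}(Z)\leq 2\sum_{k,l}\langle h_k,Th_l\rangle^2$ by Cauchy--Schwarz and relabeling, then decouple with an independent copy $\vartheta'$ to rewrite this as $2\,\mathbb{E}_{\vartheta'}\sum_k\langle h_k,Tb_{\mathcal{B}}(\vartheta')\rangle^2$, and finish by applying the $W$-bound deterministically at each $\vartheta'$. Your approach is more modular: it reduces the $Z$-estimate to the $W$-estimate, whereas the paper's is one-shot but repeats essentially the same duality manipulation twice. Your derivation of the $Y$ and $W$ variances via a global--local double Hölder argument (bounding $\langle g,h_k\rangle^2$ by $\|g\|_q\|g\chi_{H_k}\|_q|H_k|^{2/p}$, absorbing one factor of $|H_k|^{1/p}$ into $\max_j|H_j|^{1/p}$, then summing by discrete Hölder and using disjointness to collapse $\sum_k\|g\chi_{H_k}\|_q^q\leq\|g\|_q^q$) is a clean way to produce the $(|H|\max_j|H_j|)^{1/p}$ structure; the paper omits this proof entirely, so there is nothing to compare against there, but your argument is correct and tight.
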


\begin{proof}
All of the above rely on the fact that the functions $\theta_k:\Omega\to\{-1,1\}$, $1\leq k\leq m$, are independent with mean zero and satisfy $\theta_k^2 = 1$. For illustrative purposes, we prove the last bound, as it is the least straightforward.
\begin{align*}
\mathbb{V}(Z_{\mathcal{B},T}) &= \mathbb{E}\Big(\sum_{k,l=1\atop k\neq l}^m\theta_k\theta_m\langle h_k,Th_l\rangle\Big)^2 = \underbrace{\sum_{k,l=1\atop k\neq l}^m\langle h_k,Th_l\rangle\langle h_l,Th_k\rangle}_{=:A} + \underbrace{\sum_{k,l=1\atop k\neq l}^m\langle h_k,Th_l\rangle^2}_{=:B}.
\end{align*}
We compute bounds for $A$ and $B$ separately:
\begin{align*}
A & = \sum_{l=1}^m\Big\langle h_l, T\Big(\sum_{k=1\atop k\neq l}^m\langle  h_k,Th_l\rangle h_k\Big)\Big\rangle  \leq \|T\| \sum_{l=1}^m\|h_l\|_{L_q}\Big\|\sum_{k=1\atop k\neq l}^m\langle h_k,Th_l\rangle h_k\Big\|_{L_p}\\
&\leq \|T\|^2\sum_{l=1}^m\|h_l\|_{L_q}\max_{1\leq k\leq m}\big(\|h_k\|_{L_q}\|h_l\|_{L_p}\big)\Big\|\sum_{k=1}^mh_k\Big\|_{L_p}= \|T\|^2\sum_{l=1}^m|H_l|^{1/q}|H_l|^{1/p}\big(\max_{1\leq k\leq m}|H_k|^{1/q}\big)|H|^{1/p}\\
&= \|T\|^2\Big(\max_{1\leq l\leq m}|H_l|\Big)^{1/q}|H|^{1+1/p}
\end{align*}
and
\begin{align*}
B &=\sum_{k=1}^m\Big\langle h_k, T\Big(\sum_{l=1\atop l\neq k}^m\langle  h_k,Th_l\rangle h_l\Big)\Big\rangle \leq \|T\|\sum_{k=1}^m\|h_k\|_{L_q}\Big\|\sum_{l=1\atop l\neq k}^m\langle  h_k,Th_l\rangle h_l\Big\|_{L_p}\\
&\leq \|T\|^2\sum_{k=1}^m\|h_k\|_{L_q} \max_{1\leq l\leq m}\big(\|h_k\|_{L_q}\|h_l\|_{L_p}\big) \Big\|\sum_{l=1}^mh_l\Big\|_{L_p} =\|T\|^2|H|^{1/p} \Big(\max_{1\leq l\leq m}|H_l|\Big)^{1/p}\sum_{k=1}^m|H_k|^{2/q}\\
&\leq \|T\|^2|H|^{1/p} \Big(\max_{1\leq l\leq m}|H_l|\Big)^{1/p}\Big(\max_{1\leq k\leq m}|H_k|\Big)^{2/q-1}|H| = \|T\|^2\Big(\max_{1\leq l\leq m}|H_l|\Big)^{1/q}|H|^{1+1/p}.
\end{align*}
By placing the sum on the left-hand side of the bilinear form $\langle \cdot,\cdot\rangle$ in the computations for the bounds of $A$ and $B$, one can interchange the roles of $p$ and $q$ in the resulting expressions. This refinement yields the sharper estimate
$\mathbb{V}(Z_{\mathcal{B},T}) \;\leq\; 2\|T\|^2\,|H|^{1+1/p^*}(\max_{1\leq k\leq m}|H_k|)^{\,1-1/p^*}$,
although this improvement is not required in our application.
\end{proof}

\begin{rem} \label{notation for reduction to FDD} Let $m \in \mathbb{N}$, $\theta\in(0,1]$, and $S:L_p^{m,0}\to L_p^0$ be a $\theta$-compression. The orthogonal projection $E \colon L_{p}^{0}  \to  S[L_p^{m,0}]$ can be expressed as
\begin{align*}
E(f) = \sum_{I \in \mathcal{D}_{m-1}} \theta^{-1}\vert I \vert^{-1} \langle Sh_{I}, f \rangle Sh_{I} = \mathbb{E}(f\chi_{A}|\mathcal{A}),
\end{align*}
where $\mathcal{A} = \sigma (Sh_{I} : I \in \mathcal{D}_{m-1})$ and $A = \cup_{I\in\mathcal{D}^{m-1}}\mathrm{supp}(Sh_I)$ (the above equality of formulas holds in $L_p^0$ but not in $L_p$). We will mainly rely on the former, while the latter reveals that $\|E\| = 1$.
\end{rem}

This first step of the reduction is carried out in full and yields a stabilization where $ET|_Y$ is close to a diagonal operator $R$ on a compressed copy $Y$ of $L_p^{n,0}$, obtained from pieces of disjoint compressed copies of $L_p^{k,0}$ for sufficiently large $k$.

\begin{lem} \label{f.d. diagonalization 2.0} Let $n \in \mathbb{N}$, $m \in \mathbb{N}$, $\Gamma >0$,  and $0<\epsilon<1$. There exists $k_0 := k_0(n, m, \Gamma, \epsilon) \in \mathbb{N}$ such that for every $k \geq k_0 +n$ and
\begin{enumerate}[label=(\alph*)]
\item \label{f.d. diagonalization 2.0 a}  operator $T \colon X \to L^0_p$ of norm at most $\Gamma$, where $X$ is a subspace of $L_p^0$,

\item \label{f.d. diagonalization 2.0 b} operators $Q_1:Z\to L_p^0,Q_2:X\to W$ of norm at most $\Gamma$ and rank at most $m$, where $Z$, $W$ are normed spaces, and

\item \label{f.d. diagonalization 2.0 c} operators $S_1,\ldots,S_N:L_p^{k,0}\to X$ with disjointly supported ranges such that, for $1\leq l\leq N$, $S_l$ is a $\theta_l$-compression, for some $\theta_l\in(0,1]$,

\end{enumerate}
there exist distributional embeddings $J_l:L_p^{n,0}\to L_p^{k,0}$, $1\leq l\leq N$, satisfying the following:
\begin{enumerate}[label=(\roman*)]

\item\label{f.d. diagonalization 2.0 i} For each $1\leq l\leq N$, $0\leq i\leq n-1$ and $I\in\mathcal{D}^{i}$, there are $\mathcal{B}^l_{I} \subset \mathcal{D}^{k_{0}+i}$ and $(\theta^l_{K})_{K \in \mathcal{B}^l_{I}} \in \lbrace -1,1 \rbrace^{\mathcal{B}^l_{I}}$ such that
\[J_lh_I = \sum_{K \in \mathcal{B}^l_{I}} \theta^l_{K} h_{K}.\]

\item\label{f.d. diagonalization 2.0 ii} Denote $S = \sum_{l=1}^NS_lJ_l:L_p^{n,0}\to X$, which is a $\theta$-compression with $\theta = \sum_{l=1}^N\theta_l$, $b_I = Sh_I$, $I\in\mathcal{D}_{n-1}$, $Y = [ (b_{I})_{I \in \mathcal{D}_{n-1}}]$, $E:L_p^0\to Y$ the orthogonal projection, and $R:Y\to Y$ the diagonal operator given by $Rb_I = d_Ib_I$, where
\begin{align*}
d_{I} = \frac{1}{|I|\theta}\sum_{l=1}^N \sum_{K \in \mathcal{B}^l_{I}}\langle S_lh_{K} ,TS_lh_{K} \rangle, \;I\in\mathcal{D}_{n-1}.
\end{align*}   
Then
\begin{align*}
\Big\| \Big( ET - R \Big)|Y \Big\| < \epsilon.
\end{align*}

\item\label{f.d. diagonalization 2.0 iii} We have $\|EQ_1\| <\epsilon$ and $\|Q_2E\| <\epsilon$.

\end{enumerate}
\end{lem}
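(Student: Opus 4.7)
My plan is to construct the distributional embeddings $J_1,\ldots,J_N$ jointly by inducting on $I\in\mathcal{D}_{n-1}$ in lexicographic order, choosing all the signs $(\theta^l_K)_{l,K\in\mathcal{B}^l_I}$ at each step via a probabilistic argument built on Lemma~\ref{aupper estimates}. I would take $\mathcal{B}^l_{[0,1)}=\mathcal{D}^{k_0}$ for every $l$; once $\mathcal{B}^l_I\subset\mathcal{D}^{k_0+i}$ and its signs are fixed, the locations $\mathcal{B}^l_{I^\pm}\subset\mathcal{D}^{k_0+i+1}$ are \emph{forced} by the requirements $\mathrm{supp}(J_lh_{I^\pm})\subset[J_lh_I=\pm 1]$ and $|\mathcal{B}^l_{I^\pm}|=2^{k_0}$ (which matches the distributional target $|\mathrm{supp}(J_lh_{I^\pm})|=|I^\pm|$). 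Any subsequent choice of signs at the new level then yields a bona fide distributional embedding $J_l\colon L_p^{n,0}\to L_p^{k,0}$, so the ``distributional'' part of the conclusion is automatic, and the signs remain entirely free to enforce the quantitative bounds below. Under this construction, $b_I=\sum_{l,K\in\mathcal{B}^l_I}\theta^l_K S_lh_K$ is precisely a realization of $b_\mathcal{B}(\vartheta)$ from Notation~\ref{random variables} over the disjointly supported, $\{-1,0,1\}$-valued family $\mathcal{B}=(S_lh_K)_{l,K\in\mathcal{B}^l_I}$, with $|H|=\theta|I|$ and $\max|H_{l,K}|\leq 2^{-(k_0+i)}$.

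At step $I\in\mathcal{D}^i$, every $b_{I'}$ with $I'<I$ is already determined, and I would use the fresh signs to enforce simultaneously: the diagonal bound $|\langle b_I,Tb_I\rangle-|I|\theta d_I|<\eta$, which is exactly $|Z_{\mathcal{B},T}|<\eta$; the two-sided off-diagonal bounds $|\langle b_{I'},Tb_I\rangle|,|\langle b_I,Tb_{I'}\rangle|<\eta|I|\theta$ for every $I'<I$, which, after Hahn-Banach-extending the fixed-side functional $f\mapsto\langle b_{I'},Tf\rangle$ on $X$ to an element $\tilde g\in L_q$ of norm $\leq\Gamma\|b_{I'}\|_q$, take the form $|Y_{\mathcal{B},\tilde g}|<\eta|I|\theta$; and the rank-$m$ bounds $|\langle b_I,v_i\rangle|,|\langle\tilde g_i,b_I\rangle|<\eta|I|\theta$, where $(v_i)_{i=1}^m$ spans the range of $Q_1$ and $(\tilde g_i)_{i=1}^m\subset L_q^0$ is a Hahn-Banach representation of $Q_2$. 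Each variance is bounded by $C(p,\Gamma)\cdot 2^{-c(p)(k_0+i)}$ uniformly in $N$ and $(\theta_l)$: the $Z$- and $Y$-bounds come directly from Lemma~\ref{aupper estimates}, while the delicate one-sided estimates pairing $\mathcal{B}$ against an $L_p$-function $v_i$ are handled by Bessel's inequality on the orthonormal system $(S_lh_K/\|S_lh_K\|_2)$ when $p\geq 2$ (so $v_i\in L_2$), and by the identity $\sum_{l,K}\|v_i\chi_{H_{l,K}}\|_p^2\leq\|v_i\|_p^2$ for disjointly supported $(H_{l,K})$ when $p\leq 2$, thereby avoiding a dimension factor in $N$. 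Chebyshev and a union bound over the $O(2^{2n}+m\cdot 2^n)$ accumulated inequalities then produce signs satisfying all of them once $k_0=k_0(n,m,\Gamma,\epsilon)$ is chosen large enough.

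To pass from these pointwise controls to (ii) and (iii), I would expand $(ET-R)|_Y$ in the basis $(b_I)$; its matrix has diagonal entries $(|I|\theta)^{-1}\langle b_I,Tb_I\rangle-d_I$ and off-diagonal entries $(|I'|\theta)^{-1}\langle b_{I'},Tb_I\rangle$, all of absolute value at most a prescribed $\eta'$. The $(p^\ast-1)$-unconditionality of $(b_I)$ together with the pointwise tree-counting estimate $\sum_I c_I^2b_I^2(x)\leq n\bigl(\max_I|c_I|\bigr)^2\chi_{\mathrm{supp}(b_{[0,1)})}(x)$ then yields $\|(ET-R)|_Y\|\leq C(n,p)\eta'$. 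For (iii), writing $Q_1=\sum_i\phi_i\otimes v_i$ and $Q_2=\sum_iw_i\otimes g_i^*$ as rank-$\leq m$ factorizations reduces $\|EQ_1\|$ to a maximum over $i$ of $\|Ev_i\|_p$, while the dual identity $(Q_2E)^\ast=E^\ast Q_2^\ast$ reduces $\|Q_2E\|$ symmetrically; each of these is in turn controlled by the same tree-counting bound applied to the pointwise controls on $\langle b_I,v_i\rangle$ and $\langle\tilde g_i,b_I\rangle$. The principal technical obstacle is ensuring that the variance estimates are genuinely uniform in $N$ and $(\theta_l)$---as the lemma insists $k_0$ depend only on $n,m,\Gamma,\epsilon$---which is accomplished exactly by the Bessel-and-disjoint-supports maneuvers above rather than by per-term bounds that would leave a residual dimension factor.
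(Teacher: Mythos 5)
Your proposal follows the same overall route as the paper's proof: the same inductive construction of the $J_l$ over the lexicographic order of $\mathcal{D}_{n-1}$ with forced locations and random signs, the same appeal to Lemma~\ref{aupper estimates} for variance bounds, and the same Chebyshev-plus-union-bound conclusion. Two remarks are in order, however: one on a genuine gap, one on a minor divergence.

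The genuine gap is in the normalizations of the pointwise targets. You propose to enforce at step $I$ the bounds
$|\langle b_I,Tb_I\rangle-|I|\theta d_I|<\eta$ (diagonal),
$|\langle b_{I'},Tb_I\rangle|,|\langle b_I,Tb_{I'}\rangle|<\eta|I|\theta$ (off-diagonal), and
$|\langle b_I,v_i\rangle|,|\langle\tilde g_i,b_I\rangle|<\eta|I|\theta$ (rank-$m$).
The $\theta$-powers in these targets do not match those in the variance bounds, and this breaks the uniformity of $k_0$ in $\theta$. Concretely, for a normalized $v_i\in L_p$ Lemma~\ref{aupper estimates} gives $\mathbb{V}(W_{\mathcal{B},v_i})\leq(\vert H\vert\max_k\vert H_k\vert)^{1/q}\leq\theta^{2/q}2^{-(k_0+2i)/q}$, so Chebyshev against your target $\eta\theta\vert I\vert$ leaves a factor $\theta^{2/q-2}$ that blows up as $\theta\to0$. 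On the other side, a constant diagonal target $\eta$ does pass Chebyshev, but it does not yield (ii): the matrix entry of $(ET-R)\vert_Y$ in the direction of $b_I$ is $(\theta\vert I\vert)^{-1}\langle b_I,Tb_I\rangle-d_I$, and a bound $|\langle b_I,Tb_I\rangle-\theta\vert I\vert d_I|<\eta$ only controls this entry by $\eta/(\theta\vert I\vert)$. The resolution is to scale each target to match the $L_p$-$L_q$ dualities: $\epsilon\theta\vert I\vert/2^{n+1}$ for the diagonal, $\epsilon\theta\vert I\vert^{1/q}\vert J\vert^{1/p}/2^{2n+1}$ and $\epsilon\theta\vert J\vert^{1/q}\vert I\vert^{1/p}/2^{2n+1}$ for the two off-diagonal families, and $\epsilon(\theta\vert I\vert)^{1/q}/(\Gamma 2^{n+1})$, $\epsilon(\theta\vert I\vert)^{1/p}/2^{n+1}$ for the rank-$m$ families; with these, every Chebyshev ratio is independent of $\theta$ (and of $N$), which is exactly what the paper does. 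Relatedly, your Bessel/disjoint-supports maneuver is aimed at the wrong spot: Lemma~\ref{aupper estimates} already gives $N$- and $\theta$-uniform variance bounds out of the box, so the dimension-in-$N$ worry does not arise there; the $\theta$-uniformity is a matter of the target normalization, not the variance.

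The divergence that is merely stylistic: the paper controls $Q_1,Q_2$ through $(1/2)$-dense nets of size $M=5^m-1$ in the unit spheres of $\mathrm{range}(Q_1)$ and $(\mathrm{range}(Q_2))^*$, then passes to $\|EQ_1\|\leq 2\Gamma\max_i\|Ex_i\|$ by a net argument, whereas you propose spanning sets of size $m$. That variant does work, but note that with $Q_1=\sum_i\phi_i\otimes v_i$ one only gets $\|EQ_1\|\leq\sum_i\|\phi_i\|\,\|Ev_i\|$, not a maximum as you wrote; choosing an Auerbach basis gives $\|\phi_i\|\|v_i\|\leq\Gamma$ and hence $\|EQ_1\|\leq m\Gamma\max_i\|Ev_i\|$, which still suffices after tightening the per-event target by a factor $m$.
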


\begin{proof} 
Let $M = 5^m-1$, which is sufficiently large such that the unit sphere of every $m$-dimensional normed space contains an $(1/2)$-dense set of cardinality at most $M$. Let 
\begin{align} \label{f.d. diagonalization 2.0 Appropriate depth}
k_{0}(n, m, \Gamma, \epsilon)  = \Big\lfloor p^{\ast} \Big( 5n+3 + 2\log(\Gamma)+3m+2\log(\epsilon^{-1})\Big)\Big\rfloor +1,
\end{align}
and fix $k\geq k_{0}+n$ and items such as in \ref{f.d. diagonalization 2.0 a} to \ref{f.d. diagonalization 2.0 c}. Fix $(1/2)$-dense subsets $(x_i)_{i=1}^M$ of the unit sphere of  $\mathrm{range}(Q_1)$ and $(y_i^*)_{i=1}^M$ of the unit sphere of the dual of $\mathrm{range}(Q_2)$.

For $1\leq l\leq N$, we will define $J_l:L_p^{n,0}\to L_p^k$ by choosing a distributional copy $(c^l_I)_{I\in\mathcal{D}_{n-1}}$ of $(h_I)_{I\in\mathcal{D}_{n-1}}$ in $L_p^{k,0}$, and letting $J_lh_I = c_I^l$, $I\in\mathcal{D}_{n-1}$. Following the notation of the statement, the construction will satisfy \ref{f.d. diagonalization 2.0 i} and
\begin{align} \label{f.d. diagonalization 2.0 Imp 1}
\Big| \Big\langle b_{I}, Tb_{J} \Big\rangle \Big| < \frac{\epsilon\theta|I|^{1/q}|J|^{1/p}}{2^{2n+1}}, \;\; I \neq J \in \mathcal{D}_{n-1},  
\end{align}
\begin{align} \label{f.d. diagonalization 2.0 Imp 2}
\Big| \theta^{-1}|I|^{-1}\Big\langle b_{I}, Tb_{I} \Big\rangle - d_I \Big| < \frac{\epsilon}{2^{n+1}}, \;\; I  \in \mathcal{D}_{n-1},
\end{align}
\begin{equation}
\label{f.d. diagonalization 2.0 Imp 3}
|\langle b_I,x_i\rangle|\leq \frac{\epsilon(\theta|I|)^{1/q}}{\Gamma 2^{n+1}}\text{ and }\big| Q_2^*x_i^*(b_I)\big| \leq \frac{\epsilon(\theta|I|)^{1/p}}{2^{n+1}}, \; 1\leq i\leq M.
\end{equation}
We note that \eqref{f.d. diagonalization 2.0 Imp 1} and \eqref{f.d. diagonalization 2.0 Imp 2} yield \ref{f.d. diagonalization 2.0 ii} as follows:
for $y \in Y $, with $\|y\|\leq 1$,
\begin{align*}
\Big\|ETy - Ry\Big\|& = \Big\|\sum_{I\in\mathcal{D}_{n-1}}\Big(\sum_{J\in\mathcal{D}_{n-1}}\theta|I|^{-1}\theta|J|^{-1}\langle b_I,Tb_J\rangle\langle b_J,y\rangle \Big)b_I - \sum_{I\in\mathcal{D}_{n-1}}\theta^{-1}|I|^{-1}d_I\langle b_I,y\rangle b_I\Big\|\\
&\leq \Big\|\sum_{I\in\mathcal{D}_{n-1}}\theta^{-1}|I|^{-1}\langle b_I,y\rangle\Big(\theta^{-1}|I|^{-1}\langle b_I,Tb_I\rangle - d_I\Big)b_I\Big\|\\
&+ \Big\|\sum_{I\in\mathcal{D}_{n-1}}\Big(\sum_{J\in\mathcal{D}_{n-1}\atop J\neq I}\theta^{-2}|I|^{-1}|J|^{-1}\langle b_I,Tb_J\rangle\langle b_J,y\rangle\Big)b_I\Big\|\\
&< \sum_{I\in\mathcal{D}_{n-1}}\theta^{-1}|I|^{-1}\theta^{1/q}|I|^{1/q}\theta^{1/p}|I|^{1/p}\frac{\epsilon}{2^{n+1}}\\
&+ \sum_{I\in\mathcal{D}_{n-1}}\sum_{J\in\mathcal{D}_{n-1}\atop J\neq I}\theta^{-2}|I|^{-1}|J|^{-1}\theta^{1/q}|J|^{1/q}\theta^{1/p}|I|^{1/p} \frac{\epsilon\theta|I|^{1/q}|J|^{1/p}}{2^{2n+1}}\\
&\leq \big(\#\mathcal{D}_{n-1}\big) \frac{\epsilon}{2^{n+1}}+ \big(\#\mathcal{D}_{n-1}\big)^2\frac{\epsilon}{2^{2n+1}} <\epsilon.
\end{align*}
Furthermore, \eqref{f.d. diagonalization 2.0 Imp 3} yields \ref{f.d. diagonalization 2.0 iii} as follows: for $1\leq i\leq M$,
\begin{align*}
\|Ex_i\| & = \Big\|\sum_{I\in\mathcal{D}_{n-1}}\theta^{-1}|I|^{-1}\langle b_I ,x_i\rangle b_I\Big\| \leq \sum_{I\in\mathcal{D}_{n-1}}\theta^{-1}|I|^{-1}\frac{\epsilon(\theta|I|)^{1/q}}{\Gamma 2^{n+1}}(\theta|I|)^{1/p} <\frac{\epsilon}{2\Gamma},
\end{align*}
and $\|EQ_1\|\leq 2\Gamma\max_{1\leq i\leq M}\|Ex_i\| <\epsilon$. An argument of a similar nature yields $\|Q_2E\| <\epsilon$.

We return to the main objective of the proof, namely, the construction of the distributional copies $(c^l_I)_{I\in\mathcal{D}_{n-1}}$ of $(h_I)_{I\in\mathcal{D}_{n-1}}$ in $L_p^{k,0}$, $1\leq l\leq N$, such that item \ref{f.d. diagonalization 2.0 i} and \eqref{f.d. diagonalization 2.0 Imp 1} to \eqref{f.d. diagonalization 2.0 Imp 3} are satisfied. We proceed by induction on the lexicographical order of $\mathcal{D}_{n-1}$, and for each $I\in\mathcal{D}_{n-1}$, if $I\in\mathcal{D}^i$, we will choose, simultaneously for all $1\leq l\leq N$, collections $\mathcal{B}^l_{I} \subset \mathcal{D}^{k_0+i}$, with $\cup\mathcal{B}^l_I = [c^l_{\pi(I)} = \epsilon(I)]$ (or $\mathcal{B}_{[0,1)} = \mathcal{D}^{k_0}$, for $I = [0,1)$),  and $(\theta^l_{K})_{K \in \mathcal{B}_{I}} \in \lbrace -1, 1 \rbrace^{\mathcal{B}^l_{I}}$, and let
\[c_I^l = \sum_{K\in\mathcal{B}_I^l}\theta_K^lh_I.\]
We skip the basis step because the argument is analogous to the general one. Let $I\in\mathcal{D}^i$, with $1\leq i\leq n-1$, and assume that for every $J$ preceding $I$ in the lexicographical order, we have chosen the collection $\mathcal{B}^l_{J}$ and the signs $(\theta^l_K)_{K\in\mathcal{B}^l_J}$, $1\leq l\leq N$, satisfying the inductive hypothesis so far. For $1\leq l\leq N$, define 
\begin{align*}
\mathcal{B}^l_{I} = \Big\lbrace K \in \mathcal{D}^{k_{0}+i}: K \subset [c^l_{\pi(I)} = \epsilon(I)] \Big\rbrace,
\end{align*}
i.e., $\mathcal{B}^l_I$ satisfies $\cup\mathcal{B}^l_I = [c^l_{\pi(I)} = \epsilon(I)]$. We consider the randomized vector
\begin{align*}
b_{I}(\vartheta) = \sum_{l=1}^N\sum_{K \in \mathcal{B}^l_{I}} \theta^l_{K} S_lh_{K}
\end{align*}
where
\[\vartheta = \big((\theta^l_{K})_{K \in \mathcal{B}^l_{I}}\big)_{l=1}^N \in\Omega:= \prod_{l=1}^N\lbrace -1, 1 \rbrace^{\mathcal{B}^l_{I}},\]
equipped with the uniform measure. Note that $\big((S_lh_K)_{K\in\mathcal{B}^l_I}\big)_{l=1}^N$ are disjointly supported and symmetric $\{-1,0,1\}$-valued vectors. We introduce the following random variables based on Notation \ref{random variables}. For $\vartheta \in\Omega$, we let
\begin{gather*}
Y_{J}(\vartheta) := Y_{\mathcal{B}_{I}, T^{*}b_{J}}(\vartheta)  =\Big\langle b_{J}, Tb_{I}(\vartheta) \Big\rangle, \;\;  J < I \in \mathcal{D}_{n-1},\\
W_{J}(\vartheta) := W_{\mathcal{B}_{I}, Tb_{J}} (\vartheta) = \Big\langle b_{I}(\vartheta), Tb_{J} \Big\rangle, \;\;  J < I \in \mathcal{D}_{n-1},\\
Z_{I}(\vartheta) := Z_{\mathcal{B}_{I}, T}(\vartheta) =  \Big\langle b_{I}(\vartheta), Tb_{I}(\vartheta) \Big\rangle -  \sum_{l =1}^N \sum_{K \in \mathcal{B}^l_{I}} \Big\langle S_{l}h_{K} ,TS_{l}h_{K} \Big\rangle, \text{ and}\\
W_{j}(\vartheta) := W_{\mathcal{B}_{I}, f_{j}} (\vartheta) =  \langle b_{I}(\vartheta), x_{j} \rangle\;\; \text{and}\;\; Y_{j}(\vartheta) := Y_{\mathcal{B}_{I}, g_{j}} (\vartheta) =  Q_2^*x_j^*\big( b_{I}(\vartheta)\big), \;\;  1 \leq j \leq M.
\end{gather*}

We relate \eqref{f.d. diagonalization 2.0 Imp 1} to \eqref{f.d. diagonalization 2.0 Imp 3} to these random variables as follows: there exists $\vartheta\in\Omega$ such that
\begin{gather}
\label{f.d. diagonalization 2.0 what signs to choose restated in rv language}
\begin{gathered}
\Big| Y_{J} (\vartheta) \Big| < \frac{\epsilon\theta|J|^{1/q}|I|^{1/p}}{2^{2n+1}}\text{ and }\Big| W_{J}(\vartheta) \Big| < \frac{\epsilon\theta|I|^{1/q}|J|^{1/p}}{2^{2n+1}}, \;\;  J < I \in \mathcal{D}_{n-1},\\
\Big| Z_{I}(\vartheta) \Big| <  \frac{\epsilon\theta|I|}{2^{n+1}},\text{ and}\\
\vert W_{j} (\vartheta) \vert < \frac{\epsilon{(\theta|I|)^{1/q}}}{\Gamma 2^{n+1}}\;\; \text{and}\;\; \vert Y_{j} (\vartheta) \vert < \frac{\epsilon{(\theta|I|)^{1/p}}}{2^{n+1}}, \;\;  1 \leq j \leq M.
\end{gathered}
\end{gather}
Once the existence of such $\vartheta$ has been established, we can let, for $1\leq l\leq N$, $c_I^l =\sum_{K\in\mathcal{B}_I^l}\theta_K^lh_K$, and the proof will be complete.  We proceed by examining the following events in $\Omega$:
\begin{gather*}
\mathcal{Y}_{J} = \Big[ \big| Y_{J} (\vartheta) \big| \geq \frac{\epsilon\theta|J|^{1/q}|I|^{1/p}}{2^{2n+1}} \Big]\text{ and }\mathcal{W}_{J} = \Big[ \big| W_{J} (\vartheta) \big| \geq \frac{\epsilon\theta|I|^{1/q}|J|^{1/p}}{2^{2n+1}} \Big],\;\;  J < I \in \mathcal{D}_{n-1},\\
\mathcal{Z}_{I} = \Big[ \big| Z_{I} (\vartheta) \big| \geq \frac{\epsilon\theta|I|}{2^{n+1}}\Big], \text{ and}\\
\mathcal{W}_{j} = \Big[ \big| W_{j} (\vartheta) \big| \geq \frac{\epsilon{(\theta|I|)^{1/q}}}{\Gamma 2^{n+1}} \Big] \text{ and }\;\; \mathcal{Y}_{j} = \Big[ \big| Y_{j} (\vartheta) \big| \geq \frac{\epsilon {(\theta|I|)^{1/p}}}{2^{n+1}}\Big], \;\;  1 \leq j \leq M.
\end{gather*}
Denoting $\mathcal{E}$ the union of the above events, it suffices to show $\mathbb{P}(\mathcal{E})<1$. Taking the probability of $\mathcal{E}$ and applying Chebyshev's inequality, we obtain:
\begin{align*}   
\mathbb{P} ( \mathcal{E} ) &\leq \sum_{J < I \in \mathcal{D}_{n-1}}\mathbb{P}(\mathcal{Y}_{J})  +  \sum_{J < I \in \mathcal{D}_{n-1}}\mathbb{P}(\mathcal{W}_{J}) + \mathbb{P} (\mathcal{Z}_{I} ) + \sum_{j=1}^{m} \mathbb{P} (\mathcal{W}_{j})  +  \sum_{j=1}^{m}\mathbb{P} (\mathcal{Y}_{j})\\
& \leq \sum_{J < I \in \mathcal{D}_{n-1}} \frac{2^{4n+2}}{\epsilon^2\theta^2|J|^{2/q}|I|^{2/p}}\mathbb{V}(Y_{J})  + \sum_{J < I \in \mathcal{D}_{n-1}} \frac{2^{4n+2}}{\epsilon^2\theta^2|I|^{2/q}|J|^{2/p}} \mathbb{V}(W_{J})  + \frac{2^{2n+2}}{\epsilon^2\theta^2|I|^2} \mathbb{V}(Z_{I})\\
&+ \sum_{j=1}^{M} \frac{\Gamma^22^{2n+2}}{\epsilon^{2}{(\theta|I|)^{2/q}}}\mathbb{V} (W_{j}) +  \sum_{j=1}^{M} \frac{2^{2n+2}}{\epsilon^{2}{(\theta|I|)^{2/p}}}\mathbb{V} (Y_{j}).
\end{align*}
Lemma \ref{aupper estimates} yields the following estimates for the above variances:
\begin{align*}
\mathbb{V}(Y_{J}) &\leq \|T\|^2\big(\theta|J|\big)^{2/q}\Big(\theta\frac{1}{2^i} \big(\max_{1\leq l\leq N}\theta_l\big)\frac{1}{2^{k_0+i}}\Big)^{1/p} \leq \frac{\theta^2\Gamma^2|J|^{2/q}}{2^{(k_0+2i)/p}},\;\;  J < I \in \mathcal{D}_{n-1},  
\end{align*}
\begin{align*}
\mathbb{V}(W_{J})  &\leq \|T\|^2 \big(\theta|J|\big)^{2/p}\Big(\theta\frac{1}{2^i} \big(\max_{1\leq l\leq N}\theta_l\big)\frac{1}{2^{k_0+i}}\Big)^{1/q}\leq \frac{\theta^2\Gamma^2|J|^{2/p}}{2^{(k_0+2i)/q}},\;\;  J < I \in \mathcal{D}_{n-1},  
\end{align*}
\begin{align*} 
\mathbb{V}(Z_{I}) &\leq 2\|T\|^2\Big(\theta\frac{1}{2^i}\Big)^{1+1/p}\Big(\max_{1\leq l\leq N}\theta_l\frac{1}{2^{k_0+i}}\Big)^{1/q}\leq \frac{2\theta^2\Gamma^2}{2^{2i+k_0/q}},\text{ and}
\end{align*}
\begin{align*}
\mathbb{V}( W_{j})&\leq \Big(\theta\frac{1}{2^i} \big(\max_{1\leq l\leq N}\theta_l\big)\frac{1}{2^{k_0+i}}\Big)^{1/q}\leq \frac{\theta^{2/q}}{2^{(k_0+2i)/q}} \text{ and}\\
\mathbb{V}( Y_{j})&\leq \Gamma^2\Big(\theta\frac{1}{2^i} \big(\max_{1\leq l\leq N}\theta_l\big)\frac{1}{2^{k_0+i}}\Big)^{1/p}\leq \frac{\Gamma^2\theta^{2/p}}{2^{(k_0+2i)/p}},\;\; 1 \leq j \leq M.
\end{align*}
By combining the upper bound for $\mathbb P(\mathcal E)$ with the variance bounds we conclude:
\begin{align*}   
\mathbb{P} ( \mathcal{E} ) &\leq  \sum_{J < I \in \mathcal{D}_{n-1}} \frac{2^{4n+2}}{\epsilon^2\theta^2|J|^{2/q}|I|^{2/p}}\frac{\theta^2\Gamma^2|J|^{2/q}}{2^{(k_0+2i)/p}} + \sum_{J < I \in \mathcal{D}_{n-1}} \frac{2^{4n+2}}{\epsilon^2\theta^2|I|^{2/q}|J|^{2/p}}\frac{2\theta^2\Gamma^2|J|^{2/p}}{2^{(k_0+2i)/q}}\\
&+\frac{2^{2n+2}}{\epsilon^2\theta^2|I|^2}\frac{2\theta^2\Gamma^2}{2^{2i+k_0/q}} + M\frac{\Gamma^22^{2n+2}}{\epsilon^2{(\theta|I|)^{2/q}}}\frac{\theta^{2/q}}{2^{(k_0+2i)/q}} + M\frac{2^{2n+2}}{\epsilon^2{(\theta|I|)^{2/p}}}\frac{\Gamma^2\theta^{2/p}}{2^{(k_0+2i)/p}}\\
&\leq 2^i\frac{2^{4n+2}\Gamma^2}{\epsilon^22^{k_0/p}} + 2^i\frac{2^{4n+2}\Gamma^2}{\epsilon^22^{k_0/q}} + \frac{2^{2n+2}2\Gamma^2}{\epsilon^22^{k_0/q}} + \frac{2^{2n+3}M\Gamma^2}{\epsilon^22^{k_0/p^*}} \leq \frac{2^{5n+3}\Gamma^2(1+M)}{\epsilon^22^{k_0/p^*}}.
\end{align*}
From \eqref{f.d. diagonalization 2.0 Appropriate depth}, $\mathbb{P} ( \mathcal{E} ) < 1$, finishing the proof.
\end{proof}

The following lemma is a finite-dimensional stabilization and it is, in essence, \cite[Theorem 6.5]{konstantos:motakis:2025}. The latter statement does not specify the exact form of the vectors $b_I$ and the scalar $r$, but these are evident in the proof provided there.

\begin{lem} \label{f.d.stab. 2.0}  Let $n \in \mathbb{N}$, $\Gamma >0$ and $0<\epsilon <1$. There exists $k_0' = k_0(n,\Gamma,\epsilon)\in\mathbb{N}$ such that for every $k\geq k_0'$ and every diagonal operator $R \colon L_{p}^{k,0} \to  L_{p}^{k,0} $ with $\| R \| \leq \Gamma$, there exists a distributional embedding $J:L_p^{n,0}\to L_{p}^{k,0}$ satisfying the following:
\begin{enumerate}[label=(\roman*)]

\item\label{f.d.stab. 2.0 i} There are $0\leq m_0<m_1<\cdots<m_{n-1} \leq k-1$ such that, for $0\leq i\leq n-1$ and $I\in\mathcal{D}^i$,
\[Jh_{I} = \sum_{K \in \mathcal{B}_{I}} \theta_{K} h_{K},\]
where $\mathcal{B}_{I} \subset \mathcal{D}^{m_i}$ and $(\theta_{K})_{K \in \mathcal{B}_{I}} \in \lbrace -1,1 \rbrace^{\mathcal{B}_{I}}$.

\item\label{f.d.stab. 2.0 ii} Denote $Y = J(L_p^{n,0})$, $E$ the conditional expectation operator onto the space $Y$, and $r = \langle Jh_{[0,1]}, RJh_{[0,1]}\rangle$. Then,

\[\Big\| \Big( ER - r \cdot id \Big)|Y \Big\| < \epsilon.\]

\end{enumerate}
\end{lem}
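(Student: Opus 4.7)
The plan is to deduce this lemma directly from \cite[Theorem 6.5]{konstantos:motakis:2025}, which is essentially the same stabilization result with the explicit form of $J$ and $r$ suppressed. I would first take $k_0'(n,\Gamma,\epsilon)$ to be the threshold supplied by that theorem. For $k\geq k_0'$ and a given diagonal operator $R\colon L_p^{k,0}\to L_p^{k,0}$ with $\|R\|\leq \Gamma$, one application of the theorem produces a distributional embedding $J\colon L_p^{n,0}\to L_p^{k,0}$ and some scalar $r_0$ such that $\|(ER-r_0\cdot id)|_Y\|<\epsilon$, where $Y=J(L_p^{n,0})$ and $E$ is the orthogonal projection onto $Y$ (which, by Remark \ref{notation for reduction to FDD}, also coincides with the conditional expectation $\mathbb{E}(\cdot\chi_A|\mathcal{A})$ onto the relevant $\sigma$-subalgebra).

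Next I would unpack the inductive construction of $J$ in the proof of \cite[Theorem 6.5]{konstantos:motakis:2025}. There the vectors $Jh_I$ are defined by recursion on $I\in\mathcal{D}_{n-1}$ in the lexicographic order: at level $i$ a depth $m_i$ is selected and one sets $Jh_I=\sum_{K\in\mathcal{B}_I}\theta_K h_K$, where $\mathcal{B}_I\subset\mathcal{D}^{m_i}$ partitions the positive or negative region of $Jh_{\pi(I)}$ according to $\epsilon(I)$, and the signs $\theta_K\in\{-1,1\}$ are chosen by a probabilistic argument analogous to the one in the proof of Lemma \ref{f.d. diagonalization 2.0}. Reading off this construction yields item (i) with $0\leq m_0<m_1<\cdots<m_{n-1}\leq k-1$; the strict inequalities are forced by the distributional embedding property, which requires each generation to refine the supports of the previous one.

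Finally I would identify $r_0$ with the prescribed scalar $r=\langle Jh_{[0,1]},RJh_{[0,1]}\rangle$. Writing the diagonal entries of $R$ as $Rh_K=\lambda_K h_K$, one computes
\[
\langle Jh_{[0,1]},RJh_{[0,1]}\rangle=\sum_{K\in\mathcal{B}_{[0,1]}}\lambda_K|K|,
\]
and the distributional condition $\|Jh_{[0,1]}\|_2=\|h_{[0,1]}\|_2=1$ together with the fact that the blocks $K\in\mathcal{B}_{[0,1]}$ have a common depth $m_0$ forces $\mathcal{B}_{[0,1]}=\mathcal{D}^{m_0}$. Hence this pairing equals the average $2^{-m_0}\sum_{K\in\mathcal{D}^{m_0}}\lambda_K$ of the depth-$m_0$ diagonal entries of $R$, which is precisely the scalar that the proof of \cite[Theorem 6.5]{konstantos:motakis:2025} produces as output. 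The genuine content of the result, namely the simultaneous probabilistic control of off-diagonal pairings and diagonal fluctuations, is already carried out in that reference; the main (and essentially only) obstacle here is the bookkeeping verification that the explicit structural data in items (i) and (ii) can indeed be read directly off that proof without altering the estimates.
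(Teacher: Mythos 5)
Your proposal is correct and takes essentially the same route as the paper: the paper's own treatment of this lemma consists precisely of citing \cite[Theorem 6.5]{konstantos:motakis:2025} and remarking that the explicit form of $J$ and $r$ can be read off from the proof given there, which is exactly what you do. Your unpacking of $\mathcal{B}_{[0,1]}=\mathcal{D}^{m_0}$ and of $\langle Jh_{[0,1]}, RJh_{[0,1]}\rangle = 2^{-m_0}\sum_{K\in\mathcal{D}^{m_0}}\lambda_K$ is a correct, helpful clarification of what the paper leaves implicit.
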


This is the main result of the subsection. Its proof applies Lemma \ref{f.d. diagonalization 2.0}, translates the outcome to the assumptions of Lemma \ref{f.d.stab. 2.0}, and then expresses the conclusion in the language of the present setting.

\begin{prop} \label{crucial lemma for the FDD-reduction 2.0}
Let $n \in \mathbb{N}$, $m \in \mathbb{N}$, $\Gamma >0$,  and $0<\epsilon<1$. There exists $k_0'' := k''_0(n, m, \Gamma, \epsilon) \in \mathbb{N}$ such that for every $k \geq k_0''$ and
\begin{enumerate}[label=(\alph*)]
\item \label{crucial lemma for the FDD-reduction 2.0 a}  operator $T \colon X \to L_{p}$, where $X$ is a subspace of $L_p$, with $\| T \| \leq \Gamma$,

\item \label{crucial lemma for the FDD-reduction 2.0 b} operators $Q_1:Z\to L_p^0,Q_2:X\to W$ of norm at most $\Gamma$ and rank at most $m$, where $Z$ and $W$ are normed spaces, and

\item \label{crucial lemma for the FDD-reduction 2.0 c} operators $S_1,\ldots,S_N:L_p^{k,0}\to X$ with disjointly supported ranges such that, for $1\leq l\leq N$, $S_l$ is a $\theta_l$-compression, for some $\theta_l\in(0,1]$,

\end{enumerate}
there exist distributional embeddings $J_l:L_p^{n,0}\to L_p^{k,0}$, $1\leq l\leq N$ such that the following hold.
\begin{enumerate}[label=(\roman*)]

\item\label{crucial lemma for the FDD-reduction 2.0 i} Denote $S = \sum_{l=1}^NS_lJ_l:L_p^{n,0}\to X$, which is a $\theta$-compression with $\theta = \sum_{l=1}^N\theta_l$, $b_I = Sh_I$, $I\in\mathcal{D}_{n-1}$, $Y = [ (b_{I})_{I \in \mathcal{D}_{n-1}}]$, and $E:L_p^0\to Y$ the orthogonal projection. Then, for some $0\leq i\leq k-1$ and
\[r = \theta^{-1}\sum_{l=1}^N\sum_{K\in\mathcal{D}^i}\langle S_lh_K,TS_l h_K\rangle,\]
we have
\begin{align*}
\Big\| \Big( ET - r \cdot id \Big)|Y \Big\| < \epsilon.
\end{align*}

\item\label{crucial lemma for the FDD-reduction 2.0 ii} We have $\|EQ_1\| <\epsilon$ and $\|Q_2E\| <\epsilon$.
\end{enumerate}
\end{prop}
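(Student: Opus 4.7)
The strategy is to obtain this scalar reduction by composing the two preceding stabilizations: first Lemma \ref{f.d. diagonalization 2.0} reduces the compression of $T$ to a diagonal operator $R^{(1)}$ on a compressed copy $Y_1$ of an intermediate $L_p^{n_1,0}$, and then Lemma \ref{f.d.stab. 2.0}, applied inside the uncompressed space $L_p^{n_1,0}$ to the conjugate $\tilde R^{(1)}$ of $R^{(1)}$, reduces that diagonal operator to a scalar multiple of the identity. I will set $n_1 := k_0'(n,\Gamma+1,\epsilon/2)$, the $\Gamma+1$ absorbing the perturbation from the first step, and $k_0''(n,m,\Gamma,\epsilon) := k_0(n_1,m,\Gamma,\epsilon/2) + n_1$.

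Given $k\ge k_0''$ and the data, I first apply Lemma \ref{f.d. diagonalization 2.0} at depth $n_1$ with error $\epsilon/2$, producing distributional embeddings $J_l^{(1)}:L_p^{n_1,0}\to L_p^{k,0}$, a $\theta$-compression $S^{(1)} := \sum_l S_lJ_l^{(1)}$ with image $Y_1$ and orthogonal projection $E_1$, and a diagonal operator $R^{(1)}:Y_1\to Y_1$ satisfying $R^{(1)}(S^{(1)}h_I) = d_I S^{(1)}h_I$ and $\|(E_1T - R^{(1)})|Y_1\|<\epsilon/2$. Because $S^{(1)}$ scales the $L_2$-inner product by $\theta$, the conjugate $\tilde R^{(1)}:L_p^{n_1,0}\to L_p^{n_1,0}$, $\tilde R^{(1)}h_I = d_I h_I$, has norm equal to that of $R^{(1)}$, hence at most $\Gamma+1$. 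Applying Lemma \ref{f.d.stab. 2.0} to $\tilde R^{(1)}$ at depth $n$ with error $\epsilon/2$ yields a distributional embedding $J:L_p^{n,0}\to L_p^{n_1,0}$ with $Jh_{[0,1]} = \sum_{K\in\mathcal{D}^{m_0}}\theta_K h_K$ and a scalar $r = \langle Jh_{[0,1]},\tilde R^{(1)}Jh_{[0,1]}\rangle$ such that $\|(\tilde E\tilde R^{(1)} - r\cdot id)|\tilde Y\|<\epsilon/2$, where $\tilde Y := J(L_p^{n,0})$ and $\tilde E$ is its orthogonal projection. I then set $J_l := J_l^{(1)}J$, so that $S := \sum_l S_lJ_l = S^{(1)}J$ has image $Y = S^{(1)}(\tilde Y)\subset Y_1$ and orthogonal projection $E$.

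The verification hinges on two identities obtained by direct computation from the formula in Remark \ref{notation for reduction to FDD}: $EE_1 = E_1E = E$ (because $Y\subset Y_1$, so the conditional-expectation iterations apply), and $ES^{(1)} = S^{(1)}\tilde E$ on $L_p^{n_1,0}$ (from the scaling $\langle S^{(1)}u,S^{(1)}v\rangle = \theta\langle u,v\rangle$ inserted in the projection formula). Writing $y = S^{(1)}\tilde y$ with $\tilde y\in\tilde Y$, these identities yield
\begin{align*}
ETy - ry = E(E_1T - R^{(1)})y + S^{(1)}(\tilde E\tilde R^{(1)}\tilde y - r\tilde y),
\end{align*}
whose $L_p$-norm is at most $(\epsilon/2)\|y\|_p + \theta^{1/p}(\epsilon/2)\|\tilde y\|_p = \epsilon\|y\|_p$, proving \ref{crucial lemma for the FDD-reduction 2.0 i}. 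Item \ref{crucial lemma for the FDD-reduction 2.0 ii} follows from $EQ_1 = EE_1Q_1$ and $Q_2E = Q_2E_1E$ together with Lemma \ref{f.d. diagonalization 2.0}\ref{f.d. diagonalization 2.0 iii}. For the explicit form of $r$, the diagonality of $\tilde R^{(1)}$ gives $r = 2^{-m_0}\sum_{K\in\mathcal{D}^{m_0}}d_K$; substituting the definition of $d_K$ from Lemma \ref{f.d. diagonalization 2.0}\ref{f.d. diagonalization 2.0 ii} and using the partition $\bigcup_{K\in\mathcal{D}^{m_0}}\mathcal{B}_K^l = \mathcal{D}^{k_0+m_0}$ of Lemma \ref{f.d. diagonalization 2.0}\ref{f.d. diagonalization 2.0 i} yields the required form with $i := k_0+m_0$. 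The main delicacy is the compatibility identity $ES^{(1)} = S^{(1)}\tilde E$, which is what permits the scalar stabilization carried out inside the uncompressed $L_p^{n_1,0}$ to be transported onto the compressed subspace $Y\subset Y_1\subset L_p^0$ with the error constants preserved exactly.
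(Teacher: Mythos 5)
Your proposal is correct and follows essentially the same route as the paper's own proof: apply Lemma \ref{f.d. diagonalization 2.0} at the intermediate depth $k_1 = k_0'(n,\cdot,\epsilon/2)$, conjugate the resulting diagonal operator back to the uncompressed $L_p^{k_1,0}$, apply Lemma \ref{f.d.stab. 2.0} to that conjugate, compose the distributional embeddings, and transport the estimates via the intertwining identity $ES^{(1)} = S^{(1)}\tilde E$ together with $E E_1 = E_1 E = E$. The only cosmetic difference is that you take the norm parameter to be $\Gamma+1$ rather than $\Gamma + \epsilon/2$; both bound $\|\tilde R^{(1)}\|$ and lead to the same conclusion.
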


\begin{proof}
Let
\begin{align*}
k_1 =  k_{0}^{\prime}(n, \Gamma+\epsilon/2, \epsilon/2)\text{ and }k_0'' = k_0(k_1,m,\Gamma,\epsilon / 2)+k_1,
\end{align*}
where $k_{0}$ is as in Lemma \ref{f.d. diagonalization 2.0}  and $k_0^{\prime}$ is as in Lemma \ref{f.d.stab. 2.0}. Fix $k\geq k_0''$ and items such as in \ref{crucial lemma for the FDD-reduction 2.0 a} to \ref{crucial lemma for the FDD-reduction 2.0 c}. Applying Lemma \ref{f.d. diagonalization 2.0} to these parameters yields distributional embeddings $\hat J_1,\ldots,\hat J_N:L_1^{k_1,0}\to L_1^{k,0}$ satisfying its conclusion. For clarity we adopt the hat notation for all objects associated with them. Specifically, for each $1\leq l\leq N$, $0\leq i\leq k_1-1$, and $I\in\mathcal{D}^i$ there are $\hat{\mathcal{B}}_I^l\subset\mathcal{D}^{k_0+i}$ and signs $(\hat\theta_K^l)_{K\in \hat{\mathcal{B}}_I^l}$ such that
\[\hat J_lh_I = \sum_{K\in \hat{\mathcal{B}}_I^l}\hat\theta_K^l h_K.\]
Defining the $\theta$-compression $\hat S = \sum_{l=1}^N S_l\hat J_l:L_1^{k,0}\to X$, $\hat b_I = \hat S h_I$,
\[\hat d_I = (|I|\theta)^{-1}\sum_{l=1}^N\sum_{K\in\hat{\mathcal{B}}_K^l}\langle S_lh_K,TS_lh_K\rangle,\;I\in\mathcal{D}_{k_1-1},\]
$\hat Y = [(\hat b_I)_{I\in\mathcal{D}_{k_1-1}}]$, $\hat E:L_p^0\to \hat Y$ the orthogonal projection, and $\hat R :Y\to Y$ given by $\hat R\hat b_I = \hat d_I\hat b_I$, we have that Lemma \ref{f.d. diagonalization 2.0} \ref{f.d. diagonalization 2.0 ii} and \ref{f.d. diagonalization 2.0 iii} hold for $\epsilon/2$.

Define
\[R = (\hat S)^{-1}\hat R\hat S:L_1^{k_1,0}\to L_1^{k_1,0}\]
and note that $Rh_I = \hat d_Ih_I$, for $I\in\mathcal{D}_{k_1-1}$, i.e., $R$ is a diagonal operator, and $\|R\| = \|\hat R\| \leq \Gamma+\epsilon/2$. We may therefore apply Lemma \ref{f.d.stab. 2.0} to $R$ to obtain a distributional embedding $J':L_p^{n,0}\to L_p^{k_1,0}$ satisfying its conclusion. For clarity we adopt the hat notation for all objects associated with it. More precisely, there exist $0\leq m_0<m_1<\cdots<m_{n-1} \leq k-1$ such that for $0\leq i\leq n-1$ and $I\in\mathcal{D}^i$, there are $\mathcal{B}'_{I} \subset \mathcal{D}^{m_i}$ and signs $(\theta'_{K})_{K \in \mathcal{B}'_{I}}$ such that
\[J'h_{I} = \sum_{K \in \mathcal{B}'_{I}} \theta'_{K} h_{K}.\]
Denote  $Y' = J'(L_p^{n,0})$, $E'$ the conditional expectation operator onto $Y'$, and
\begin{align*}
r &= \langle Jh_{[0,1]},RJh_{[0,1]} \rangle = \langle  \sum_{L\in\mathcal{D}^{m_0}}\theta'_Lh_L,\sum_{L\in\mathcal{D}^{m_0}}\hat d_L\theta'_Lh_L\rangle = \frac{1}{2^{m_0}}\sum_{L\in\mathcal{D}^{m_0}}\hat d_L\\
& = \frac{1}{2^{m_0}}\sum_{L\in\mathcal{D}^{m_0}}\frac{2^{m_0}}{\theta}\sum_{l=1}^N\sum_{K\in\hat{\mathcal{B}}^l_{L}}\langle S_lh_K,TS_lh_K\rangle = \frac{1}{\theta}\sum_{l=1}^N\sum_{K\in\mathcal{D}^{k_0+m_0}}\langle S_lh_K,TS_lh_K\rangle.
\end{align*}
Then,
\begin{equation}
\label{crucial lemma for the FDD-reduction 2.0 eq1}
\Big\|\Big(E'R - r\cdot id\Big)\Big|_{Y'}\Big\| <\frac{\epsilon}{2}.
\end{equation}

For $1\leq l\leq N$, we define $J_l = \hat J_l J'$, and we will show that the desired conclusion is satisfied. Denote $S = \sum_{l=1}^NS_lJ_l = \hat SJ':L_p^{n,0}\to X$, which is a $\theta$-compression with $\theta = \sum_{l=1}^N\theta_l$, $b_I = Sh_I$, $I\in\mathcal{D}_{n-1}$, $Y = [ (b_{I})_{I \in \mathcal{D}_{n-1}}]$, and $E:L_p^0\to Y$ the orthogonal projection.  Because $\hat S$ is a $\theta$-compression, $\mathrm{dist}(\hat Sf,\hat Sg) = \theta\mathrm{dist}(f,g)+(1-\theta)\delta_{(0,0)}$, for all $f,g$ in its domain. We obtain that for every $I\in\mathcal{D}_{n-1}$, $f\in L_p^{k_1,0}$, 
\[\langle \hat SJ'h_I,\hat Sf\rangle = \theta\langle J'h_I,f\rangle.\]
We deduce that, for $f\in \hat Y$,
\begin{align*}
E'(\hat S)^{-1}f &= \sum_{I\in\mathcal{D}_{n-1}}|I|^{-1}\langle J'h_I,(\hat S)^{-1}f\rangle J'h_I = \sum_{I\in\mathcal{D}_{n-1}}\theta^{-1}|I|^{-1}\langle \hat SJ'h_I,f\rangle J'h_I\\
&= \sum_{I\in\mathcal{D}_{n-1}}\theta^{-1}|I|^{-1}\langle Sh_I,f\rangle (\hat S)^{-1}Sh_I = (\hat S)^{-1}E(f).
\end{align*}
Therefore, $\hat S E'(\hat S)^{-1}|_{\hat Y} = E$. We use this in conjunction with \eqref{crucial lemma for the FDD-reduction 2.0 eq1}:
\begin{align*}
\frac{\epsilon}{2} &> \Big\| \Big(E' (\hat S)^{-1}\hat R\hat S - r\cdot id\Big)\Big|_{Y'}\Big\| = \Big\| \Big(\hat SE' (\hat S)^{-1}\hat R\hat S - r\cdot \hat S\Big)\Big|_{Y'}\Big\| = \Big\| \Big(E\hat R - r\cdot id\Big)\Big|_{\hat S(Y')}\Big\|\\
& = \Big\| \Big(E\hat R - r\cdot id\Big)\Big|_{Y}\Big\|.
\end{align*}
We combine this with the conclusion of Lemma \ref{f.d. diagonalization 2.0} \ref{f.d. diagonalization 2.0 ii} for $T$, $\hat Y$, and $\hat E$, which states
\[\frac{\epsilon}{2} > \Big\|\Big(\hat E T - \hat R\Big)\Big|_{\hat Y}\Big\| \geq \Big\|\Big(E\hat ET- E\hat E\hat R\Big)\Big|_Y\Big\| = \Big\|\Big(ET- E\hat R\Big)\Big|_Y\Big\|,\]
and therefore, $\|(ET-r\cdot id)|_Y\| <\epsilon$.

It remains only to verify \ref{crucial lemma for the FDD-reduction 2.0 ii}, which follows at once from $\|\hat E Q_i\|<\epsilon/2$, $\|Q_2\hat E\|<\epsilon/2$, and $E\hat E=\hat EE=E$.
\end{proof}

\subsection{The stabilization to a scalar FDD-diagonal operator}
We now address the reduction of an arbitrary bounded linear operator $T:R_\alpha^{p,0}\to R_\alpha^{p,0}$ to a scalar FDD-diagonal operator $R$ whose entries are averages of the diagonal entries of $T$. With Proposition \ref{crucial lemma for the FDD-reduction 2.0} in place, we turn to the inductive construction of a distributional embedding scheme $\Psi$, as described at the beginning of Section \ref{reduction to FDD}.

\begin{ntt}
For $\alpha < \omega_{1}$ and a bounded linear operator $T : R_{\alpha}^{p,0} \to R_{\alpha}^{p,0}$, we denote by $\mathcal{A}(D(T))$ the set of averages of distinct terms of the family of diagonal entries of $T$, that is, of $(\theta_{\lambda}^{-1} |I|^{-1} \langle h_{I}^{\lambda}, T h_{I}^{\lambda} \rangle)_{I \in \mathcal{D}_{\kappa(\lambda)-1},\; \lambda \in \mathcal{T}_{\alpha}}$.
\end{ntt}

\begin{thm} \label{reduction to FDD diagonal}
Let $\alpha$ be a limit ordinal number, $T \colon R_{\alpha}^{p,0} \to R_{\alpha}^{p,0}$ be a bounded linear operator and $\epsilon>0$. Then there exist a distributional embedding scheme $\Psi$ from $R_\alpha^{p,0}$ to itself and a scalar FDD-diagonal operator $R \colon R_{\alpha}^{p,0} \to R_{\alpha}^{p,0}$ with diagonal entries in $\mathcal{A}(D(T))$ such that
\[\|T_\Psi^\dagger T T_\Psi - R\|<\epsilon.\]
In particular, $T$ is an orthogonal factor of $R$ with error $\epsilon$.
\end{thm}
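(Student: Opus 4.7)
The plan is to build $\Psi$ and $R$ simultaneously by an inductive procedure along a tree-compatible enumeration $(\lambda_n)_{n=1}^\infty$ of $\mathcal{T}_\alpha$, meaning that whenever $\lambda_m\sqsubsetneq\lambda_n$ we have $m<n$. At stage $n$ we will select a finite antichain $\mathcal{M}_{\lambda_n}\subset\mathcal{T}_\alpha$, distributional embeddings $J_\mu:L_p^{\kappa(\lambda_n),0}\to L_p^{\kappa(\mu),0}$ for $\mu\in\mathcal{M}_{\lambda_n}$, and a scalar $r_n\in \mathcal{A}(D(T))$. The selection will respect Definition~\ref{DES}, so that putting $\Psi=(\mathcal{M}_{\lambda_n},(J_\mu)_{\mu\in\mathcal{M}_{\lambda_n}})_{n=1}^\infty$ yields a distributional embedding scheme; Theorem~\ref{Dist Isom theorem} then guarantees $T_\Psi:R_\alpha^{p,0}\to R_\alpha^{p,0}$ is a distributional embedding. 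Define $R|_{X_{\lambda_n}}=r_n\cdot id$, which is scalar FDD-diagonal, and note that $T$ is an orthogonal factor of $R$ with the claimed error as soon as $\|T_\Psi^\dagger T T_\Psi - R\|<\epsilon$.

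The heart of the argument is the stage-$n$ stabilization. Denote $S_n = \sum_{\mu\in\mathcal{M}_{\lambda_n}} T_\mu J_\mu:L_p^{\kappa(\lambda_n),0}\to L_p^0$, which is a $\theta_{\lambda_n}$-compression by Proposition~\ref{common domain disjointly supported compressions can be added}, $Y_n$ its range, and $E_n:L_p^0\to Y_n$ the orthogonal projection. I aim to arrange the three estimates
\[
\bigl\|(E_nT-r_n\cdot id)|_{Y_n}\bigr\|<\frac{\epsilon}{3\cdot 2^n},\quad
\Bigl\|\bigl(\textstyle\sum_{m<n}E_m\bigr)TE_n\Bigr\|<\frac{\epsilon}{3\cdot 2^n},\quad
\Bigl\|E_nT\bigl(\textstyle\sum_{m<n}E_m\bigr)\Bigr\|<\frac{\epsilon}{3\cdot 2^n}.
\]
Given $(\mathcal{M}_{\lambda_m},(J_\mu)_{\mu\in\mathcal{M}_{\lambda_m}})_{m<n}$, the operators $Q_1=T\bigl(\sum_{m<n}E_m\bigr)$ and $Q_2=\bigl(\sum_{m<n}E_m\bigr)T$ have norm bounded by $\|T\|\sum_{m<n}\|E_m\|\le n\|T\|$ and rank bounded by $\sum_{m<n}\dim Y_m$. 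Using Definition~\ref{DES}\ref{DES part c}, the admissible choices of $\mathcal{M}_{\lambda_n}$ consist of antichains of the form $\mathcal{M}_{\lambda_n}=\{\mu_0{}^\smallfrown P{}^\smallfrown\xi_P:\mu_0\in\mathcal{M}_{\pi(\lambda_n)},\,P\in\mathcal{D}^{\kappa(\mu_0)}_{I_n}\}$ for an appropriate common depth, where $I_n$ is the interval connecting $\pi(\lambda_n)$ to $\lambda_n$; here $\alpha$ being a limit ordinal together with the recursive structure of $\mathcal{T}_\alpha$ supplies enough freedom to take the tails $\xi_P$ arbitrarily complex. Choose these tails so that every $\mu\in\mathcal{M}_{\lambda_n}$ satisfies $\kappa(\mu)\ge k_0''(\kappa(\lambda_n),\sum_{m<n}\dim Y_m,\|T\|+n\|T\|,\epsilon/(3\cdot 2^n))$ from Proposition~\ref{crucial lemma for the FDD-reduction 2.0}. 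Applying that proposition to $T$, $Q_1$, $Q_2$, and the compressions $(T_\mu)_{\mu\in\mathcal{M}_{\lambda_n}}$ (which have disjointly supported ranges by Definition~\ref{DES}\ref{DES part a}\ref{DES part a, part 1}) produces the $J_\mu$'s and a scalar $r_n$ satisfying the three inequalities. The formula for $r_n$ in that proposition expresses it as $\theta_{\lambda_n}^{-1}\sum_\mu\sum_K\langle h_K^\mu,Th_K^\mu\rangle$, which is exactly the convex combination of the diagonal entries $\{\theta_\mu^{-1}|K|^{-1}\langle h_K^\mu,Th_K^\mu\rangle\}$ with weights $\theta_\mu|K|/\theta_{\lambda_n}$ summing to one; hence $r_n\in\mathcal{A}(D(T))$.

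Once the construction is complete, the verification that $\Psi$ satisfies Definition~\ref{DES} is routine: \ref{DES part a}\ref{DES part a, part 2} follows because $\sum_\mu\theta_\mu=\sum_\mu\theta_{\mu_0}|P|=\theta_{\mu_0}=\theta_{\lambda_n}$ by the splitting of $\mathcal{D}^{\kappa(\mu_0)}_{I_n}$ (Remark~\ref{remark about partitioning a successor}\ref{remark about partitioning a successor a}), and the tree property~\ref{DES part b} is maintained by construction. Then Theorem~\ref{Dist Isom theorem} gives that $T_\Psi$ is a distributional embedding. To finish, expand $T_\Psi^\dagger T T_\Psi-R$ blockwise along the FDD $(X_{\lambda_n})_n$: for $x_n\in X_{\lambda_n}$, $T_\Psi x_n=S_nT_{\lambda_n}^{-1}x_n\in Y_n$ and, since $T_\Psi T_\Psi^\dagger$ acts as the identity on each $Y_m$ and $T_\Psi^\dagger|_{Y_m}=(T_\Psi|_{X_{\lambda_m}})^{-1}$ composed with the adjoint of the block isometry, one obtains
\[
(T_\Psi^\dagger T T_\Psi - R)x_n = T_\Psi^\dagger\bigl[(E_nT-r_n\cdot id)T_\Psi x_n\bigr] + T_\Psi^\dagger\sum_{m\ne n}E_m TT_\Psi x_n,
\]
whose norm, summed over $n$ using unconditionality of $(X_{\lambda_n})$ and the three estimates applied to the pairs $(n,m)$ with $m<n$ and $(m,n)$ with $m>n$ (swapping roles), is bounded by a telescoping geometric series totalling less than $\epsilon$. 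The principal obstacle is to maintain coherence between the combinatorial demands of Definition~\ref{DES} and the dimensional demand $\kappa(\mu)\ge k_0''$ at each stage; the limit-ordinal hypothesis on $\alpha$ is precisely what allows the recursive tree to accommodate arbitrarily deep extensions along every required branch.
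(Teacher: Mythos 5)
Your proposal follows essentially the same inductive strategy as the paper's proof: a tree-compatible enumeration $(\lambda_n)$, stage-by-stage construction of $(\mathcal{M}_{\lambda_n},(J_\mu))$ and $r_n$ via Proposition~\ref{crucial lemma for the FDD-reduction 2.0} with the three estimates at tolerance $\epsilon/(3\cdot 2^n)$, and the observation that $r_n$ is a plain average of diagonal entries because (by construction) all $\theta_\mu$ and all $|K|$ coincide within stage $n$.

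The one place where your write-up is imprecise is the final assembly. You compute the block action $(T_\Psi^\dagger T T_\Psi - R)x_n = T_\Psi^\dagger\bigl[(E_nT-r_n\cdot id)y_n + \sum_{m\ne n}E_m T y_n\bigr]$ (with $y_n=T_\Psi x_n$), which yields $\|(T_\Psi^\dagger T T_\Psi - R)|_{X_{\lambda_n}}\|\le \epsilon/2^n$; note that $T_\Psi^\dagger$ restricted to $Y=T_\Psi(R_\alpha^{p,0})$ is the isometry $T_\Psi^{-1}$, so no extra constant appears there. But passing from these restriction bounds to $\|T_\Psi^\dagger T T_\Psi - R\|<\epsilon$ is not a direct consequence of unconditionality: for $x=\sum_n x_n$ you get $\|(T_\Psi^\dagger T T_\Psi-R)x\|\le\sum_n(\epsilon/2^n)\|x_n\|$, and $\sum_n\|x_n\|/2^n$ is only controlled by $\|x\|$ up to the FDD basis constant, giving a bound of the form $C\epsilon$ rather than $\epsilon$. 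The paper sidesteps this by first conjugating by the isometry $T_\Psi$ to reduce to $\|P_\Psi T|_Y-\tilde R\|$ and then writing this operator as the norm-convergent series $\sum_n\bigl[(\sum_{m<n}E_m)TE_n + E_nT(\sum_{m<n}E_m) + (E_nT-r_n\cdot id)E_n\bigr]$, whose term-by-term norm bound sums exactly to $<\epsilon$. Your blockwise decomposition can be rearranged into this triangular form, but you should write it that way rather than appealing to unconditionality, or else shrink the stage-$n$ tolerances to absorb the extra constant. Two minor omissions worth making explicit: the root case of the induction, where $\mathcal{M}_{\lambda_n}$ is a singleton root obtained by bumping the last entry of $\lambda_n$ (this is the only place the limit-ordinal hypothesis is used), and the requirement that $\kappa_{n+1}>\max_{m\le n}\kappa_m$, which is needed to keep the antichains $\mathcal{M}_{\lambda_n}$ pairwise disjoint and thereby verify Definition~\ref{DES}\ref{DES part b}.
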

\begin{proof}
Let $(\lambda_{n})_{n=1}^{\infty}$ be an enumeration of the elements of $\mathcal{T}_{\alpha}$ compatible with the order of the initial segments, that is $\lambda_{n} \sqsubsetneq \lambda_{m}$ implies $n < m $. We will construct a distributional embedding scheme $\Psi$ of $R_{\alpha}^{p,0}$ in $R_{\alpha}^{p,0}$ and a scalar FDD-diagonal operator $R$ on $R_{\alpha}^{p,0}$ such that $\| T_{\Psi}^{\dagger} T T_{\Psi} -R\| < \epsilon$, and furthermore, for every $n \in \mathbb{N}$, there is an $r_{n}$ in $\mathcal{A}(D(T))$ such that $R|_{X_{\lambda_{n}}} = r_{n} \cdot id$. The construction of $\Psi = ( \mathcal{M}_{\lambda_{n}}, ( J_{\mu} )_{  \mu \in \mathcal{M}_{\lambda_{n}} } )_{n=1}^{\infty}$ and choice of $(r_n)_{n=1}^\infty$ will be carried out inductively on $n\in\mathbb{N}$ such that after $n$ steps, the partial family $( \mathcal{M}_{\lambda_{k}}, ( J_{\mu} )_{  \mu \in \mathcal{M}_{\lambda_{k}} } )_{k=1}^n$ satisfies Definition \ref{DES}, up to that point (the set $\{\lambda_1,\ldots,\lambda_n\}$ is a backward closed subtree of $\mathcal{T}_\alpha$), and the following bounds hold: for every $n\in\mathbb{N}$, denote $S_n = \sum_{\mu\in\mathcal{M}_{\lambda_n}}T_\mu J_\mu$, which is a $\theta_{\lambda_n}$-compression,
\[Y_n = S_n(L_p^{\kappa(\lambda_n),0}) = T_\Psi(X_{\lambda_n}),\]
and $E_n:R_{\alpha}^{p,0}\to Y_n$ the orthogonal projection. Then, for all $n\in\mathbb{N}$,
\begin{gather}
\label{reduction to FDD diagonal eq1}\Big\|\Big(E_nT - r_n\cdot id\Big)\Big|_{Y_n}\Big\| < \frac{\epsilon}{3\cdot2^{n}},\\
\label{reduction to FDD diagonal eq2}\Big\|\big(\sum_{m=1}^{n-1}E_m\big)TE_n\Big\| <\frac{\epsilon}{3\cdot2^{n}},\text{ and}\\
\label{reduction to FDD diagonal eq3}\Big\|E_nT\big(\sum_{m=1}^{n-1}E_m\big)\Big\| <\frac{\epsilon}{3\cdot2^{n}}.
\end{gather}
Before initiating the construction, we demonstrate how the above bounds yield the conclusion. Consider the space $Y = T_\Psi[R_\alpha^{p,0}]$ that admits the Schauder decomposition $(Y_n)_{n=1}^\infty$ with corresponding coordinate projections $(E_n)_{n=1}^\infty$, which have norm one. Denote the projection
\[P_\Psi = T_\Psi T_\Psi^\dagger:R_\alpha^{p,0}\to Y,\qquad P_\Psi f = \sum_{n=1}^\infty E_nf,\]
as in Remark \ref{IMPORTANT PROJECTION} (see also  Example \ref{mds orthocomplemented} and Proposition \ref{distro copy of orthocomplemented is orthocomplemented and others}), recalling also $T_\Psi^\dagger T_\Psi = id:R_\alpha^{p,0}\to R_\alpha^{p,0}$. Consider the operator $\tilde R = T_\Psi R T_\Psi^{-1}$, which is diagonal with respect to the aforementioned decomposition, and more precisely, for all $n\in\mathbb{N}$, $\tilde RE_n = r_nE_n$. Then,
\begin{align*}
\Big\|T_\Psi^\dagger TT_\Psi - R\Big\| &= \Big\|T_\Psi \big(T_\Psi^\dagger TT_\Psi - R\big)T_\Psi^{-1}\Big\| =  \Big\|P_\Psi T|_Y - \tilde R\Big\| \\
&= \Big\|\mathrm{SOT}-\sum_{n=1}^\infty\Big(\big(\sum_{m=1}^{n-1}E_m\big)TE_n + E_nT\big(\sum_{m=1}^{n-1} E_m\big) + E_nTE_n - E_n\tilde RE_n\Big)\Big\| \\
&\leq \sum_{n=1}^\infty\Big( \Big\|\big(\sum_{m=1}^{n-1}E_m\big)TE_n\Big\| + \Big\|E_nT\big(\sum_{m=1}^{n-1} E_m\big)\Big\| + \Big\|\Big(E_nT - r_n\cdot id\Big)\Big|_{Y_n}\Big\|\Big) < \epsilon.
\end{align*}

We return to the inductive construction of the family $(\mathcal{M}_{\lambda_{n}}, ( J_{\mu} )_{\mu \in \mathcal{M}_{\lambda_{n}}} )_{n=1}^\infty$ and the scalars $(r_n)_{n=1}^\infty$. The inductive hypothesis up to $n$ consists of
\begin{enumerate}[label=(\roman*)]
    \item\label{reduction to FDD diagonal i} the partial family $( \mathcal{M}_{\lambda_{m}}, ( J_{\mu} )_{\mu \in \mathcal{M}_{\lambda_{m}}} )_{m=1}^n$ satisfies Definition~\ref{DES},
    \item\label{reduction to FDD diagonal ii} \eqref{reduction to FDD diagonal eq1} to \eqref{reduction to FDD diagonal eq3} hold up to $n$,
    \item\label{reduction to FDD diagonal iii} there exist $\kappa_n\in\mathbb{N}$ and $\theta_n\in(0,1]$ such that for all $\mu\in\mathcal{M}_{\lambda_n}$,
    \[
    \kappa(\mu)=\kappa_n, \;\theta_\mu=\theta_n,\text{ and }b(\mu) = b(\lambda_n),\text{ and}
    \]
    \item\label{reduction to FDD diagonal iv} if $\lambda_n$ is a root then $\mathcal{M}_{\lambda_n} = \{\lambda_n'\}$, for some root $\lambda_n'$, and otherwise, for every $\mu\in\mathcal{M}_{\lambda_n}$, $\pi(\mu)\in\mathcal{M}_{\pi(\lambda_n)}$.
\end{enumerate}
We omit the basis step, since the same method applies. Assume the construction is complete up to $n$, and set $\lambda=\lambda_{n+1}$. There are two cases: $\lambda$ is a root, or a successor. The root case is analogous to the successor case, so we only record that then $\mathcal{M}_\lambda=\{\lambda'\}$, where $\lambda'$ is the root of $\mathcal{T}_\alpha$ obtained from $\lambda$ by repeating all entries except that the last, $b(\lambda)+\kappa(\lambda)$, is replaced with $b(\lambda)+\kappa_{n+1}$ for some sufficiently large $\kappa_{n+1}$. While this is analogous to the successor case, this is the only point where the fact that $\alpha$ is a limit ordinal matters; otherwise $\mathcal{T}_\alpha$ would have a unique root. We now turn to the successor case, where
\[\lambda=\pi(\lambda)^{\frown} O^{\frown}\nu,\]
with $\pi(\lambda) = \lambda_{n_0}$, for some $1\leq n_0\leq n$, $O\in\mathcal{D}^{\kappa(\pi(\lambda))}$ and $\nu\in\mathcal{T}_{b(\pi(\lambda))}$ is a root. We apply Proposition \ref{crucial lemma for the FDD-reduction 2.0} to obtain an integer
\[k_0'' = k_0''\Big(\kappa(\lambda),\sum_{m=1}^{n}2^{\kappa(\lambda_m)}, \|T\|\cdot\big\|\sum_{m=1}^nE_m\big\|,\frac{\epsilon}{3\cdot 2^{n+1}}\Big)\]
satisfying its conclusion, and we define
\[\kappa_{n+1} = k_0''\vee\kappa(\nu)\vee \Big(\max_{1\leq m\leq n}\big(\kappa_m + 1\big)\Big)\text{ and }\theta_{n+1} = \frac{\theta_{n_0}}{2^{\kappa_{n_0}}}.\]
We define the root $\nu'$ of $\mathcal{T}_{b(\pi(\lambda))}$ by taking the entries of $\nu$ unchanged, except that the last entry, $b(\nu) + \kappa(\nu)$, is replaced with $b(\nu) + \kappa_{n+1}$. Letting $\mathcal{D}^{\kappa(\mu)}_O = \{P\in\mathcal{D}^{\kappa(\mu)}:P\subset [J_\mu h_{\pi(O)} = \epsilon(O)]\}$, $\mu\in\mathcal{M}_{\pi(\lambda)}$, as in Notation \ref{notation for Ds with direction given by interval}, we define
\[\mathcal{M}_\lambda = \Big\{\mu^\smallfrown P^\smallfrown \nu':\mu\in\mathcal{M}_{\pi(\lambda)}\text{ and }P\in\mathcal{D}^{\kappa(\mu)}_O\Big\}.\]
It is clear that, for $\mu\in\mathcal{M}_\lambda$, $b(\mu) = b(\lambda)$, $\kappa(\mu) = \kappa_{n+1}$ and $\theta_\mu = \theta_{n+1}$. By the inductive hypothesis, the members of $\mathcal{M}_{\pi(\mu)}$ are pairwise disjointly supported; hence, by Proposition \ref{remark for supp and lenghts}, so are the members of $\mathcal{M}_\lambda$ and
\[\sum_{\mu\in\mathcal{M}_\lambda}\theta_\mu = \sum_{\mu\in\mathcal{M}_{\pi(\lambda)}}\sum_{P\in\mathcal{D}^{\kappa(\mu)}_O}\theta_\mu|P| =  \sum_{\mu\in\mathcal{M}_{\pi(\lambda)}}\frac{\#\mathcal{D}^{\kappa(\mu)}_O}{2^{\kappa_{n_0}}}\theta_\mu = \sum_{\mu\in\mathcal{M}_{\pi(\lambda)}}\frac{|I|2^{\kappa_{n_0}}}{2^{\kappa_{n_0}}}\theta_\mu = |I|\theta_{\pi(\lambda)} = \theta_\lambda.\]
Therefore, Definition \ref{DES} \ref{DES part a} is satisfied. Clearly, Definition \ref{DES} \ref{DES part c} holds by design. The verification of Definition~\ref{DES}\ref{DES part b} relies on the tree structure of $(\mathcal{M}_{\lambda_k})_{k=1}^n$, on \ref{reduction to FDD diagonal iv}, which holds by construction, and on the fact that, by the choice of $\kappa_{n+1}$, $\mathcal{M}_\lambda$ is disjoint from $\mathcal{M}_{\lambda_1},\ldots,\mathcal{M}_{\lambda_n}$. It remains to choose distributional embeddings $J_{\mu}:L_p^{\kappa(\lambda),0}\to L_p^{\kappa(\mu),0}$, $\mu \in \mathcal{M}_{\lambda}$, and a scalar $r_{n+1}\in\mathcal{A}(D(T))$ satisfying \ref{reduction to FDD diagonal ii}. We now note that the numerical restrictions tied to the integer $k_0''$ are satisfied by the operators $T$, $Q_1=T\Big(\sum_{m=1}^nE_m\Big)$, $Q_2=(\sum_{m=1}^nE_m)T$, and by the $\theta_{n+1}$-compressions $(T_\mu)_{\mu\in\mathcal{M}_\lambda}$, whose ranges are disjointly supported. Proposition \ref{crucial lemma for the FDD-reduction 2.0} yields distributional embeddings $J_\mu:L_p^{\kappa(\lambda),0}\to L_p^{\kappa(\mu),0}$, $\mu\in\mathcal{M}_{\lambda}$, such that letting $S_{n+1} = \sum_{\mu\in\mathcal{M}_\lambda}T_\mu J_\mu$, $Y_{n+1} = S_{n+1}[L_p^{\kappa(\lambda),0}]$, and $E_{n+1}$ the orthogonal projection onto $Y_{n+1}$, then, for some $0\leq i\leq \kappa(\lambda)-1$ and
\begin{align*}
r_{n+1} &= \theta_\lambda ^{-1}\sum_{\mu\in\mathcal{M}_\lambda}\sum_{K\in\mathcal{D}^i}\langle T_\mu h_K,TT_\mu h_K\rangle = 2^{-i}\frac{\theta_{n+1}}{\theta_\lambda}\sum_{\mu\in\mathcal{M}_\lambda}\sum_{K\in\mathcal{D}^i}|K|^{-1}\theta^{-1}_\mu\langle h^\mu_K,T h^\mu_K\rangle\\
&= \frac{1}{\big(\#\mathcal{M}_\lambda\big)\cdot\big(\#\mathcal{D}^i\big)}\sum_{\mu\in\mathcal{M}_\lambda}\sum_{K\in\mathcal{D}^i}|K|^{-1}\theta^{-1}_\mu\langle h^\mu_K,T h^\mu_K\rangle\in\mathcal{A}(D(T))
\end{align*}
we have
\begin{gather*}
\Big\| \Big( E_{n+1}T - r_{n+1} \cdot id \Big)|Y_{n+1} \Big\| < \frac{\epsilon}{3\cdot 2^{n+1}},\\
\Big\|\big(\sum_{m=1}^{n}E_m\big)TE_{n+1}\Big\| <\frac{\epsilon}{3\cdot2^{n+1}},\text{ and }\Big\|E_{n+1}T\big(\sum_{m=1}^{n}E_m\big)\Big\| <\frac{\epsilon}{3\cdot2^{n+1}}.
\end{gather*}
\end{proof}

\begin{rem}
As the proof of Theorem \ref{reduction to FDD diagonal} suggests, the ``error'' operator $T_\Psi^\dagger T T_\Psi - R$ is compact.
\end{rem}

Theorem \ref{reduction to FDD diagonal} and Proposition \ref{Prop for factorization propery} yield the factorization property of $R_\alpha^{p,0}$ for a countable limit ordinal number $\alpha$.

\begin{thm}
\label{fact property of BRS spaces} The standard MDS basis of $R_{\alpha}^{p,0}$ has the factorization property, whenever $\alpha$ is a countable limit ordinal number.
\end{thm}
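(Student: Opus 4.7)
The plan is to apply Proposition \ref{Prop for factorization propery} with $X = R_\alpha^{p,0}$ equipped with its standard MDS basis $(f_n)_{n=1}^\infty$, enumerated as in Theorem \ref{unconditional FDD of the R_a^p}. The proof then reduces to verifying the two hypotheses (a) and (b) of that proposition; the heavy lifting has already been done in Section \ref{reduction to FDD}, so what remains is a bookkeeping argument.

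For hypothesis (a), I would invoke Remark \ref{the 0 BRS spaces are orthog compl}, which asserts that $R_\alpha^{p,0}$ is $C$-orthogonally complemented with $C = 2(p^*-1)^2(p^*/2)^{3/2}$. The standard basis is a $\{-1,0,1\}$-valued symmetric martingale difference sequence by construction, so the ambient setup of Proposition \ref{Prop for factorization propery} is met.

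For hypothesis (b), fix $\eta > 0$ and an arbitrary bounded linear operator $T \colon R_\alpha^{p,0} \to R_\alpha^{p,0}$. Theorem \ref{reduction to FDD diagonal} yields a distributional embedding scheme $\Psi$ and a scalar FDD-diagonal operator $R$, whose entries $(d_\lambda)_{\lambda \in \mathcal{T}_\alpha}$ lie in $\mathcal{A}(D(T))$, such that $\|T_\Psi^\dagger T T_\Psi - R\| < \eta$. Since $T_\Psi$ is a distributional embedding (Theorem \ref{Dist Isom theorem}), this exhibits $T$ as an orthogonal factor of $R$ with error $\eta$ in the sense of Definition \ref{defin of orthog factor}. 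The main (and essentially only) point requiring care is that Theorem \ref{reduction to FDD diagonal} produces an operator that is FDD-diagonal rather than basis-diagonal; however, $R|_{X_\lambda} = d_\lambda \cdot id$ forces $R h_I^\lambda = d_\lambda h_I^\lambda$ for every $I \in \mathcal{D}_{\kappa(\lambda)-1}$, so $R$ is automatically basis-diagonal, and each $d_\lambda$, being an average of diagonal entries of $T$, lies in $\mathrm{conv}(f_n^*(Tf_n))$. This verifies hypothesis (b), and Proposition \ref{Prop for factorization propery} delivers the factorization property.
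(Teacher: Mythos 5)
Your proposal is correct and follows the same route the paper takes: the paper's proof of Theorem \ref{fact property of BRS spaces} is exactly the one-line observation that Theorem \ref{reduction to FDD diagonal} supplies hypothesis (b) of Proposition \ref{Prop for factorization propery}, while Remark \ref{the 0 BRS spaces are orthog compl} supplies hypothesis (a). The only thing worth noting is that your reference to Theorem \ref{Dist Isom theorem} is already subsumed in the concluding sentence of Theorem \ref{reduction to FDD diagonal} (``In particular, $T$ is an orthogonal factor of $R$ with error $\epsilon$''), and that the containment $\mathcal{A}(D(T))\subset\mathrm{conv}(f_n^*(Tf_n))$ rests on the identity $f_n^*(Tf_n)=\theta_\lambda^{-1}|I|^{-1}\langle h_I^\lambda,Th_I^\lambda\rangle$, which your argument correctly implicitly uses.
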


\begin{rem}
Let $\alpha$ be a countable limit ordinal and $n\in\mathbb{N}$. Every bounded linear operator $T:R_{\alpha+n}^{p,0}\to R_{\alpha+n}^{p,0}$ is a factor of a bounded linear operator $\tilde T:R_{\alpha}^{p,0}\to R_{\alpha}^{p,0}$ whose diagonal entries are a subcollection of the diagonal entries of $T$ with respect to the standard MDS basis. This is achieved, e.g., by fixing $I\in\mathcal{D}^n$ and taking $A = (id - \mathbb E)Q_I :R_{\alpha+n}^{p,0}\to R_\alpha^{p,0}$ and $B=T_I:R_{\alpha}^{p,0}\to R_{\alpha+n}^{p,0}$ and letting $\tilde T = ATB$. Together with Theorem \ref{fact property of BRS spaces} and the isomorphism $R_{\alpha}^{p,0}\simeq R_{\alpha+n}^{p,0}$ from \cite{alspach:1999}, this yields the factorization property for the standard MDS bases of all spaces $R_{\alpha}^{p,0}$, $\omega\leq \alpha<\omega_1$. Note that the operator-norm bounds for the factoring operators deteriorate as $n\to\infty$.
\end{rem}
}


%
%
\bibliographystyle{plain}
\bibliography{bibliography}

\begin{thebibliography}{10}

\bibitem{alspach:1999}
D.~E. Alspach.
\newblock Tensor products and independent sums of {$\mathscr L_p$}-spaces,
  {$1<p<\infty$}.
\newblock {\em Mem. Amer. Math. Soc.}, 138(660):viii+77, 1999.

\bibitem{alspach:tong:2003}
D.~E. Alspach and S.~Tong.
\newblock Subspaces of {$L_p,\ p>2$}, determined by partitions and weights.
\newblock volume 159, pages 207--227. 2003.
\newblock Dedicated to Professor Aleksander Pe\l czy\'nski on the occasion of
  his 70th birthday (Polish).

\bibitem{alspach:tong:2006}
D.~E. Alspach and S.~Tong.
\newblock Subspaces of {$L_p$}, {$p>2$}, with unconditional bases have
  equivalent partition and weight norms.
\newblock {\em Arch. Math. (Basel)}, 86(1):73--78, 2006.

\bibitem{andrew:1979}
A.~D. Andrew.
\newblock Perturbations of {S}chauder bases in the spaces {$C(K)$} and
  {$L^{p}$}, {$p>1$}.
\newblock {\em Studia Math.}, 65(3):287--298, 1979.

\bibitem{blower:1990}
G.~Blower.
\newblock The {B}anach space {$B(l^2)$} is primary.
\newblock {\em Bull. London Math. Soc.}, 22(2):176--182, 1990.

\bibitem{bourgain:1983}
J.~Bourgain.
\newblock On the primarity of {$H^{\infty }$}-spaces.
\newblock {\em Israel J. Math.}, 45(4):329--336, 1983.

\bibitem{bourgain:rosenthal:schechtman:1981}
J.~Bourgain, H.~P. Rosenthal, and G.~Schechtman.
\newblock An ordinal {$L^{p}$}-index for {B}anach spaces, with application to
  complemented subspaces of {$L^{p}$}.
\newblock {\em Ann. of Math. (2)}, 114(2):193--228, 1981.

\bibitem{burkholder:1966}
D.~L. Burkholder.
\newblock Martingale transforms.
\newblock {\em Ann. Math. Statist.}, 37:1494--1504, 1966.

\bibitem{burkholder:davis:gundy:1972}
D.~L. Burkholder, B.~J. Davis, and R.F. Gundy.
\newblock Integral inequalities for convex functions of operators on
  martingales.
\newblock {\em Berkeley Symp. on Math. Statist. and Prob.}, 6.2:223--240, 1972.

\bibitem{capon:1982:2}
M.~Capon.
\newblock Primarit\'e de {$L^{p}(L^{r})$}, {$1<p,\,r<\infty $}.
\newblock {\em Israel J. Math.}, 42(1-2):87--98, 1982.

\bibitem{conway:1990}
J.~B. Conway.
\newblock {\em A course in functional analysis}, volume~96 of {\em Graduate
  Texts in Mathematics}.
\newblock Springer-Verlag, New York, second edition, 1990.

\bibitem{dosev:johnson:2010}
D.~Dosev and W.~B. Johnson.
\newblock Commutators on {$\ell_\infty$}.
\newblock {\em Bull. Lond. Math. Soc.}, 42(1):155--169, 2010.

\bibitem{dosev:johnson:schechtman:2013}
D.~Dosev, W.~B. Johnson, and G.~Schechtman.
\newblock Commutators on {$L_p$}, {$1\leq p<\infty$}.
\newblock {\em J. Amer. Math. Soc.}, 26(1):101--127, 2013.

\bibitem{dutta:khurana:2016}
S.~Dutta and D.~Khurana.
\newblock Ordinal indices of small subspaces of {$L_p$}.
\newblock {\em Mediterr. J. Math.}, 13(3):1117--1125, 2016.

\bibitem{enflo:starbird:1979}
P.~Enflo and T.~W. Starbird.
\newblock Subspaces of {$L^{1}$} containing {$L^{1}$}.
\newblock {\em Studia Math.}, 65(2):203--225, 1979.

\bibitem{ghawadrah:2016}
G.~Ghawadrah.
\newblock Non-isomorphic complemented subspaces of the reflexive {O}rlicz
  function spaces {$L^\Phi[0,1]$}.
\newblock {\em Proc. Amer. Math. Soc.}, 144(1):285--299, 2016.

\bibitem{johnson:odell:1981}
W.~B. Johnson and E.~Odell.
\newblock Subspaces and quotients of {$l\sb{p}\oplus l\sb{2}$}\ and
  {$X\sb{p}$}.
\newblock {\em Acta Math.}, 147(1-2):117--147, 1981.

\bibitem{johnson:schechtman:2021}
W.~B. Johnson and G.~Schechtman.
\newblock The number of closed ideals in {$L(L_p)$}.
\newblock {\em Acta Math.}, 227(1):103--113, 2021.

\bibitem{johnson:schechtman:2024}
W.~B. Johnson and G.~Schechtman.
\newblock Approximate identities for ideals in {$L(L^p)$}.
\newblock {\em Bull. Lond. Math. Soc.}, 56(12):3708--3720, 2024.

\bibitem{kania:laustsen:2012}
T.~Kania and N.~J. Laustsen.
\newblock Uniqueness of the maximal ideal of the {B}anach algebra of bounded
  operators on {$C([0,\omega_1])$}.
\newblock {\em J. Funct. Anal.}, 262(11):4831--4850, 2012.

\bibitem{kania:lechner:2022}
T.~Kania and R.~Lechner.
\newblock Factorisation in stopping-time {B}anach spaces: identifying unique
  maximal ideals.
\newblock {\em Adv. Math.}, 409:Paper No. 108643, 35, 2022.

\bibitem{konstantos:motakis:2025}
K.~Konstantos and P.~Motakis.
\newblock Orthogonal factors of operators on the {R}osenthal {$X_{p,w}$} spaces
  and the {B}ourgain-{R}osenthal-{S}chechtman {$R_\omega^p$} space.
\newblock {\em Journal of Functional Analysis}, 288(5):110802, 2025.

\bibitem{konstantos:speckhofer:2025}
K.~Konstantos and Th. Speckhofer.
\newblock Factorization in independent sums of {H}aar system {H}ardy spaces.
\newblock 2025.
\newblock arxiv:2507.18600.

\bibitem{laustsen:lechner:mueller:2015}
N.~J. {Laustsen}, R.~{Lechner}, and P.~F.~X. {M{\"u}ller}.
\newblock {Factorization of the identity through operators with large
  diagonal}.
\newblock {\em J. Funct. Anal.}, 275(11):3169--3207, 2018.

\bibitem{lechner:2018:SL-infty}
R.~Lechner.
\newblock Factorization in {$SL^\infty$}.
\newblock {\em Israel J. Math.}, 226(2):957--991, 2018.

\bibitem{lechner:2018:1-d}
R.~Lechner.
\newblock Dimension dependence of factorization problems: {H}ardy spaces and
  {$SL_n^\infty$}.
\newblock {\em Israel J. Math.}, 232(2):677--693, 2019.

\bibitem{lechner:motakis:mueller:schlumprecht:2020}
R.~Lechner, P.~Motakis, P.~F.~X. M\"{u}ller, and Th. Schlumprecht.
\newblock Strategically reproducible bases and the factorization property.
\newblock {\em Israel J. Math.}, 238(1):13--60, 2020.

\bibitem{lechner:motakis:mueller:schlumprecht:2021}
R.~Lechner, P.~Motakis, P.~F.~X. M\"uller, and Th. Schlumprecht.
\newblock The factorisation property of {$\ell^\infty(X_k)$}.
\newblock {\em Math. Proc. Cambridge Philos. Soc.}, 171(2):421--448, 2021.

\bibitem{lechner:motakis:mueller:schlumprecht:2022}
R.~Lechner, P.~Motakis, P.~F.~X. M\"{u}ller, and Th. Schlumprecht.
\newblock The space {$L_1(L_p)$} is primary for {$1 < p < \infty$}.
\newblock {\em Forum Math. Sigma}, 10:Paper No. e32, 36, 2022.

\bibitem{lechner:motakis:mueller:schlumprecht:2023}
R.~Lechner, P.~Motakis, P.~F.~X. M\"uller, and Th. Schlumprecht.
\newblock Multipliers on bi-parameter {H}aar system {H}ardy spaces.
\newblock {\em Math. Ann.}, 390(4):5669--5752, 2024.

\bibitem{lechner:speckhofer:2025}
R.~Lechner and Th. Speckhofer.
\newblock Factorization in {H}aar system {H}ardy spaces.
\newblock {\em Ark. Mat.}, 63(1):149--205, 2025.

\bibitem{lindenstrauss:1971}
J.~Lindenstrauss.
\newblock Decomposition of {B}anach spaces.
\newblock {\em Indiana Univ. Math. J.}, 20(10):917--919, 1971.

\bibitem{lindenstrauss:pelczynski:1971}
J.~Lindenstrauss and A.~Pe{\l}czy{\'n}ski.
\newblock Contributions to the theory of the classical {B}anach spaces.
\newblock {\em J. Functional Analysis}, 8:225--249, 1971.

\bibitem{maurey:1975:2}
B.~Maurey.
\newblock Sous-espaces compl\'ement\'es de {$L^{p}$}, d'apr\`es {P}. {E}nflo.
\newblock In {\em S\'eminaire {M}aurey-{S}chwartz 1974--1975: {E}spaces
  {$L^{p}$}, applications radonifiantes et g\'eom\'etrie des espaces de
  {B}anach, {E}xp. {N}o. {III}}, pages 15 pp. (erratum, p. 1). Centre Math.,
  \'Ecole Polytech., Paris, 1975.

\bibitem{mueller:1987}
P.~F.~X. M\"{u}ller.
\newblock Classification of the isomorphic types of martingale-{$H^1$} spaces.
\newblock {\em Israel J. Math.}, 59(2):195--212, 1987.

\bibitem{mueller:1988}
P.~F.~X. M{\"u}ller.
\newblock On projections in {$H^1$} and {BMO}.
\newblock {\em Studia Math.}, 89(2):145--158, 1988.

\bibitem{mueller:schechtman:1989}
P.~F.~X. M\"uller and G.~Schechtman.
\newblock On complemented subspaces of {$H^1$} and {${\rm VMO}$}.
\newblock In {\em Geometric aspects of functional analysis (1987--88)}, volume
  1376 of {\em Lecture Notes in Math.}, pages 113--125. Springer, Berlin, 1989.

\bibitem{navoyan:2023}
Kh.~V. Navoyan.
\newblock Factorization property in rearrangement invariant spaces.
\newblock {\em Adv. Oper. Theory}, 8(4):Paper No. 63, 18, 2023.

\bibitem{pietsch:1978}
A.~Pietsch.
\newblock {\em Operator ideals}, volume~20 of {\em North-Holland Mathematical
  Library}.
\newblock North-Holland Publishing Co., Amsterdam-New York, 1980.
\newblock Translated from German by the author.

\bibitem{rosenthal:1970:Xp}
H.~P. Rosenthal.
\newblock On the subspaces of {$L\sp{p}$} {$(p>2)$} spanned by sequences of
  independent random variables.
\newblock {\em Israel J. Math.}, 8:273--303, 1970.

\bibitem{rzeszut:wojciechowski:2017}
M.~Rzeszut and M.~Wojciechowski.
\newblock Independent sums of {$H_n^1(\Bbb{T})$} and {$H_n^1(\delta)$}.
\newblock {\em J. Funct. Anal.}, 273(2):836--873, 2017.

\bibitem{schechtman:1975}
G.~Schechtman.
\newblock Examples of {${\mathcal L}_{p}$} spaces {$(1<p\not=2<\infty )$}.
\newblock {\em Israel J. Math.}, 22(2):138--147, 1975.

\bibitem{speckhofer:2025}
Th. Speckhofer.
\newblock Dimension dependence of factorization problems: {H}aar system {H}ardy
  spaces.
\newblock {\em Studia Math.}, 281(2):171--198, 2025.

\end{thebibliography}

\end{document}